\newcommand{\Aut}{\operatorname{Aut}}
\newcommand{\lsp}{\operatorname{span}}
\newcommand{\id}{\operatorname{id}}
\newcommand{\PI}{\operatorname{PIso}}
\newcommand{\Dom}{\operatorname{Dom}}
\newcommand{\CC}{\mathbb{C}}
\newcommand{\NN}{\mathbb{N}}
\newcommand{\ZZ}{\mathbb{Z}}
\newcommand{\TT}{\mathbb{T}}
\newcommand{\KK}{\mathbb{K}}
\newcommand{\Bb}{\mathcal{B}}
\newcommand{\Cc}{\mathcal{C}}
\newcommand{\Ff}{\mathcal{F}}
\newcommand{\Gg}{\mathcal{G}}
\newcommand{\Jj}{\mathcal{J}}
\newcommand{\Mm}{\mathcal{M}}
\newcommand{\Nn}{\mathcal{N}}
\newcommand{\Oo}{\mathcal{O}}
\newcommand{\Ss}{\mathcal{S}}
\newcommand{\Uu}{\mathcal{U}}
\newcommand{\lZ}[1]{Z(#1]}
\newcommand{\rZ}[1]{Z[#1)}
\newcommand{\bZ}[1]{Z(#1)}
\newcommand{\Jsig}{\tilde\sigma}
\newcommand{\Stau}{\tilde\tau}
\newcommand{\hG}{\widehat{\mathcal{G}}}
\newcommand{\hO}{\widehat{\mathcal{O}}}
\newtheorem{thm}{Theorem}[section]
\newtheorem{cor}[thm]{Corollary}
\newtheorem{lem}[thm]{Lemma}
\newtheorem{prp}[thm]{Proposition}
\theoremstyle{definition}
\newtheorem{dfn}[thm]{Definition}
\newtheorem{ntn}[thm]{Notation}
\theoremstyle{remark}
\newtheorem{rmk}[thm]{Remark}
\newtheorem{exm}[thm]{Example}
\numberwithin{equation}{section}
\tikzstyle{vertex}=[circle]
\tikzstyle{goto}=[->,shorten >=1pt,>=stealth,semithick]
\newcommand{\aeq}{\mathrm{ae}} 
\begin{document}

\title[$KK$-duality for self-similar groupoid actions]{$KK$-duality for self-similar groupoid actions on graphs}

\author[Brownlowe]{Nathan Brownlowe}
\author[Buss]{Alcides Buss}
\author[Gon\c{c}alves]{Daniel Gon\c{c}alves}
\author[Hume]{Jeremy B. Hume}
\author[Sims]{Aidan Sims}
\author[Whittaker]{Michael F. Whittaker}

\address{Nathan Brownlowe \\ School of Mathematics and
Statistics  \\
University of Sydney} \email{nathan.brownlowe@sydney.edu.au}
\address{Alcides Buss and Daniel Gon\c{c}alves, Departamento de Matem\'atica\\
 Universidade Federal de Santa Catarina}
\email{alcides.buss@ufsc.br, daemig@gmail.com}
\address{Jeremy B. Hume and Michael F.
Whittaker \\ School of Mathematics and Statistics  \\
University of Glasgow}
\email{jeremybhume@gmail.com, Mike.Whittaker@glasgow.ac.uk}
\address{Aidan Sims \\ School of Mathematics and
Applied Statistics  \\
University of Wollongong} \email{asims@uow.edu.au}

\thanks{Sims was supported by Australian Research Council grant DP220101631
and by CAPES grant 88887.370640. The second and third authors were partially
supported by CNPq and CAPES - Brazil. Hume was supported by the European Research Council (ERC) under the European Union's Horizon 2020 research and innovation programme (grant agreement No. 817597). We thank Isnie Yusnitha for careful
reading and helpful comments and Volodia Nekrashevych for insightful conversations. We also thank Valentin Deaconu and an anonymous referee for pointing out issues with our original draft.}

\begin{abstract}
We extend Nekrashevych's $KK$-duality for $C^*$-algebras of regular, recurrent, contracting self-similar group actions to regular, contracting self-similar groupoid actions on a graph, removing the recurrence condition entirely and generalising from a finite alphabet to a finite graph.

More precisely, given a regular and contracting self-similar groupoid $(G,E)$ acting faithfully on a finite directed graph $E$, we associate two $C^*$-algebras, $\Oo(G,E)$ and $\hO(G,E)$, to it and prove that they are strongly Morita equivalent to the stable and unstable Ruelle C*-algebras of a Smale space arising from a Wieler solenoid of the self-similar limit space.  That these algebras are Spanier-Whitehead dual in $KK$-theory follows from the general result for Ruelle algebras of irreducible Smale spaces proved by Kaminker, Putnam, and the last author.
\end{abstract}

\subjclass[2020]{47L05, 19K35 (Primary); 37B05 (Secondary)}

\keywords{$C^*$-algebra; self-similar group; limit space; Poincar\'e
duality; Spanier-Whitehead duality; KK-duality; Smale space}

\maketitle

\section{Introduction}\label{sec:intro}

In the last 40 years self-similar groups have been fundamental in answering a
wide range of outstanding conjectures; for example, the Grigorchuk group
\cite {Gr} was the first group shown to have intermediate growth, and also
the first known example of a group that is amenable but not elementary
amenable. Since self-similar groups are defined by their actions on trees,
and hence induce actions on their boundaries, they lend themselves to study
via noncommutative analysis. Segal \cite{Segal:AJM51}, building on the work
of Maharam \cite{Maharam:PNAS42} showed that commutative von Neumann algebras
are precisely those that arise as the $L^\infty$-algebras of localisable
measure spaces, and the Gelfand--Naimark Theorem
\cite{GelfandNaimark:RMNS43} characterises commutative $C^*$-algebras as
$C_0$-algebras of locally compact Hausdorff spaces. Building on this
foundation, much of modern operator-algebra theory, including Connes'
noncommutative geometry, investigates noncommutative $C^*$-algebras by
analogy with a kind of noncommutative measure space or topological space. An
excellent example is Connes' notion of noncommutative Poincar\'e duality
\cite{Con}, which we refer to as \emph{$KK$-duality} between a pair of noncommutative $C^*$-algebras (actually a version of Spanier--Whitehead duality, see \cite{KS}). A very
general example of this is the $KK$-duality between the stable and unstable
Ruelle algebras of irreducible Smale spaces
\cite{Kaminker-Putnam-Whittaker:K-duality}. Recently, pioneering work of
Nekrashevych \cite{Nekrashevych:Cstar_selfsimilar} proved that contracting,
recurrent, and regular self-similar groups each give rise to a pair of
$C^*$-algebras that can be realised as the Ruelle algebras of an underlying
Smale space and used the duality result of
\cite{Kaminker-Putnam-Whittaker:K-duality} to see that these algebras are
$KK$-dual.

In this paper we extend this duality result to the self-similar groupoids defined in \cite{Laca-Raeburn-Ramagge-Whittaker:Equilibrium}, and simultaneously extend Nekrashevych's Smale space result
\cite{Nekrashevych:Cstar_selfsimilar} to self-similar actions that are not necessarily recurrent. We also note that our proof is completely different; in particular, we show that the underlying Smale space is a
Wieler solenoid \cite{Wieler:Smale}. Our main theorem, Theorem~\ref{thm:main duality thm}, states that the $C^*$-algebra $\Oo(G, E)$ of a contracting self-similar groupoid action as defined in
\cite{Laca-Raeburn-Ramagge-Whittaker:Equilibrium}, and the $C^*$-algebra, suggestively denoted $\hO(G, E)$, of the Deaconu--Renault groupoid of the canonical local homeomorphism on an associated limit space are
$KK$-dual. In conjunction with consequences of classification theory for $C^*$-algebras (see \cite[Theorem 1.1 and Section 4.4]{Kaminker-Putnam-Whittaker:K-duality} and \cite[Theorem
5.11]{Yamashita-Proietti}), our result implies that $\Oo(G, E) \cong \hO(G, E)$, so our $KK$-duality mimics Poincar\'e duality in topology.

To prove our main theorem, we generalise the results of Nekrashevych
\cite{Nekrashevych:Cstar_selfsimilar} to the self-similar groupoid setting of
\cite{Laca-Raeburn-Ramagge-Whittaker:Equilibrium}. We extend Nekrashevych's
construction of the limit space $\Jj$ of a self-similar group to the setting
of self-similar groupoids, and show that the shift map on $\Jj$ is open and
expansive. We employ Wieler's classification of Smale spaces with totally
disconnected stable sets to see that the projective limit of $\Jj$ with
respect to the shift map, which we identify with a natural limit solenoid
$\Ss$, is a Smale space with respect to a homeomorphism $\tilde\tau$ induced
by the shift map on $\Jj$. Nekrashevych's construction of Smale spaces from
self-similar groups is subtle, so we are careful to include all the details
in extending to the situation of self-similar groupoids. In doing so, we are
able to weaken the existing hypotheses, even for self-similar groups. The
remainder of our work goes into proving that the Cuntz--Pimsner algebra
$\Oo(G, E)$ is Morita equivalent (i.e. stably isomorphic) to the unstable Ruelle algebra of $(\Ss,
\tilde\tau)$ and the Deaconu--Renault groupoid $C^*$-algebra $\hO(G,
E)$ is Morita equivalent to the stable Ruelle algebra of $(\Ss, \tilde\tau)$.

We illustrate our results via several examples. In particular, the main example from \cite{Laca-Raeburn-Ramagge-Whittaker:Equilibrium} is defined as the self-similar groupoid $(G,E)$ arising from the following
graph and automaton:
\begin{center}
\mbox{}\hfill\begin{minipage}[t]{0.4\textwidth}
\[
\begin{tikzpicture}
\begin{scope}[xshift=0,yshift=0]
\node at (0,0) {$w$};
\node[vertex] (vertexe) at (0,0)   {$\quad$};
\node at (-3,0) {$v$};
\node[vertex] (vertex-a) at (-3,0)   {$\quad$}
	edge [->,>=latex,out=35,in=145] node[below,swap,pos=0.5]{$3$} (vertexe)
	edge [->,>=latex,out=50,in=130] node[above,swap,pos=0.5]{$4$} (vertexe)
	edge [->,>=latex,out=210,in=150,loop] node[left,pos=0.5]{$1$} (vertexe)
	edge [<-,>=latex,out=310,in=230] node[below,swap,pos=0.5]{$2$} (vertexe);
\end{scope}
\end{tikzpicture}
\]
\end{minipage}\hskip2em
\begin{minipage}[t]{0.35\textwidth}
\vspace{0cm}
\begin{align*}
a\cdot 1=4,\ a|_1=v; \\
a\cdot 2=3,\ a|_2=b;\\
b\cdot 3=1, \ b|_3=v;\\
b\cdot 4=2,\ b|_4=a.
\end{align*}
\end{minipage}\hfill\mbox{}
\end{center}
The limit space of this self-similar action is homeomorphic to the complex unit circle with the map $z \mapsto z^2$. The associated limit solenoid is the classical dyadic solenoid Smale space. Thus, using
\cite{Yi:k-theory} and Theorem~\ref{thm:main duality thm} we deduce that
\[
K_0(\Oo_{(G,E)}) \cong \mathbb{Z},\quad K_1(\Oo_{(G,E)}) \cong \mathbb{Z},\quad K^0(\Oo_{(G,E)}) \cong \mathbb{Z},\quad\text{ and } K^1(\Oo_{(G,E)}) \cong \mathbb{Z}.
\]
The Kirchberg-Phillips Theorem then implies that $\Oo_{(G,E)}$ is isomorphic to the Cuntz---Pimsner algebra of the odometer.

Another source of interesting examples are the Katsura algebras \cite{Katsura:class_II}. These were recognised as self-similar actions on graphs by Exel
and Pardo \cite{Exel-Pardo:Self-similar}. To see how they fit into our framework, see \cite[Example 7.7]{Laca-Raeburn-Ramagge-Whittaker:Equilibrium} and
\cite[Appendix A]{Laca-Raeburn-Ramagge-Whittaker:Equilibrium} for the general translation from the Exel-Pardo situation to self-similar groupoid actions. In \cite[Section 18]{Exel-Pardo:Self-similar}, Exel and Pardo show that all unital Kirchberg algebras in the UCT class have representations as self-similar groupoids. They also prove that
the $K$-theory of the Cuntz--Pimsner algebra of a self-similar groupoid of this sort is directly computable from the graph adjacency matrix and restriction matrix for the self-similar action.
An analysis using Schreier graphs shows that the limit space of such a self-similar groupoid is the total space of a bundle over the circle of copies of the  Cantor set, each with an odometer action specified by the restriction matrix. Once again, a combination of $KK$-duality with classification theory shows that the Cuntz--Pimsner algebra of such a system is isomorphic to the Deaconu--Renault groupoid of the limit space. This allows us to
compute the $K$-theory of these interesting Deaconu--Renault $C^*$-algebras, see Example \ref{Ex:Katsura}

Example~\ref{basilica type example} introduces a new example whose limit dynamical system is conjugate to that of the basilica group. By definition, the basilica
group is the iterated monodromy group (see \cite[Chapter~5]{Nekrashevych:Self-similar}) of the function $f(z)=z^2-1$ as a complex map from $\mathbb{C}\setminus\nobreak\{-1,0,1\} \to \mathbb{C}\setminus \{0,1\}$. The
$K$-groups of its
Cuntz--Pimsner algebra, and those of its dual algebra, are computed in \cite[Theorem~4.8]{Nekrashevych:Cstar_selfsimilar} and \cite[Theorem~6.6]{Nekrashevych:Cstar_selfsimilar} (see also \cite{Hume}).
Our main theorem therefore allows us also to compute the $K$-homology of both algebras.

The paper is organised as follows. In Section~\ref{sec:background} we give the necessary background for the paper. We begin with directed graphs and their $C^*$-algebras.
This leads to a section on self-similar groupoid actions on graphs and we recall relevant information from \cite{Laca-Raeburn-Ramagge-Whittaker:Equilibrium}. We conclude the section
with Smale spaces and their $C^*$-algebras.

In Section~\ref{sec:limit space},  we generalise Nekrashevych's notion of a limit space to self-similar groupoid actions. Nekrashevych's construction is clever and subtle, so we provide substantial details regarding the metric topology on the limit space that are omitted in Nekrashevych's work. We complete this section by defining the level Schreier graphs of a self-similar groupoid action and how these relate
to the limit space. In the following two sections we seek to understand the dynamics on the limit space. In particular, we show that the shift map on the limit space is a Wieler solenoid, and hence
the natural extension is a Smale space with totally disconnected stable sets.

Sections \ref{sec:O(G,E)}~and~\ref{sec:hO(G,E)} define two natural
$C^*$-algebras associated to a self-similar groupoid action. One is the
Cuntz--Pimsner algebra of \cite{Laca-Raeburn-Ramagge-Whittaker:Equilibrium};
our main goal is to provide a groupoid model for this algebra, extending that
given by Nekrashevych in \cite[Section~5]{Nekrashevych:Cstar_selfsimilar}.
The other is the $C^*$-algebra of a generalisation to self-similar groupoids
of Nekrashevych's Deaconu--Renault groupoid of the limit space of a
self-similar group. This becomes the dual algebra for the Cuntz--Pimsner
algebra. Corollary~\ref{cor:Hausdorffness} establishes the exact condition
required for the groupoid of germs to be Hausdorff.

Our main result appears in Section~\ref{sec:poincare}. We prove that the
Cuntz--Pimsner algebra is strongly Morita equivalent to the unstable Ruelle
algebra of a Smale space and that the Deaconu--Renault groupoid algebra is
strongly Morita equivalent to its stable Ruelle algebra. We then deduce our
main $KK$-duality result from~\cite{Kaminker-Putnam-Whittaker:K-duality}.

\section{Background}\label{sec:background}

\subsection{Graphs and $C^*$-algebras}\label{subsec:graphs}

In this paper, we use the notation and conventions of
\cite{Raeburn:Graph_algebras} for graphs and their $C^*$-algebras. A
\emph{directed graph} $E$ is a quadruple $E = (E^0, E^1, r, s)$ consisting of
sets $E^0$ and $E^1$ and maps $r,s : E^1 \to E^0$. The elements of $E^0$ are
called \emph{vertices} and we think of them as dots, and elements of $E^1$
are called edges, and we think of them as arrows pointing from one vertex to
another: $e \in E^1$ points from $s(e)$ to $r(e)$.

A \emph{path} in $E$ is either a vertex, or a string $\mu = e_1 \dots e_n$
such that each $e_i \in E^1$ and $s(e_i) = r(e_{i+1})$. For $n \ge 2$ we
define $E^n = \{e_1 \dots e_n \mid e_i \in E^1, s(e_i) = r(e_{i+1})\}$ for the set of paths of length $n$ in $E$. The \emph{length}
$|\mu|$ of the path $\mu$ is given by $|\mu| = n$ if and only if $\mu \in
E^n$. The collection $E^* := \bigcup^\infty_{n=0} E^n$ of all paths in $E$ is
a small category with identity morphisms $E^0$, composition given by
concatenation of paths of nonzero length together with the identity rules
$r(\mu) \mu = \mu = \mu s(\mu)$, and domain and codomain maps $r,s$.

For $\mu \in E^*$ and $X \subseteq E^*$, we write $\mu X = \{\mu \nu \mid \nu
\in X, s(\mu) = r(\nu\}$ and $X \mu = \{\nu\mu \mid \nu \in X, r(\mu) =
s(\nu)\}$. We write $\mu X \nu$ for $\mu X \cap X \nu$.

We say that a graph $E$ is \emph{row finite} if $vE^1$ is finite for each $v
\in E^0$ and that it has \emph{no sources} if each $vE^1$ is nonempty. We say
that it is \emph{finite} if both $E^0$ and $E^1$ are finite. We say that $E$
is \emph{strongly connected} if for all $v,w \in E^0$, the set $v E^* w$ is
nonempty, and $E$ is not the graph with one vertex and no edges. If $E$ is
strongly connected, then $vE^1$ and $E^1v$ are nonempty for all $v $ in
$E^0$.

In this paper, we will need to work with left-infinite, right-infinite and bi-infinite paths in a
directed graph $E$. We will use the following notation:
\begin{align*}
    E^\infty &= \{e_1 e_2 e_3 \dots \mid e_i \in E^1, s(e_i) = r(e_{i+1})\text{ for all }i\},\\
    E^{-\infty} &= \{\dots e_{-3} e_{-2} e_{-1} \mid e_i \in E^1, s(e_i) = r(e_{i+1})\text{ for all }i\},\quad\text{ and}\\
    E^\ZZ &= \{\dots e_{-2} e_{-1} e_0 e_1 e_2 \dots \mid e_i \in E^1, s(e_i) = r(e_{i+1})\text{ for all }i\}.
\end{align*}
For $x = x_1 x_2\dots \in E^\infty$ we write $r(x) = r(x_1)$ and for $x =
\dots x_{-2} x_{-1} \in E^{-\infty}$, we write $s(x) = s(x_{-1})$.

We endow these spaces with the topologies determined by cylinder sets. These
cylinder sets are indexed by finite paths in each of the three spaces
involved, so we will distinguish them with the following slightly
non-standard notation: for $\mu \in E^n$, we define
\begin{align*}
\rZ{\mu} &:= \{x \in E^\infty \mid x_1\dots x_n = \mu\},\quad\text{ and}\\
\lZ{\mu} &:= \{x \in E^{-\infty} \mid x_{-n} \dots x_{-1} = \mu\}.
\end{align*}
For $n \ge 0$ and $\mu  \in E^{2n+1}$, we write
\[
\bZ{\mu} := \{x \in E^\ZZ \mid x_{-n} \dots x_n = \mu\}.
\]

In this paper, all graphs will be finite. The spaces $E^\infty$, $E^{-\infty}$ and $E^\ZZ$ are then
totally disconnected compact Hausdorff spaces and the collections of cylinder sets are bases for the topologies that are closed under
intersections.

There are standard metrics realising these topologies. The metric on $E^{-\infty}$ is given by
\begin{equation}\label{eq:cylinder metric}
d(x,y) = \begin{cases}
    \inf\{c^{-n} \mid x,y \in \lZ{\mu}\text{ for some } \mu \in E^n\} &\text{ if $s(x) = s(y)$}\\
    2 &\text{ if $s(x) \not= s(y)$,}
\end{cases}
\end{equation}
and the other two are defined analogously: for $E^\infty$, we
replace $\lZ{\mu}$ with $\rZ{\mu}$ and the source map with the range map; for
$E^\ZZ$ we replace ``$\lZ{\mu}$ for some $\mu \in E^n$" with ``$\bZ{\mu}$ for
some $\mu \in E^{2n+1}$,'' and the conditions ``$s(x) = s(y)$'' and ``$s(x)
\not= s(y)$'' with ``$x_0 = y_0$'' and ``$x_0 \not= y_0$.''

Given a finite directed graph with no sources, a \emph{Cuntz--Krieger
$E$-family} in a $C^*$-algebra $A$ is a pair $(p, s)$ of functions $p : v
\mapsto p_v$ from $E^0$ to $A$ and $s : e \mapsto s_e$ from $E^1$ to $A$ such
that the $p_v$ are mutually orthogonal projections, each $s_e^*s_e =
p_{s(e)}$, and $p_v = \sum_{e \in vE^1} s_e s^*_e$ for all $v  \in E^0$. The
\emph{graph $C^*$-algebra}, denoted $C^*(E)$, is the universal $C^*$-algebra
generated by a Cuntz--Krieger $E$-family, see \cite{Raeburn:Graph_algebras}.

\subsection{Self similar actions of groupoids on graphs}\label{subsec:ssa}

Recall that a \emph{groupoid} is a small category $\Gg$ with inverses. The identity morphisms are
called \emph{units} and the collection of all identity morphisms is called the \emph{unit space}
and denoted $\Gg^{(0)}$. The set of composable
pairs of elements in $\Gg$ is denoted $\Gg^{(2)}$.

Self-similar actions of groupoids on graphs were introduced in
\cite{Laca-Raeburn-Ramagge-Whittaker:Equilibrium}, inspired by Exel and Pardo's work in
\cite{Exel-Pardo:Self-similar}. The precise relationship between the two constructions is detailed
in the appendix of \cite{Laca-Raeburn-Ramagge-Whittaker:Equilibrium}.

Given a directed graph $E$ with no sources, and given $v,w  \in E^0$, a
\emph{partial isomorphism} of $E^*$ is a bijection $g : vE^* \to wE^*$ that
preserves length and preserves concatenation in the sense that $g(\mu e) \in
g(\mu)E^1$ for all $\mu  \in E^*$ and $e  \in E^1$. The expected formula
$g(\mu e) = g(\mu)g(e)$ does not even make sense since $g$ is not typically
defined on $s(\mu)E^*$. For each $v  \in E^*$, the identity map $\id_v : vE^*
\to vE^*$ is a partial isomorphism.

The set $\PI(E^*)$ of all partial isomorphisms of $E^*$ is a groupoid with
units $\id_v$ indexed by the vertices of $E$ and multiplication given by
composition of maps. We will identify the unit space of $\PI(E^*)$ with $E^0$
in the canonical way; this is consistent with our notation for graphs since
the map $\mu \mapsto v\mu$ coincides with $\id_v : vE^* \to vE^*$.

We will write $c,d : \PI(E^*) \to E^0$ for the codomain and domain maps on the groupoid
$\PI(E^*)$, because the symbols $s$ and $r$ are already fairly overloaded. So if $g : vE^* \to
wE^*$ is a partial isomorphism, then $c(g) = w$ and $d(g) = v$.

A \emph{faithful action} of a groupoid $G$ with unit space $E^0$ on the graph $E$ is an injective
groupoid homomorphism $\phi : G \to \PI(E^*)$ that restricts to the identity map on $E^0$. We will generally write $g \cdot \mu$ in place of $\phi(g)(\mu)$.

If $E = (E^0, E^1, r, s)$ is a directed graph, and $G$ is a groupoid with
unit space $E^0$, that acts faithfully on $E^*$, then we say that $(G, E)$ is
a \emph{self similar groupoid action} if for every $g  \in G$ and every $e
\in d(g)E^1$ there exists $h  \in G$ such that $c(h) = s(g\cdot e)$ and
\begin{equation}\label{eq:self-similar}
    g \cdot(e\mu) = (g\cdot e)(h \cdot \mu)\quad\text{ for all $\mu \in s(g\cdot e)E^*$.}
\end{equation}

Since the groupoid action $G \curvearrowright E^*$ is faithful, for each $g
\in G$ and $e \in d(g)E^1$ there is a \emph{unique} $h$
satisfying~\eqref{eq:self-similar}. We denote this element by $g|_e$, and
call it the \emph{restriction} of $g$ to $e$. Restriction extends to finite
paths by iteration: for $g  \in G$, we define $g|_{d(g)} = g$, and for $e \in
E^1$ and $\mu \in s(e)E^*$, we recursively define
\[
g|_{e\mu} = (g|_e)|_{\mu}.
\]
So $g|_{e_1 \dots e_n} = (\dots(g|_{e_1})|_{e_2} \dots)|_{e_n}$, and then \eqref{eq:self-similar} extends to
\[
g\cdot(\mu\nu) = (g\cdot \mu)(g|_\mu \cdot \nu)
\]
whenever $g \in G$, $\mu \in d(g)E^*$ and $\nu \in E^* s(\mu)$.

We will use the following fundamental formulas without comment throughout the paper.

\begin{lem}[{\cite[Lemma~3.4 and Proposition~3.6]{Laca-Raeburn-Ramagge-Whittaker:Equilibrium}}]\label{properties SSA}
Let $(G, E)$ be a self-similar groupoid action on a finite directed graph $E$. For $(g,h) \in
G^{(2)}$, $\mu \in d(g)E^*$, $\nu \in E^* s(\mu)$ and $\eta \in c(g)E^*$, we
have
\begin{enumerate}
    \item $r(g \cdot \mu) = c(g)$ and $s(g \cdot \mu) = g|_\mu \cdot
        s(\mu)$;
    \item $g|_{\mu\nu} = (g|_\mu)|_\nu$;
    \item $\id_{r(\mu)}|_\mu = \id_{s(\mu)}$;
    \item $(hg)|_\mu = (h|_{g \cdot \mu})(g|_\mu)$; and
    \item $g^{-1}|_{\eta} = (g|_{g^{-1} \cdot \eta})^{-1}$.
\end{enumerate}
\end{lem}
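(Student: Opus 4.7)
The plan is to prove the five identities in order, since (3) is needed for (5), and (4) plus (3) gives (5). The recurring mechanism throughout will be to compute an expression $g\cdot(\mu\nu)$ (or its analogue) in two different ways using the defining self-similarity relation $g\cdot(e\mu) = (g\cdot e)(g|_e\cdot\mu)$, and then invoke faithfulness of the action (which gives uniqueness of the restriction elements) to deduce the relevant equality in $G$.

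For (1), note first that any partial isomorphism $g : d(g)E^* \to c(g)E^*$ preserves length, so since $d(g)$ is the unique element of length $0$ in $d(g)E^*$, we have $g\cdot d(g) = c(g)$; hence $r(g\cdot\mu) = c(g)$ for every $\mu \in d(g)E^*$. For the source claim, I first treat $\mu = e \in E^1$: applying the self-similarity relation to $e\cdot s(e) = e$ gives $g\cdot e = (g\cdot e)(g|_e\cdot s(e))$, so $g|_e\cdot s(e) = s(g\cdot e)$. The general statement follows by induction on $|\mu|$, using the observation that $g\cdot(e\eta) = (g\cdot e)(g|_e\cdot\eta)$ together with the inductive hypothesis applied to $g|_e$ acting on $\eta$. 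Statement (2) is essentially just unwinding the recursive definition of the iterated restriction: writing $\mu = e_1\cdots e_n$, the identity $g|_{\mu\nu} = (g|_\mu)|_\nu$ follows by straightforward induction on $n$ from the base case $g|_{e\nu} = (g|_e)|_\nu$, which is the definition.

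For (3), I compute $\id_{r(\mu)}\cdot(\mu\nu)$ in two ways: directly it equals $\mu\nu$, while via self-similarity (iterated through the letters of $\mu$, using (2)) it equals $\mu\cdot(\id_{r(\mu)}|_\mu\cdot\nu)$. Cancelling the initial $\mu$ (paths compose uniquely) shows that $\id_{r(\mu)}|_\mu\cdot\nu = \nu$ for every $\nu \in s(\mu)E^*$; faithfulness then forces $\id_{r(\mu)}|_\mu = \id_{s(\mu)}$. For (4), I expand $(hg)\cdot(\mu\nu)$ once by self-similarity to obtain $((hg)\cdot\mu)((hg)|_\mu\cdot\nu)$, and alternatively compute
\[
(hg)\cdot(\mu\nu) = h\cdot\bigl((g\cdot\mu)(g|_\mu\cdot\nu)\bigr) = (h\cdot(g\cdot\mu))\bigl(h|_{g\cdot\mu}\cdot(g|_\mu\cdot\nu)\bigr),
\]
where the composability $c(g|_\mu) = s(g\cdot\mu) = d(h|_{g\cdot\mu})$ follows from (1). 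Comparing the two expressions and again invoking faithfulness yields $(hg)|_\mu = (h|_{g\cdot\mu})(g|_\mu)$. Finally, (5) falls out of (4) applied to the composable pair $(g, g^{-1})$: by (3) the left-hand side becomes $\id_{d(g)}|_\mu = \id_{s(\mu)}$, so $(g^{-1}|_{g\cdot\mu})(g|_\mu) = \id_{s(\mu)}$, giving $g^{-1}|_{g\cdot\mu} = (g|_\mu)^{-1}$; substituting $\mu = g^{-1}\cdot\eta$ produces the stated formula.

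I do not anticipate any genuine obstacles: the main subtlety is simply bookkeeping about which elements are composable in $G$ and which paths are concatenable in $E^*$, all of which is resolved by (1) once it is established. The essential conceptual point throughout is that faithfulness of the action promotes pointwise equalities of the actions to equalities in $G$, and this is used at every step from (3) onwards.
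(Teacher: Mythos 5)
Your proof is correct; note that the paper does not prove this lemma itself but imports it by citation from Laca--Raeburn--Ramagge--Whittaker (Lemma~3.4 and Proposition~3.6), and your argument --- computing $g\cdot(\mu\nu)$ (or $(hg)\cdot(\mu\nu)$) in two ways via the defining relation and then using faithfulness to upgrade pointwise equality of actions on $s(\mu)E^*$ to equality in $G$ --- is exactly the standard argument given in that reference. The only cosmetic remark is that in~(1) the identity $r(g\cdot\mu)=c(g)$ is immediate from $g\cdot\mu\in c(g)E^*$, so the length-zero observation is not needed for that half of the claim (though it is a correct way to see $g\cdot d(g)=c(g)$).
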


We now give an example of a self-similar groupoid action by defining an $E$-automaton as described in \cite[Definition~3.7, Proposition~3.9 and
Theorem~3.9]{Laca-Raeburn-Ramagge-Whittaker:Equilibrium}. The key point of an $E$-automaton is that an
action on the edges of the graph and a restriction map satisfying specified
range and source conditions ensures that the action extends to a self-similar groupoid action on finite paths of the graph.

\begin{exm}
\label{s,r clarifying example}

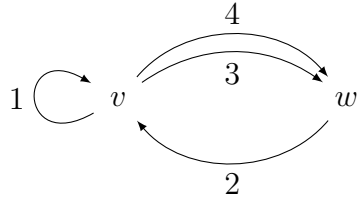
\begin{figure}
\begin{tikzpicture}[scale=1.0]
\node at (0,0) {$w$};
\node[vertex] (vertexe) at (0,0)   {$\quad$};
\node at (-3,0) {$v$};
\node[vertex] (vertex-a) at (-3,0)   {$\quad$}
	edge [->,>=latex,out=35,in=145] node[below,swap,pos=0.5]{$3$} (vertexe)
	edge [->,>=latex,out=50,in=130] node[above,swap,pos=0.5]{$4$} (vertexe)
	edge [->,>=latex,out=210,in=150,loop] node[left,pos=0.5]{$1$} (vertexe)
	edge [<-,>=latex,out=310,in=230] node[below,swap,pos=0.5]{$2$} (vertexe);
\end{tikzpicture}
\caption{Graph $E$ for Example~\ref{s,r clarifying example}}
\label{asymmetric E}
\end{figure}

The following example is carried through \cite{Laca-Raeburn-Ramagge-Whittaker:Equilibrium}. Consider the graph $E$ in Figure~\ref{asymmetric E},
and define
\begin{align}\label{defasymm}
a\cdot 1=4,\ a|_1=v;\quad\quad&b\cdot 3=1, \ b|_3=v;\\
a\cdot 2=3,\ a|_2=b;\quad\quad&b\cdot 4=2,\ b|_4=a.\notag
\end{align}
See \cite[Example 3.10]{Laca-Raeburn-Ramagge-Whittaker:Equilibrium} for a detailed exposition. To see explicitly how the groupoid action on $E^*$ manifests we compute
\[
a \cdot 242312=3(b \cdot 42312)=32(a \cdot 2312)=323(b\cdot 312)=3231(v \cdot 12)=323112.
\]
\end{exm}

\subsection{Smale spaces and $C^*$-algebras}\label{subsec:smale}

A Smale space $(X,\varphi)$ consists of a compact metric space $X$ and a homeomorphism $\varphi: X
\to X$ along with constants $\varepsilon_{X} > 0$ and $0<\lambda < 1$ and a locally defined continuous map
\[ [\cdot,\cdot]:\{(x,y) \in X \times X \mid d(x,y) \leq \varepsilon_{X}\}\to X,\quad (x,y) \mapsto [x, y]\]
satisfying
\begin{itemize}
\item[(B1)] $\left[ x, x \right] = x$,
\item[(B2)] $\left[ x, [ y, z] \right] = [ x, z]$ if both sides are defined,
\item[(B3)] $\left[ [ x, y], z \right] = [ x,z ]$ if both sides are defined,
\item[(B4)] $\varphi[x, y] = [ \varphi(x), \varphi(y)]$ if both sides are defined,
\item[(C1)] For $x,y  \in X$ such that $[x,y]=y$, we have
    $d(\varphi(x),\varphi(y)) \leq \lambda d(x,y)$, and
\item[(C2)] For $x,y  \in X$ such that $[x,y]=x$, we have
    $d(\varphi^{-1}(x),\varphi^{-1}(y)) \leq \lambda d(x,y)$.
\end{itemize}

The bracket map defines a local product structure on a Smale space as
follows, for $x  \in X$ and $ 0 < \varepsilon \leq \varepsilon_{X}$, define
\begin{align*}
 X^{s}(x, \varepsilon) & := \{ y \in X \mid d(x,y) < \varepsilon, [y,x] =x \} \,\text{ and } \\
X^{u}(x, \varepsilon) & := \{ y \in X \mid d(x,y) < \varepsilon, [x,y] =x \}.
\end{align*}
We call $X^s(x,\varepsilon)$ a \emph{local stable set} of $x$ and $X^u(x,\varepsilon)$ a
\emph{local unstable set} of $x$. Figure~\ref{Bracket intersection} gives a pictorial
representation of the local stable sets and their interactions (provided
$d(x,y)<\varepsilon_X/2$).

\begin{figure}[ht]
\begin{center}
\begin{tikzpicture}
\tikzstyle{axes}=[]
\begin{scope}[style=axes]
	\draw[<->] (-2,-1) node[left] {$X^s(x,\varepsilon_X)$} -- (1,-1);
	\draw[<->] (-1,-2) -- (-1,1) node[above] {$X^u(x,\varepsilon_X)$};
	\node at (-1.2,-1.4) {$x$};
	\node at (1.1,-1.4) {$[x,y]$};
	\pgfpathcircle{\pgfpoint{-1cm}{-1cm}} {2pt}
	\pgfpathcircle{\pgfpoint{0.5cm}{-1cm}} {2pt}
	\pgfusepath{fill}
\end{scope}
\begin{scope}[style=axes]
	\draw[<->] (-1.5,0.5) -- (1.5,0.5) node[right] {$X^s(y,\varepsilon_X)$};
	\draw[<->] (0.5,-1.5) -- (0.5,1.5) node[above] {$X^u(y,\varepsilon_X)$};
	\node at (0.7,0.2) {$y$};
	\node at (-1.6,0.2) {$[y,x]$};
	\pgfpathcircle{\pgfpoint{0.5cm}{0.5cm}} {2pt}
	\pgfpathcircle{\pgfpoint{-1cm}{0.5cm}} {2pt}
	\pgfusepath{fill}
\end{scope}
\end{tikzpicture}
\caption{The local stable and unstable sets of $x,y  \in X$ and their bracket maps}
\label{Bracket intersection}
\end{center}
\end{figure}
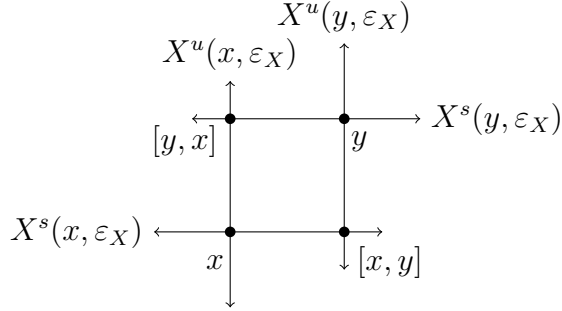

Suppose $(X,\varphi)$ is a Smale space. Then for $x,y  \in X$ the global
stable and unstable equivalence relations are given by
\begin{align*}
x \sim_s y & \text{ whenever } d(\varphi^n(x),\varphi^n(y)) \to 0 \text{ as } n \to \infty \text{ and} \\
x \sim_u y & \text{ whenever } d(\varphi^{-n}(x),\varphi^{-n}(y)) \to 0  \text{ as } n \to \infty.
\end{align*}
The stable equivalence class of $x  \in X$ is denoted $X^s(x)$ and we have
$X^s(x,\varepsilon) \subset X^s(x)$. Similarly, the unstable equivalence
class of $x  \in X$ is denoted by $X^u(x)$ and $X^u(x,\varepsilon) \subset
X^u(x)$. We consider each of the stable equivalence classes as locally
compact and Hausdorff topological spaces whose topology is generated by
$\{X^s(y,\varepsilon) \mid y \in X^s(x), 0 <\varepsilon< \varepsilon_X\}$. A
similar topology is defined in the unstable case.

A Smale space $(X,\varphi)$ is said to be \emph{irreducible} if, for all non-empty open sets $U,V
\subseteq X$, there exists $N$ such that $\varphi^N(U) \cap V \neq \varnothing$.

It is said to be \emph{mixing} if, for all non-empty open sets $U,V
\subseteq X$, there exists $N$ such that $\varphi^n(U) \cap V \neq \varnothing$, for all $n\geq N$.

We now consider various $C^*$-algebras associated with Smale spaces. Ruelle first defined
$C^*$-algebras associated to Smale spaces in \cite{Ruelle:Noncommutative}, and these
$C^*$-algebras are usually referred to as the stable and unstable algebras of the Smale space. In
\cite{Putnam:Algebras}, Putnam then defined the \emph{Ruelle algebras} as crossed product
$C^*$-algebras of the stable and unstable algebras. Putnam showed that the Ruelle algebras
generalise Cuntz--Krieger algebras. More recently, Putnam and Spielberg
\cite{Putnam-Spielberg:Structure} considerably simplified the groupoid constructions of the above algebras when the Smale space is mixing, up to Morita equivalence. Putnam discussed in
\cite[Section~2]{Putnam:Functoriality} how this simplification extends to the \textit{non-wandering} case. We shall only be interested in Smale spaces that are irreducible, which is a stronger condition than
non-wandering.

For the remainder of this section we will outline the construction of the stable
algebra $S(X,P)$ and the stable Ruelle algebra $S(X,P) \rtimes \ZZ$. A more detailed version of
these constructions is given in \cite{Putnam-Spielberg:Structure} and
\cite[Section~3]{Kaminker-Putnam-Whittaker:K-duality}.

Given an irreducible Smale space
$(X,\varphi)$, we fix a non-empty finite $\varphi$-invariant set of periodic points $P$ (in the
irreducible case periodic points are dense). Then we define $X^{u}(P) = \bigcup_{p \in P}
X^{u}(p)$, which is given a locally compact and Hausdorff topology generated by the collection
$\{X^u(x,\varepsilon) \mid x \in X^u(P),\; \varepsilon\in (0,\varepsilon_X]\}$.

The groupoid of the stable equivalence relation is
\begin{equation}
\label{stable groupoid def}
G^s(P) := \{(v,w) \in X \times X \mid v \sim_s w \textrm{ and } v,w \in X^u(P) \}.
\end{equation}
The stable groupoid can be endowed with an \'{e}tale topology, see
\cite[Lemma~3.1]{Kaminker-Putnam-Whittaker:K-duality} for details. With this structure $G^s(P)$ is
an amenable locally compact Hausdorff \'etale groupoid. The stable $C^*$-algebra $S(X,P)$ is
defined to be the groupoid $C^*$-algebra associated with $G^{s}(P)$.

There is a canonical automorphism of the $C^*$-algebra $S(X,P)$ induced by
the automorphism of the underlying groupoid $G^s(P)$ defined by
$\alpha:=\varphi \times \varphi$. This automorphism of $G^s(P)$ gives rise to
a semidirect product groupoid $G^s(P)\rtimes_\alpha \ZZ$, which is again an
amenable locally compact Hausdorff \'{e}tale groupoid. The stable Ruelle
algebra is the crossed product $S(X,P) \rtimes_{\alpha} \ZZ\cong
C^*(G^s(P)\rtimes_\alpha \ZZ)$, where we also write $\alpha$ for the
automorphism of $S(X,P)$ induced by $\alpha \in \Aut(G^s(P))$. Putnam
explains in \cite[Section~2]{Putnam:Functoriality} how $S(X,P)
\rtimes_{\alpha} \ZZ$ is strongly Morita equivalent to the Ruelle algebra
originally defined by Putnam in \cite{Putnam:Algebras}, building from the
similar result of Putnam and Spielberg in the mixing case
(\cite{Putnam-Spielberg:Structure}). The result of
\cite{Putnam-Spielberg:Structure} that the Ruelle algebra is separable,
simple, stable, nuclear, purely infinite, and satisfies the UCT extends
readily to the irreducible case.

A similar construction gives the unstable groupoid $G^u(P)$, the unstable algebra $U(X,P)$ and the
associated unstable Ruelle algebra $U(S,P)\rtimes \ZZ\cong C^*(G^u(P)\rtimes\ZZ)$. Alternatively,
the stable algebras for the Smale space $(X,\varphi^{-1})$ with the opposite bracket map are
isomorphic to the relevant unstable algebras for $(X,\varphi)$.

\section{The limit space of a self-similar groupoid action}\label{sec:limit space}

In this section we generalise Nekrashevych's construction of the limit space of a self-similar
group \cite[Chapter~3]{Nekrashevych:Self-similar} to the situation of self-similar groupoid
actions.

\begin{dfn}\label{def:asymptotic equivalence}
Let $E$ be a finite directed graph. Let $(G, E)$ be a self-similar groupoid
action. We say that left-infinite paths $x,y  \in E^{-\infty}$ are
\emph{asymptotically equivalent}, and write $x \sim_{\aeq} y$ if there is a
sequence $(g_n)_{n < 0}$ in $G$ such that $\{g_n \mid n < 0\}$ is a finite
set, and such that
\[
g_n \cdot x_n \dots x_{-1} = y_n \dots y_{-1}\quad\text{ for all }n < 0.
\]
\end{dfn}

If the sequence $(g_n)$ implements an asymptotic equivalence $x \sim_{\aeq} y$ and the sequence
$(h_n)$ an asymptotic equivalence $y \sim_{\aeq} z$ then $d(h_n) = c(g_n)$ for all $n$, and
$(h_ng_n)$ implements and asymptotic equivalence $x \sim_{\aeq} z$. Moreover, the sequence
$g_n^{-1}$ implements an asymptotic equivalence $y \sim_{\aeq} x$, and the sequence $(r(x_n))_{n <
0}$ implements an asymptotic equivalence $x \sim_{\aeq} x$. So $\sim_{\aeq}$ is an equivalence
relation.

\begin{dfn}\label{dfn:limit space}
Let $E$ be a finite directed graph. Let $(G, E)$ be a self-similar groupoid
action. The \emph{limit space} of $(G, E)$ is defined to be the quotient
space $\Jj_{G, E} := E^{-\infty}/{\sim_{\aeq}}$.
\end{dfn}

The limit space is typically not a Hausdorff space, but, just as in the
setting of \cite{Nekrashevych:Self-similar}, it is guaranteed to be Hausdorff
if the self-similar action is contracting in the following sense, introduced in
\cite{Nekrashevych:Self-similar,Laca-Raeburn-Ramagge-Whittaker:Equilibrium}.

\begin{dfn}\label{dfn:contracting} We say that a self-similar groupoid action $(G, E)$ on a finite directed graph $E$ is \emph{contracting} if there is a finite
subset $F \subseteq G$ such that for every $g  \in G$ there exists $n \ge 0$
such that $\{g|_{\mu} : \mu \in d(g)E^n\} \subseteq F$. Any such finite set
$F$ is called a \emph{contracting core} for $(G, E)$. The \emph{nucleus} of
$G$ is the set
\[
    \Nn_{G, E} := \bigcap\{F \subseteq G \mid F\text{ is a contracting core for }(G, E)\}.
\]
We will frequently just write $\Nn$ rather than $\Nn_{G, E}$ when the self-similar groupoid action
in question is clear from context.
\end{dfn}

Just as in the setting of self similar groups, the nucleus is the minimal contracting core for
$(G, E)$, and is symmetric, closed under restriction and contains $G^{(0)}$:

\begin{lem}\label{lem:N a core}
Let $E$ be a finite directed graph. Let $(G, E)$ be a contracting self-similar
groupoid action. Then $\Nn$ is a contracting core for $(G, E)$ and is contained in any other
contracting core for $(G, E)$. We have
\begin{equation}\label{eq:nucleus formula}
\Nn = \bigcup_{g \in G} \bigcap^\infty_{n=1} \{g|_\mu \mid \mu \in d(g)E^*, |\mu| \ge n\}.
\end{equation}
We have $\Nn = \Nn^{-1}$, $\Nn$ is closed under restriction, and if $E$ has no sinks, then $G^{(0)} \subseteq \Nn$.
\end{lem}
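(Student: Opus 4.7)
The plan is to prove formula~\eqref{eq:nucleus formula} first, and derive all remaining assertions from it. Write $K$ for the right-hand side, and for each $g \in G$ and $n \ge 1$ set $A_n(g) := \{g|_\mu : \mu \in d(g)E^*, |\mu| \ge n\}$. Two observations drive the argument: first, the sets $A_n(g)$ form a descending chain $A_{n+1}(g) \subseteq A_n(g)$, since restricting to a smaller path set can only shrink the set of restrictions; second, the contracting hypothesis implies that, for any contracting core $F$ and any $g$, we have $A_n(g) \subseteq F$ for all sufficiently large $n$, so each $A_n(g)$ is eventually finite. A descending chain of finite subsets of a finite set must stabilize, so for each $g$ there is $N_g$ with $A_n(g) = A_{N_g}(g) = \bigcap_{m \ge 1} A_m(g)$ for all $n \ge N_g$.

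With the stabilization in hand, two containments give the formula. First, $K \subseteq F$ for every contracting core $F$: if $h \in K$ then $h \in \bigcap_{m \ge 1} A_m(g)$ for some $g$, and $A_{N_g}(g) \subseteq F$, so $h \in F$. Second, $K$ is itself a contracting core: for each $g$, $A_{N_g}(g) = \bigcap_{m \ge 1} A_m(g) \subseteq K$ by definition of $K$. Combining these shows $K = \Nn$, simultaneously establishing \eqref{eq:nucleus formula}, that $\Nn$ is a contracting core, and that $\Nn$ is contained in every other.

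The remaining assertions then follow by directly manipulating \eqref{eq:nucleus formula} using Lemma~\ref{properties SSA}. For $\Nn = \Nn^{-1}$: if $h \in \Nn$, write $h = g|_{\mu_n}$ with $|\mu_n| \to \infty$; part~(v) gives $h^{-1} = g^{-1}|_{g \cdot \mu_n}$, and $|g \cdot \mu_n| = |\mu_n| \to \infty$ since partial isomorphisms preserve length, so $h^{-1} \in \bigcap_n A_n(g^{-1}) \subseteq \Nn$. For closure under restriction: if $\nu \in d(h) E^*$, part~(ii) gives $h|_\nu = g|_{\mu_n \nu}$ with $|\mu_n \nu| = |\mu_n| + |\nu| \to \infty$, so $h|_\nu \in \Nn$. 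For $G^{(0)} \subseteq \Nn$ under the no-sinks hypothesis: given $v \in E^0$, I would use the no-sinks condition iteratively to build paths $\mu_n \in E^n$ with $s(\mu_n) = v$; finiteness of $E^0$ then forces some $w$ to equal $r(\mu_{n_k})$ for a subsequence $n_k \to \infty$, and part~(iii) gives $\id_w|_{\mu_{n_k}} = \id_{s(\mu_{n_k})} = \id_v$, so $\id_v$ lies in every $A_{n_k}(\id_w)$ and hence, by the descending-chain property, in $\bigcap_n A_n(\id_w) \subseteq \Nn$.

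The main obstacle is the stabilization step tying the contracting hypothesis to the formula: one has to marry contraction (to guarantee that the chain $A_n(g)$ eventually lies in a finite set) with the monotonicity $A_{n+1}(g) \subseteq A_n(g)$ (to force eventual constancy). This single crux is what simultaneously identifies $K$ with $\Nn$, shows that $\Nn$ itself is a contracting core, and powers the short deductions of the inversion, restriction, and unit-space statements.
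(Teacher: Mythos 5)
Your proof is correct, and its technical engine — the stabilization of the descending chain $A_n(g)$, which is finite for large $n$ because it eventually sits inside a contracting core — is exactly the same device the paper uses (there the sets are called $R_n$). Where you genuinely differ is in the decomposition of the argument: the paper first proves that $\Nn$ is a contracting core by showing that contracting cores are closed under pairwise intersection and that $\Nn$ is in fact an intersection of finitely many of them, and only afterwards establishes \eqref{eq:nucleus formula} by proving the two containments $\Mm \subseteq \Nn$ and $\Nn \subseteq \Mm$ separately (the second of which needs the first part of the lemma). You instead work directly with the right-hand side $K$ of \eqref{eq:nucleus formula} and show in one pass that $K$ is a contracting core and is contained in every contracting core; this yields the formula, the fact that $\Nn$ is a core, and minimality simultaneously, and dispenses with the intersection lemma entirely — a mild but real streamlining. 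The one point you should make explicit is that $K$ is finite, which is required for it to qualify as a contracting core; this is immediate from your first containment $K \subseteq F$, since a (finite) contracting core $F$ exists by the contracting hypothesis. Your treatments of $\Nn = \Nn^{-1}$, closure under restriction, and $G^{(0)} \subseteq \Nn$ in the no-sinks case coincide with the paper's.
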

\begin{proof}
For the first statement, we first show that the collection of contracting
cores for $(G, E)$ is closed under intersections. If $F, K$ are contracting
cores for $G$ and $g  \in G$, then there exist $M, N$ such that $g|_\mu \in
F$ whenever $|\mu| \ge M$ and $g|_\nu \in K$ whenever $|\nu| \ge N$. In
particular, if $|\mu| \ge \max\{M,N\}$ then $g|_\mu \in F \cap K$. So $F \cap
K$ is a contracting core.

Now since contracting cores are, by definition, finite, there is a finite collection $\Ff$ of
contracting cores such that $\Nn = \bigcap \Ff$. So the preceding paragraph shows that $\Nn$ is a
contracting core. It is then contained in any other contracting core by definition.

Let $\Mm := \bigcup_{g \in G} \bigcap^\infty_{n=1} \{g|_\mu \mid \mu \in
d(g)E^*, |\mu| \ge n\}$. Fix $h  \in \Mm$. Then there exists $g  \in G$ and a
sequence $(\mu_i)^\infty_{i=1}$ of finite paths such that $|\mu_i| \to
\infty$ and $g|_{\mu_i} = h$ for all $i$. By definition of $\Nn$ there exists
$N$ such that $g|_\mu \in \Nn$ whenever $|\mu| \ge N$. Since $n_i \to \infty$
we have $n_i
> N$ for some $i$, and so $h = g|_{\mu_i} \in \Nn$. So $\Mm \subseteq \Nn$. For the reverse
containment, observe that we have just seen that $\Mm$ is finite. Fix $g  \in
G$. Since $G$ is contracting, the sets $R_n := \{g|_\mu \mid |\mu| \ge n\}$
indexed by $n  \in \NN$ are all finite, and they are decreasing with respect
to set containment. So there exists $N$ such that $R_n = R_N$ for all $n \ge
N$. It follows that $g|_\mu \in R_N \subseteq \Mm$ whenever $|\mu| \ge N$. So
$\Mm$ is a contracting core for $(G, E)$ and therefore $\Nn \subseteq \Mm$ by
the first assertion of the lemma.

To see that $\Nn = \Nn^{-1}$, fix $h  \in \Nn$. Then~\eqref{eq:nucleus
formula} shows that there exists $g  \in G$ and a sequence
$(\mu_i)^\infty_{i=1}$ in $d(g)E^*$ such that $|\mu_i| \to \infty$ and
$g|_{\mu_i} = h$ for all $i$. We then have $h^{-1} = (g|_{\mu_i})^{-1} =
g^{-1}|_{g\cdot \mu_i}$ for all $i$, and so~\eqref{eq:nucleus formula} gives
that $h^{-1} \in \Nn$.

That $\Nn$ is closed under restriction follows immediately from \eqref{eq:nucleus formula}.

Finally, if $E$ has no sinks, then for each $v  \in E^0$ and $n\geq 0$,
$E^{n}v\neq\emptyset$. Let $w$ be a vertex such that $wE^{n}v\neq\emptyset$
for infinitely many $n \in \mathbb{N}$. Then, $v \in \{w|_{\mu}\text{
}:\text{ }\mu\in wE^{n}, |\mu|\geq n\}$ for all $n \in \mathbb{N}$, so that
$v \in \mathcal{N}$.
\end{proof}

\begin{ntn}\label{ntn:Rk}
Let $E$ be a finite directed graph with no sources. Let $(G, E)$ be a contracting self-similar
groupoid action with nucleus $\Nn$. Since $\Nn$ is finite, so are the sets
\[\textstyle
\Nn^k := \big\{\prod^k_{i=1} g_i \mid g_1, \dots, g_k \in \Nn\text{ and }
                                        d(g_i) = c(g_{i+1})\text{ for all }i\Big\}.
\]
We write $R_k$ for the integer
\[
R_k := \min\{j \in \NN \mid h|_\mu \in \Nn\text{ for all } h \in \Nn^k\text{ and }\mu \in E^j\}.
\]
So $R_1 = 0$, and $R_i \le R_{i+1}$ for all $i$.
\end{ntn}

We now show that if $x,y  \in E^{-\infty}$ are asymptotically equivalent,
then the sequence $g_i$ implementing the asymptotic equivalence can be taken
to belong to $\Nn$ and to be consistent with respect to restriction.

\begin{lem}\label{lem:ae by nucleus}
Let $E$ be a finite directed graph. Let $(G, E)$ be a contracting
self-similar groupoid action with nucleus $\Nn$. Then $x,y  \in E^{-\infty}$
are asymptotically equivalent if and only if there exists a sequence
$(h_{n})_{n < 0}$ of elements of $\Nn$ such that $h_n \cdot x_n = y_n$ and
$h_n|_{x_n} = h_{n+1}$ for all $n$.
\end{lem}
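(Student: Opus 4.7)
The plan is to dispose of the "if" direction by a direct induction, and to prove the "only if" direction by extracting, via the contracting property and a diagonal argument, the desired coherent sequence inside $\Nn$ from any sequence that implements the asymptotic equivalence.

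For the forward implication, suppose $(h_n)_{n < 0}$ is a sequence in $\Nn$ with $h_n \cdot x_n = y_n$ and $h_n|_{x_n} = h_{n+1}$ for every $n$. Setting $g_n := h_n$, the set $\{g_n : n < 0\}$ is finite because it is contained in the finite set $\Nn$. I then verify by downward induction on $m \in \{-1, -2, \dots\}$ that $h_m \cdot x_m \dots x_{-1} = y_m \dots y_{-1}$. The base case is immediate from $h_{-1} \cdot x_{-1} = y_{-1}$, and the inductive step uses the self-similarity identity $h_m \cdot (x_m \mu) = (h_m \cdot x_m)(h_m|_{x_m} \cdot \mu) = y_m (h_{m+1} \cdot \mu)$ together with the inductive hypothesis applied to $\mu = x_{m+1} \dots x_{-1}$.

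For the converse, fix a sequence $(g_n)_{n < 0}$ with finite image that implements $x \sim_{\aeq} y$. Since $\Nn$ is a contracting core (Lemma~\ref{lem:N a core}) and $\{g_n : n < 0\}$ is finite, there exists $N \ge 1$ such that $g_n|_\mu \in \Nn$ for every $n < 0$ and every $\mu \in d(g_n)E^*$ with $|\mu| \ge N$. For each $m < 0$ and each integer $n \le m - 1$ set $h^{(n)}_m := g_n|_{x_n \dots x_{m-1}}$. Lemma~\ref{properties SSA}(ii) gives the "column" compatibility $h^{(n)}_m|_{x_m} = h^{(n)}_{m+1}$. Applying the self-similarity formula to $g_n \cdot (x_n \dots x_{-1}) = y_n \dots y_{-1}$ and using Lemma~\ref{properties SSA}(i) to read off the first letter yields $h^{(n)}_m \cdot x_m = y_m$. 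Moreover $h^{(n)}_m \in \Nn$ whenever $n \le m - N$.

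The remaining task is to extract a single sequence $(h_m)_{m < 0}$ in $\Nn$ that is the column-wise limit of the $h^{(n)}_m$. I do this by a diagonal argument using the finiteness of $\Nn$: starting with $T_0 := \{n : n < 0\}$, recursively assume an infinite set of negative indices $T_{m+1}$ has been produced on which $n \mapsto h^{(n)}_{m+1}$ is constantly equal to some $h_{m+1} \in \Nn$; then $T_{m+1} \cap (-\infty, m - N]$ is infinite and $(h^{(n)}_m)_{n \in T_{m+1} \cap (-\infty, m - N]}$ takes values in the finite set $\Nn$, so by the pigeonhole principle some $h_m \in \Nn$ is attained along an infinite $T_m \subseteq T_{m+1} \cap (-\infty, m-N]$. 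Compatibility then passes to the limit: for any $n \in T_m$, $h_m|_{x_m} = h^{(n)}_m|_{x_m} = h^{(n)}_{m+1} = h_{m+1}$, and $h_m \cdot x_m = h^{(n)}_m \cdot x_m = y_m$. This gives the desired sequence.

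The main obstacle is the interaction between contraction and compatibility: the contracting hypothesis instantly yields $h^{(n)}_m \in \Nn$ once $n$ is sufficiently negative relative to $m$, but "sufficiently negative" depends on $m$, so no single $g_n$ produces a column of restrictions lying entirely in $\Nn$. The diagonal extraction, made possible by the finiteness of $\Nn$, is precisely what reconciles this with the restriction-compatibility requirement $h_m|_{x_m} = h_{m+1}$.
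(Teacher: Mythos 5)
Your proof is correct and follows essentially the same route as the paper's: the forward direction is the same telescoping computation using the compatibility $h_n|_{x_n}=h_{n+1}$, and the converse is the same construction, extracting the coherent nucleus sequence column-by-column from the restrictions $g_n|_{x_n\dots x_{m-1}}$ via the contracting property and a pigeonhole/diagonal argument on the finite set $\Nn$. Your $T_m$-notation is just a cleaner packaging of the paper's nested strictly decreasing index sequences $(n^m_i)$.
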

\begin{proof}
If there is such a sequence $(h_n)$ of elements of $\Nn$, then for each $n$ we have
\[
h_n \cdot (x_n \dots x_{-1})
    = y_n (h_n|_{x_n} \cdot x_{n+1} \dots x_{-1})
    = y_n (h_{n+1} \cdot x_{n+1} \dots x_{-1})
    = \dots = y_n \dots y_{-1}.
\]
So $x \sim_{\aeq} y$.

Conversely suppose that $x \sim_{\aeq} y$, and fix a sequence $(g_n)_{n < 0}$
in $G$ with just finitely many distinct terms and satisfying $g_n \cdot x_n
\dots x_{-1} = y_n \dots y_{-1}$ for all $n$. Let $S = \{g_n \mid n < 0\}$ be
the finite set of elements appearing in the sequence $(g_n)$. Since $(G, E)$
is contracting, for each $g  \in S$ there exists $k_g$ such that $g|_\mu \in
\Nn$ whenever $|\mu| \ge k_g$. Let $k := \max_{g \in S} k_g$. We construct a
sequence $(h_n)_{n < \infty}$ in $\Nn$ iteratively as follows. Consider the
sequence $(g_n|_{x_n \dots x_{-2}})_{n < -k-1}$. By the choice of $k$, every term
of this sequence belongs to $\Nn$. Since $\Nn$ is finite, there exists
$h_{-1}  \in \Nn$ and a strictly decreasing infinite sequence
$(n^1_i)^\infty_{i=1}$ of integers $n^1_i < -2-k$ such that
$g_{n^1_i}|_{x_{n^1_i} \dots x_{-2}} = h_{-1}$ for all $i$. Since each
$g_{n^1_i} \cdot(x_{n^1_i} \dots x_{-1}) = y_{n^1_i} \dots y_{-1}$, we have
$h_{-1} \cdot x_{-1} = g_{n^1_i}|_{x_{n^1_i} \dots x_{-2}} \cdot x_{-1} =
(g_{n^1_i} \cdot x_{n^1_i} \dots x_{-1})_{-1} = y_{-1}$.

Now suppose that we have chosen $h_{m}, \dots, h_{-1} \in \Nn$ such that each
$h_j \cdot x_j = y_j$ and $h_j|_{x_j} = h_{j+1}$, and a strictly decreasing
sequence $(n^m_i)^\infty_{i=1}$ of integers $n^m_i < m-k-1$ such that
$g_{n^m_i}|_{x_{n^m_i} \dots x_{m-1}} = h_m$ for all $i$. Then the sequence
$(g_{n^m_i}|_{x_{n^m_i} \dots x_{m-2}})_i$ is contained in $\Nn$, so there
exists $h_{m-1} \in \Nn$ and a subsequence $n^{m-1}_i$ of the sequence
$n^m_i$ with the property that $n^{m-1}_i|_{x_{n^m_i} \dots x_{m-2}} =
h_{m-1}$ for all $i$. We have
\[
h_{m-1}|_{m-1}
    = (g_{n^{m-1}_1}|_{x_{n^m_i} \dots x_{m-2}})|_{x_{m-1}}
    = g_{n^{m-1}_1}|_{x_{n^m_i} \dots x_{m-1}}
    = h_m,
\]
and $h_{m-1} \cdot x_{m-1} = y_{m-1}$ by a calculation just like the one we used to see that
$h_{-1} \cdot x_{-1} = y_{-1}$.

The above procedure produces a sequence $(h_n)_{n<1}$ in $\Nn$ with the desired properties.
\end{proof}

\begin{cor}\label{cor:finite orbits}
Let $E$ be a finite directed graph. Let $(G, E)$ be a contracting
self-similar groupoid action with limit space $\Jj = \Jj_{G, E}$. Let $q :
E^{-\infty} \to \Jj$ be the quotient map. For each $x  \in E^{-\infty}$, the
equivalence class $[x] := q^{-1}(q(x))$ satisfies $|[x]| \le |\Nn|$.
\end{cor}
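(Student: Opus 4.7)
The plan is to construct an injection $\Phi \colon [x] \hookrightarrow \Nn$, which immediately yields the bound $|[x]| \le |\Nn|$. For each $y \in [x]$, I invoke Lemma~\ref{lem:ae by nucleus} to fix a sequence $(h^y_n)_{n<0}$ in the finite set $\Nn$ satisfying $h^y_n \cdot x_n = y_n$ and $h^y_{n+1} = h^y_n|_{x_n}$. Since $\Nn$ is finite, the pigeonhole principle produces an element $h^y \in \Nn$ and a strictly decreasing sequence of indices $n^y_1 > n^y_2 > \cdots \to -\infty$ with $h^y_{n^y_k} = h^y$ for every $k$; set $\Phi(y) := h^y$.

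Injectivity of $\Phi$ will rest on the determinism of the forward rule $h \mapsto h|_{x_n}$. Once $h^y_n$ is known at any single position $n$, the relation $h^y_{n+1} = h^y_n|_{x_n}$ forces $h^y_m = h^y_n|_{x_n x_{n+1} \cdots x_{m-1}}$ for all $m \ge n$, and therefore determines $y_m = h^y_m \cdot x_m$. Using that $h^y$ recurs at indices $n^y_k \to -\infty$, every coordinate $y_m$ can be expressed as $\bigl(h^y|_{x_{n^y_k} \cdots x_{m-1}}\bigr) \cdot x_m$ for any sufficiently negative $n^y_k < m$. When $\Phi(y) = \Phi(y') = h$, the analogous formula for $y'$ uses the same element $h$ but with its own recurrence indices $n^{y'}_j$; the goal becomes to show that the two expressions for $y_m$ and $y'_m$ coincide for every $m$, whence $y = y'$.

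The main obstacle is this final reconciliation, since a priori the recurrence sets $\{n^y_k\}$ and $\{n^{y'}_j\}$ need not overlap. I would address it by running the nested-subsequence construction in the proof of Lemma~\ref{lem:ae by nucleus} simultaneously for both $y$ and $y'$, passing to a common refinement so that $(h^y_n)$ and $(h^{y'}_n)$ share a cofinal set of indices at which both take the value $h$; forward determinism then forces $h^y_n = h^{y'}_n$ (and hence $y_n = y'_n$) for every $n$ lying to the right of some index in the common set, and since that index can be chosen arbitrarily negative we conclude $y = y'$. This step uses crucially that $\Nn$ is closed under restriction (Lemma~\ref{lem:N a core}), so that the nested extractions remain inside the finite set $\Nn$ where pigeonhole applies at each stage.
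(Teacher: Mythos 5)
Your underlying idea --- that nucleus elements implementing the equivalence must separate the members of $[x]$ --- is the right one, but the map $\Phi$ you build does not do the job, and the proposed repair cannot be carried out. The problem is that $\Phi(y)$ records only \emph{which} element of $\Nn$ recurs in the sequence $(h^y_n)$, not \emph{where} it recurs. Forward determinism gives you the implication: if $h^y_n = h^{y'}_n$ at a single \emph{common} index $n$, then $y_m = y'_m$ for all $m \ge n$. But if $y \ne y'$, say $y_m \ne y'_m$, then for every $n \le m$ you necessarily have $h^y_n \ne h^{y'}_n$; nothing prevents the same element $h$ from recurring in both sequences at \emph{disjoint} sets of indices (e.g.\ two sequences alternating $g,k,g,k,\dots$ and $k,g,k,g,\dots$ along $x$, which happens whenever $g|_{x_n}=k$ and $k|_{x_n}=g$ at the relevant edges). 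In that situation $\Phi(y)=\Phi(y')$ while $y \ne y'$, so $\Phi$ is not injective. Your ``common refinement'' cannot fix this: applying pigeonhole to the pair sequence $(h^y_n, h^{y'}_n)$ in $\Nn\times\Nn$ only produces a cofinally recurring \emph{pair} $(h,h')$, and when $y\ne y'$ the observation above forces $h \ne h'$ at every sufficiently negative common index; moreover, re-choosing the sequences pair-by-pair would change the value of $\Phi$ and destroy the global function you need for a counting argument.

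The fix is to compare nucleus elements at a fixed finite level rather than along the whole infinite path, which is exactly what the paper does: given distinct $y^1,\dots,y^l \in [x]$, choose $m<0$ so negative that the truncations $\mu^i := y^i_m\dots y^i_{-1}$ are pairwise distinct; Lemma~\ref{lem:ae by nucleus} provides $n^i \in \Nn$ with $n^i \cdot (x_m\dots x_{-1}) = \mu^i$, and since all the $n^i$ act on the \emph{same} path $x_m \dots x_{-1}$ with distinct outputs, they are distinct, giving $l \le |\Nn|$ directly. In effect the correct injection is $y \mapsto h^y_m$ for a single sufficiently negative $m$ (depending on the finite collection being compared), not $y \mapsto$ (a recurrent value of $h^y_\bullet$).
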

\begin{proof}
Fix $x  \in E^{-\infty}$. Let $y^1, \dots, y^{l}$ be distinct elements of
$[x]$. We must show that $l \le |\Nn|$. Fix $m < 0$ such that the finite
paths $\mu^i := y^i_m \dots y^i_{-1}$ for $i \le l$ are all distinct.
Lemma~\ref{lem:ae by nucleus} implies that there are elements $n^1, \dots,
n^l  \in \Nn$ such that $\mu^i = n^i \cdot x_m \dots x_{-1}$ for all $i$.
Since the $\mu^i$ are distinct, the $n^i$ are distinct, forcing $l \le
|\Nn|$.
\end{proof}

To construct a Smale space from the limit space $\Jj$ we will show that the shift map on
$E^{-\infty}$ descends to a self-mapping of $\Jj$, and that under a regularity hypothesis similar
to that used by Nekrashevych \cite{Nekrashevych:Self-similar}, this self-mapping is locally
expanding and hence a local homeomorphism.

To do this, we need to describe a basis for the
topology on $\Jj$. We start with a preliminary lemma about quotient topologies.

\begin{lem}\label{lem:quotient top}
Let $X$ be a compact and metrisable Hausdorff space and let $\sim$ be an equivalence
relation on $X$. Let $Y := X/{\sim}$ be the quotient space, and $q : X \to Y$
the quotient map. For each $A \subseteq X$, let $U_A := \{y \in Y \mid
q^{-1}(y) \subseteq A\}$. If $A$ is open in $X$, then $U_A$ is open in $Y$.
If $|q^{-1}(y)| < \infty$ for each $y  \in Y$, then for any basis $\Bb$ for
the topology on $X$, the set
\[
    \Uu_\Bb := \{U_B \mid B \text{ is a finite union of elements of } \Bb\}
\]
is a basis for the quotient topology on $Y$. If $q : X \to Y$ is a closed map, then $Y$ is
metrisable.
\end{lem}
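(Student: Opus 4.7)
I will handle the three claims in order, each building on the previous.

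For the first claim, by definition of the quotient topology, $U_A$ is open in $Y$ precisely when $q^{-1}(U_A)$ is open in $X$. Unpacking,
\[
q^{-1}(U_A) = \{x \in X : q^{-1}(q(x)) \subseteq A\} = X \setminus q^{-1}(q(X \setminus A)),
\]
so the claim reduces to showing that the saturation $q^{-1}(q(C))$ of the closed set $C := X \setminus A$ is closed in $X$. I would deduce this from the closedness of the equivalence relation $R = \{(x,x') : x \sim x'\}$ in $X \times X$ (an implicit running hypothesis, which in the intended application to $\sim_{\aeq}$ is ensured by Lemma~\ref{lem:ae by nucleus} and the finiteness of $\Nn$): the saturation is the second-coordinate projection of the compact set $R \cap (C \times X)$, hence is compact and therefore closed in the Hausdorff space $X$.

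For the second claim, fix $y \in V$ with $V \subseteq Y$ open. The preimage $q^{-1}(V)$ is open in $X$ and contains the finite set $q^{-1}(y) = \{x_1, \dots, x_k\}$. Choosing $B_i \in \Bb$ with $x_i \in B_i \subseteq q^{-1}(V)$ and setting $B := B_1 \cup \cdots \cup B_k$, the inclusion $q^{-1}(y) \subseteq B$ gives $y \in U_B$, and $B \subseteq q^{-1}(V)$ together with surjectivity of $q$ gives $U_B \subseteq V$: any $y' \in U_B$ satisfies $q^{-1}(y') \subseteq B \subseteq q^{-1}(V)$, and any $x' \in q^{-1}(y')$ then witnesses $y' = q(x') \in V$. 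Openness of $U_B$ follows from part (1), since $B$ is open in $X$.

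For the third claim, I would first establish that $Y$ is Hausdorff. Since $q$ is closed, every equivalence class $q^{-1}(y)$ is closed in $X$, so distinct fibres $q^{-1}(y_1)$ and $q^{-1}(y_2)$ can be separated by disjoint open $A_1, A_2 \subseteq X$ using normality of the compact Hausdorff space $X$; the sets $U_{A_1}, U_{A_2}$ are then disjoint open neighbourhoods of $y_1, y_2$ in $Y$ by part (1). Since $Y$ is also compact (as a continuous image of $X$), by Urysohn's metrisation theorem it remains only to produce a countable basis. Starting from a countable basis $\Bb$ of $X$ and repeating the covering argument of part (2)—using that each fibre $q^{-1}(y)$ is compact (closed in compact $X$) rather than merely finite, so that a finite subcover by $\Bb$-elements inside $q^{-1}(V)$ still covers the fibre—shows that the countable family $\{U_B : B \text{ is a finite union of elements of } \Bb\}$ is a basis for $Y$. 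I expect the main subtlety to lie in part (1), specifically in pinning down the closed-relation hypothesis that makes the ``saturation of a closed set is closed'' assertion go through.
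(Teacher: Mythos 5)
Your proposal is correct, and in two of its three parts it takes a genuinely different route from the paper. Part (2) (the basis claim) coincides with the paper's argument essentially verbatim. For part (1), the paper argues with nets: it chooses $y_i \in [x_i]\setminus A$, extracts a convergent subnet $y_{i_j}\to y\notin A$, and concludes $q(y)=\lim_j q(y_{i_j})=\lim_i q(x_i)=[x]$ by continuity of $q$ --- a step that tacitly invokes uniqueness of limits in $Y$, i.e.\ precisely the separation property that is not among the stated hypotheses. You instead isolate the missing hypothesis explicitly (closedness of $R=\{(x,x'): x\sim x'\}$ in $X\times X$) and give a cleaner argument: the saturation of a closed set $C$ is the projection of the compact set $R\cap(C\times X)$, hence compact and closed. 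You are right to flag this as the real subtlety: for a general equivalence relation on a compact metric space the first claim is simply false (even with all classes of size at most two --- identify $1/n$ with $2+1/n$ inside $\{0\}\cup\{1/n\}\cup\{2\}\cup\{2+1/n\}$), and in the intended application closedness of $\sim_{\aeq}$ is exactly what the proof of Corollary~\ref{cor:limit space metrisable} supplies via Lemma~\ref{lem:ae by nucleus} and the finiteness of $\Nn$. So your version makes precise a hypothesis the lemma leaves implicit. For part (3), the paper cites \cite[Theorem~4.2.13]{Engelking:General_Topology}, whereas you give a self-contained proof: under the closed-map hypothesis part (1) holds outright (the saturation $q^{-1}(q(C))$ of a closed set is closed because $q(C)$ is closed and $q$ is continuous), fibres are closed hence compact, normality of $X$ plus part (1) gives Hausdorffness of $Y$, and rerunning the covering argument of part (2) with compactness of fibres in place of finiteness gives second countability, so Urysohn metrisation applies. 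Both routes are valid; yours is more elementary and self-contained, yields second countability of $Y$ as a by-product, and makes visible exactly which hypotheses each clause of the lemma uses, at the cost of somewhat more work than the paper's one-line citation.
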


\begin{proof}
By definition of the quotient topology, $U_A$ is open in $Y$ if and only if
$q^{-1}(U_A)$ is open in $X$. By definition of $U_A$, we have $[x] \in U_A$
if and only if $[x] \subseteq A$, and so
\[
q^{-1}(U_A) = \{x \in X \mid [x] \subseteq A\} = X \setminus \{x \in X \mid [x] \setminus A \not=\varnothing\}.
\]
So it suffices to show that if a net $(x_i)_{i\in I}$ in $X$ converges to
some $x \in X$, and if each $[x_i] \setminus A$ is nonempty, then $[x]
\setminus A$ is nonempty. To see this, note that for each $i$, there exists
$y_i  \in [x_i] \setminus A$. Since $X$ is compact we can pass to a subnet
$(y_{i_j})$ that converges in $X$. Since $A$ is open, we have that $y :=
\lim_j y_{i_j} \not\in A$. Since the quotient map is continuous we have $q(y)
= \lim_j q(y_{i_j}) = \lim_j q(x_{i_j}) = \lim_i q(x_i) = [x]$, and so $y \in
[x] \setminus A$.

Now suppose that $\Bb$ is a basis for the topology on $X$. Let $V$ be an open
subset of $Y$ and fix $y  \in V$. Since $q^{-1}(V)$ is open in $X$, for each
point $x  \in q^{-1}(y)$, we can find $B_x \in \Bb$ such that $x \in B_x
\subseteq q^{-1}(V)$. Let $B := \bigcup_{x \in q^{-1}(y)} B_x$. Since
$q^{-1}(y)$ is finite, this is a finite union of elements of $\Bb$, so it
suffices to show that $y \in U_B \subseteq V$. By definition of $B$ we have
$q^{-1}(y) \subseteq B$ and so $y \in U_B$. To see that $U_B \subseteq V$,
take $y'  \in U_B$. Then $q^{-1}(y) \subseteq B$ by definition of $U_B$.
Since each $B_x \subseteq q^{-1}(V)$, we have $B \subseteq q^{-1}(V)$ and
hence $q(B) \subseteq V$. So $y \in q(B) \subseteq V$, as required.

The last statement follows from \cite[Theorem~4.2.13]{Engelking:General_Topology}.
\end{proof}

Our next lemma describes how asymptotic equivalence interacts with the action of the nucleus on cylinder sets.

\begin{lem}\label{lem:asymp cylinders}
Let $E$ be a finite directed graph. Let $(G, E)$ be a contracting
self-similar groupoid action. If there exists $g  \in \Nn$ and $\mu, \nu $ in
$E^*$ such that $g \cdot \mu=\nu$, then $q(\lZ{\mu}) \cap q(\lZ{\nu}) \neq
\varnothing$.
\end{lem}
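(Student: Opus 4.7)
My plan is to produce, via Lemma~\ref{lem:ae by nucleus}, a pair of asymptotically equivalent left-infinite paths $x \in \lZ{\mu}$ and $y \in \lZ{\nu}$, by constructing the implementing sequence $(h_n)_{n<0}$ in $\Nn$ from the given data. Write $k := |\mu|$, $\mu = e_1 \cdots e_k$ and $\nu = f_1 \cdots f_k$. For the terminal segment $n \in \{-k, \dots, -1\}$ I take $x_{j-k-1} := e_j$, $y_{j-k-1} := f_j$ and $h_{j-k-1} := g|_{e_1 \cdots e_{j-1}}$ for $j = 1, \dots, k$. Each $h_{j-k-1}$ lies in $\Nn$ because $g \in \Nn$ and $\Nn$ is closed under restriction (Lemma~\ref{lem:N a core}), and the identities $h_n \cdot x_n = y_n$ and $h_n|_{x_n} = h_{n+1}$ on this segment are immediate from the self-similarity relation~\eqref{eq:self-similar} and Lemma~\ref{properties SSA}(2).

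The real work, and the main obstacle I expect, is extending $(h_n, x_n, y_n)$ indefinitely to the left of $n = -k$. The key sub-lemma I will establish first is: \emph{every} $h \in \Nn$ admits an ``un-restriction'' inside the nucleus, in the sense that there exist $h' \in \Nn$ and $e \in E^1$ with $s(e) = d(h)$ and $h'|_e = h$. This is not evident from the definition of $\Nn$ but drops out of the explicit formula~\eqref{eq:nucleus formula} combined with the contracting hypothesis: pick $g' \in G$ such that, for every $n$, some $\sigma \in d(g')E^*$ with $|\sigma| \geq n$ satisfies $g'|_\sigma = h$; by contracting, fix $N$ with $g'|_\tau \in \Nn$ whenever $|\tau| \geq N$; then take $|\sigma| \geq N+1$, factor $\sigma = \sigma'e$, and set $h' := g'|_{\sigma'} \in \Nn$.

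Iterating this sub-lemma starting from $h_{-k} = g$ produces $h_n \in \Nn$ and $x_n \in E^1$ for all $n \leq -k-1$ with $h_n|_{x_n} = h_{n+1}$. The identity $h_n|_{x_n} = h_{n+1}$ forces $s(x_n) = d(h_{n+1}) = r(x_{n+1})$ (the latter by induction), so the $x_n$ splice into a genuine left-infinite path $x \in \lZ{\mu}$. Setting $y_n := h_n \cdot x_n$ and using Lemma~\ref{properties SSA}(1) analogously yields a left-infinite path $y \in \lZ{\nu}$. Lemma~\ref{lem:ae by nucleus} now gives $x \sim_\aeq y$, so $q(x) = q(y) \in q(\lZ{\mu}) \cap q(\lZ{\nu})$.
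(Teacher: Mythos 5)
Your proof is correct and follows essentially the same route as the paper's: both hinge on iteratively ``un-restricting'' within the nucleus (finding $h' \in \Nn$ and an edge $e$ with $h'|_e = h$) to extend $\mu$ and $\nu$ leftward to asymptotically equivalent paths in $\lZ{\mu}$ and $\lZ{\nu}$, then invoking Lemma~\ref{lem:ae by nucleus}. If anything, your explicit justification of the un-restriction sub-lemma via the nucleus formula~\eqref{eq:nucleus formula} and the contracting hypothesis is more careful than the paper's, which attributes that step somewhat loosely to closure of $\Nn$ under restriction.
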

\begin{proof}
Fix $\mu, \nu$, and $g$. Since $\Nn$ is closed under restriction, there exist
$e \in E^1$ and $h \in \Nn$ such that $h|_e=g$. Let $f := h \cdot e$. We
claim that $s(e)=r(\mu)$ and $s(f)=r(\nu)$ so that $h\cdot e\mu=f\nu$.
Indeed, by Lemma~\ref{properties SSA}(1) we have
\[
s(f)=s(h\cdot e)=h|_e \cdot s(e)=g\cdot s(e).
\]
Since $d(g)=r(\mu)$ we have that $s(e)=r(\mu)$ and then $s(f)=g\cdot
r(\mu)=r(\nu)$, proving the claim. By applying the above procedure
recursively, we can construct paths $x  \in \lZ{\mu}$ and $y  \in \lZ{\nu}$
such that $x \sim_{\aeq} y$.
\end{proof}

We can now describe a basis for the topology on the limit space of a contracting self-similar groupoid action.

\begin{cor}\label{cor:limit space basis}
Let $E$ be a finite directed graph with no sources or sinks. Let $(G, E)$ be contracting self-similar groupoid action. The sets
\[\textstyle
    U_\mu := \Big\{y \in \Jj \mid q^{-1}(y) \subseteq \bigcup_{g \in \Nn \cap d^{-1}(r(\mu))} \lZ{g \cdot \mu}\Big\},
\]
indexed by $\mu  \in E^*$, are a basis for the topology on $\Jj$.
\end{cor}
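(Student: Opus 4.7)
The plan is to apply Lemma \ref{lem:quotient top} with $X = E^{-\infty}$ and $\Bb$ the cylinder basis $\{\lZ{\nu} : \nu \in E^*\}$, which is permissible because Corollary \ref{cor:finite orbits} supplies the required finiteness of each $q^{-1}(y)$. Setting $B_\mu := \bigcup_{g \in \Nn \cap d^{-1}(r(\mu))} \lZ{g \cdot \mu}$, finiteness of $\Nn$ makes $B_\mu$ a finite union of cylinders, and $U_\mu$ coincides with $U_{B_\mu}$ in the notation of Lemma \ref{lem:quotient top}. Openness of each $U_\mu$ is then immediate, and the task reduces to showing that whenever $V \subseteq \Jj$ is open and $y \in V$, there is some $\mu \in E^*$ with $y \in U_\mu \subseteq V$.

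Fix a representative $x \in q^{-1}(y)$ and write $\mu_n := x_n x_{n+1} \dots x_{-1}$ for $n < 0$. The containment $y \in U_{\mu_n}$, for every $n$, follows at once from Lemma \ref{lem:ae by nucleus}: each $y' \in q^{-1}(y)$ comes with a sequence $(h_n)_{n<0}$ in $\Nn$ such that $h_n \cdot \mu_n = y'_n \dots y'_{-1}$ and $d(h_n) = r(\mu_n)$, so $y' \in \lZ{h_n \cdot \mu_n} \subseteq B_{\mu_n}$. The substantive step is then showing that $U_{\mu_n} \subseteq V$ for $|n|$ sufficiently large. I would argue by contradiction: if not, for each $n$ pick $z^n \in E^{-\infty}$ with $q(z^n) \in U_{\mu_n} \setminus V$, so that $z^n_n \dots z^n_{-1} = g_n \cdot \mu_n$ for some $g_n \in \Nn$ and $z^n$ lies in the closed set $E^{-\infty} \setminus q^{-1}(V)$.

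The main obstacle is producing a limit $z^*$ of the $z^n$ that is asymptotically equivalent to $x$, yielding a contradiction with $z^* \in E^{-\infty} \setminus q^{-1}(V)$ and $[x] \subseteq q^{-1}(V)$. Finiteness of $\Nn$ lets me pass to a subsequence with $g_n = g$ constant, and compactness of $E^{-\infty}$ yields a further subsequence $z^n \to z^* \notin q^{-1}(V)$. For each $k \ge 1$ and each $n \le -k-1$, expanding $g \cdot \mu_n$ via the self-similarity relation gives
\[
z^n_{-k} \dots z^n_{-1} = (g|_{x_n \dots x_{-k-1}}) \cdot (x_{-k} \dots x_{-1}),
\]
with $g|_{x_n \dots x_{-k-1}} \in \Nn$ because $\Nn$ is closed under restriction (Lemma \ref{lem:N a core}). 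A nested pigeonhole argument --- successively thinning the subsequence for $k = 1, 2, 3, \dots$ so that $g|_{x_n \dots x_{-k-1}}$ is eventually a constant $h^*_k \in \Nn$ --- produces elements $h^*_k$ with $h^*_k \cdot (x_{-k} \dots x_{-1}) = z^*_{-k} \dots z^*_{-1}$ for every $k$. The crucial compatibility $h^*_k|_{x_{-k}} = h^*_{k-1}$ then falls out of Lemma \ref{properties SSA}(2), because $h^*_k$ and $h^*_{k-1}$ are realised simultaneously at any $n$ in the nested $k$-th subsequence. Reindexing via $m \mapsto -m$ puts this exactly in the hypothesis of Lemma \ref{lem:ae by nucleus}, giving $z^* \sim_\aeq x$ and the desired contradiction.
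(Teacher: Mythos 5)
Your argument is correct, and it reaches the conclusion by a genuinely different route from the paper. The paper, after establishing $y \in U_{x_{-n}\dots x_{-1}}$ for all $n$ exactly as you do, works downstairs in $\Jj$: it forms the nested compact sets $Y_n := q(X_n)$ where $X_n = \bigcup_{g} \lZ{g\cdot x_{-n}\dots x_{-1}}$, invokes Lemma~\ref{lem:ae by nucleus} to get $\bigcap_n X_n = q^{-1}(y)$ and hence $\bigcap_n Y_n = \{y\}$, and concludes by the finite intersection property that some $Y_k$ (which contains $U_{x_{-k}\dots x_{-1}}$) lies inside $V$. You instead argue by contradiction entirely upstairs in $E^{-\infty}$: you extract a convergent sequence of ``bad'' points $z^n \in X_{|n|}\setminus q^{-1}(V)$ and then run a nested pigeonhole on the restrictions $g|_{x_n\dots x_{-k-1}} \in \Nn$ to build, by hand, a compatible sequence $(h^*_k)$ witnessing $z^* \sim_\aeq x$ via Lemma~\ref{lem:ae by nucleus}, contradicting $z^*\notin q^{-1}(V)$. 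The two proofs rest on the same ingredients (compactness of $E^{-\infty}$, finiteness of $\Nn$, closure of $\Nn$ under restriction), but yours has the minor advantage of never needing to reason about compact subsets of the possibly-not-yet-known-to-be-Hausdorff quotient $\Jj$ --- the paper's step ``nested nonempty compacta in $\Jj$ have nonempty intersection'' is slightly delicate at this stage of the development, whereas your diagonal argument lives in the compact Hausdorff space $E^{-\infty}$ throughout; the cost is that you re-derive explicitly the compatibility $h^*_k|_{x_{-k}} = h^*_{k-1}$ that the paper gets for free by citing $\bigcap_n X_n = q^{-1}(y)$.
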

\begin{proof}
By Lemma~\ref{lem:quotient top} we know that $U_\mu$ is open. Now fix an open
set $V \subseteq \Jj$ and $y \in V$.

If $x \in Z(\mu]$ for some $\mu \in E^{n}$, and $x'\sim_{a.e} x$, then by
Lemma~\ref{lem:ae by nucleus} there exists $g \in \mathcal{N}\cap
d^{-1}(r(\mu))$ such that $x' \in Z(g\cdot\mu]$. So, if $q(x) = y$, then $y
\in U_{x_{-n}...x_{-1}}$ for all $n \in \mathbb{N}$. Let $X_{n} = \bigcup_{g
\in \Nn \cap d^{-1}(r(x_{-n}))} \lZ{g \cdot x_{-n}...x_{-1}}$. Since $\Nn$ is
closed under restriction, $X_{n+1}\subseteq X_{n}$ for all $n \in
\mathbb{N}$. By Lemma~\ref{lem:ae by nucleus}, $\bigcap_{n\in\mathbb{N}}X_{n}
= q^{-1}(y)$. Hence, the compact sets $Y_{n} := q(X_{n})$ satisfy
$Y_{n+1}\subseteq Y_{n}$ and $\bigcap_{n\in\mathbb{N}}Y_{n} = \{y\}$.
Therefore, there exists $k \in \mathbb{N}$ such that $Y_{k}\subseteq V$.
Since $y \in U_{x_{-k}...x_{-1}}\subseteq Y_{k}$, the result follows.
\end{proof}
In addition to having a canonical basis, one can also find a metric for $\Jj$ inducing the same topology, as the next corollary of Lemma~\ref{lem:quotient top} shows.
\begin{cor}\label{cor:limit space metrisable}
Let $(G, E)$ be a contracting self-similar groupoid action. Then its limit space $\Jj$
is compact and metrisable.
\end{cor}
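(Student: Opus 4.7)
The plan is to use the last clause of Lemma~\ref{lem:quotient top}: since $E^{-\infty}$ is compact and metrisable, it suffices to show that the quotient map $q : E^{-\infty} \to \Jj$ is a closed map. Compactness of $\Jj$ is then automatic from compactness of $E^{-\infty}$ and continuity of $q$.

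To show $q$ is closed, I take a closed subset $C \subseteq E^{-\infty}$; by definition of the quotient topology, $q(C)$ is closed in $\Jj$ iff $q^{-1}(q(C))$ is closed in $E^{-\infty}$. Since $E^{-\infty}$ is metrisable, I verify this sequentially: given a sequence $(x^{k})$ in $q^{-1}(q(C))$ converging to some $x \in E^{-\infty}$, I must produce $y \in C$ with $x \sim_{\aeq} y$. For each $k$, choose $y^{k} \in C$ with $x^{k} \sim_{\aeq} y^{k}$, and use compactness of $C$ to pass to a subsequence with $y^{k} \to y \in C$. By Lemma~\ref{lem:ae by nucleus}, for each $k$ there is a sequence $(h^{k}_{n})_{n<0}$ in $\Nn$ satisfying $h^{k}_{n} \cdot x^{k}_{n} = y^{k}_{n}$ and $h^{k}_{n}|_{x^{k}_{n}} = h^{k}_{n+1}$ for all $n$.

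The technical core of the argument is a diagonal extraction: because $\Nn$ is finite, I can successively thin the sequence so that, for each $n<0$, eventually $h^{k}_{n}$ equals some fixed $h_{n} \in \Nn$. Because $E$ is finite, pointwise convergence $x^{k} \to x$ and $y^{k} \to y$ in the product topology on $E^{-\infty}$ means that for each fixed $n$ the coordinates $x^{k}_{n}, y^{k}_{n}$ are eventually constant equal to $x_{n}, y_{n}$. Passing to the limit in the identities $h^{k}_{n} \cdot x^{k}_{n} = y^{k}_{n}$ and $h^{k}_{n}|_{x^{k}_{n}} = h^{k}_{n+1}$ gives $h_{n} \cdot x_{n} = y_{n}$ and $h_{n}|_{x_{n}} = h_{n+1}$ for every $n$. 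Lemma~\ref{lem:ae by nucleus} then yields $x \sim_{\aeq} y$, so $x \in q^{-1}(q(C))$ as required.

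The main obstacle is organising the diagonal extraction so that the compatibility conditions $h_{n}|_{x_{n}} = h_{n+1}$ survive in the limit, but this is not really difficult: the finiteness of $\Nn$ and of each $E^{1}$ makes both the selection of $(h_{n})$ and the stabilisation of the coordinates $x^{k}_{n}, y^{k}_{n}$ routine. Once $q$ is shown closed, metrisability follows from Lemma~\ref{lem:quotient top}, and compactness is immediate.
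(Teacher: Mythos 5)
Your proof is correct, and it follows the same overall skeleton as the paper's: both arguments reduce the corollary to showing that $q$ is a closed map and then invoke the final clause of Lemma~\ref{lem:quotient top} (compactness being immediate from continuity and surjectivity of $q$). Where you differ is in how closedness of $q$ is established. The paper takes a net $(x_\alpha)$ in a closed set $C$ with $q(x_\alpha)\to q(x)$, passes to a subnet converging to some $z\in C$, and then shows $x\sim_{\aeq}z$ directly by combining the basic open sets $U_\mu$ of Corollary~\ref{cor:limit space basis} with the disjointness of the cylinders $\lZ{\mu}$ for distinct $\mu$ of the same length; this produces, for each $n$, a nucleus element $g_n$ with $g_n\cdot x_n\dots x_{-1}=z_n\dots z_{-1}$, and finiteness of $\Nn$ finishes the job. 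You instead work entirely upstairs in $E^{-\infty}$: you show the saturation $q^{-1}(q(C))$ is sequentially closed by extracting, via a diagonal argument over the finite nucleus, a restriction-compatible sequence $(h_n)$ in the limit, and then apply Lemma~\ref{lem:ae by nucleus} in both directions. Your route avoids Corollary~\ref{cor:limit space basis} altogether (in effect you prove that $\sim_{\aeq}$ is a closed relation on the compact space $E^{-\infty}\times E^{-\infty}$, which forces $q$ to be closed), at the cost of the slightly more delicate bookkeeping needed to make the compatibility conditions $h_n|_{x_n}=h_{n+1}$ survive the diagonal extraction; the paper's version is shorter once the basis result is in hand, but yours is self-contained modulo Lemma~\ref{lem:ae by nucleus} and would work even before the basis is constructed.
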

\begin{proof}
Recall that $(E^{-\infty},d)$ is compact in the  product metric $d$
of~\eqref{eq:cylinder metric}. By Lemma~\ref{lem:quotient top}, it suffices to show that $\sim_{\aeq}$ is
a closed equivalence relation. For this, suppose that $C$ is a closed subset
of $E^{-\infty}$, and fix a net $(x_\alpha) \in C$ and a point $x $ in
$E^{-\infty}$ such that $q(x_\alpha)$ converges to $q(x)$. We must show that
$q(x) \in q(C)$. Since $E^{-\infty}$ is compact, so is $C$, so we may assume
that $x_\alpha$ converges to some $z  \in C$. Hence $q(x_\alpha) \to q(z)$.
So we must show that $x \sim_{\aeq} z$.

Fix an integer $n < 0$. By Corollary~\ref{cor:limit space basis}, we have
that $q(x_\alpha) \in U_{x_n \dots x_{-1}}$ for large $\alpha$, and since
$x_\alpha$ converges to $z$ we have that $x_\alpha \in \lZ{z_{n} \dots
z_{-1}}$ for large $\alpha$. It follows that there exist $g_n \in \Nn$ such
that $\lZ{g_n \cdot x_n \dots x_{-1}} \cap \lZ{z_n \dots z_{-1}} \not=
\varnothing$. Since $\lZ{\mu} \cap \lZ{\nu} = \varnothing$ for distinct
$\mu,\nu  \in E^n$, we deduce that $g_n \cdot x_n \dots x_{-1} = z_n \dots
z_{-1}$. Since $n < 0$ was arbitrary, it follows that $x \sim_{\aeq} z$ as
claimed.
\end{proof}

\subsection{Schreier graphs and recurrent self-similar actions}

Schreier graphs define useful combinatorial approximations to the limit space
of self-similar actions. We begin with the definition that suits our
situation. Note that Schreier graphs of groups have a rather general
definition that generalises Cayley graphs.

\begin{dfn}
Let $(G,E)$ be a finitely generated self-similar groupoid, and let $A$ be a
generating set for $G$ that is closed under inverses and restriction. The
level-$n$ Schreier graph $\Gamma_n:=\Gamma_n(G,A)$ is the (undirected) graph
with vertex set $\Gamma_n^0:=E^n$ and an edge labelled by $a \in A$ between
$\mu$ and $\nu$ if and only if $d(a)=r(\mu)$ and $a \cdot \mu = \nu$.
\end{dfn}

Note that we could label an edge in $\Gamma_n$ by either $a \in A$ or
$a^{-1}$ since $A$ is closed under inverses and if $a \cdot \mu = \nu$, then
$a^{-1} \cdot \nu = \mu$. We will make use of the geodesic distance,
$d_{\text{geo}}$, on the vertex set of an undirected graph:
$d_{\text{geo}}(v,w)$ is the minimum length of a path between $v$ and $w$.
The following generalises \cite[Proposition
3.6.6]{Nekrashevych:Self-similar}; the proof is virtually identical.

\begin{prp}\label{prp:schreier approximation}
Let $(G, E)$ be a finitely generated, contracting self-similar groupoid
action on a finite directed graph $E$. Let $A$ be a finite generating set for $G$ that is closed under
inverses and restriction. Let $\Gamma$ be the level-n Schreier graph
$\Gamma_n(G, A)$. There is a map $\psi_n:\Gamma_n \to \Gamma_{n-1}$ defined
by
\begin{align*}
&\psi_n(e\mu)=\mu \quad \text{for } e\text{ in } E^1 \text{ and } v \text{ in } s(e)E^{n-1} \\
&\psi_n(a: e\mu \to f\nu)=a|_e: \mu \to \nu.
\end{align*}
For $x,y \in E^{-\infty}$, the sequence $\big(d_{\text{geo}}(x_{-n} \ldots
x_{-1},y_{-n}\ldots y_{-1})\big)^\infty_{n=1}$ is bounded if and only if $x$
and $y$ are asymptotically equivalent.
\end{prp}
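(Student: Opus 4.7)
The plan is to handle the map $\psi_n$ first and then establish the biconditional by going through the two directions separately, leveraging the nucleus $\Nn$ and the generating set $A$.

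For well-definedness of $\psi_n$: on vertices, deleting the first edge of a path in $E^n$ yields a path in $E^{n-1}$, so $e\mu \mapsto \mu$ is defined. For an edge $a : e\mu \to f\nu$ of $\Gamma_n$ (so $a\cdot(e\mu) = f\nu$), the self-similarity relation~\eqref{eq:self-similar} gives $a\cdot(e\mu) = (a\cdot e)(a|_e \cdot \mu)$, whence $f = a\cdot e$ and $\nu = a|_e \cdot \mu$. Since $A$ is closed under restriction, $a|_e \in A$, so $\psi_n(a : e\mu \to f\nu) = (a|_e : \mu \to \nu)$ is indeed a labelled edge in $\Gamma_{n-1}$.

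For the implication \emph{asymptotic equivalence $\Rightarrow$ bounded distance}, assume $x \sim_{\aeq} y$. Lemma~\ref{lem:ae by nucleus} supplies elements $h_n \in \Nn$ with $h_n \cdot (x_n \dots x_{-1}) = y_n \dots y_{-1}$ for every $n < 0$. Since $\Nn$ is finite and $A$ generates $G$, there is a constant $M$, depending only on $\Nn$ and $A$, such that every element of $\Nn$ is a product of at most $M$ elements of $A$. Writing $h_n = a_{n,1} \cdots a_{n,k_n}$ with $k_n \le M$ and $a_{n,j} \in A$, the vertices $\nu^{(j)}_n := (a_{n,k_n - j +1}\cdots a_{n,k_n}) \cdot (x_n\dots x_{-1})$ for $0 \le j \le k_n$ form a walk of length $k_n$ in $\Gamma_n$ from $\nu^{(0)}_n = x_n\dots x_{-1}$ to $\nu^{(k_n)}_n = y_n\dots y_{-1}$, so $d_{\text{geo}}(x_n\dots x_{-1}, y_n\dots y_{-1}) \le M$.

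For the converse, suppose $d_{\text{geo}}(x_n\dots x_{-1}, y_n\dots y_{-1}) \le M$ for all $n < 0$. Take a minimal-length walk between these two vertices in $\Gamma_n$ and read off the sequence of edge labels (replacing any label $a$ by $a^{-1} \in A$ whenever the walk traverses the edge in the reverse direction); this yields a word $w_n = a_{n,1} \cdots a_{n,k_n}$ in $A$ of length $k_n \le M$ with $w_n \cdot (x_n\dots x_{-1}) = y_n\dots y_{-1}$. Define $g_n \in G$ to be the corresponding groupoid product; composability at each stage is automatic because each factor corresponds to an edge of $\Gamma_n$ emanating from the current vertex. Since $A$ is finite there are only finitely many such words of bounded length, so $\{g_n : n < 0\}$ is a finite set witnessing $x \sim_{\aeq} y$. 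The main subtlety in both directions is keeping track of domain/codomain constraints on intermediate groupoid products, but in the forward direction this is free because $h_n$ itself has domain $r(x_n)$, and in the reverse direction it is immediate from the existence of the edges forming the walk.
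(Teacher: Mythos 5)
Your proof is correct, and it is precisely the standard argument that the paper has in mind: the paper omits the proof, remarking only that it is ``virtually identical'' to Nekrashevych's \cite[Proposition~3.6.6]{Nekrashevych:Self-similar}, and your translation between bounded-length words in $A$ and bounded-length walks in $\Gamma_n$ (using composability of the groupoid products to match the range/domain conditions on Schreier-graph edges) is exactly that argument. The only cosmetic remark is that the forward direction does not actually need Lemma~\ref{lem:ae by nucleus}: the finite set $\{g_n\}$ from Definition~\ref{def:asymptotic equivalence} already has uniformly bounded word length over $A$, though routing through the nucleus is harmless.
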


We now generalise Nekrashevych's notion of a recurrent self-similar group action to groupoid actions. While we do not require recurrence for the main results of this paper, it does illuminate interesting
topological properties of the dynamics and the limit space. We note that Nekrashevych synonymously uses \emph{recurrence}, \emph{self-replicating}, and \emph{fractal} for the notion below.

\begin{dfn}\label{recurrent}
A self-similar groupoid action $(G, E)$ is said to be \emph{recurrent} if,
for any $e,f  \in E^1$ and $h  \in G$ with $d(h)=s(e)$ and $c(h)=s(f)$, there is $g$ in
$Gr(e)$ such that $g\cdot e=f$ and $g|_e=h$.
\end{dfn}

Recurrence of a self-similar groupoid action is obviously a rather strong condition. For example, if $(G,E)$ is recurrent, then we immediately see that the in-degree of all vertices of the graph must be equal.
Another immediate consequence of recurrence is the following.

\begin{prp}
Suppose $(G,E)$ is a recurrent self-similar groupoid action on a finite directed graph $E$. Then the action of $G$ on $E^*$ is level-transitive.
\end{prp}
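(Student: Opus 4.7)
The plan is to prove this by induction on the path length $n$, after first strengthening the recurrence hypothesis to a statement about paths of arbitrary length. Concretely, the key intermediate claim is:

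\emph{For every $n \geq 1$, every $\mu, \nu \in E^n$, and every $h \in G$ with $d(h) = s(\mu)$ and $c(h) = s(\nu)$, there exists $g \in G$ satisfying $d(g) = r(\mu)$, $c(g) = r(\nu)$, $g \cdot \mu = \nu$, and $g|_\mu = h$.}

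The case $n = 1$ is exactly the recurrence hypothesis. For the inductive step, I would factorise $\mu = e \mu'$ and $\nu = f \nu'$ with $e, f \in E^1$ and $\mu', \nu' \in E^n$; since the concatenations $e\mu'$ and $f\nu'$ are defined, we have $s(e) = r(\mu')$, $s(f) = r(\nu')$, $s(\mu') = s(\mu)$, and $s(\nu') = s(\nu)$. Applying the inductive hypothesis to $\mu', \nu', h$ yields $g' \in G$ with $g' \cdot \mu' = \nu'$ and $g'|_{\mu'} = h$, and with $d(g') = r(\mu') = s(e)$ and $c(g') = r(\nu') = s(f)$. Applying recurrence to $e, f, g'$ then produces $g \in G$ with $g \cdot e = f$, $g|_e = g'$, $d(g) = r(e) = r(\mu)$, and $c(g) = r(f) = r(\nu)$. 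Using self-similarity together with the restriction chain rule of Lemma~\ref{properties SSA}, one computes
\[
g \cdot \mu = g \cdot (e \mu') = (g \cdot e)(g|_e \cdot \mu') = f (g' \cdot \mu') = f \nu' = \nu,
\]
and $g|_\mu = (g|_e)|_{\mu'} = g'|_{\mu'} = h$, closing the induction.

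Level-transitivity at level $n$ then follows from the claim: given $\mu, \nu \in E^n$ whose sources $s(\mu), s(\nu)$ lie in a common $G$-orbit on $E^0$ (in particular whenever $s(\mu) = s(\nu)$, by taking $h = \id_{s(\mu)}$), any witness $h \in G$ of this equivalence supplies an element $g \in G$ transporting $\mu$ to $\nu$. The main technical care required is the bookkeeping: at each stage of the induction, one must verify that the domains, codomains, and restrictions line up correctly so that recurrence and the inductive hypothesis can be applied in turn. With the factorisation peeling off the first letter of $\mu$ and $\nu$, and a clean use of Lemma~\ref{properties SSA}, the induction reduces cleanly and no serious obstacle arises beyond this careful ledger.
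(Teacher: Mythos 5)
Your proof is correct, and it follows the same basic strategy as the paper's --- strengthen the statement to ``transitive with prescribed restriction'' and induct on path length --- but the decomposition is mirrored. The paper writes $\lambda = (\lambda_1\cdots\lambda_n)\lambda_{n+1}$, first uses recurrence on the \emph{last} edge to produce $g_{n+1}$ with $g_{n+1}\cdot\lambda_{n+1}=\rho_{n+1}$, and then feeds $g_{n+1}$ into the inductive hypothesis as the prescribed restriction along the initial segment; you write $\mu = e\mu'$, apply the inductive hypothesis to the tails $\mu',\nu'$ with prescribed restriction $h$ to obtain $g'$, and then use recurrence on the \emph{first} edges with prescribed restriction $g'$. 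A side benefit of your arrangement is that the induction closes cleanly: you establish the strengthened claim (with arbitrary prescribed $h$) at level $n+1$, whereas the paper's inductive step, as written, assumes the strengthened statement at level $n$ but only derives plain transitivity at level $n+1$ (easily repaired by also prescribing $g_{n+1}|_{\lambda_{n+1}}=h$ in the recurrence step, but your version needs no repair). You are also more explicit than the paper about the hypothesis hidden in the base case: transporting $\mu$ to $\nu$ requires a witness $h\in G$ with $d(h)=s(\mu)$ and $c(h)=s(\nu)$, i.e.\ that $s(\mu)$ and $s(\nu)$ lie in a common $G$-orbit in $E^0$; the paper's assertion that transitivity on edges ``follows immediately from recurrence'' quietly assumes the same thing, so this is a shared caveat rather than a gap in your argument.
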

\begin{proof}
For paths of length one, transitivity follows immediately from recurrence.
Now suppose that for any paths $\mu$ and $\nu$ of length $n$ and any $h  \in
G$ with $d(h)=s(\mu)$ there exists $g  \in G$ with $d(g)=r(\mu)$ such that $g
\cdot \mu=\nu$ and $g|_{\mu}=h$; that is, $G$ acts transitively on paths of
length $n$ with specified restriction as in the definition of recurrence. We
now consider paths $\lambda$ and $\rho$ of length $n+1$ and aim to show that
there exists $g  \in G$ such that $g\cdot \lambda=\rho$. Recurrence implies
that there exists $g_{n+1}  \in G$ with $d(g_{n+1})=r(\lambda_{n+1})$ such
that $g_{n+1}\cdot \lambda_{n+1}=\rho_{n+1}$.  Now the inductive hypothesis
implies that there exists $g  \in G$ with $d(g)=r(\lambda)$ such that $g
\cdot \lambda_1 \cdots \lambda_n=\rho_1 \cdots \rho_n$ and $g|_{\lambda_1
\cdots \lambda_n}=g_{n+1}$. Thus we have
\[
g\cdot \lambda=g \cdot \lambda_1 \cdots \lambda_n \lambda_{n+1}=(\rho_1 \cdots \rho_n)g_{n+1}\cdot \lambda_{n+1}=\rho_1 \cdots \rho_n\rho_{n+1}=\rho,
\]
the desired result.
\end{proof}

Following Nekrashevych, we now look to connectedness of the limit space, but first we will generalise \cite[Proposition 2.11.3]{Nekrashevych:Self-similar}.

\begin{prp}\label{nuc_gen}
Suppose that $(G,E)$ is a contracting, recurrent self-similar groupoid action on a finite directed graph $E$ and that $G$ is finitely generated. Then the nucleus $\Nn$ of $(G,E)$ is a
generating set.
\end{prp}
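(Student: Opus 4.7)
The plan is to adapt the proof of the analogous statement for self-similar groups, \cite[Proposition 2.11.3]{Nekrashevych:Self-similar}, to the groupoid setting. Let $A$ be a finite generating set for $G$. Since $(G,E)$ is contracting, there is some $N \geq 0$ such that $a|_{\mu} \in \Nn$ for all $a \in A$ and all $\mu \in d(a)E^N$. It therefore suffices to prove by induction on $k \geq 0$ that every $g \in G$ whose level-$k$ restrictions all lie in $\Nn$ belongs to $\langle \Nn \rangle$; this immediately gives $A \subseteq \langle \Nn \rangle$ and hence $G = \langle \Nn \rangle$.

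The base case $k = 0$ is immediate, since then $g = g|_{d(g)} \in \Nn$. For the inductive step at level $k+1$, each restriction $g|_e$ with $e \in d(g)E^1$ has all of its level-$k$ restrictions in $\Nn$ and hence lies in $\langle \Nn \rangle$ by the inductive hypothesis. What remains is to reconstruct $g$ from the family $\{(g \cdot e, g|_e) : e \in d(g)E^1\}$ using only elements of $\langle \Nn \rangle$. For this, I use recurrence to produce, for each $e \in d(g)E^1$, an element $b_e \in G$ satisfying $b_e \cdot (g \cdot e) = f_e$ and $b_e|_{g \cdot e} = (g|_e)^{-1}$, where $f_e$ is any edge with $s(f_e) = s(e)$ (such an edge exists because $E$ has no sources). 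Multiplying $g$ on the left by these $b_e$'s one at a time, in a carefully chosen order and keeping track of the interference at subsequent edges, progressively trivialises the first-level restrictions of $g$. The resulting element has contraction depth at most $1$ and thus lies in $\langle \Nn \rangle$ either by the inductive hypothesis (when $k \geq 1$) or, when $k = 0$, by being a unit already in $\Nn$ (since $G^{(0)} \subseteq \Nn$ when $E$ has no sinks, by Lemma~\ref{lem:N a core}).

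The principal obstacle will be verifying that each adjusting element $b_e$ produced by recurrence can itself be chosen inside $\langle \Nn \rangle$ rather than merely inside $G$. This is to be handled by a secondary induction: the prescribed restriction $(g|_e)^{-1}$ already lies in $\langle \Nn \rangle$ by the outer inductive hypothesis together with the symmetry $\Nn = \Nn^{-1}$ from Lemma~\ref{lem:N a core}, and the contracting property bounds the contraction depth of $b_e$ in terms of that of $g|_e$, so $b_e$ satisfies the same hypothesis as $g$ at a strictly smaller level. A further subtlety in the groupoid setting is tracking the source and range conditions imposed by recurrence: the target edge $f_e$ cannot generally be chosen equal to $g \cdot e$, unlike in the group case where there is only a single vertex, so one must use the no-sources hypothesis to ensure compatible edges always exist.
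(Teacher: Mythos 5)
Your overall strategy---an induction on the ``contraction depth'' of $g$, reconstructing $g$ from its first-level data by multiplying by correction elements supplied by recurrence---has two genuine gaps, and the second one is essentially fatal. First, the interference problem you flag is not a bookkeeping subtlety but a real obstruction: recurrence produces $b_e$ with a prescribed value of $b_e\cdot(g\cdot e)$ and of $b_e|_{g\cdot e}$ at the \emph{single} edge $g\cdot e$, and says nothing about $b_e|_{g\cdot e'}$ for the other edges $e'\in d(g)E^1$. Since $(b_e g)|_{e'} = (b_e|_{g\cdot e'})(g|_{e'})$, each correction perturbs every previously ``fixed'' edge by an uncontrolled element, so there is no ordering of the edges for which the process terminates with all first-level restrictions trivial; no mechanism is given and none is apparent. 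Second, and more seriously, you need each $b_e$ to lie in $\langle\Nn\rangle$, and your justification---that ``the contracting property bounds the contraction depth of $b_e$ in terms of that of $g|_e$''---is false: recurrence is a bare existence statement, and the element $b_e$ it produces is an arbitrary element of $G$ subject to two pointwise constraints, with no bound on how many levels of restriction are needed before its restrictions enter $\Nn$. Proving that such a $b_e$ can be chosen in $\langle\Nn\rangle$ is essentially the full strength of the proposition, so the ``secondary induction'' is circular.

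The paper's proof goes in the opposite direction and avoids both issues. Fix a finite generating set $H$ and choose $m$ with $h|_\mu\in\Nn$ for all $h\in H$ and $|\mu|\ge m$. By the iterated form of recurrence (the level-transitivity proposition proved just before, which produces, for any path $\mu$ of length $m$ and any prescribed target restriction $h$, an element $a$ with $a|_\mu=h$), each generator $h\in H$ is realised as a level-$m$ restriction $h=a|_\mu$ of some $a\in G$. Writing $a$ as a word in $H\cup H^{-1}$ and applying the product rule $(h_1h_2)|_\mu=(h_1|_{h_2\cdot\mu})(h_2|_\mu)$ of Lemma~\ref{properties SSA}, together with $\Nn=\Nn^{-1}$, expresses $h=a|_\mu$ as a product of level-$m$ restrictions of generators, each of which lies in $\Nn$. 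This uses recurrence only through the restriction it \emph{prescribes}, never through any property of the witnessing element, which is exactly the control your argument lacks.
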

\begin{proof}
Let $H$ be a finite generating set for $G$. Then there exists $m \in
\mathbb{N}$ such that for every $h \in H$, the set $\{h|_{\mu} \mid |\mu|
\geq m\}$ is contained in $\Nn$. Given $g \in G$, we have $g=h_1 h_2 \cdots
h_n$ with $h_i \in H$ for $1 \leq i \leq n$. Since $(G,E)$ is recurrent, for
each $h_i$, there exists $a_i \in G$ and $\mu_i \in s(a_i)E^m$ such that
$a_i|_{\mu_i}=h_i$. Since $a_i$ is a product of elements of $H$ it follows
that $a_i|_{\mu_i}=h_i$ is a product of elements of $\Nn$. Thus $G$ is
generated by $\Nn$.
\end{proof}

The following proof follows Nekrashevych's \cite[Proposition 3.3.10 and
Theorem 3.5.1]{Nekrashevych:Self-similar}, which he in turn partially
attributes to K. Pilgrim and P. Haissinsky (private communication).

\begin{thm}
Suppose $(G,E)$ is a contracting self-similar groupoid action on a finite directed graph $E$ and that $G$ is finitely generated. Then the limit space $\Jj_{(G,E)}$ is connected if and only if $(G,E)$ level-transitive.
\end{thm}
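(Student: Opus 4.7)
The plan is to split the biconditional and treat each direction through the tile cover $\{T_\mu := q(\lZ{\mu}) : \mu \in E^n\}$ of $\Jj$, with Lemma~\ref{lem:asymp cylinders} and Lemma~\ref{lem:ae by nucleus} controlling the overlap combinatorics.

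For the (easier) direction $\Jj$ connected $\Rightarrow$ level-transitive I would argue by contrapositive. Suppose the $G$-action on $E^n$ fails to be transitive for some $n$, and let $O \subsetneq E^n$ be a single $G$-orbit. Set
\[
A := q\Bigl(\bigcup_{\mu \in O} \lZ{\mu}\Bigr) \quad\text{and}\quad B := q\Bigl(\bigcup_{\nu \in E^n \setminus O} \lZ{\nu}\Bigr).
\]
Each is a nonempty continuous image of a compact set, hence compact, and together they cover $\Jj$. To see $A \cap B = \varnothing$, suppose $[x] = [y]$ with $x \in \lZ{\mu}$, $\mu \in O$, and $y \in \lZ{\nu}$, $\nu \notin O$. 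Then Lemma~\ref{lem:ae by nucleus} produces $h \in \Nn \subseteq G$ with $h \cdot \mu = \nu$, contradicting the fact that $\mu$ and $\nu$ lie in distinct $G$-orbits. Since $\Jj$ is Hausdorff by Corollary~\ref{cor:limit space metrisable}, $A$ and $B$ are closed, and hence a clopen partition of $\Jj$, making it disconnected.

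For the harder direction, level-transitive $\Rightarrow$ $\Jj$ connected, I would argue by contradiction. Suppose $\Jj = U \sqcup V$ with $U, V$ nonempty, disjoint, open. Then $E^{-\infty} = q^{-1}(U) \sqcup q^{-1}(V)$ is a clopen partition of a compact totally disconnected space. Because the cylinder sets $\{\lZ{\mu} : \mu \in E^n\}$ form a finite clopen refinement of $E^{-\infty}$ whose mesh in the metric~\eqref{eq:cylinder metric} tends to $0$, a Lebesgue-number argument produces $n$ such that every cylinder $\lZ{\mu}$ with $\mu \in E^n$ lies entirely in $q^{-1}(U)$ or entirely in $q^{-1}(V)$. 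This defines a $2$-coloring $c : E^n \to \{0,1\}$. Lemma~\ref{lem:asymp cylinders} then forces $c(\mu) = c(h \cdot \mu)$ for every $h \in \Nn$ with $d(h) = r(\mu)$, so $c$ is constant on each orbit of the sub-groupoid $\langle\Nn\rangle$ of $G$ acting on $E^n$.

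The remaining step, and the main obstacle, is to lift the coloring from being $\Nn$-invariant to being $G$-invariant, so that level-transitivity of $G$ forces $c$ constant. Under finite generation this can be carried out using the contracting hypothesis: for any finite generating set $S$ and any $g = s_1 \cdots s_k \in G$, each $s_i$ has the property that its restriction to any sufficiently long path lies in $\Nn$, so at sufficiently large level the $G$-orbits on $E^n$ coincide with the orbits of $\langle \Nn \rangle$. (In the recurrent case this is immediate from Proposition~\ref{nuc_gen}, which identifies $\Nn$ itself as a generating set.) Given $G$-invariance of $c$, the level-transitivity assumption makes $c$ constant, contradicting that both $U$ and $V$ are nonempty; hence $\Jj$ is connected. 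I expect that orchestrating this passage from $\Nn$-invariance to $G$-invariance—carefully choosing the level $n$ large enough that every restriction of every generator factors through $\Nn$—will be the most delicate part of the argument.
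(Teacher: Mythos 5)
Your first direction (connected $\Rightarrow$ level-transitive, argued contrapositively) is exactly the paper's proof: decompose $E^{-\infty}$ according to a proper $G$-orbit in $E^n$, use Lemma~\ref{lem:ae by nucleus} for disjointness of the images, and conclude a clopen partition of $\Jj$. No issues there.

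In the converse direction your architecture -- clopen partition of $\Jj$, monochromatic level via a Lebesgue-number argument, $\Nn$-invariance of the resulting $2$-colouring via Lemma~\ref{lem:asymp cylinders}, then an upgrade to $G$-invariance -- is completable, but the claim you lean on for the upgrade is false as stated. It is not true for a contracting, finitely generated action that the $G$-orbits on $E^n$ eventually coincide with the $\langle\Nn\rangle$-orbits. Take the one-vertex graph with two loops $0,1$ and $G=\ZZ/2$ generated by $f$ with $f\cdot 0=1$, $f\cdot 1=0$, $f|_0=f|_1=\id$: this is contracting with $\Nn=\{\id\}$, so $\langle\Nn\rangle$-orbits are singletons at every level while $G$-orbits have two elements at every level. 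The point is that contraction controls the restrictions $g|_\lambda$, i.e.\ how $g$ acts on the far end of a long path, and says nothing about how $g$ permutes short prefixes; no purely orbit-theoretic statement can follow. What rescues the argument is a property of your particular colouring that you do not invoke: it is \emph{suffix-determined}. If $N$ is the monochromatic level and $n\ge N$, then $c_n(\mu)$ depends only on the length-$N$ suffix $\mu'$ of $\mu=\lambda\mu'$, because $\lZ{\mu}\subseteq\lZ{\mu'}$. Since $g\cdot\mu=(g\cdot\lambda)(g|_\lambda\cdot\mu')$, the length-$N$ suffix of $g\cdot\mu$ is $g|_\lambda\cdot\mu'$, and once $n\ge N+k_g$ (so $|\lambda|\ge k_g$) we have $g|_\lambda\in\Nn$, whence $c_n(g\cdot\mu)=c_N(g|_\lambda\cdot\mu')=c_N(\mu')=c_n(\mu)$ by $\Nn$-invariance of $c_N$. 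Taking $n\ge N+\max_{s\in S}k_s$ over a finite generating set $S$ gives $c_n\circ s=c_n$ for each generator, hence $c_n\circ g=c_n$ for all $g\in G$ by composing, and level-transitivity then forces $c_n$ (hence $c_N$) to be constant -- the contradiction you want. So keep your outline but replace the orbit-coincidence claim by this suffix computation; without it the step genuinely fails.

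For comparison, the paper avoids the colouring language: it uses level-transitivity together with contraction to produce, for each level, a nucleus element carrying a cylinder inside $q^{-1}(A)$ to a cylinder inside $q^{-1}(B)$, then applies Lemma~\ref{lem:asymp cylinders} and a compactness/diagonal extraction to manufacture a single asymptotically equivalent pair $\mu\in q^{-1}(A)$, $\nu\in q^{-1}(B)$, giving a point of $A\cap B$. Both proofs ultimately rest on the same mechanism -- restrictions of arbitrary group elements to long prefixes land in $\Nn$, which acts on monochromatic cylinders by Lemma~\ref{lem:asymp cylinders} -- so your route, once repaired, is a legitimate and arguably more transparent reorganisation rather than a genuinely different proof.
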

\begin{proof}
First suppose that $(G,E)$ is level transitive. We suppose that
$\Jj=\Jj_{(G,E)}$ is not connected, and derive a contradiction. Let $H$ be a
finite generating set for $G$. Fix closed, non-empty subsets $A, B \subset
\Jj$ such that $A \cup B=\Jj$ and $A \cap B = \varnothing$. Let
$X_A=q^{-1}(A)$ and $X_B=q^{-1}(B)$. Then $X_A$ and $X_B$ are closed,
non-empty subsets of $E^{-\infty}$ such that $X_A \cup X_B = E^{-\infty}$ and
$X_A \cap X_B=\varnothing$. Define
\[
X_A^n:=\{a_{-n} \cdots a_{-1} \mid a=\ldots a_{-n-1}a_{-n} \ldots a_{-1} \in X_A\}.
\]
Since $X_A$ is open, we write it as a union $X_A = \bigcup_{\mu \in I}
\lZ{\mu}$ of cylinder sets. Since $X_A$ is also compact, there is a finite $F
\subseteq I$ such that $X_A = \bigcup_{\mu \in F} \lZ{\mu}$. Put $N =
\max\{|\mu| : \mu \in F\}$. For each $\mu \in F$, we have $\lZ{\mu} =
\bigcup_{\nu \in E^*\mu \cap E^n} \lZ{\nu}$. So $\lZ{\nu} \subset X_A$
whenever for any $\nu \in X_A^n$.

Since $(G, E)$ is level-transitive and contracting, for each $n \geq N$,
there exist $a_{-n} \ldots a_{-1}$ in $X_A^n$ and $g_n \in \Nn$ such that $g_n
\cdot a_{-n} \cdots a_{-1} \in X_B$. By Lemma~\ref{lem:asymp cylinders},
there exist $\mu_n \in X_A \cap E^* (a_{-n} \cdots a_{-1})$ and $\nu_n \in
X_B \cap E^*(g_n \cdot a_{-n} \cdots a_{-1})$. Since $X_A, X_B$ are compact,
there is an increasing sequence $(n_k)^\infty_{k=1}$ of natural numbers such
that $(\mu_{n_{k}})_k$ and $(\nu_{n_k})_k$ both converge, say to $\mu_{n_k}
\to \mu \in X_A$ and $\nu_{n_k} \to \nu \in X_B$. Since $\Nn$ is finite, $\mu
\sim_\aeq \nu$. So $q_{\aeq}(\mu) = q_{\aeq}(\nu) \in A \cap B$, a
contradiction. Thus $\Jj$ is connected.

Now suppose that $(G, E)$ is not level-transitive. Fix $n \in \NN$ and $a \in
E^{n}$ such that $G\cdot a\neq E^{n}$. Define $A' := \bigcup_{a'\in G\cdot p}
Z(a']$ and $B': = \bigcup_{b'\in E^{n}\setminus G\cdot p}Z(b']$. Then, $A :=
q(A')$ and $B := q(B')$ are disjoint compact sets in $\Jj$ such that $A\cup B
= \Jj$.
\end{proof}

\begin{cor}
Suppose that $(G,E)$ is a contracting and recurrent self-similar groupoid
action on a finite directed graph $E$ such that $G$ is finitely generated. Then the limit space
$\Jj_{(G,E)}$ is connected.
\end{cor}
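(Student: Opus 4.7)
The plan is essentially a one-line chase through results already established. By the Proposition proved earlier in this subsection (the one stating that a recurrent self-similar groupoid action on a finite directed graph acts level-transitively on $E^*$), the hypothesis that $(G,E)$ is recurrent immediately yields that $(G,E)$ is level-transitive. Now the Theorem stated just before this Corollary asserts that, for a contracting self-similar groupoid action on a finite directed graph with $G$ finitely generated, the limit space $\Jj_{(G,E)}$ is connected if and only if $(G,E)$ is level-transitive. All hypotheses of that theorem hold here by assumption, so we simply invoke its forward direction to conclude that $\Jj_{(G,E)}$ is connected.

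There is really no obstacle to this argument, since the two inputs (recurrence $\Rightarrow$ level-transitivity, and level-transitivity $\Rightarrow$ connectedness of the limit space) have already been carried out in full generality above. The only thing worth double-checking is that the hypotheses line up: recurrence is a condition on groupoid elements with matching source/range data and holds by assumption; $G$ finitely generated and $(G,E)$ contracting are both assumed; and $E$ is a finite directed graph. So the quoted proposition and theorem apply verbatim, and the corollary follows.

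If one wanted to be slightly more self-contained, one could phrase the proof as: \emph{``By the preceding proposition, $(G,E)$ is level-transitive. By the preceding theorem, $\Jj_{(G,E)}$ is connected.''} No new computation, no new estimates, no new topology is required; the corollary is purely a bookkeeping consequence of the two results immediately preceding it.
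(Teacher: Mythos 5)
Your proposal is correct and is exactly the argument the paper intends: the corollary is stated without proof precisely because it follows immediately by combining the preceding proposition (recurrence implies level-transitivity) with the forward direction of the preceding theorem (for a contracting, finitely generated action, level-transitivity implies connectedness of the limit space). All hypotheses line up as you checked, so nothing further is needed.
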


\begin{exm}\label{s,r clarifying example_schreier}
\begin{figure}
\begin{tikzpicture}
\begin{scope}[xshift=0cm,yshift=0cm]
\def\x{1.0}
\node at (-3,0) {$\Gamma_1$:};
\node[vertex] (vert_00) at (1:\x)   {$1$}
	edge [->,>=latex,out=10,in=-10,loop,thick] node[right,pos=0.5]{$v$} (vert_00);
\node[vertex] (vert_10) at (90:\x)   {$4$}
	edge [->,>=latex,out=100,in=80,loop,thick] node[left,pos=0.5]{$w$} (vert_10)
	edge [<-,>=latex,out=0,in=90,thick] node[above,pos=0.5]{$a$} (vert_00);
\node[vertex] (vert_01) at (180:\x)   {$2$}
	edge [->,>=latex,out=190,in=170,loop,thick] node[left,pos=0.5]{$v$} (vert_01)
	edge [<-,>=latex,out=90,in=180,thick] node[above,pos=0.5]{$b$} (vert_10);
\node[vertex] (vert_11) at (270:\x)   {$3$}
	edge [->,>=latex,out=260,in=280,loop,thick] node[left,pos=0.5]{$w$} (vert_11)
	edge [<-,>=latex,out=180,in=270,thick] node[below,pos=0.5]{$a$} (vert_01)
	edge [->,>=latex,out=0,in=270,thick] node[below,pos=0.5]{$b$} (vert_00);
\end{scope}
\begin{scope}[xshift=7cm,yshift=0cm]
\def\x{2}
\node at (-3.9,0) {$\Gamma_2$:};
\node[vertex] (vert_000) at (1:\x)   {$11$}
	edge [->,>=latex,out=10,in=-10,loop,thick] node[right,pos=0.5]{$v$} (vert_000);
\node[vertex] (vert_100) at (45:\x)   {$41$}
	edge [->,>=latex,out=55,in=35,loop,thick] node[right,pos=0.5]{$w$} (vert_100)
	edge [<-,>=latex,out=-45,in=90,thick] node[right,pos=0.5]{$a$} (vert_000);
\node[vertex] (vert_010) at (90:\x)   {$24$}
	edge [->,>=latex,out=100,in=80,loop,thick] node[left,pos=0.5]{$v$} (vert_010)
	edge [<-,>=latex,out=0,in=135,thick] node[above,pos=0.5]{$b$} (vert_100);
\node[vertex] (vert_110) at (135:\x)   {$32$}
	edge [->,>=latex,out=145,in=125,loop,thick] node[left,pos=0.5]{$w$} (vert_110)
	edge [<-,>=latex,out=45,in=180,thick] node[above,pos=0.5]{$a$} (vert_010);
\node[vertex] (vert_001) at (180:\x)   {$12$}
	edge [->,>=latex,out=190,in=170,loop,thick] node[left,pos=0.5]{$v$} (vert_001)
	edge [<-,>=latex,out=90,in=225,thick] node[left,pos=0.5]{$b$} (vert_110);
\node[vertex] (vert_101) at (225:\x)   {$42$}
	edge [->,>=latex,out=235,in=215,loop,thick] node[left,pos=0.5]{$w$} (vert_101)
	edge [<-,>=latex,out=135,in=270,thick] node[left,pos=0.5]{$b$} (vert_001);
\node[vertex] (vert_011) at (270:\x)   {$23$}
	edge [->,>=latex,out=260,in=280,loop,thick] node[left,pos=0.5]{$v$} (vert_011)
	edge [<-,>=latex,out=180,in=315,thick] node[below,pos=0.5]{$a$} (vert_101);
\node[vertex] (vert_111) at (315:\x)   {$31$}
	edge [->,>=latex,out=305,in=325,loop,thick] node[right,pos=0.5]{$w$} (vert_111)
	edge [<-,>=latex,out=225,in=0,thick] node[below,pos=0.5]{$a$} (vert_011)
	edge [->,>=latex,out=45,in=270,thick] node[right,pos=0.5]{$b$} (vert_000);
\end{scope}
\end{tikzpicture}
\caption{The first two Schreier graphs of Example~\ref{s,r clarifying example_schreier}.}
\label{Schreier_clarifying}
\end{figure}
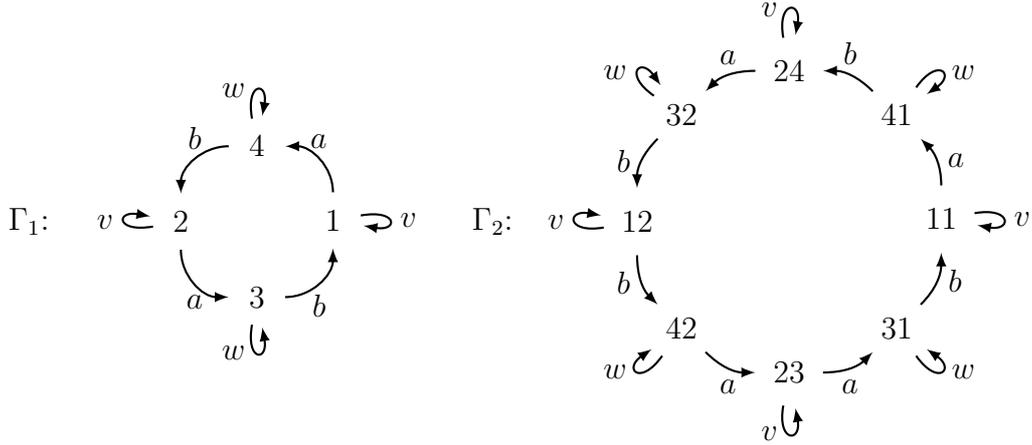

Consider Example~\ref{s,r clarifying example}. We claim that the action is
contracting with nucleus $\Nn=\{v,w,a,b,a^{-1},b^{-1}\}$. Indeed, since all
elements of the automaton appear as restrictions, $v,w,a,b, a^{-1}, b^{-1}
\in \Nn$. To see that this is everything we compute
\begin{align*}
(ab)|_3&=a|_{b \cdot 3}b|_3=v  &(ba)|_1=b|_{a \cdot 1}a|_1=a \\
(ab)|_4&=a|_{b \cdot 4}b|_4=ba   &(ba)|_2=b|_{a \cdot 2}a|_2=b
\end{align*}
and all groupoid elements of length $2$ restrict to the nucleus.

The first two Schreier graphs are depicted in
Figure~\ref{Schreier_clarifying}. More generally, the $n$th Schreier graph is
a cycle of length $2^n$ with a loop at each vertex, showing that the action
is level-transitive. This also suggests that the limit space is homeomorphic
to a circle. One can prove this, by showing inductively that the vertices of
the $n$th Schreier graph can be mapped to the $n$th roots of unity on the
complex circle, metrised so that it has circumference 1, in a way that
extends the pictures in Figure~\ref{Schreier_clarifying}. Specifically, each
vertex is connected in the Schreier graph to its two nearest neighbours on
the circle, and for any infinite path $\mu \in E^{-\infty}$ the images of its
initial segments, regarded as vertices of Schreier graphs, on the unit circle
converge. The map that sends $\mu$ to the limit-point is the desired
homeomorphism: it is continuous because it is a contraction; it is surjective
because its image is both dense and compact; and one checks that it is
injective using the final statement of Proposition~\ref{prp:schreier
approximation}.
\end{exm}

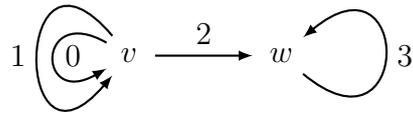
\begin{figure}[h!]
\begin{tikzpicture}
\node[vertex] (vert_x) at (-1,0)   {$v$}
	edge [->,>=latex,out=150,in=210,loop,thick,looseness=8] node[right,pos=0.5]{$0$} (vert_x)
	edge [->,>=latex,out=130,in=230,loop,thick,looseness=10] node[left,pos=0.5]{$1$} (vert_x);
\node[vertex] (vert_y) at (1,0)   {$w$}
	edge [->,>=latex,out=-40,in=40,loop,thick,looseness=10] node[right,pos=0.5]{$3$} (vert_y)
	edge [<-,>=latex,out=180,in=0,thick] node[above,pos=0.5]{$2$} (vert_x);
\end{tikzpicture}
\caption{Graph $E$ for Example~\ref{basilica type example}}
\label{basilica type example graph E}
\end{figure}

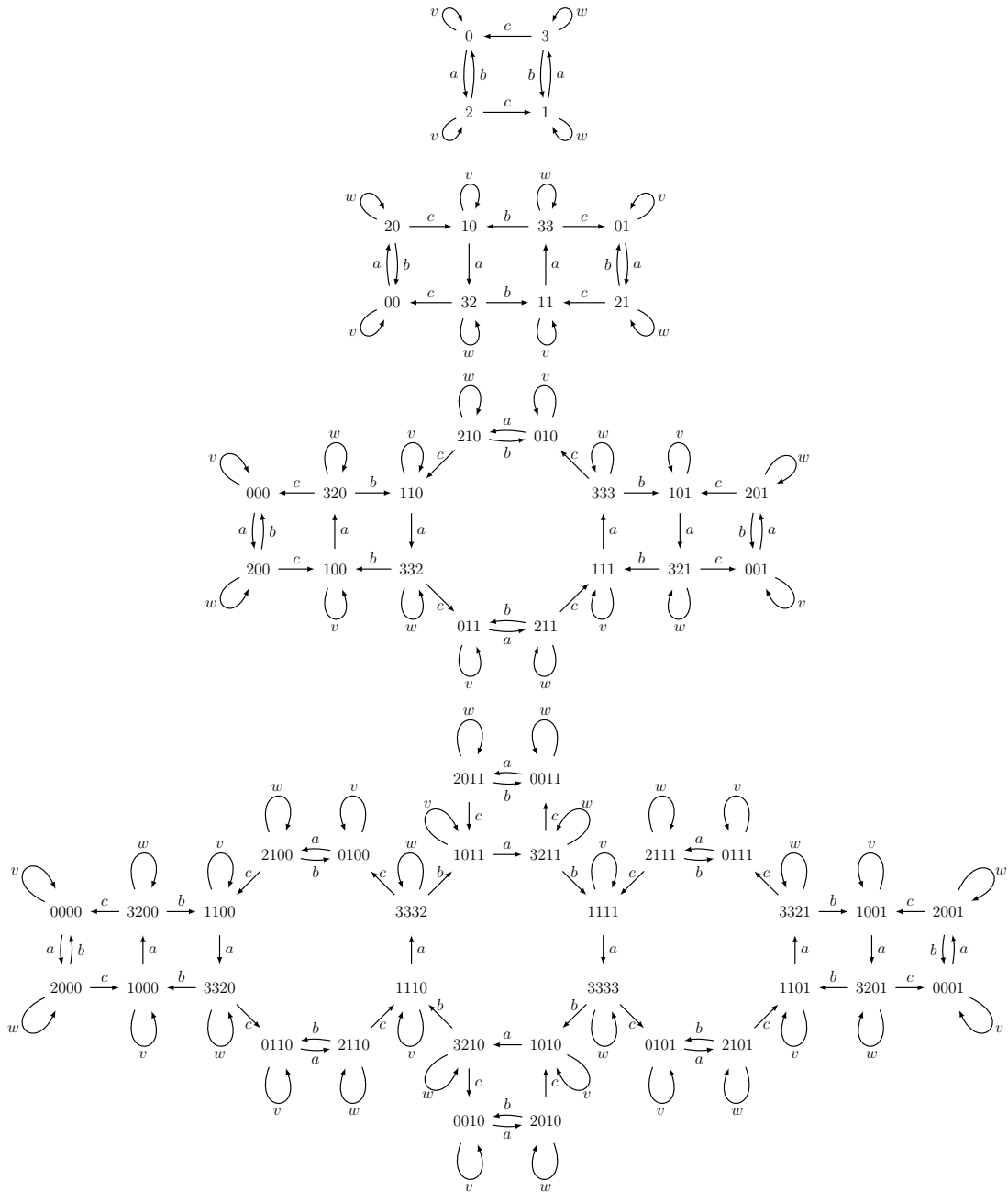
\begin{figure}[h!]\scalebox{0.55}{
\begin{tikzpicture}
\begin{scope}[xshift=0cm,yshift=0cm]
\node[vertex] (vert_0) at (-1,1)   {$0$}
	edge [->,>=latex,out=155,in=115,loop,thick] node[left,pos=0.5]{$v$} (vert_0);
\node[vertex] (vert_3) at (1,1)   {$3$}
	edge [->,>=latex,out=25,in=65,loop,thick] node[right,pos=0.5]{$w$} (vert_3)
	edge [->,>=latex,out=180,in=0,thick] node[above,pos=0.5]{$c$} (vert_0);
\node[vertex] (vert_2) at (-1,-1)   {$2$}
	edge [->,>=latex,out=-155,in=-115,loop,thick] node[left,pos=0.5]{$v$} (vert_2)
	edge [->,>=latex,out=80,in=280,thick] node[right,pos=0.5]{$b$} (vert_0)
	edge [<-,>=latex,out=100,in=260,thick] node[left,pos=0.5]{$a$} (vert_0);
\node[vertex] (vert_1) at (1,-1)   {$1$}
	edge [->,>=latex,out=-25,in=-65,loop,thick] node[right,pos=0.5]{$w$} (vert_1)
	edge [->,>=latex,out=80,in=280,thick] node[right,pos=0.5]{$a$} (vert_3)
	edge [<-,>=latex,out=100,in=260,thick] node[left,pos=0.5]{$b$} (vert_3)
	edge [<-,>=latex,out=180,in=0,thick] node[above,pos=0.5]{$c$} (vert_2);
\end{scope}

\begin{scope}[xshift=0cm,yshift=-5cm]
\node[vertex] (vert_20) at (-3,1)   {$20$}
	edge [->,>=latex,out=155,in=115,loop,thick] node[left,pos=0.5]{$w$} (vert_20);
\node[vertex] (vert_10) at (-1,1)   {$10$}
	edge [<-,>=latex,out=180,in=0,thick] node[above,pos=0.5]{$c$} (vert_20)
	edge [->,>=latex,out=110,in=70,loop,thick] node[above,pos=0.5]{$v$} (vert_10);
\node[vertex] (vert_33) at (1,1)   {$33$}
	edge [->,>=latex,out=180,in=0,thick] node[above,pos=0.5]{$b$} (vert_10)
	edge [->,>=latex,out=110,in=70,loop,thick] node[above,pos=0.5]{$w$} (vert_33);
\node[vertex] (vert_01) at (3,1)   {$01$}
	edge [->,>=latex,out=25,in=65,loop,thick] node[right,pos=0.5]{$v$} (vert_01)
	edge [<-,>=latex,out=180,in=0,thick] node[above,pos=0.5]{$c$} (vert_33);
\node[vertex] (vert_00) at (-3,-1)   {$00$}
	edge [->,>=latex,out=-155,in=-115,loop,thick] node[left,pos=0.5]{$v$} (vert_00)
	edge [<-,>=latex,out=80,in=280,thick] node[right,pos=0.5]{$b$} (vert_20)
	edge [->,>=latex,out=100,in=260,thick] node[left,pos=0.5]{$a$} (vert_20);
\node[vertex] (vert_32) at (-1,-1)   {$32$}
	edge [->,>=latex,out=-110,in=-70,loop,thick] node[below,pos=0.5]{$w$} (vert_32)
	edge [->,>=latex,out=180,in=0,thick] node[above,pos=0.5]{$c$} (vert_00)
	edge [<-,>=latex,out=90,in=270,thick] node[right,pos=0.5]{$a$} (vert_10);
\node[vertex] (vert_11) at (1,-1)   {$11$}
	edge [->,>=latex,out=-110,in=-70,loop,thick] node[below,pos=0.5]{$v$} (vert_11)
	edge [<-,>=latex,out=180,in=0,thick] node[above,pos=0.5]{$b$} (vert_32)
	edge [->,>=latex,out=90,in=270,thick] node[right,pos=0.5]{$a$} (vert_33);
\node[vertex] (vert_21) at (3,-1)   {$21$}
	edge [->,>=latex,out=-25,in=-65,loop,thick] node[right,pos=0.5]{$w$} (vert_21)
	edge [<-,>=latex,out=80,in=280,thick] node[right,pos=0.5]{$a$} (vert_01)
	edge [->,>=latex,out=100,in=260,thick] node[left,pos=0.5]{$b$} (vert_01)
	edge [->,>=latex,out=180,in=0,thick] node[above,pos=0.5]{$c$} (vert_11);
\end{scope}

\begin{scope}[xshift=-0.5cm,yshift=-12cm]
\node[vertex] (vert_000) at (-6,1)   {$000$}
	edge [->,>=latex,out=155,in=115,loop,thick] node[left,pos=0.5]{$v$} (vert_000);
\node[vertex] (vert_320) at (-4,1)   {$320$}
	edge [->,>=latex,out=180,in=0,thick] node[above,pos=0.5]{$c$} (vert_000)
	edge [->,>=latex,out=110,in=70,loop,thick] node[above,pos=0.5]{$w$} (vert_320);
\node[vertex] (vert_110) at (-2,1)   {$110$}
	edge [<-,>=latex,out=180,in=0,thick] node[above,pos=0.5]{$b$} (vert_320)
	edge [->,>=latex,out=110,in=70,loop,thick] node[above,pos=0.5]{$v$} (vert_110);
\node[vertex] (vert_210) at (-0.5,2.5)   {$210$}
	edge [->,>=latex,out=110,in=70,loop,thick] node[above,pos=0.5]{$w$} (vert_210)
	edge [->,>=latex,out=225,in=45,thick] node[above,pos=0.5]{$c$} (vert_110);
\node[vertex] (vert_200) at (-6,-1)   {$200$}
	edge [->,>=latex,out=-155,in=-115,loop,thick] node[left,pos=0.5]{$w$} (vert_200)
	edge [->,>=latex,out=80,in=280,thick] node[right,pos=0.5]{$b$} (vert_000)
	edge [<-,>=latex,out=100,in=260,thick] node[left,pos=0.5]{$a$} (vert_000);
\node[vertex] (vert_100) at (-4,-1)   {$100$}
	edge [->,>=latex,out=-110,in=-70,loop,thick] node[below,pos=0.5]{$v$} (vert_100)
	edge [<-,>=latex,out=180,in=0,thick] node[above,pos=0.5]{$c$} (vert_200)
	edge [->,>=latex,out=90,in=270,thick] node[right,pos=0.5]{$a$} (vert_320);
\node[vertex] (vert_332) at (-2,-1)   {$332$}
	edge [->,>=latex,out=-110,in=-70,loop,thick] node[below,pos=0.5]{$w$} (vert_332)
	edge [->,>=latex,out=180,in=0,thick] node[above,pos=0.5]{$b$} (vert_100)
	edge [<-,>=latex,out=90,in=270,thick] node[right,pos=0.5]{$a$} (vert_110);
\node[vertex] (vert_011) at (-0.5,-2.5)   {$011$}
	edge [->,>=latex,out=-110,in=-70,loop,thick] node[below,pos=0.5]{$v$} (vert_011)
	edge [<-,>=latex,out=135,in=-45,thick] node[below,pos=0.5]{$c$} (vert_332);
\node[vertex] (vert_010) at (1.5,2.5)   {$010$}
	edge [->,>=latex,out=170,in=10,thick] node[above,pos=0.5]{$a$} (vert_210)
	edge [<-,>=latex,out=190,in=-10,thick] node[below,pos=0.5]{$b$} (vert_210)
	edge [->,>=latex,out=70,in=110,loop,thick] node[above,pos=0.5]{$v$} (vert_010);
\node[vertex] (vert_333) at (3,1)   {$333$}
	edge [->,>=latex,out=135,in=-45,thick] node[above,pos=0.5]{$c$} (vert_010)
	edge [->,>=latex,in=110,out=70,loop,thick] node[above,pos=0.5]{$w$} (vert_333);
\node[vertex] (vert_101) at (5,1)   {$101$}
	edge [<-,>=latex,out=180,in=0,thick] node[above,pos=0.5]{$b$} (vert_333)
	edge [->,>=latex,in=110,out=70,loop,thick] node[above,pos=0.5]{$v$} (vert_101);
\node[vertex] (vert_201) at (7,1)   {$201$}
	edge [->,>=latex,in=25,out=65,loop,thick] node[right,pos=0.5]{$w$} (vert_201)
	edge [->,>=latex,out=180,in=0,thick] node[above,pos=0.5]{$c$} (vert_101);
\node[vertex] (vert_211) at (1.5,-2.5)   {$211$}
	edge [->,>=latex,out=170,in=10,thick] node[above,pos=0.5]{$b$} (vert_011)
	edge [<-,>=latex,out=190,in=-10,thick] node[below,pos=0.5]{$a$} (vert_011)
	edge [->,>=latex,out=-70,in=-110,loop,thick] node[below,pos=0.5]{$w$} (vert_211);
\node[vertex] (vert_111) at (3,-1)   {$111$}
	edge [->,>=latex,in=-110,out=-70,loop,thick] node[below,pos=0.5]{$v$} (vert_111)
	edge [<-,>=latex,out=-135,in=45,thick] node[below,pos=0.5]{$c$} (vert_211)
	edge [->,>=latex,out=90,in=270,thick] node[right,pos=0.5]{$a$} (vert_333);
\node[vertex] (vert_321) at (5,-1)   {$321$}
	edge [->,>=latex,in=-110,out=-70,loop,thick] node[below,pos=0.5]{$w$} (vert_321)
	edge [->,>=latex,out=180,in=0,thick] node[above,pos=0.5]{$b$} (vert_111)
	edge [<-,>=latex,out=90,in=270,thick] node[right,pos=0.5]{$a$} (vert_101);
\node[vertex] (vert_001) at (7,-1)   {$001$}
	edge [->,>=latex,out=-25,in=-65,loop,thick] node[right,pos=0.5]{$v$} (vert_001)
	edge [->,>=latex,out=80,in=280,thick] node[right,pos=0.5]{$a$} (vert_201)
	edge [<-,>=latex,out=100,in=260,thick] node[left,pos=0.5]{$b$} (vert_201)
	edge [<-,>=latex,out=180,in=0,thick] node[above,pos=0.5]{$c$} (vert_321);
\end{scope}

\begin{scope}[xshift=0cm,yshift=-23cm]
\node[vertex] (vert_0000) at (-6-5.5,1)   {$0000$}
	edge [->,>=latex,out=155,in=115,loop,thick] node[left,pos=0.5]{$v$} (vert_0000);
\node[vertex] (vert_3200) at (-4-5.5,1)   {$3200$}
	edge [->,>=latex,out=180,in=0,thick] node[above,pos=0.5]{$c$} (vert_0000)
	edge [->,>=latex,out=110,in=70,loop,thick] node[above,pos=0.5]{$w$} (vert_3200);
\node[vertex] (vert_1100) at (-2-5.5,1)   {$1100$}
	edge [<-,>=latex,out=180,in=0,thick] node[above,pos=0.5]{$b$} (vert_3200)
	edge [->,>=latex,out=110,in=70,loop,thick] node[above,pos=0.5]{$v$} (vert_1100);
\node[vertex] (vert_2100) at (-0.5-5.5,2.5)   {$2100$}
	edge [->,>=latex,out=110,in=70,loop,thick] node[above,pos=0.5]{$w$} (vert_2100)
	edge [->,>=latex,out=225,in=45,thick] node[above,pos=0.5]{$c$} (vert_1100);
\node[vertex] (vert_2000) at (-6-5.5,-1)   {$2000$}
	edge [->,>=latex,out=-155,in=-115,loop,thick] node[left,pos=0.5]{$w$} (vert_2000)
	edge [->,>=latex,out=80,in=280,thick] node[right,pos=0.5]{$b$} (vert_0000)
	edge [<-,>=latex,out=100,in=260,thick] node[left,pos=0.5]{$a$} (vert_0000);
\node[vertex] (vert_1000) at (-4-5.5,-1)   {$1000$}
	edge [->,>=latex,out=-110,in=-70,loop,thick] node[below,pos=0.5]{$v$} (vert_1000)
	edge [<-,>=latex,out=180,in=0,thick] node[above,pos=0.5]{$c$} (vert_2000)
	edge [->,>=latex,out=90,in=270,thick] node[right,pos=0.5]{$a$} (vert_3200);
\node[vertex] (vert_3320) at (-2-5.5,-1)   {$3320$}
	edge [->,>=latex,out=-110,in=-70,loop,thick] node[below,pos=0.5]{$w$} (vert_3320)
	edge [->,>=latex,out=180,in=0,thick] node[above,pos=0.5]{$b$} (vert_1000)
	edge [<-,>=latex,out=90,in=270,thick] node[right,pos=0.5]{$a$} (vert_1100);
\node[vertex] (vert_0110) at (-0.5-5.5,-2.5)   {$0110$}
	edge [->,>=latex,out=-110,in=-70,loop,thick] node[below,pos=0.5]{$v$} (vert_0110)
	edge [<-,>=latex,out=135,in=-45,thick] node[below,pos=0.5]{$c$} (vert_3320);
\node[vertex] (vert_0100) at (1.5-5.5,2.5)   {$0100$}
	edge [->,>=latex,out=170,in=10,thick] node[above,pos=0.5]{$a$} (vert_2100)
	edge [<-,>=latex,out=190,in=-10,thick] node[below,pos=0.5]{$b$} (vert_2100)
	edge [->,>=latex,out=70,in=110,loop,thick] node[above,pos=0.5]{$v$} (vert_0100);
\node[vertex] (vert_3332) at (3-5.5,1)   {$3332$}
	edge [->,>=latex,out=135,in=-45,thick] node[above,pos=0.5]{$c$} (vert_0100)
	edge [->,>=latex,in=110,out=70,loop,thick] node[above,pos=0.5]{$w$} (vert_3332);
\node[vertex] (vert_2110) at (1.5-5.5,-2.5)   {$2110$}
	edge [->,>=latex,out=170,in=10,thick] node[above,pos=0.5]{$b$} (vert_0110)
	edge [<-,>=latex,out=190,in=-10,thick] node[below,pos=0.5]{$a$} (vert_0110)
	edge [->,>=latex,out=-70,in=-110,loop,thick] node[below,pos=0.5]{$w$} (vert_2110);
\node[vertex] (vert_1110) at (3-5.5,-1)   {$1110$}
	edge [->,>=latex,in=-110,out=-70,loop,thick] node[below,pos=0.5]{$v$} (vert_1110)
	edge [<-,>=latex,out=-135,in=45,thick] node[below,pos=0.5]{$c$} (vert_2110)
	edge [->,>=latex,out=90,in=270,thick] node[right,pos=0.5]{$a$} (vert_3332);
\node[vertex] (vert_1011) at (-1,2.5)   {$1011$}
	edge [->,>=latex,out=155,in=115,loop,thick] node[above,pos=0.5]{$v$} (vert_1011)
	edge [<-,>=latex,out=225,in=45,thick] node[above,pos=0.5]{$b$} (vert_3332);
\node[vertex] (vert_3211) at (1,2.5)   {$3211$}
	edge [<-,>=latex,out=180,in=0,thick] node[above,pos=0.5]{$a$} (vert_1011)
	edge [->,>=latex,out=25,in=65,loop,thick] node[above,pos=0.5]{$w$} (vert_3211);
\node[vertex] (vert_2011) at (-1,4.5)   {$2011$}
	edge [->,>=latex,out=110,in=70,loop,thick] node[above,pos=0.5]{$w$} (vert_2011)
	edge [->,>=latex,out=270,in=90,thick] node[right,pos=0.5]{$c$} (vert_1011);
\node[vertex] (vert_0011) at (1,4.5)   {$0011$}
	edge [<-,>=latex,out=270,in=90,thick] node[right,pos=0.5]{$c$} (vert_3211)
	edge [->,>=latex,out=170,in=10,thick] node[above,pos=0.5]{$a$} (vert_2011)
	edge [<-,>=latex,out=190,in=-10,thick] node[below,pos=0.5]{$b$} (vert_2011)
	edge [->,>=latex,out=70,in=110,loop,thick] node[above,pos=0.5]{$w$} (vert_0011);
\node[vertex] (vert_3210) at (-1,-2.5)   {$3210$}
	edge [->,>=latex,in=155+90,out=115+90,loop,thick] node[below,pos=0.5]{$w$} (vert_3210)
	edge [->,>=latex,out=-225,in=-45,thick] node[above,pos=0.5]{$b$} (vert_1110);
\node[vertex] (vert_1010) at (1,-2.5)   {$1010$}
	edge [->,>=latex,out=180,in=0,thick] node[above,pos=0.5]{$a$} (vert_3210)
	edge [->,>=latex,in=25-90,out=65-90,loop,thick] node[below,pos=0.5]{$v$} (vert_1010);
\node[vertex] (vert_0010) at (-1,-4.5)   {$0010$}
	edge [->,>=latex,in=110+180,out=70+180,loop,thick] node[below,pos=0.5]{$v$} (vert_0010)
	edge [<-,>=latex,out=90,in=270,thick] node[right,pos=0.5]{$c$} (vert_3210);
\node[vertex] (vert_2010) at (1,-4.5)   {$2010$}
	edge [->,>=latex,out=90,in=270,thick] node[right,pos=0.5]{$c$} (vert_1010)
	edge [->,>=latex,out=170,in=10,thick] node[above,pos=0.5]{$b$} (vert_0010)
	edge [<-,>=latex,out=190,in=-10,thick] node[below,pos=0.5]{$a$} (vert_0010)
	edge [->,>=latex,in=70+180,out=110+180,loop,thick] node[below,pos=0.5]{$w$} (vert_2010);
\node[vertex] (vert_1111) at (-2+4.5,1)   {$1111$}
	edge [<-,>=latex,out=-225,in=-45,thick] node[above,pos=0.5]{$b$} (vert_3211)
	edge [->,>=latex,out=110,in=70,loop,thick] node[above,pos=0.5]{$v$} (vert_1111);
\node[vertex] (vert_2111) at (-0.5+4.5,2.5)   {$2111$}
	edge [->,>=latex,out=110,in=70,loop,thick] node[above,pos=0.5]{$w$} (vert_2111)
	edge [->,>=latex,out=225,in=45,thick] node[above,pos=0.5]{$c$} (vert_1111);
\node[vertex] (vert_3333) at (-2+4.5,-1)   {$3333$}
	edge [->,>=latex,out=225,in=45,thick] node[above,pos=0.5]{$b$} (vert_1010)
	edge [->,>=latex,out=-110,in=-70,loop,thick] node[below,pos=0.5]{$w$} (vert_3333)
	edge [<-,>=latex,out=90,in=270,thick] node[right,pos=0.5]{$a$} (vert_1111);
\node[vertex] (vert_0101) at (-0.5+4.5,-2.5)   {$0101$}
	edge [->,>=latex,out=-110,in=-70,loop,thick] node[below,pos=0.5]{$v$} (vert_0101)
	edge [<-,>=latex,out=135,in=-45,thick] node[below,pos=0.5]{$c$} (vert_3333);
\node[vertex] (vert_0111) at (1.5+4.5,2.5)   {$0111$}
	edge [->,>=latex,out=170,in=10,thick] node[above,pos=0.5]{$a$} (vert_2111)
	edge [<-,>=latex,out=190,in=-10,thick] node[below,pos=0.5]{$b$} (vert_2111)
	edge [->,>=latex,out=70,in=110,loop,thick] node[above,pos=0.5]{$v$} (vert_0111);
\node[vertex] (vert_3321) at (3+4.5,1)   {$3321$}
	edge [->,>=latex,out=135,in=-45,thick] node[above,pos=0.5]{$c$} (vert_0111)
	edge [->,>=latex,in=110,out=70,loop,thick] node[above,pos=0.5]{$w$} (vert_3321);
\node[vertex] (vert_1001) at (5+4.5,1)   {$1001$}
	edge [<-,>=latex,out=180,in=0,thick] node[above,pos=0.5]{$b$} (vert_3321)
	edge [->,>=latex,in=110,out=70,loop,thick] node[above,pos=0.5]{$v$} (vert_1001);
\node[vertex] (vert_2001) at (7+4.5,1)   {$2001$}
	edge [->,>=latex,in=25,out=65,loop,thick] node[right,pos=0.5]{$w$} (vert_2001)
	edge [->,>=latex,out=180,in=0,thick] node[above,pos=0.5]{$c$} (vert_1001);
\node[vertex] (vert_2101) at (1.5+4.5,-2.5)   {$2101$}
	edge [->,>=latex,out=170,in=10,thick] node[above,pos=0.5]{$b$} (vert_0101)
	edge [<-,>=latex,out=190,in=-10,thick] node[below,pos=0.5]{$a$} (vert_0101)
	edge [->,>=latex,out=-70,in=-110,loop,thick] node[below,pos=0.5]{$w$} (vert_2101);
\node[vertex] (vert_1101) at (3+4.5,-1)   {$1101$}
	edge [->,>=latex,in=-110,out=-70,loop,thick] node[below,pos=0.5]{$v$} (vert_1101)
	edge [<-,>=latex,out=-135,in=45,thick] node[below,pos=0.5]{$c$} (vert_2101)
	edge [->,>=latex,out=90,in=270,thick] node[right,pos=0.5]{$a$} (vert_3321);
\node[vertex] (vert_3201) at (5+4.5,-1)   {$3201$}
	edge [->,>=latex,in=-110,out=-70,loop,thick] node[below,pos=0.5]{$w$} (vert_3201)
	edge [->,>=latex,out=180,in=0,thick] node[above,pos=0.5]{$b$} (vert_1101)
	edge [<-,>=latex,out=90,in=270,thick] node[right,pos=0.5]{$a$} (vert_1001);
\node[vertex] (vert_0001) at (7+4.5,-1)   {$0001$}
	edge [->,>=latex,out=-25,in=-65,loop,thick] node[right,pos=0.5]{$v$} (vert_0001)
	edge [->,>=latex,out=80,in=280,thick] node[right,pos=0.5]{$a$} (vert_2001)
	edge [<-,>=latex,out=100,in=260,thick] node[left,pos=0.5]{$b$} (vert_2001)
	edge [<-,>=latex,out=180,in=0,thick] node[above,pos=0.5]{$c$} (vert_3201);
\end{scope}
\end{tikzpicture}}
\caption{The first four Schreier graphs of Example~\ref{basilica type example}.}
\label{Schreier_basilicatype}
\end{figure}

\begin{exm}
\label{basilica type example}
Consider the graph $E$ in Figure~\ref{basilica type example},
and define a self-similar groupoid through the $E$-automaton
\begin{equation}\label{defbasilicatype}
\begin{split}
a \cdot 0=2 \quad a|_{0}=v \qquad b\cdot 2=0 \quad b|_2=v \qquad c\cdot 2=1 \quad c|_2=v \\
a \cdot 1=3 \quad a|_1=a \qquad b\cdot 3=1 \quad b|_3=c \qquad c\cdot 3=0 \quad c|_3=b.
\end{split}
\end{equation}
We claim that this action is contracting with nucleus
\[
\Nn=\{v,w,a,b,c,ba,ca,a^{-1},b^{-1},c^{-1},(ba)^{-1},(ca)^{-1}\}.
\]
To see this we note that all elements of the automaton appear as
restrictions, so $v,w,a,b,c$ and their inverses are in the nucleus. That $ba$
and $ca$ are in the nucleus follow from the computations
\[
(ba)|_{1}=b|_{a \cdot 1} a|_{1}=ca \quad \text{ and } \quad (ca)|_{1}=c|_{a\cdot 1}a|_{1}=ba.
\]
One can now compute that all groupoid elements of length $3$ reduce to one of the elements of the nucleus after restriction to length $2$ words.

The first four Schreier graphs are presented in
Figure~\ref{Schreier_basilicatype}. They suggest that the action is
level-transitive and that the limit-space is homeomorphic to the basilica
fractal (the Julia set of $z^2-1$); one can prove this via an argument of the
sort outlined in Example~\ref{s,r clarifying example_schreier}.
\end{exm}

\section{Dynamics on the limit space}\label{sec:J dynamics}

In this section we describe an action of $\NN$ by locally expansive local homeomorphisms of the
limit space $\Jj$ of a contracting, regular self-similar groupoid action. We will use
this in the next section to construct a Smale space from the self-similar groupoid action.

Let $E$ be a finite directed graph. The \emph{shift map} $\sigma : E^{-\infty} \to E^{-\infty}$ is
defined by $\sigma(\dots x_{-3} x_{-2} x_{-1}) = \dots x_{-3} x_{-2}$; that is, $\sigma$ deletes
the right-most edge of a left-infinite path. This $\sigma$ is a local homeomorphism because it
restricts to a homeomorphism $\sigma : \lZ{\mu e} \to \lZ{\mu}$ for any finite path $\mu$ and any
edge $e$ such that $s(\mu) = r(e)$. The main result in this section is about self-similar groupoid
actions that are \emph{regular} in the following sense, which is based on the regularity condition
used by Nekrashevych in \cite{Nekrashevych:Cstar_selfsimilar}.

\begin{dfn}[cf. {\cite[Definition 6.1]{Nekrashevych:Cstar_selfsimilar}}]\label{def:regular}
Let $E$ be a finite directed graph. Let $(G, E)$ be a self-similar groupoid
action. We say that $(G, E)$ is \emph{regular} if for every $g  \in G$ and
every $y  \in E^\infty$ such that $g \cdot y = y$, there exists $\mu $ in
$E^*$ such that $y \in \rZ{\mu}$, $g \cdot \mu = \mu$ and $g|_\mu = s(\mu)$.
\end{dfn}

\begin{rmk}
Since, by definition, self-similar groupoid actions are faithful, the
regularity condition is equivalent to the condition that if $y \in E^\infty$
and $g \cdot y = y$, then there is a clopen neighbourhood of $y$ that is
pointwise fixed by $g$.
\end{rmk}

Our main theorem in this section says that for contracting, regular
self-similar groupoid actions, the shift map induces a locally expanding
local homeomorphism of $\Jj$.

\begin{thm}\label{thm:Jsig}
Let $E$ be a finite directed graph with no sources. Let $(G, E)$ be a
contracting, regular self-similar groupoid action with limit space $\Jj$ as
in Definition~\ref{dfn:limit space}. Let $\sigma : E^{-\infty} \to
E^{-\infty}$ be the shift map. Then there is a surjective map $\Jsig : \Jj
\to \Jj$ such that $\Jsig([x]) = [\sigma(x)]$ for all $x  \in E^{-\infty}$.
Furthermore, there exist $\varepsilon > 0$, $c>1$ and a metric $d_\Jj$ on $\Jj$ such that
\begin{enumerate}
\item whenever $d_\Jj([x],
[y]) < \varepsilon$, we have $d_\Jj(\Jsig([x]), \Jsig([y])) = c\cdot d_\Jj([x], [y])$, and
\item whenever $\alpha \leq\varepsilon$, we have $\tilde{\sigma}(B([x],\alpha)) = B(\tilde{\sigma}([x]),c\alpha).$
\end{enumerate}
In particular, $\Jsig$ is a locally
expanding local homeomorphism.
\end{thm}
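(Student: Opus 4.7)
The plan is to push the shift $\sigma$ on $E^{-\infty}$ down to $\Jj$ using the nucleus sequences of Lemma~\ref{lem:ae by nucleus}, and to equip $\Jj$ with the quotient of the ultrametric on $E^{-\infty}$ so that the expansion constant is inherited exactly. Well-definedness and surjectivity of $\Jsig$ are routine: if $(h_n)_{n<0}$ in $\Nn$ implements $x \sim_{\aeq} y$, then because $\sigma(x)_n = x_{n-1}$ the reindexed sequence $\tilde h_n := h_{n-1}$ implements $\sigma(x) \sim_{\aeq} \sigma(y)$; and surjectivity follows from the no-sources hypothesis because any edge $e$ with $r(e) = s(x_{-1})$ satisfies $\sigma(xe) = x$.

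For the metric I would fix $c > 1$ and consider the quotient of the ultrametric $d$ from~\eqref{eq:cylinder metric},
\[
d_\Jj([x], [y]) := \min\{d(x', y') \mid x' \in [x],\ y' \in [y]\}.
\]
The minimum is attained since $[x] \times [y]$ is finite by Corollary~\ref{cor:finite orbits}; the triangle inequality is inherited from the ultrametric inequality together with this finiteness; and Corollary~\ref{cor:limit space basis} confirms that $d_\Jj$ generates the quotient topology. Choosing $\varepsilon < 1$ ensures that $d_\Jj([x], [y]) < \varepsilon$ forces the minimising pair to share its last edge, after which the shift on $E^{-\infty}$ satisfies $d(\sigma(x'), \sigma(y')) = c \cdot d(x',y')$, yielding ``$\leq$'' in~(1). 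For the reverse inequality, given a minimising pair $(a, b)$ for $d_\Jj(\Jsig[x], \Jsig[y])$, I would use Lemma~\ref{lem:ae by nucleus} to extract nucleus-valued sequences implementing $\sigma(x) \sim_{\aeq} a$ and $\sigma(y) \sim_{\aeq} b$, and then compute using the self-similar identity and Lemma~\ref{properties SSA} that appended edges $e := (\tilde h_{-1}|_{x_{-2}}) \cdot x_{-1}$ and $f := (\tilde k_{-1}|_{y_{-2}}) \cdot y_{-1}$ satisfy $ae \in [x]$ and $bf \in [y]$. The key is to arrange $e = f$, which is precisely what regularity supplies: otherwise one can locate a nucleus element fixing a right-infinite path without fixing any finite prefix, contradicting Definition~\ref{def:regular}.

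The ball identity~(2) then follows from~(1): ``$\subseteq$'' is immediate, while ``$\supseteq$'' uses surjectivity combined with the same lifting construction relative to the basepoint $[x]$. Local injectivity of $\Jsig$ comes from~(1) together with $c > 1$, continuity is inherited from $\sigma$ on $E^{-\infty}$, and~(2) provides openness on small balls, so $\Jsig$ is a locally expanding local homeomorphism as claimed.

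The main obstacle is the lifting step in the reverse expansion inequality, specifically arranging that the lifts $ae \in [x]$ and $bf \in [y]$ use the \emph{same} appended edge. This is precisely where regularity is crucial: without it, $\Jsig$ could identify two close-but-distinct points in $\Jj$, producing shadow preimages and destroying both local injectivity and exact expansion. Regularity rules out such pathologies by forcing any would-be extra asymptotic equivalence after shifting to come from an asymptotic equivalence before shifting.
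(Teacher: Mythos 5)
There is a genuine gap, and it sits at the heart of parts (1) and (2): the function $\rho([x],[y]) := \min\{d(x',y') \mid x' \in [x],\, y' \in [y]\}$ is not a metric on $\Jj$, and it does not induce the quotient topology, so the whole ``quotient ultrametric'' strategy collapses. The triangle inequality is \emph{not} inherited from the ultrametric inequality plus finiteness of classes, because chaining passes through \emph{different} representatives of the middle class. Concretely, consider the dyadic odometer (the one-vertex version of Example~\ref{s,r clarifying example}), whose limit space is the circle. Let $[y]$ be a two-point class $\{y', y''\}$ with $y'_{-1} \neq y''_{-1}$, so $d(y',y'')=1$; let $x'$ agree with $y'$ on a long suffix and $z'$ agree with $y''$ on a long suffix, with $[x]=\{x'\}$ and $[z]=\{z'\}$ singletons. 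Then $\rho([x],[y])$ and $\rho([y],[z])$ are both tiny while $\rho([x],[z]) = d(x',z') = 1$, violating the triangle inequality; the same example shows the $\rho$-balls fail to reproduce the quotient topology, whose basic neighbourhoods $U_\mu$ of Corollary~\ref{cor:limit space basis} require \emph{all} representatives to lie in $\bigcup_{g \in \Nn} \lZ{g\cdot\mu}$, not just one. There is no explicit formula of this kind for a compatible metric: the genuine quotient (pseudo)metric requires infima over chains, and then the exact expansion identity $d_\Jj(\Jsig([x]),\Jsig([y])) = c\, d_\Jj([x],[y])$ is no longer automatic.

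The paper's route is structurally different and you should compare it with what you wrote. It first gets \emph{some} metric on $\Jj$ abstractly, via closedness of the quotient map (Lemma~\ref{lem:quotient top} and Corollary~\ref{cor:limit space metrisable}); it then uses regularity through Lemma~\ref{lem:magic k} and Proposition~\ref{prp:mapping_properties} to prove that $\Jsig$ is an open, surjective, positively expansive map (if $\Jsig^n([x])$ and $\Jsig^n([y])$ stay within a Lebesgue number of each other for all $n$, then $[x]=[y]$); and only then does it invoke \cite[Theorem~2.2]{FKM}, which manufactures an adapted metric realising the exact expansion constant $c$ for any Ruelle expanding map. The existence of such a metric is a nontrivial theorem, not a consequence of the cylinder ultrametric. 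Your observations about well-definedness, surjectivity, and the need to lift the shift bijectively on fibres (your ``$e=f$'' step, which is essentially Proposition~\ref{prp:mapping_properties}(1) and rests on Lemma~\ref{lem:magic k}) are correct in spirit, but without a valid metric the expansion statements (1) and (2) remain unproved.
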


Before proving the theorem, we need to establish some preliminary results. To
get started, observe that if $x \sim_\aeq y$, then there is a sequence
$(g_n)_{n < 0} \in \Nn$ such that $g_n \cdot x_n \dots x_{-1} = y_n \dots
y_{-1}$ for all $n$, and it follows that $g_{n-1} \cdot \sigma(x)_n \dots
\sigma(x)_{-1} = g_{n-1} \cdot x_{n-1} \dots x_{-2} = y_{n-1} \dots y_{-2} =
\sigma(y)_n \dots \sigma(y)_{-1}$. That is,
\begin{equation}\label{eq:Jsig welldef}
    x \sim_\aeq y \Longrightarrow \sigma(x) \sim_\aeq \sigma(y).
\end{equation}
Therefore, there exists a map $\tilde{\sigma}:\mathcal{J}\mapsto\mathcal{J}$
as described in Theorem~\ref{thm:Jsig}.

\begin{lem}\label{lem:magic k}
Let $E$ be a finite directed graph with no sources. Let $(G, E)$ be a regular
self-similar groupoid action. For any finite set $F\subseteq G$, there exists
$k  \in \NN$ such that for all $g,h  \in F$ such that $d(g) = d(h)$ and all
$\mu  \in d(g)E^*$ with $|\mu| \ge k$, if $g \cdot \mu = h \cdot \mu$, then
$g|_\mu = h|_\mu$.
\end{lem}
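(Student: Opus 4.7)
The plan is to argue by contradiction, using compactness of $E^\infty$ together with the regularity hypothesis.

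First I would reduce the two-element statement to a one-element statement. If $g, h \in F$ satisfy $d(g) = d(h)$ and $g \cdot \mu = h \cdot \mu$ for some $\mu \in d(g)E^*$, then $c(g) = r(g\cdot\mu) = r(h\cdot\mu) = c(h)$, so the composition $f := h^{-1}g \in G$ is defined. Parts (4) and (5) of Lemma~\ref{properties SSA} give $(h^{-1}g)|_\mu = (h|_{g\cdot\mu})^{-1}(g|_\mu) = (h|_\mu)^{-1}(g|_\mu)$ once $h \cdot \mu = g\cdot \mu$, so the desired conclusion $g|_\mu = h|_\mu$ is equivalent to $f|_\mu = s(\mu)$, while the hypothesis $g\cdot\mu = h\cdot\mu$ is equivalent to $f \cdot \mu = \mu$. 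Hence it suffices to prove the following one-element reformulation: for the finite set $K := \{h^{-1}g : g,h \in F,\, d(g)=d(h),\, c(g)=c(h)\} \subseteq G$, there exists $k \in \NN$ such that whenever $f \in K$ and $\mu \in d(f)E^*$ has $|\mu| \ge k$ and $f \cdot \mu = \mu$, then $f|_\mu = s(\mu)$.

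To prove this reformulation I would fix $f \in K$ and suppose, for contradiction, that there exist $\mu_n \in d(f)E^*$ with $|\mu_n| \to \infty$, $f \cdot \mu_n = \mu_n$, and $f|_{\mu_n} \ne s(\mu_n)$. The no-sources hypothesis lets me extend each $\mu_n$ to a right-infinite path $x_n \in d(f)E^\infty$ by iteratively choosing edges whose range matches the current source. By compactness of the clopen set $d(f)E^\infty \subseteq E^\infty$, pass to a convergent subsequence $x_n \to x$. The action of $f$ extends continuously to $d(f)E^\infty$ via the self-similarity formulas (the $n$th prefix of $f \cdot y$ is $f\cdot(y_1\cdots y_n)$, and these prefixes are consistent by Lemma~\ref{properties SSA}(2)). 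Since both $f\cdot x_n$ and $x_n$ begin with $\mu_n$, the two sequences agree on an initial segment of length tending to $\infty$ and so share the same limit; continuity then forces $f\cdot x = x$.

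Now I would invoke regularity to produce a prefix $\nu$ of $x$ with $f \cdot \nu = \nu$ and $f|_\nu = s(\nu)$. Since $x_n \to x$ and $|\mu_n| \to \infty$, for all sufficiently large $n$ the path $\mu_n$ begins with $\nu$; writing $\mu_n = \nu \alpha_n$ and using Lemma~\ref{properties SSA}(2) and (3) yields
\[
f|_{\mu_n} = (f|_\nu)|_{\alpha_n} = s(\nu)|_{\alpha_n} = s(\alpha_n) = s(\mu_n),
\]
contradicting the choice of $\mu_n$. Finiteness of $K$ then lets me take $k$ to be one more than the maximum length at which any $f \in K$ admits such a counterexample. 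The main obstacle, and the step where regularity is essential, is precisely this passage from a fixed point of $f$ in $E^\infty$ to a finite prefix on which $f$ already restricts to the identity: compactness alone lets me limit the counterexamples to a single infinite path, but without regularity nothing forces the restrictions $f|_{\mu_n}$ along approximating finite prefixes to collapse to units.
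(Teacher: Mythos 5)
Your proof is correct and follows essentially the same route as the paper's: reduce to the single element $f = h^{-1}g$, use the no-sources hypothesis to extend finite paths to points of $E^\infty$, apply regularity at fixed points of the induced action on $E^\infty$, and use compactness of $E^\infty$ to make the bound uniform over the finite set. The only difference is cosmetic: you extract a convergent subsequence from a hypothetical sequence of counterexamples, whereas the paper takes a finite subcover of $E^\infty$ by the cylinder sets that regularity (and its negation) provides and then reads off the bound directly.
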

\begin{proof}
Fix $g,h  \in F$.

For each $y  \in E^\infty$ satisfies $g\cdot y = h \cdot y$, we have $h^{-1}g
\cdot y = y$, and so regularity implies that there exists $\lambda_y \in E^*$
such that $y \in \rZ{\lambda_y}$ and $(h^{-1}g)|_{\lambda_y} = s(\lambda_y)$.
For each $x \in E^\infty$ such that $g \cdot x \not= h \cdot x$, we have
$h^{-1} g \cdot x \not= x$, and so there exists $\lambda_x \in E^*$ such that
$x \in \rZ{\lambda_x}$ and $(h^{-1} g) \cdot \lambda_x \not= \lambda_x$.

Since $E^\infty = \bigcup_{x \in E^\infty} \rZ{\lambda_x}$ and since
$E^\infty$ is compact, there exists a finite $K \subseteq E^\infty$ such that
$E^\infty = \bigcup_{x \in K} \rZ{\lambda_x}$. Let $k_{g,h} :=
\max\{|\lambda_x| : x \in K\}$. Suppose that $\mu \in E^*$ with $|\mu| \geq
k_{g,h}$ and that $g \cdot \mu = h \cdot \mu$. Since $E$ has no sources we
have $Z[\mu)\neq\emptyset$. Since the $\rZ{\lambda_x}$ cover $E^\infty$ we
have $\rZ{\mu} \cap \rZ{\lambda_x} \not= \varnothing$ for some $x  \in K$.
Since $|\mu| \ge |\lambda_x|$, it follows that $\mu = \lambda_x \mu'$ for
some $\mu' $ in $E^\infty$. Since $g \cdot \mu = h \cdot \mu$, we have $g
\cdot \lambda_x = h \cdot \lambda_x$ and therefore $h^{-1} g \cdot \lambda_x
= \lambda_x$. By the choice of $\lambda_x$, we have $h^{-1} g \cdot x = x$ and
$(h^{-1} g)|_{\lambda_x} = s(\lambda_x)$. Hence $(h^{-1} g)|_\mu = (h^{-1}
g)|_{\lambda_x \mu'} = s(\lambda_x)|_{\mu'} = s(\mu') = s(\mu)$. Hence
$g|_\mu = h|_\mu$.

We have now proved that for each $g,h  \in F$ with $d(g) = d(h)$, there
exists $k_{g,h}  \in \NN$ such that whenever $\mu  \in d(g) E^{k_{g,h}}$
satisfies $g\cdot \mu = h \cdot \mu$, we have $g|_\mu = h|_\mu$. So $k =
\max_{g,h \in F} k_{g,h}$ has the required property.
\end{proof}

Our next result is essentially a version of Theorem~\ref{thm:Jsig} in which
the metric balls and $\varepsilon$-approximations are replaced by conditions
in terms of the basic open sets from Corollary~\ref{cor:limit space basis}.
We will bootstrap from this result to prove Theorem~\ref{thm:Jsig}.

\begin{prp}\label{prp:mapping_properties}
Let $E$ be a finite directed graph with no sources. If $(G,E)$ is a contracting and regular self-similar group action, then
\begin{enumerate}
\item for each $z \in \mathcal{J},$ $\sigma$ maps $q^{-1}(z)$ bijectively
    onto $q^{-1}(\tilde{\sigma}(z))$;
\item there exists $k \in \mathbb{N}$ such that for every $n\geq k+1$ and
    every $\mu \in E^{n}$, the map $\tilde{\sigma}$ restricts to a
    bijection of $U_{\mu}$ onto $U_{\sigma(\mu)}$; and
\item for every $n\geq k$, every $\omega \in E^{n}$, every $w \in
    U_{\omega}$, and every $z \in \tilde{\sigma}^{-1}(w)$, there exists
    $\mu \in E^{n+1}$ such that $z \in U_{\mu}$ and $\sigma(\mu) = \omega$.
\end{enumerate}
In particular, $\tilde{\sigma}$ is a local homeomorphism.
\end{prp}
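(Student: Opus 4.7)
My plan is to treat the three parts in order, with the main technical tool being Lemma~\ref{lem:magic k} applied to successively larger finite subsets of $G$, together with Lemma~\ref{lem:ae by nucleus} to control asymptotic equivalence via~$\Nn$.

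For part~(1), I establish surjectivity of $\sigma\colon q^{-1}(z)\to q^{-1}(\tilde\sigma(z))$ by a direct lifting construction. Given $y\sim_{\aeq}\sigma(z_0)$ with implementing sequence $(h_n)_{n<0}\subseteq\Nn$, I set $h_0:=h_{-1}|_{(z_0)_{-2}}\in\Nn$ (using that $\Nn$ is closed under restriction) and define $x:=y\cdot e$ with $e:=h_0\cdot(z_0)_{-1}$. Direct verification then shows that $\sigma(x)=y$, and that the sequence given by $g_n:=h_{n+1}$ for $n\leq -2$ and $g_{-1}:=h_0$ implements $x\sim_{\aeq}z_0$. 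For injectivity, suppose $x^1,x^2\in q^{-1}(z)$ with $\sigma(x^1)=\sigma(x^2)$; the sequence $(g_n)\subseteq\Nn$ from Lemma~\ref{lem:ae by nucleus} has each $g_n$ with $n\leq -2$ fixing the common tail $(x^1)_n\cdots(x^1)_{-2}$, and iterating $g_n|_{(x^1)_n}=g_{n+1}$ gives $g_n|_{(x^1)_n\cdots(x^1)_{-2}}=g_{-1}$. Extracting a subsequence $(n_k)$ on which $g_{n_k}$ is constantly some $g\in\Nn$ and comparing $g$ with $\id_{d(g)}$ via Lemma~\ref{lem:magic k} applied to $F:=\Nn\cup G^{(0)}$ forces $g_{-1}=\id_{r((x^1)_{-1})}$, whence $(x^1)_{-1}=(x^2)_{-1}$.

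For parts~(2) and~(3), I take $k$ to be the magic-$k$ constant from Lemma~\ref{lem:magic k} for the finite set $F:=\Nn\cup\Nn^2$. The inclusion $\tilde\sigma(U_\mu)\subseteq U_{\sigma(\mu)}$ is immediate from $\sigma(\lZ{g\cdot\mu})=\lZ{g\cdot\sigma(\mu)}$. Injectivity of $\tilde\sigma|_{U_\mu}$ for $|\mu|\geq k+1$ follows by using part~(1) to align representatives $x_i\in\lZ{g_i\cdot\mu}$ of $y_i\in U_\mu$ with a common $\sigma(x_1)=\sigma(x_2)$, then invoking Lemma~\ref{lem:magic k} on the equality $g_1\cdot\sigma(\mu)=g_2\cdot\sigma(\mu)$. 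The main technical step is surjectivity in~(2): given $w\in U_{\sigma(\mu)}$ and $x\in q^{-1}(w)\cap\lZ{h\cdot\sigma(\mu)}$, I form the lift $x':=x\cdot(h|_{\sigma(\mu)}\cdot\mu_n)\in\lZ{h\cdot\mu}$. To see that the whole class $q^{-1}(q(x'))$ lies in $\bigcup_{g\in\Nn, d(g)=r(\mu)}\lZ{g\cdot\mu}$, I take $x''\sim_{\aeq}x'$ and combine two expressions for its tail: Lemma~\ref{lem:ae by nucleus} gives $x''_{-n}\cdots x''_{-1}=(g_{-n}h)\cdot\mu$ with $g_{-n}h\in\Nn^2$, while $\sigma(x'')\in q^{-1}(w)\subseteq U_{\sigma(\mu)}$ gives $x''_{-n}\cdots x''_{-2}=h''\cdot\sigma(\mu)$ for some $h''\in\Nn$. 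Equating the first $n-1$ letters yields $(g_{-n}h)\cdot\sigma(\mu)=h''\cdot\sigma(\mu)$, and the magic-$k$ lemma for $\Nn\cup\Nn^2$ (using $|\sigma(\mu)|=n-1\geq k$) forces $(g_{-n}h)|_{\sigma(\mu)}=h''|_{\sigma(\mu)}$, hence $x''\in\lZ{h''\cdot\mu}$ with $h''\in\Nn$. Part~(3) is essentially the same argument performed downstream: given $z\in\tilde\sigma^{-1}(w)$, $x'\in q^{-1}(z)$, and $h\in\Nn$ with $\sigma(x')\in\lZ{h\cdot\omega}$, I set $\mu:=\omega\cdot(h|_\omega^{-1}\cdot x'_{-1})$, verify directly that $x'\in\lZ{h\cdot\mu}$, and repeat the two-expression comparison with $\omega$ in place of $\sigma(\mu)$, needing only $|\omega|=n\geq k$.

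For the local-homeomorphism conclusion, $\tilde\sigma$ is continuous because $q$ is a quotient map and $\tilde\sigma\circ q=q\circ\sigma$. The nested-cylinder argument inside the proof of Corollary~\ref{cor:limit space basis} shows that the $U_\mu$ with $|\mu|\geq k+1$ already form a basis for~$\Jj$, so part~(2) simultaneously exhibits $\tilde\sigma$ as an open map (since $\tilde\sigma(U_\mu)=U_{\sigma(\mu)}$) and as locally injective, hence a local homeomorphism. The hardest step throughout is the surjectivity in~(2): the naive lift lands only in $\bigcup_{g\in\Nn^2}\lZ{g\cdot\mu}$, and the reason for choosing $k$ from the magic-$k$ lemma applied to $\Nn^2$ rather than to $\Nn$ alone is precisely so that we can re-express this $\Nn^2$-cylinder as an $\Nn$-cylinder by comparing against the a~priori known membership $\sigma(x'')\in U_{\sigma(\mu)}$.
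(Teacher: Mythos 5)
Your proof is correct and follows essentially the same route as the paper's: the same magic-$k$ constant taken for the finite set $\Nn\cup\Nn^{2}\cup G^{(0)}$, the same lift $x\mapsto x\,\big((h|_{\sigma(\mu)})\cdot\mu_{-1}\big)$, and the same $\Nn^{2}$-to-$\Nn$ comparison (via Lemma~\ref{lem:magic k} and the a priori membership $\sigma(x'')\in q^{-1}(w)\subseteq\bigcup_{h''}\lZ{h''\cdot\sigma(\mu)}$) to see that the lifted class lands in $U_\mu$. The only differences are organizational: the paper packages the bijectivity of $\tilde{\sigma}|_{U_\mu}$ into an explicitly constructed inverse $\tilde{\delta}$ rather than checking injectivity and surjectivity separately, and in part~(1) it stabilises a not-necessarily-coherent sequence by another magic-$k$ application where you instead invoke the coherent sequence from Lemma~\ref{lem:ae by nucleus} directly.
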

\begin{proof}
Applying Lemma~\ref{lem:magic k} to the finite set $F =
\mathcal{N}^{2}\cup\mathcal{N}\cup E^{0}$ yields $k \in \mathbb{N}$ such that
for all $n_{1}, n_{2} \in F$, if $\mu \in E^*$ with $|\mu| \ge k$ satisfies
$n_{1}\cdot \mu = n_{2}\cdot \mu$, then $n_{1}|_\mu = n_{2}|_\mu$. We fix $k$
with this property for the remainder of the proof.

(1) Since $q \circ \sigma = \tilde\sigma\circ q$, if $q(x) = z$ then
$q(\sigma(x)) = \tilde\sigma(q(x)) = \tilde\sigma(z)$, and so
$\sigma(q^{-1}(z))\subseteq q^{-1}(\tilde{\sigma}(z))$. So we must prove the
reverse inclusion. Suppose that $q(x) = z$ and $y'\sim_{\aeq} \sigma(x)$. Let
$(g_{n})_{n\in\mathbb{N}}$ be a sequence in $\Nn$ such that $d(g_{n}) =
r(x_{-n-1})$ and $g_{n}\cdot x_{-n-1}x_{-n}...x_{-2} =
y'_{-n}y'_{-n+1}...y'_{-1}$ for every $n \in \mathbb{N}$. By the choice of $k$,
for all $n, n'\geq k$, it follows that $g_{n}|_{x_{-n-1}x_{-n}...x_{-2}} =
g_{n'}|_{x_{-n'-1}x_{-n'}...x_{-2}}$. Let $x'_{-1} =
(g_{k}|_{x_{-k-1}x_{-k}...x_{-2}})\cdot x_{-1}$ and $x' = y'x'_{-1}$, and let
$g'_{n} = g_{k}|_{x_{-k-1}x_{-k}...x_{-n-1}}$ for $n < k$, and $g'_{n} =
g_{n-1}$ for $n\geq k$. Then $g'_{n}\cdot x_{-n}x_{-n+1}...x_{-1} =
x'_{-n}x'_{-n+1}...x'_{-1}$ for every $n \in \mathbb{N}$. So, $x\sim_{\aeq}
x'$ and $\sigma(x') = y'$. Therefore, $\sigma(q^{-1}(z)) =
q^{-1}(\tilde{\sigma}(z))$.

Suppose that $x', x'' \in q^{-1}(z)$ satisfy $\sigma(x'') = \sigma(x') = y'$.
Since $x'\sim_{\aeq} x\sim_{\aeq} x''$, there exists $g$ in $\mathcal{N}$
such that $g\cdot x'_{-k-1}x'_{-k}...x'_{-1} =
x''_{-k-1}x''_{-k}...x''_{-1}$. By the choice of $k$ and since
$x'_{-k-1}x'_{-k}...x'_{-2} = x''_{-k-1}x''_{-k}...x''_{-2}$, we have
$g|_{x'_{-k-1}x'_{-k}...x'_{-2}} = s(x'_{-2})$. Hence $x''_{-1} =
(g|_{x'_{-k-1}x'_{-k}...x'_{-2}})\cdot x'_{-1} = x_{-1}$. Therefore, $x' =
x''$.

(2) Fix $\mu \in E^*$ with $|\mu| > k$ and $w \in U_{\sigma(\mu)}$. For each
$y \in q^{-1}(w)$, there exists $g \in \Nn$ such that $y \in \rZ{g\cdot
\sigma(\mu)}$. By the choice of $k$, the element $g|_{\sigma(\mu)}$ does not
depend on the choice of $g$. So there is a unique map $\delta :
q^{-1}(U_{\sigma(\mu)}) \to E^{-\infty}$ such that $\delta(y) = y
\big((g|_{\sigma(\mu)}) \cdot \mu_{-1}\big)$ for any $g \in \Nn$ such that $y
\in \rZ{g\cdot \sigma(\mu)}$. We claim that $\delta$ descends to a map
$\tilde\delta : U_{\sigma(\mu)} \to U_\mu$ that is an inverse for
$\tilde\sigma|_{U_\mu}$.

For this, suppose that $y, y' \in q^{-1}(w)$ are asymptotically equivalent.
Fix $g \in \Nn$ such that $y \in \lZ{g\cdot \sigma(\mu)}$ and $g' \in \Nn$
such that $y' \in \lZ{g'\cdot \sigma(\mu)}$. Then $x := \delta(y)$ satisfies
$x = y\big((g|_{\sigma(\mu)})\cdot \mu_{-1}) \in \lZ{g \cdot \mu}$. By~(1)
there is a unique element $x' \in E^{-\infty}$ such that $x' \sim x$ and
$\sigma(x') = y'$. Since $x \sim_{\aeq} x'$, there exists $h \in \Nn$ such
that $h \cdot(x_{-k-1}\cdots x_{-1}) = x'_{-k-1} x'_{-k} \cdots x'_{-1}$.
Hence $(hg)\cdot\sigma(\mu) = x'_{-k-1} x'_{-k} \cdots x'_{-2} = g' \cdot
\sigma(\mu)$. Since $hg, g' \in F$, by the choice of $k$ we have
$(hg)|_{\sigma(\mu)} = g'|_{\sigma(\mu)}$ and we deduce that $x'_{-1} =
\big((hg)|_{\sigma(\mu)}\big)\cdot \mu_{-1} =
\big(g'|_{\sigma(\mu)}\big)\cdot \mu_{-1}$. By definition, we have
$\delta(y') = y'\big(g'|_{\sigma(\mu)}\big)\cdot \mu_{-1} = y' x'_{-1}$, and
since $\sigma(x') = y'$ by definition of $x'$, we deduce that $\delta(y') =
\sigma(x') x'_{-1} = x'$. So $\delta(y') = x' \sim_{\aeq} x= \delta(y)$, and
it follows that $\delta$ descends to a map $\tilde\delta : U_{\sigma(\mu)}
\to \Jj$.

To see that $\tilde\delta(U_{\sigma(\mu)}) \subseteq U_\mu$, fix $y \in
q^{-1}(w)$ and let $x = \delta(y)$. We must show that $[x] \subseteq
\bigcup_{\nu \in \Nn \cdot \mu} \lZ{\nu}$. Fix $x' \in [x]$, and let $y' =
\sigma(x)$. Applying the argument of the preceding paragraph we obtain $x' =
\delta(y') \in \lZ(g'\cdot \mu)$ for some $g' \in \Nn$ as required.

It remains to show that $\tilde\delta$ is an inverse for
$\tilde\sigma|_{U_\mu}$. By construction, $\sigma\delta(y) = y$ for $y \in
q^{-1}(U_{\sigma(\mu)})$, so $\tilde{\sigma}\tilde{\delta}(q(y)) = q(y)$.
Therefore, $\tilde{\sigma}\tilde{\delta} = \id_{U_{\sigma(\mu)}}$.

We now show that $\tilde{\delta}\tilde{\sigma} = id_{U_{\mu}}$. If
$q^{-1}(z)\subseteq \bigcup_{g\in\mathcal{N}:d(g) = r(\mu)} Z[g\cdot \mu)$,
then $q^{-1}(\tilde{\sigma} (z)) = \sigma(q^{-1}(z))\subseteq
\sigma(\bigcup_{g\in\mathcal{N}:d(g) = r(\mu)} Z[g\cdot \mu)) =
\bigcup_{g\in\mathcal{N}:d(g) = r(\sigma(\mu))} Z[g\cdot \sigma(\mu))$.
Hence, $\tilde{\sigma}(U_{\mu})\subseteq U_{\sigma(\mu)}$, so the composites
$\tilde{\delta}\tilde{\sigma}$ and $\delta\sigma$ are well defined on
$U_{\mu}$ and $q^{-1}(U_{\mu})$ respectively. We have $\delta(y) =
y(g|_{\sigma(\mu)})\cdot\mu_{-1}$ for $y \in q^{-1}(U_{\sigma(\mu)})\cap
Z[g\cdot \sigma(\mu))$, and so $\delta\sigma(x) = x$ for $x$ in
$q^{-1}(U_{\mu})\cap Z[g\cdot \mu)$. Hence,
$\tilde{\delta}\tilde{\sigma}(q(x)) = q(x)$ for $x \in q^{-1}(U_{\mu})$.
Therefore, $\tilde{\delta}\tilde{\sigma} = \id_{U_{\mu}}$, and we have shown
$\tilde{\sigma}$ maps $U_{\mu}$ bijectively onto $U_{\sigma(\mu)}$.

(3) Fix $\omega \in E^*$ with $|\omega| \ge k$ and $w \in U_{\omega}$, and
fix $z \in \tilde{\sigma}^{-1}(w)$. Choose $x,y \in E^{-\infty}$ such that
$q(x) = z$, $q(y) = w$, and $\sigma(x) = y$. Then there exists $g \in G
r(\omega)$ such that $y \in Z[g\cdot \omega)$. Since $\sigma(x) = y$, it
follows that $x_{-n-1}x_{-n}...x_{-1} = (g\cdot\omega)x_{-1}$. We have
$d((g|_{\omega})^{-1}) = c(g|_{\omega}) = s(g\cdot\omega) = r(x_{-1})$. Let
$f = (g|_{\omega})^{-1}\cdot x_{-1}$ and $\mu = \omega f$. Then $x \in
Z(g\cdot\mu)$ and $\sigma(\mu) = \omega$, and the map $\delta :
q^{-1}(U_{\sigma(\mu)}) \to q^{-1}(U_{\mu})$ constructed in the proof of~(2)
satisfies $\delta(y) = x$. Hence $z = q(x) \in U_{\mu}$.
\end{proof}
We now prove the first part of our main result.
\begin{proof}[Proof of Theorem~\ref{thm:Jsig}(1)]
By Corollary~\ref{cor:limit space metrisable}, $\Jj$ is metrisable. Fix a metric $d$ on $\Jj$. By Lemma~\ref{lem:magic k}, there is $k\in\mathbb{N}$ such that for all $n_{1},n_{2}\in F = \Nn^{2}\cup \Nn\cup E^{0}$, if $\mu\in E^{*}$ with $|\mu|\geq k$ satisfies $n_{1}\cdot \mu = n_{2}\cdot\mu$, then $n_{1}|_{\mu} = n_{2}|_{\mu}$. By the Lebesgue Number Lemma, there exists $\delta > 0$ such that every ball of radius $\delta$ in $\Jj$ is contained in $U_\omega$ for some $\omega \in E^{k+2}$. We first prove $\tilde{\sigma}$ is expanding in the sense that if $[x], [y]\in \Jj$ satisfies $d(\tilde{\sigma}^{n}([x]), \tilde{\sigma}^{n}([y])) < \delta$ for all $n\in\mathbb{N}_{0}$, then $[x] = [y]$.
\par
For each $n\in \mathbb{N}_{0}$, since $d(\tilde{\sigma}^{n}([x]), \tilde{\sigma}^{n}([y])) <\delta$, there is $\omega_{n}\in E^{k+2}$ such that $\tilde{\sigma}^{n}([x]), \tilde{\sigma}^{n}([y])\in U_{\omega_{n}}$. Therefore, there are $h_{n,1}, h_{n,2}\in\Nn$ such that $d(h_{n,1})= d(h_{n,2}) = r(\omega_{n})$, $h_{n,1}\cdot\omega_{n} = x_{-n-k-2}...x_{-n-1}$ and $h_{n,2}\cdot\omega_{n} = y_{-n-k-2}...y_{-n-1}$. Hence, $g_{n}:= h_{n,2}h_{n,1}^{-1}$ satisfies $d(g_{n}) = r(x_{-n-k-2})$ and $g_{n}\cdot x_{-n-k-2}...x_{-n-1} = y_{-n-k-2}...y_{-n-1}$. We claim $g_{n}\cdot  x_{-n-k-2}...x_{-1} = y_{-n-k-2}...y_{-1}$ for all $n\in\mathbb{N}_{0}$.
\par
This is trivially true for $n=0$. Suppose we have $g_{n}\cdot x_{-n-k-2}...x_{-1} = y_{-n-k-2}...y_{-1}$ for some $n\geq 0$. Since $g_{n},(g_{n+1}|_{x_{-n-k-3}})\in F$, $g_{n}\cdot x_{-n-k-2}...x_{-n-2} = y_{-n-k-2}...y_{-n-2} = (g_{n+1}|_{x_{-n-k-3}})\cdot x_{-n-k-2}...x_{-n-2}$ and $|x_{-n-k-2}...x_{-n-2}| = k$, it follows from Lemma~\ref{lem:magic k} that
$g_{n+1}|_{x_{-n-k-3}...x_{-n-2}} = g_{n}|_{x_{-n-k-2}...x_{-n-2}}$. Therefore, $g_{n+1}\cdot x_{-n-k-3}...x_{-1} = \\y_{-n-k-3}...y_{-1}$. This proves the claim by induction.
\par
So, if we set $h_{n} = g_{n - k - 2}$ for $n\geq k+2$ and $h_{n} = g_{0}|_{x_{-k-2}...x_{-n-1}}$ for $0< n < k+2$, then $(h_{n})_{n\in\mathbb{N}}\subseteq F$ satisfies $h_{n}\cdot x_{-n}...x_{-1} = y_{-n}...y_{-1}$ for all $n\in\mathbb{N}$. Therefore, $[x] = [y]$, as claimed.
\par
It follows that if $[x]\neq [y]$, there is $n\in\mathbb{N}_{0}$ such that $d(\tilde{\sigma}^{n}([x]), \tilde{\sigma}^{n})([y])) >\delta$. The map $\tilde{\sigma}$ is open by Proposition~\ref{prp:mapping_properties} and, since $E$ has no sources, $\tilde{\sigma}$ is also surjective. Therefore, $\tilde{\sigma}$ is Ruelle expanding (see \cite{FKM}). Hence, by \cite[Theorem~2.2]{FKM}, there is $\epsilon > 0$, $c > 1$ and a metric $d_{\Jj}$ on $\Jj$ such that whenever $d_{\Jj}([x], [y]) <\epsilon$, we have $d_{\Jj}(\tilde{\sigma}([x]),\tilde{\sigma}([y])) = c\cdot d_{\Jj}([x],[y])$.
\end{proof}
We now fix a metric $d_{\Jj}$ satisfying Condition~(1) in Theorem~\ref{thm:Jsig}. To establish Condition~(2), we will use the
following technical lemma, which we will need again in the proof of
Lemma~\ref{lem:unstable_description}.

\begin{lem}\label{lem:delta_property}
Resume the hypotheses of Theorem~\ref{thm:Jsig}. Suppose that $\varepsilon > 0$ and $c > 1$ satisfy statement~(1) of that theorem. Let $k$ be as in Proposition~\ref{prp:mapping_properties}(2). Then there exist $\eta < \varepsilon$
and $n>k$ such that
\begin{enumerate}
    \item[(1)] for every $w \in \mathcal{J}$, and $\alpha\leq\eta$, there
        exists $\omega \in E^*$ such that $|\omega|\geq n - 1$ and
        $B(w,c\alpha)\subseteq U_{\omega}$; and
    \item[(2)] for every $\mu \in E^*$ with $|\mu|\geq n$, the map
        $\tilde{\sigma}$ restricts to a homeomorphism of $U_{\mu}$ onto
        $U_{\sigma(\mu)}$; and if $\tilde{\delta}$ denotes its inverse,
        then for all $z \in U_{\mu}$ and all $\alpha\leq\eta$ such that
        $B(\tilde{\sigma}(z),c\alpha)\subseteq U_{\sigma(\mu)}$, we have
        $B(z,\alpha)\subseteq U_{\mu}$, and $\tilde{\delta}$ restricts to a
        homeomorphism of $B(\tilde{\sigma}(z),c\alpha)$ onto $B(z,\alpha)$.
\end{enumerate}
\end{lem}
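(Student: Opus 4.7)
The plan is to choose $n$ large enough that the sets $U_\mu$ with $|\mu|\ge n-1$ all have small diameter, and then to exploit the expansion of $\tilde\sigma$ together with Proposition~\ref{prp:mapping_properties}. The key auxiliary claim I would prove first is that $\sup\{\operatorname{diam}(U_\mu) : \mu\in E^m\}\to 0$ as $m\to\infty$. Suppose for contradiction that this fails; then there are $\delta > 0$, paths $\mu^{(k)}$ with $|\mu^{(k)}|\to\infty$, and pairs $y_1^{(k)}, y_2^{(k)}\in U_{\mu^{(k)}}$ with $d_\Jj(y_1^{(k)}, y_2^{(k)})\ge\delta$. Choose preimages $x_i^{(k)}\in q^{-1}(y_i^{(k)})\cap\lZ{g_i^{(k)}\cdot\mu^{(k)}}$ with $g_i^{(k)}\in\Nn$. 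Using compactness of $\Jj$ and $E^{-\infty}$, pass to a subsequence along which $y_i^{(k)}\to y_i^*$, $x_i^{(k)}\to x_i^*$, and $y_1^*\ne y_2^*$. For each fixed $m\ge 1$ and large $k$, splitting $\mu^{(k)} = \pi^{(k)}\beta^{(k)}$ with $|\beta^{(k)}| = m$, the self-similarity formula makes the last $m$ entries of $x_i^{(k)}$ equal to $(g_i^{(k)}|_{\pi^{(k)}})\cdot\beta^{(k)}$, and the contracting hypothesis forces $g_i^{(k)}|_{\pi^{(k)}}\in\Nn$ once $k$ is large. Since $\Nn$ and $E^m$ are finite, a diagonal argument produces a single subsequence along which, for every $m$, the last $m$ entries of $x_i^*$ have the form $h_i^{(m)}\cdot\beta^{(m)}$ for some $h_i^{(m)}\in\Nn$ and $\beta^{(m)}\in E^m$. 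The sequence $(h_2^{(m)}(h_1^{(m)})^{-1})_{m\ge 1}$ lies in the finite set $\Nn^2$ and witnesses $x_1^*\sim_\aeq x_2^*$, contradicting $y_1^*\ne y_2^*$.

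Given the diameter-shrinking claim, I would pick $n\ge k+1$ so that $\operatorname{diam}(U_\mu) < \varepsilon/2$ for every $\mu$ of length at least $n-1$, let $\eta_0 > 0$ be a Lebesgue number for the open cover $\{U_\omega : \omega\in E^{n-1}\}$ of the compact space $\Jj$, and set $\eta := \min\{\eta_0/c,\, \varepsilon/2\}$; note $\eta < \varepsilon$. Item (1) is immediate: for any $w\in\Jj$ and $\alpha\le\eta$, $B(w, c\alpha)\subseteq B(w, \eta_0)\subseteq U_\omega$ for some $\omega\in E^{n-1}$. For (2), the bijection $\tilde\sigma:U_\mu\to U_{\sigma(\mu)}$ from Proposition~\ref{prp:mapping_properties}(2) is automatically a homeomorphism because $\tilde\sigma$ is a local homeomorphism (the last statement of that proposition); let $\tilde\delta$ denote its inverse.

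To obtain $B(z,\alpha)\subseteq U_\mu$, fix $z\in U_\mu$ and $\alpha\le\eta$ with $B(\tilde\sigma(z),c\alpha)\subseteq U_{\sigma(\mu)}$, and take $w\in B(z,\alpha)$. Expansion (using $d_\Jj(z,w) < \alpha < \varepsilon$) gives $\tilde\sigma(w)\in B(\tilde\sigma(z), c\alpha)\subseteq U_{\sigma(\mu)}$, so $w' := \tilde\delta(\tilde\sigma(w))\in U_\mu$ is defined and $\tilde\sigma(w') = \tilde\sigma(w)$. Since both $z$ and $w'$ lie in $U_\mu$ we have $d_\Jj(z,w') < \varepsilon/2 < \varepsilon$, so expansion again yields $d_\Jj(z,w') = d_\Jj(\tilde\sigma(z),\tilde\sigma(w))/c < \alpha$. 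Therefore $d_\Jj(w, w')\le d_\Jj(w,z) + d_\Jj(z,w') < 2\alpha\le\varepsilon$, and a final use of expansion, combined with $\tilde\sigma(w) = \tilde\sigma(w')$, forces $w = w'\in U_\mu$. An analogous computation shows $\tilde\delta(B(\tilde\sigma(z),c\alpha))\subseteq B(z,\alpha)$; together with the direct expansion bound $\tilde\sigma(B(z,\alpha))\subseteq B(\tilde\sigma(z),c\alpha)$ this gives equality of images, and since $\tilde\delta$ is a homeomorphism on $U_{\sigma(\mu)}$ its restriction is the required homeomorphism.

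The main obstacle is the diameter-shrinking auxiliary: the diagonal argument combining compactness, finiteness of $\Nn$, and the contracting hypothesis is the only nontrivial step. Once that is in hand, items (1) and (2) assemble cleanly from the expansion property of $\tilde\sigma$ and the Lebesgue number lemma.
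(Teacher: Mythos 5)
Your proof is correct and follows essentially the same route as the paper: establish that $\sup_{\mu\in E^m}\operatorname{diam}(U_\mu)\to 0$, apply the Lebesgue Number Lemma to the cover by $U_\omega$'s to get~(1), and then use the expansion identity of Theorem~\ref{thm:Jsig}(1) together with the small diameter of $U_\mu$ to get~(2). The only divergence is cosmetic: for the diameter claim the paper simply invokes the nesting $U_{\mu\omega}\subseteq U_\omega$ and $\bigcap_n U_{x_{-n}\dots x_{-1}}=\{[x]\}$ where you run a longer compactness-and-diagonalisation contradiction, and for $B(z,\alpha)\subseteq U_\mu$ you argue pointwise ($w=w'$) where the paper argues with set images and injectivity of $\tilde\sigma|_{B(z,\alpha)}$ --- both are the same underlying idea.
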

\begin{proof}
For $\mu, \omega \in E^*$ such that $s(\mu) = r(\omega)$, we have
$U_{\mu\omega}\subseteq U_{\omega}$, and for any infinite path $x \in
E^{-\infty}$, we have $\bigcap_{n\in\mathbb{N}}U_{x_{-n}...x_{-1}} = [x]$.
Hence $\lim_{n\to\infty}\sup_{\mu\in E^{n}}\text{diam}(U_{\mu}) = 0$. Let $k$
be as in Proposition~\ref{prp:mapping_properties}(2), and fix $n > k$ so that
$\text{diam}(U_{\mu}) < \varepsilon$ whenever $|\mu| \ge n$. Using
compactness of $\mathcal{J}$, fix $K \subseteq \bigcup_{m \ge n-1} E^m$ such
that $\mathcal{J} \subseteq \bigcup_{\omega \in K} U_\omega$. The Lebesgue
Number Lemma yields $0 < \eta < \varepsilon$ such that for every $w$ in
$\mathcal{J}$, there exist $\omega \in K$ such that $B(w,c\eta) \subseteq
U_{\omega}$. These values of $n,\eta$ satisfy~(1) by construction, so we
just have to establish~(2).

For this, let $\mu$ be a path such that $|\mu|\geq n$. Since $n\geq k$, by
Proposition~\ref{prp:mapping_properties}(2), $\tilde{\sigma}$ maps $U_{\mu}$
homeomorphically onto $U_{\sigma(\mu)}$. Suppose that
$B(\tilde{\sigma}(z),c\alpha)\subseteq U_{\sigma(\mu)}$ for $z \in U_{\mu}$
and $\alpha\leq\eta$. By hypothesis, $\varepsilon$ satisfies
Theorem~\ref{thm:Jsig}(1), and so since $\text{diam}(U_{\mu}) < \varepsilon$,
we have
\[
c\cdot d_{\mathcal{J}}(\tilde{\delta}(\tilde{\sigma} z), \tilde{\delta}(w))
    = d_{\mathcal{J}}(\tilde{\sigma}\tilde{\delta}(\tilde{\sigma} z), \tilde{\sigma}\tilde{\delta}(w))
    = d_{\mathcal{J}}(\tilde{\sigma}z, w)
\]
whenever $d_{\mathcal{J}}(\tilde{\sigma}z, w) < c\alpha$. Hence,
$\tilde{\delta}(B(\tilde{\sigma}z,c\alpha))\subseteq B(z,\alpha)\cap
U_{\mu}$, so that
$B(\tilde{\sigma}z,c\alpha)\subseteq\tilde{\sigma}(B(z,\alpha)\cap U_{\mu})$.
Since $\alpha\leq\eta <\varepsilon$, another application of
Theorem~\ref{thm:Jsig}(1) implies that $\tilde{\sigma}$ restricts to a
homeomorphism of $B(z,\alpha)$, and that
$\tilde{\sigma}(B(z,\alpha))\subseteq B(\tilde{\sigma}(z),c\alpha)$.
Therefore, $B(z,\alpha)\cap U_{\mu} = B(z,\alpha)$, and $\tilde{\sigma}(B(z,
\alpha)) = B(\tilde{\sigma}(z),c\alpha).$ Hence, $B(z,\alpha)\subseteq
U_{\mu}$, so that $\tilde{\delta}\circ\tilde{\sigma}|_{B(z,\alpha)}$ is well
defined, and $\tilde{\delta}(B(\tilde{\sigma}(z),c\alpha)) =
\tilde{\delta}\tilde{\sigma}(B(z,\alpha)) = B(z,\alpha)$.
\end{proof}
Now we finish proving the main result of this section.
\begin{proof}[Proof of Theorem~\ref{thm:Jsig}(2)]
Let $\epsilon > 0$ be as in the proof of Theorem~\ref{thm:Jsig}(1).
By \\Lemma~\ref{lem:delta_property}(1), there is $\varepsilon < \epsilon$ and $n\in\mathbb{N}$ such that
for any $z \in \mathcal{J}$ and
$\alpha \leq \varepsilon$, there exists $\omega \in E^*$ with $|\omega|\geq
n-1$ and $B(\tilde{\sigma}(z),c\alpha)\subseteq U_{\omega}$. By
Proposition~\ref{prp:mapping_properties}(3), there exists $f \in E^1$ such
that $s(\omega) = r(f)$ and $z \in U_{\omega f}$.
Lemma~\ref{lem:delta_property}(2) then gives $B(z,\alpha)\subseteq U_{\omega
f}$ and $\tilde{\sigma}(B(z,\alpha)) =
\tilde{\sigma}\tilde{\delta}(B(\tilde{\sigma}(z),c\alpha)) =
B(\tilde{\sigma}(z),c\alpha)$.
\par
This concludes the proof of Theorem~\ref{thm:Jsig} with $\varepsilon > 0$ as above, $c >1$ and $d_{\Jj}$.
\end{proof}
To finish this section, we will show that for strongly-connected graphs, the
regularity hypothesis is necessary in Theorem~\ref{thm:Jsig} (we will need to
restrict to strongly-connected graphs later in order to apply Kaminker,
Putnam and Whittaker's results about $KK$-duality for $C^*$-algebras
associated to Smale spaces). Recall that a directed graph $E$ is
\emph{strongly connected} if it has at least one edge, and if for all $v,w
\in E^0$ the set $v E^* w$ is nonempty.

\begin{lem}\label{lem:discerning mu}
Let $E$ be a finite directed graph with no sources. Let $(G,E)$ be a
contracting self-similar groupoid action with nucleus $\Nn$. Then there
exists $\mu  \in E^*$ such that whenever $g  \in \Nn$ satisfies $d(g) =
r(\mu)$ and $g \cdot \mu = \mu$, we have $g \cdot \mu\nu = \mu\nu$ for all
$\nu $ in $s(\mu)E^*$.
\end{lem}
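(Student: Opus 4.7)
The plan is to fix any vertex $v \in E^0$ and construct the desired $\mu \in vE^*$ inductively, handling one element of the finite isotropy set $\Nn_v := \{g \in \Nn : d(g) = c(g) = v\}$ at a time. Note that if $g \in \Nn$ satisfies $d(g) = r(\mu) = v$ and $g \cdot \mu = \mu$, then automatically $c(g) = r(g \cdot \mu) = v$, so only elements of $\Nn_v$ are relevant; moreover the target conclusion, that $g \cdot \mu\nu = \mu\nu$ for every $\nu \in s(\mu)E^*$, is, by faithfulness of the action together with the identity rule of Lemma~\ref{properties SSA}(3), equivalent to $g|_\mu = \id_{s(\mu)}$. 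So it suffices to produce $\mu \in vE^*$ such that each $g \in \Nn_v$ that fixes $\mu$ restricts there to the identity.

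Enumerate $\Nn_v = \{g_0 = \id_v, g_1, \dots, g_m\}$ and construct a prefix chain $v = \mu^{(0)} \le \mu^{(1)} \le \dots \le \mu^{(m)}$ maintaining the invariant $(\star)_i$: \emph{for every $j \le i$, either $g_j \cdot \mu^{(i)} \ne \mu^{(i)}$, or $g_j|_{\mu^{(i)}} = \id_{s(\mu^{(i)})}$.} The base case is immediate since $g_0|_v = \id_v$. For the step from $i$ to $i+1$, if $(\star)$ already holds for $g_{i+1}$ at $\mu^{(i)}$, set $\mu^{(i+1)} = \mu^{(i)}$; otherwise $g_{i+1} \cdot \mu^{(i)} = \mu^{(i)}$ and $h := g_{i+1}|_{\mu^{(i)}}$ is a non-identity element of $\Nn_{s(\mu^{(i)})}$ (using that $\Nn$ is closed under restriction by Lemma~\ref{lem:N a core}). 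Faithfulness forces $h$ to act non-trivially on $s(\mu^{(i)})E^*$, and the no-sources assumption ensures paths of positive length exist there, so one finds $\nu \in s(\mu^{(i)})E^*$ with $h \cdot \nu \ne \nu$; set $\mu^{(i+1)} := \mu^{(i)}\nu$. Then for each $j \le i+1$ one checks $(\star)_{i+1}$: an existing disagreement $g_j \cdot \mu^{(i)} \ne \mu^{(i)}$ persists because it sits in the length-$|\mu^{(i)}|$ prefix of $g_j \cdot \mu^{(i+1)} = (g_j\cdot\mu^{(i)})(g_j|_{\mu^{(i)}}\cdot\nu)$; an existing trivialisation $g_j|_{\mu^{(i)}} = \id_{s(\mu^{(i)})}$ propagates, since Lemma~\ref{properties SSA}(2,3) yields $g_j \cdot \mu^{(i+1)} = \mu^{(i+1)}$ and $g_j|_{\mu^{(i+1)}} = \id_{s(\mu^{(i)})}|_\nu = \id_{s(\nu)} = \id_{s(\mu^{(i+1)})}$; and for $g_{i+1}$ the choice of $\nu$ gives $g_{i+1}\cdot\mu^{(i+1)} = \mu^{(i)}(h\cdot\nu) \ne \mu^{(i+1)}$. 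Taking $\mu := \mu^{(m)}$ completes the construction.

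The main conceptual point, and the only place where one might fear something can go wrong, is the compatibility between steps: extending $\mu^{(i)}$ in order to break a newly-considered isotropy element must not undo the trivialisations already arranged for earlier ones. This is resolved by the dichotomy above, which leans on the identity rule $\id_{r(\nu)}|_\nu = \id_{s(\nu)}$ from Lemma~\ref{properties SSA}(3) together with the fact that a prefix-level disagreement is preserved under concatenation. The contracting hypothesis is used only through the finiteness of $\Nn_v$, and no topological machinery (such as Baire-category arguments on $vE^\infty$) is required.
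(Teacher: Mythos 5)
Your proof is correct, but it takes a genuinely different route from the paper's. The paper works topologically on the right-infinite path space $E^\infty$: it splits into the case where some $x \in E^\infty$ is moved by every non-unit element of $\Nn$ with matching domain (so a long enough prefix of $x$ is fixed by no non-unit nucleus element and the conclusion holds vacuously), and the case where the fixed-point sets of the non-unit nucleus elements cover $E^\infty$, in which one chooses $x$ outside the boundary of each such fixed-point set --- possible because the complement of each boundary is open and dense and $\Nn$ is finite --- so that a sufficiently long prefix $x_1\cdots x_n$ witnesses, for each $g$, either a disagreement or pointwise fixing of the whole cylinder. You replace this dichotomy-plus-density argument with a finite induction over the isotropy $\Nn_v$ at a single vertex, extending the candidate path one element at a time and verifying that disagreements persist under concatenation while trivialised restrictions propagate via $\id_{r(\nu)}|_\nu = \id_{s(\nu)}$. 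Both arguments rest on the same two inputs (finiteness of $\Nn$ from the contracting hypothesis, and faithfulness to convert ``$g|_\mu$ acts trivially'' into ``$g|_\mu$ is a unit''), and your inductive bookkeeping is complete: the reduction of the conclusion to $g|_\mu = s(\mu)$, the identification $d(h)=c(h)=s(\mu^{(i)})$ for the restriction $h$, and the three cases in the invariant check are all sound. What your version buys is elementarity and constructiveness --- no interiors, boundaries, or density, and $\mu$ is assembled in at most $|\Nn_v|$ explicit extension steps; the paper's version is shorter on the page and handles all nucleus elements simultaneously. One cosmetic remark: the path $\nu$ you extract from faithfulness automatically has positive length (the vertex $s(\mu^{(i)})$ itself is fixed by $h$ since $d(h)=c(h)=s(\mu^{(i)})$), so the no-sources hypothesis is not really needed at that point of your argument, whereas the paper's proof does need it to ensure $E^\infty$ is nonempty over every vertex.
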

\begin{proof}
Suppose first that there is $x \in E^{\infty}$ such that $g\cdot x\neq x$ for
all $g \in \mathcal{N}\setminus E^{0}$ satisfying $d(g) = r(x)$. Choose $n\in
\mathbb{N}$ so that $g\cdot x_{1}...x_{n}\neq x_{1}....x_{n}$ for all $g$ in
$\mathcal{N}\setminus E^{0}$ satisfying $d(g) = r(x)$. Then $\mu =
x_{1}...x_{n}$ has the desired property.

Now suppose that for every $x \in E^{\infty}$, there exists
$g\in\mathcal{N}\setminus E^{0}$ such that $d(g) = r(x)$ and $g\cdot x = x$.
Then, $\bigcup_{g\in\mathcal{N}\setminus E^{0}}\{x\in d(g)E^{\infty}:g\cdot x
= x\} = E^{\infty}$. Since a finite intersection of open dense sets is itself
an open dense set, there exists $x \in E^{\infty}$ that does not belong to
the boundary of $\{x\in d(g)E^{\infty}:g\cdot x = x\}$ for any $g \in
\mathcal{N}\setminus E^{0}$. So, there is an $n \in \mathbb{N}$ such that for
all $g$ in $\mathcal{N}\setminus E^{0}$ with $d(g) = r(x)$, either $g\cdot
x_{1}...x_{n}\neq x_{1}...x_{n}$ or $g\cdot y = y$ for all $y$ in
$Z[x_{1}...x_{n})$. Hence, $\mu = x_{1}...x_{n}$ has the desired property.
\end{proof}

\begin{prp}
Let $E$ be a strongly connected finite directed graph. Let $(G,E)$ be a contracting self-similar groupoid
action with nucleus $\Nn$. If $\Jsig : \Jj \to \Jj$ is a local homeomorphism, then $(G, E)$ is
regular. Hence, $(G, E)$ is regular if and only if $\Jsig$ is a local homeomorphism
\end{prp}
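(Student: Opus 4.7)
The strategy is contrapositive: assume $(G,E)$ is not regular and construct a point of $\Jj$ at which $\tilde\sigma$ fails to be locally injective, contradicting the local homeomorphism hypothesis. Combined with Theorem~\ref{thm:Jsig} this yields the ``if and only if''.

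First, I would extract a concrete non-regular witness. Pick $g \in G$ and $y \in E^\infty$ with $g \cdot y = y$ but no finite prefix $\mu$ of $y$ satisfies $g\cdot\mu = \mu$ and $g|_\mu = s(\mu)$. Using the contracting hypothesis and replacing $g$ by a restriction $g|_{y_1\dots y_N}$, I can assume $g \in \Nn$; non-regularity then forces $g|_{y_1\dots y_n} \in \Nn \setminus E^{0}$ for every $n \ge 1$. Since $\Nn$ is finite, the pigeonhole principle yields $n_1 < n_2$ with $g|_{y_1\dots y_{n_1}} = g|_{y_1\dots y_{n_2}} =: h$; setting $w := y_{n_1+1}\dots y_{n_2}$ produces a cycle at $v := r(y_{n_1+1}) = s(y_{n_2})$ with $h\cdot w = w$, $h|_w = h$, $d(h) = c(h) = v$, and $h \ne v$ in $G$ (the last because $h = g|_{y_1\dots y_{n_2}} \notin E^{0}$).

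Second, I would refine the discerning path from Lemma~\ref{lem:discerning mu}. That lemma gives some $\mu_0 \in E^*$ for which any $k \in \Nn$ with $d(k) = r(\mu_0)$ and $k\cdot\mu_0 = \mu_0$ fixes every extension $\mu_0\eta$. Using strong connectedness of $E$, pick $\lambda \in vE^* r(\mu_0)$ and $\rho \in s(\mu_0) E^* v$ and set $\mu := \lambda\mu_0\rho$; a direct chase using Lemma~\ref{properties SSA}(2) shows that the discerning property transfers to $\mu$, which is now a cycle at $v$ (so $k \in \Nn$ with $d(k) = v$ and $k\cdot\mu = \mu$ forces $k|_\mu = v$). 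This gives us a discerning cycle sharing its base vertex with the self-referential cycle $w$.

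Third---the main step---I would exploit the local homeomorphism hypothesis at the periodic point $q(x^*) \in \Jj$, where $x^* := \dots www \in E^{-\infty}$, to reach a contradiction. By Theorem~\ref{thm:Jsig} the map $\tilde\sigma$ is assumed to be a local homeomorphism, so there is a neighbourhood $V$ of $q(x^*)$ on which $\tilde\sigma$ is injective. I would construct, for each large $N$, a pair of distinct elements $\tilde z_N, \tilde z'_N \in E^{-\infty}$ with
\begin{enumerate}
    \item[(a)] both agreeing with $x^*$ on their last $N|w|$ letters, hence $q(\tilde z_N), q(\tilde z'_N) \in U_{w^N}$ and converging to $q(x^*)$ in $\Jj$ as $N\to\infty$;
    \item[(b)] $\tilde z_N \not\sim_\aeq \tilde z'_N$, so $q(\tilde z_N) \ne q(\tilde z'_N)$; and
    \item[(c)] $\sigma(\tilde z_N) \sim_\aeq \sigma(\tilde z'_N)$, so $\tilde\sigma q(\tilde z_N) = \tilde\sigma q(\tilde z'_N)$.
\end{enumerate}
The construction: since $h \ne v$, faithfulness produces a path $\eta \in vE^*$ with $h\cdot\eta \ne \eta$; take $\eta$ of minimal length so that $h\cdot \eta$ and $\eta$ differ only in their final edge. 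Using strong connectedness I then attach a left-infinite ``history'' of the form $\dots \mu\mu\mu w^N \eta$ (resp.\ $\dots \mu\mu\mu w^N (h\cdot\eta)$) via a path into $v$, so that the terminal letter serves as the ``test letter'' that distinguishes $\tilde z_N$ from $\tilde z'_N$. The identity $h|_w = h$ propagates the $h$-twist through the $w^N$-block, yielding (c). The discerning property of $\mu$ applied to any hypothetical $\Nn$-sequence implementing $\tilde z_N \sim_\aeq \tilde z'_N$ forces that sequence to be trivial on passage through each $\mu$-block, which is incompatible with the nontrivial last-letter discrepancy $h\cdot\eta_{|\eta|} \ne \eta_{|\eta|}$; this yields (b). Property (a) is then immediate from the construction.

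The contradiction: (a)--(c) produce, for every neighbourhood of $q(x^*)$, a pair of distinct points with the same $\tilde\sigma$-image, violating injectivity of $\tilde\sigma|_V$.

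The hard part is the rigidity argument for (b). It must be checked, using Lemma~\ref{lem:ae by nucleus} together with the discerning property of $\mu$ and the self-reference $h|_w = h$, that no consistent sequence in $\Nn$ can interpolate the ``test letter'' discrepancy through the $\mu$-blocks; this is where both the Baire-category output of Lemma~\ref{lem:discerning mu} and strong connectedness (needed to route $\mu$ as a cycle at $v$) are essential. Once (b) is in place, (a) and (c) fall out of the computations $h\cdot w^k = w^k$, $h|_{w^k} = h$, and the fact that the test letter is removed by $\sigma$.
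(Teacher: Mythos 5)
Your overall strategy is the same as the paper's: argue the contrapositive, extract a nucleus element acting nontrivially arbitrarily far along a fixed right-infinite path, and use the discerning path of Lemma~\ref{lem:discerning mu} together with strong connectedness and Lemma~\ref{lem:ae by nucleus} to manufacture pairs of \emph{non}-asymptotically-equivalent left-infinite paths that $\sigma$ identifies. Your pigeonhole step producing the cycle $w$ with $h\cdot w=w$, $h|_w=h$ is a nice explicit variant of the paper's compactness/subsequence argument, and your minimal-length $\eta$ reduces the discrepancy to a single terminal edge so that $\Jsig$ itself (rather than $\Jsig^{|\alpha|}$, as in the paper) fails to be locally injective. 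Steps (b) and (c) are sound: the transfer of the discerning property to the cycle $\mu=\lambda\mu_0\rho$ works because $\Nn$ is closed under restriction, and $\sigma(\tilde z_N)=\sigma(\tilde z'_N)$ holds on the nose.

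The gap is in claim (a). With $\tilde z_N=\dots\mu\mu\mu\, w^N\eta$, the \emph{rightmost} letters are those of $\eta$, not of $w^N$; so $\tilde z_N$ does not agree with $x^*=\dots www$ on its last $N|w|$ letters, $q(\tilde z_N)\notin U_{w^N}$ in general, and $q(\tilde z_N)$ does not converge to $q(x^*)$. In $E^{-\infty}$ you have $\tilde z_N\to w^\infty\eta$ and $\tilde z'_N\to w^\infty(h\cdot\eta)$, which are \emph{different} points of $E^{-\infty}$. Since the contradiction with local injectivity requires the pairs $\big(q(\tilde z_N),q(\tilde z'_N)\big)$ of distinct points with equal $\Jsig$-image to accumulate at a \emph{single} point of $\Jj$, you must still prove that $w^\infty\eta\sim_{\aeq}w^\infty(h\cdot\eta)$; as written, no common accumulation point has been exhibited and the argument does not close. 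The fix is short and uses exactly the self-reference you set up: the finitely many elements $h|_{w_1\dots w_j}$, $0\le j<|w|$, implement $w^\infty\eta\sim_{\aeq}w^\infty(h\cdot\eta)$ because $h\cdot w^k\eta=w^k(h\cdot\eta)$ for all $k$. With that, both sequences converge to the common point $q(w^\infty\eta)=q(w^\infty(h\cdot\eta))$, and the contradiction should be derived at that point rather than at $q(x^*)$. (This is precisely the role played in the paper's proof by the separately established equivalence $y\alpha\sim_{\aeq}y\beta$.)
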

\begin{proof}
Suppose that $(G, E)$ is not regular. It suffices to show that there exists
$k  \in \NN$ such that $\Jsig^k$ is not locally injective. Since $(G, E)$ is
not regular, there exist $x  \in E^\infty$ and $g  \in G$ such that $g \cdot
x = x$ but $g$ fixes no neighbourhood of $x$. So there is a strictly
increasing sequence $(n_j)$ in $\NN$ and paths $\alpha_j  \in s(x_{n_j})E^*$
such that $g \cdot x_1 \dots x_{n_j}\alpha_{j} \not= x_1 \dots
x_{n_j}\alpha_{j}$. In particular, the elements $h_j := g|_{x_1 \dots x_j}$
satisfy $h_j \cdot \alpha_j \not= \alpha_j$ for all $j$. By Lemma~\ref{lem:N
a core} we have $h_j \in \Nn$ for large $j$. Since $\Nn$ is finite, by
passing to a subsequence, we can assume that $h_j = h_1 =: h$ for all $j$
(and hence $s(x_{n_j}) = d(h)$ for all $j$). So $\alpha := \alpha_1 \in
d(h)E^*$ satisfies $h \cdot \alpha \not= \alpha$; let $\beta := h \cdot
\alpha$.

For each $j$, fix $y_j  \in \lZ{x_1 \dots x_{n_j}} \subseteq E^{-\infty}$.
Since $E^{-\infty}$ is compact, by passing to a subsequence, we can assume
that $y_j \to y \in E^{-\infty}$. Since $s(y_j) = s(x_{n_j}) = d(h)$ for all
$j$, we have $s(y) = d(h)$, so $y\alpha, y\beta \in E^{-\infty}$. By
definition of convergence in $E^{-\infty}$, for each $N  \in \NN$ there
exists $j$ such that $y_{-N} \dots y_{-1} = x_{n_j - N+1} \dots x_{n_j}$. For
this $j$, the element $g_N := g|_{x_1 \dots x_{n_j - N + 1}}$ satisfies $g_N
\cdot y_{-N} \dots y_{-1} \alpha = y_{-N} \dots y_{-1} \beta$. Hence $y\alpha
\sim_\aeq y_\beta$. That is, $[y_\alpha] = [y_\beta] \in \Jj$. Moreover, $k
:= |\alpha| = |\beta|$ satisfies $\Jsig([y\alpha]) = [y] = \Jsig([y\beta])$.

By Lemma~\ref{lem:discerning mu}, there exists $\mu  \in E^*$ such that every
$g  \in \Nn$ that satisfies $g \cdot\mu = \mu$ pointwise fixes $\rZ{\mu}$.
Fix $z  \in \lZ{r(\mu)}$ so that $z\mu \in E^{-\infty}$. Since $E$ is
strongly connected, for each $n  \in \NN$ there exists $\nu_n  \in E^*$ such
that $r(\nu_n) = s(\mu)$ and $s(\nu_n) = r(y_n)$. So for each $n$ we obtain
elements $z\mu\nu_n y_n \dots y_{-1} \alpha$ and $z\mu\nu_n y_n \dots y_{-1}
\beta$ of $E^{-\infty}$. Since $\alpha \not= \beta$, our choice of $\mu$
ensures that $g \cdot \mu\nu_n y_n \dots y_{-1}\alpha \not= \mu\nu_n y_n
\dots y_{-1}\beta$ for all $g  \in \Nn$, and so Lemma~\ref{lem:ae by nucleus}
shows that $z\mu\nu_n y_n \dots y_{-1} \alpha \not\sim_\aeq z\mu\nu_n y_n
\dots y_{-1} \beta$ for all $n$. We have $z\mu\nu_n y_n \dots y_{-1} \alpha
\to y\alpha$ and $z\mu\nu_n y_n \dots y_{-1} \beta \to y_\beta$, and
$\Jsig^k([z\mu\nu_n y_n \dots y_{-1} \alpha]) = [z\mu\nu_n y_n \dots y_{-1}]
= \Jsig^k([z\mu\nu_n y_n \dots y_{-1} \beta])$ for all $n$, and therefore
$\Jsig^k$ is not locally injective.
\end{proof}
\par


\section{The Smale space of a self-similar groupoid action on a graph}\label{sec:smale space}

In this section we describe the Wieler Smale space that arises from the
locally expanding dynamics described in the preceding section, and show that
this Smale space can be realised as the quotient of $E^\ZZ$ by the natural
extension of asymptotic equivalence.

We first recall Wieler's axioms, under which the projective limit of a space $V$ under iterates of a given continuous surjection $V \to V$ becomes a Smale space with totally disconnected stable set. Wieler proves
more, showing that every Smale space with totally disconnected stable set has this form, but we will not need the full power of her theorem.

For the statement of Wieler's Theorem, recall that if $g : X \to X$ is a continuous self-mapping
of a topological space, then $\varprojlim(X, g)$ is the space
\[
\varprojlim(X,g) := \{(x_n)^\infty_{n=1} \mid x_i \in X\text{ and }g(x_{i+1}) = x_i\text{ for all }i\}.
\]
If $\phi: X \to X$ is a continuous self-mapping of a topological space, we
say that a point $x  \in X$ is \emph{non-wandering} if for every
neighbourhood $U$ of $x$ there exists $n \ge 1$ such that $\phi^n(U) \cap U
\not= \varnothing$. Finally, recall that the \emph{forward orbit} of $x  \in
X$ is $\{\phi^n(x) \mid n \ge 0\}$.

\begin{thm}[{Wieler \cite[Theorem~A]{Wieler:Smale}}]\label{thm:Wieler}
Let $(X, d)$ be a compact metric space, and let $\varphi : X \to X$ be a
continuous surjection. Suppose that there exist $\varepsilon > 0, K \in \NN
\setminus \{0\}$, and $\gamma \in (0,1)$ such that
\begin{description}
\item[Axiom 1]\label{WAxiom1} For all $x, y \in X$ such that
    $d(x,y)<\varepsilon$, we have
\[
d(\varphi^K(x),\varphi^K(y))\leq\gamma^K d(\varphi^{2K}(x), \varphi^{2K}(y)).
\]
\item[Axiom 2] For all $x \in X$ and $0 < \alpha < \varepsilon$,
\[
\varphi^K(B(\varphi^K(x),\alpha))\subseteq \varphi^{2K}(B(x,\gamma\alpha)).
\]
\end{description}
Then
\[
d_\infty((x_n), (y_n)) := \sum^K_{n=1} \gamma^{-n} \sup_{m \in \NN} \gamma^m d(x_{m+n}, y_{m+n}),
\]
defines a metric on $X_\infty := \varprojlim(X, \varphi)$, the formula
\[
\varphi_\infty(x_1, x_2, \dots) = (\varphi(x_1), x_1, x_2, \dots)
\]
defines a homeomorphism $\varphi_\infty : X_\infty \to X_\infty$, and
$(X_\infty, \varphi_\infty)$ is a Smale space with totally disconnected
stable set. This Smale space is irreducible if and only if every point in $X$
is nonwandering, and there is a point in $X$ whose forward orbit under
$\varphi_\infty$ is dense.
\end{thm}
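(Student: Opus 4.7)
The plan is to follow Wieler's original strategy, which adapts the standard inverse-limit construction of Smale spaces from expanding maps to her weaker axiomatic framework. First I would verify that $d_\infty$ is a metric: each term $\sup_{m} \gamma^m d(x_{m+n}, y_{m+n})$ is bounded by $\operatorname{diam}(X)$, so the finite sum over $n = 1, \ldots, K$ converges, and the triangle inequality and symmetry are inherited from those of $d$. Showing that $\varphi_\infty$ is a homeomorphism is then straightforward: its set-theoretic inverse is the shift $(y_1, y_2, \ldots) \mapsto (y_2, y_3, \ldots)$, which is well-defined because the tower relation $\varphi(y_{n+1}) = y_n$ ensures this lands back in $X_\infty$, and both maps are Lipschitz in $d_\infty$ with constants depending only on $\gamma$ and $K$.

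The core technical step is the construction of the bracket $[\cdot, \cdot]$ on a uniform neighbourhood of the diagonal in $X_\infty \times X_\infty$. Given $\mathbf{x} = (x_n)$ and $\mathbf{y} = (y_n)$ sufficiently close in $d_\infty$, I would define $[\mathbf{x}, \mathbf{y}]$ to be the sequence whose low-index coordinates essentially match those of $\mathbf{y}$ and whose high-index coordinates are tracked near those of $\mathbf{x}$. Concretely, for each $n$ one applies Axiom 2 to produce a $\varphi^K$-preimage of the relevant coordinate of $\mathbf{y}$ lying within controlled distance of $x_n$, and Axiom 1 guarantees that such a preimage is unique at that scale; this uniqueness is what makes successive coordinate choices compatible with the tower relation and hence produces a genuine element of $X_\infty$. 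Continuity of the bracket and openness of its domain follow from the uniformity of the estimates in Axioms 1 and 2 on the compact space $X$.

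With the bracket in hand, I would verify B1--B4 and C1--C2. The idempotence statements B1--B3 are consequences of the uniqueness built into the preimage selection, and B4 is immediate from the shift structure of $\varphi_\infty$. For C1 and C2, one iterates Axiom 1: if $[\mathbf{x}, \mathbf{y}] = \mathbf{y}$, then the coordinates of $\varphi_\infty^n(\mathbf{x})$ and $\varphi_\infty^n(\mathbf{y})$ enter the regime to which Axiom 1 applies, producing a geometric contraction at rate some fixed power of $\gamma$, and symmetrically for $\varphi_\infty^{-1}$ when $[\mathbf{x}, \mathbf{y}] = \mathbf{x}$. Total disconnectedness of the local stable set falls out of the construction: within a local stable set the bracket forces all sufficiently late-index coordinates to coincide, so such a set is parametrised by finitely many coordinates taking values in the discrete fibres of $\varphi^K$. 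The irreducibility equivalence is a standard topological-dynamics argument: a Smale space is irreducible exactly when every point is non-wandering and some point has dense forward orbit, and both properties transfer between $(X, \varphi)$ and $(X_\infty, \varphi_\infty)$ via the natural projection to the first coordinate.

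The main obstacle is the bracket construction. Iterating Axiom 2 to produce a coherent point in $X_\infty$ demands careful bookkeeping of preimage choices, and Axiom 1 must be invoked at exactly the right scales so that continuity, well-definedness, and the subsequent contraction estimates all come out uniformly in a neighbourhood of the diagonal. Once the bracket is properly in place, the verification of the remaining Smale space axioms and the deduction of total disconnectedness and the irreducibility characterization are reasonably direct.
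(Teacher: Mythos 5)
The paper does not prove this statement: it is quoted verbatim from Wieler (Theorem~A of \cite{Wieler:Smale}) and used as a black box, so there is no in-paper argument to compare yours against. Judged on its own terms, your outline follows the right overall strategy --- essentially Wieler's own --- but it contains a concrete error of orientation and leaves the crux unproved. By (B1)--(B3), the bracket $[\mathbf{x},\mathbf{y}]$ must lie in $X_\infty^s(\mathbf{x},\varepsilon)\cap X_\infty^u(\mathbf{y},\varepsilon)$. In the inverse limit, stable equivalence (i.e.\ $d_\infty(\varphi_\infty^n\mathbf{x},\varphi_\infty^n\mathbf{y})\to 0$) is governed by the \emph{low-index} coordinates: it forces the forward $\varphi$-orbits of $x_1$ and $y_1$ to merge, and this is exactly the totally disconnected direction. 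Unstable equivalence is governed by the \emph{high-index} coordinates converging. So $[\mathbf{x},\mathbf{y}]$ must take its low-index data from $\mathbf{x}$ and track $\mathbf{y}$ in the deep coordinates; you have defined it the other way around. Carried through literally, this interchanges (C1) and (C2) --- forward iteration of $\varphi_\infty$ expands along the leaves, so the contraction estimate you invoke for ``$[\mathbf{x},\mathbf{y}]=\mathbf{y}$'' fails --- and your total-disconnectedness argument ends up describing the unstable sets, which are typically connected (arcs, in the dyadic solenoid).

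Beyond the orientation issue, the step you yourself identify as the main obstacle --- using Axiom~2 to produce a coherent tower of preimage choices, Axiom~1 to show the choice at each scale is unique, and then verifying that the resulting assignment is well defined, lands in $X_\infty$, is continuous, and has open domain --- is only gestured at. That bookkeeping is essentially the entire content of Wieler's proof, and the irreducibility characterisation in the last sentence also requires an argument relating non-wandering points and dense orbits of $(X,\varphi)$ to those of $(X_\infty,\varphi_\infty)$, which you assert rather than prove. As it stands the proposal is a plan for a proof rather than a proof; if you intend to rely on this statement, cite \cite{Wieler:Smale} as the paper does.
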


The key point for us is that the results of the preceding two sections show that every
contracting, regular self-similar groupoid action on a finite directed graph
with no sources gives rise to a dynamical system satisfying Wieler's axioms.

\begin{lem}
Let $E$ be a finite directed graph with no sources. Let $(G, E)$ be a
contracting, regular self-similar groupoid action. Let $\Jj$ be the limit
space of Definition~\ref{dfn:limit space}, let $d_\Jj$ be the equivalence
relation metric as in Corollary~\ref{cor:limit space metrisable}, and let
$\Jsig$ be the local homeomorphism of Theorem~\ref{thm:Jsig}. Let
$\varepsilon$ and $c$ be as in the statement of Theorem~\ref{thm:Jsig}. Then the pair
$(\Jj, \Jsig)$ satisfies Wieler's axioms for $\gamma = \frac{1}{c}$, $K = 1$
and $\varepsilon' = \frac{\varepsilon}{2}$.
\end{lem}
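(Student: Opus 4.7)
The plan is to verify Wieler's two axioms directly from the metric properties of $(\Jj, \Jsig)$ established in Theorem~\ref{thm:Jsig}; in both cases the required expansion behaviour of $\Jsig$ is precisely what parts~(1) and~(2) of that theorem provide, so little additional work is needed beyond arranging the right iterates of $\Jsig$.

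For Axiom~2 with $K = 1$ and $\gamma = 1/c$, the content is that $\Jsig(B(\Jsig [x], \alpha)) \subseteq \Jsig^2(B([x], \alpha/c))$ for every $[x] \in \Jj$ and every $0 < \alpha < \varepsilon/2$. Since $\alpha/c < \varepsilon$, one application of Theorem~\ref{thm:Jsig}(2) with radius $\alpha/c$ yields $\Jsig(B([x], \alpha/c)) = B(\Jsig [x], \alpha)$, and applying $\Jsig$ once more to both sides gives the stronger equality $\Jsig^2(B([x], \alpha/c)) = \Jsig(B(\Jsig [x], \alpha))$, which trivially contains the required set.

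For Axiom~1 with the same $K$ and $\gamma$, I would apply Theorem~\ref{thm:Jsig}(1) twice. Fix $[x], [y] \in \Jj$ with $d_\Jj([x], [y]) < \varepsilon/2$. Since $\varepsilon/2 < \varepsilon$, a first application of part~(1) gives $d_\Jj(\Jsig [x], \Jsig [y]) = c\, d_\Jj([x], [y])$. Provided this new distance is itself $< \varepsilon$---which one can guarantee by replacing the $\varepsilon$ of Theorem~\ref{thm:Jsig} by $\min(\varepsilon, \varepsilon/c)$, a harmless shrinking, should $c > 2$---a second application of part~(1) gives $d_\Jj(\Jsig^2 [x], \Jsig^2 [y]) = c\, d_\Jj(\Jsig [x], \Jsig [y])$. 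Dividing the latter by $c$ then yields the identity $d_\Jj(\Jsig [x], \Jsig [y]) = (1/c)\, d_\Jj(\Jsig^2 [x], \Jsig^2 [y])$, establishing Axiom~1 with equality.

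The only technical subtlety---and hence the main (minor) obstacle---is the self-consistency of the threshold $\varepsilon' = \varepsilon/2$: iterating the local-expansion formula of Theorem~\ref{thm:Jsig}(1) requires that the distance after one application of $\Jsig$ remain in the regime $[0, \varepsilon)$ where that formula applies, which is automatic only when $c \le 2$. As indicated above, this is pure bookkeeping, resolved by shrinking $\varepsilon$ at the outset, and no new mathematical idea beyond the content of Theorem~\ref{thm:Jsig} is needed.
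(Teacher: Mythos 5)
Your proof is correct and follows essentially the same route as the paper's: both axioms are read off directly from parts (1) and (2) of Theorem~\ref{thm:Jsig} with $K=1$ and $\gamma = 1/c$, using the set equality $\Jsig^2(B([x],\alpha/c)) = \Jsig(B(\Jsig([x]),\alpha))$ for Axiom~2 and the two-fold application of the expansion identity for Axiom~1. The one point where you are more careful than the paper --- checking that $d_\Jj(\Jsig([x]),\Jsig([y])) = c\,d_\Jj([x],[y])$ actually remains below $\varepsilon$ before the second application of part~(1), which fails to be automatic when $c>2$ and is repaired by shrinking $\varepsilon$ at the outset --- is a real (if minor) gap that the published proof elides, and your fix is harmless since Wieler's theorem only requires the axioms to hold for some positive threshold.
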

\begin{proof}
Theorem~\ref{thm:Jsig} (1) shows that if $d_\Jj([x], [y]) < \varepsilon'$ then
\begin{align*}
d_{\Jj}(\Jsig^K([x]), \Jsig^K([y]))
    &= d_{\Jj}(\Jsig([x]), \Jsig([y]))\\
    &= \frac{1}{c} d_{\Jj}(\Jsig^2([x]), \Jsig^2([y]))
    = \gamma d_{\Jj}(\Jsig^{2K}([x]), \Jsig^{2K}([y])),
\end{align*}
establishing Axiom~1.

Theorem~\ref{thm:Jsig}(2) shows that
$\tilde{\sigma}^{2K}(B([x],\gamma\alpha)) =
\tilde{\sigma}^{K}(B(\tilde{\sigma}^{K}([x]),\alpha))$ for all $\alpha\leq
\varepsilon'$ and $[x] \in \mathcal{J}$, establishing Axiom~2.
\end{proof}

We now identify the limit space $\Jj_\infty$ obtained from Theorem~\ref{thm:Wieler} applied to
$(\Jj, \Jsig)$ with a quotient of the bi-infinite path space of $E$.

We define asymptotic equivalence on bi-infinite paths just as we define it
for right-infinite paths. That is, if $(G, E)$ is a self-similar groupoid
action and $x,y \in E^\ZZ$, then $x \sim_\aeq y$ if there exists a
bi-infinite sequence $(g_n)_{n \in \ZZ}$ in $G$ such that $\{g_n \mid n \in
\ZZ\}$ is a finite set, and such that $g_n \cdot x_n x_{n+1} x_{n+2} \dots =
y_n y_{n+1} y_{n+2} \dots$ for all $n  \in \ZZ$.

The argument of Lemma~\ref{lem:ae by nucleus} shows that $x, y  \in E^\ZZ$
are asymptotically equivalent if and only if there is a sequence $(g_n)_{n
\in \ZZ}$ in $\Nn$ such that $g_n \cdot x_n x_{n+1} x_{n+2} \dots = y_n
y_{n+1} y_{n+2} \dots$ for all $n$ and such that $g_n|_{x_n} = g_{n+1}$ for
all $n$.

\begin{dfn}\label{def:S space}
Let $E$ be a finite directed graph with no sources. Let $(G, E)$ be a contracting, self-similar groupoid action. We write $\Ss$ for the quotient space
$E^\ZZ/{\sim_\aeq}$ and call this the \emph{limit solenoid} of $(G, E)$.
\end{dfn}

We will need the following notation. Given a directed graph $E$ with no sinks
or sources, we will write $\tau : E^\mathbb{Z} \to E^\mathbb{Z}$ for the
translation homeomorphism $\tau(x)_n = x_{n-1}$, $n \in \mathbb{Z}$. For $n
\in \ZZ$ and $x  \in E^{-\infty}$ we will write $x(-\infty, n)$ for the
element of $E^{-\infty}$ given by $x(-\infty, n) = \dots x_{n-2} x_{n-1}
x_{n}$.

\begin{prp}\label{prp:S-J conjugate}
Let $E$ be a finite directed graph with no sinks or sources. Let $(G, E)$ be
a contracting, self-similar groupoid action. Let $\Jj$ be the limit space of
$(G, E)$, and let $\Jsig : \Jj \to \Jj$ be the map defined as in
Theorem~\ref{thm:Jsig}. Let $\Jj_\infty := \varprojlim(\Jj, \Jsig)$, and let
$\Jsig_\infty : \Jj_\infty \to \Jj_\infty$ be the homeomorphism of defined as
in Theorem~\ref{thm:Wieler}. Let $\Ss = E^\ZZ/{\sim_\aeq}$ be the limit
solenoid of $(G, E)$. Then there is a homeomorphism $\theta : \Ss \to
\Jj_\infty$ such that $\theta([x]) = ([x(-\infty, -1)], [x(-\infty, 0)],
[x(-\infty, 1)], \dots)$ for all $x  \in E^\ZZ$. We have $\theta([\tau(x)]) =
\Jsig_\infty(\theta([x]))$ for all $x  \in E^\ZZ$.
\end{prp}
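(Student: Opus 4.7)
My plan is to verify the four required properties of $\theta$ in sequence: (i) well-definedness and that the image lies in $\Jj_\infty$, (ii) bijectivity, (iii) the homeomorphism property, and (iv) the equivariance statement.

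For (i), I will observe that if a bi-infinite sequence $(g_m)_{m\in\ZZ}$ in $G$ with finite image implements $x\sim_\aeq y$ on $E^\ZZ$, then for each fixed $n\in\ZZ$ the sub-family indexed by $m\leq n$ (appropriately reindexed so that $m=n$ becomes index $-1$) witnesses $x(-\infty,n)\sim_\aeq y(-\infty,n)$ in $E^{-\infty}$, so $\theta$ descends to $\Ss$. The image lies in $\Jj_\infty$ because $\sigma(x(-\infty,n+1))=x(-\infty,n)$ and $\Jsig([z])=[\sigma(z)]$ by Theorem~\ref{thm:Jsig}.

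For (ii), I will establish surjectivity by applying Proposition~\ref{prp:mapping_properties}(1): given $(\xi_n)_{n\geq 1}\in\Jj_\infty$, I pick any representative $z^{(1)}\in q^{-1}(\xi_1)$ and use the bijections $\sigma\colon q^{-1}(\xi_{n+1})\to q^{-1}(\xi_n)$ inductively to obtain $z^{(n+1)}$ with $\sigma(z^{(n+1)})=z^{(n)}$. The new last letters $z^{(n+1)}_{-1}$ then assemble with $z^{(1)}$ into a bi-infinite path $x\in E^\ZZ$ satisfying $x(-\infty,n-2)=z^{(n)}$, giving $\theta([x])=(\xi_n)$. Injectivity is the main technical step: suppose $\theta([x])=\theta([y])$. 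By Lemma~\ref{lem:ae by nucleus} applied for each $n\in\ZZ$, I obtain sequences $(g^{(n)}_m)_{m\leq n}$ in $\Nn$ with $g^{(n)}_m\cdot x_m=y_m$ and $g^{(n)}_m|_{x_m}=g^{(n)}_{m+1}$. Since $\Nn$ is finite, a diagonal argument produces an infinite set of indices $n$ along which, for every fixed $m$, the values $g^{(n)}_m$ are eventually constant. The resulting bi-infinite sequence $(g_m)_{m\in\ZZ}$ in $\Nn$ will inherit both the action relation and the restriction relation, and hence implement $x\sim_\aeq y$.

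Finally, $\theta$ will be continuous by the universal property of the quotient $q^\ZZ\colon E^\ZZ\to\Ss$: it suffices to check continuity of the composition $E^\ZZ\to\Jj^\NN$ sending $x\mapsto([x(-\infty,n-2)])_{n\geq 1}$, which is clear coordinatewise because each projection $x\mapsto x(-\infty,n-2)$ is continuous on $E^\ZZ$ and $q$ is continuous. Since $E^\ZZ$ is compact and $\Jj^\NN$ is Hausdorff (as $\Jj$ is metrisable by Corollary~\ref{cor:limit space metrisable}), $\theta$ is a continuous bijection from a compact space onto a Hausdorff space and so is automatically a homeomorphism (this also confirms that $\Ss$ is Hausdorff). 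The equivariance reduces to the identity $\tau(x)(-\infty,n)=x(-\infty,n-1)$ combined with the explicit formula $\Jsig_\infty(\xi_1,\xi_2,\dots)=(\Jsig(\xi_1),\xi_1,\xi_2,\dots)$ from Theorem~\ref{thm:Wieler}. I expect the diagonalisation producing a globally consistent $(g_m)$ from the local sequences to be the main obstacle; the remaining steps are essentially bookkeeping with the indexing conventions.
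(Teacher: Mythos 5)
Your overall architecture (well-definedness, bijectivity, compact-to-Hausdorff continuity, equivariance by direct computation) is the same as the paper's, and your treatments of well-definedness, injectivity, continuity and equivariance are all sound: in particular your diagonalisation over the sequences supplied by Lemma~\ref{lem:ae by nucleus} is a slightly more elaborate version of what the paper does for injectivity (the paper only extracts, for each fixed $m$, a constant subsequence of the elements $g_{m,n}\in\Nn$ witnessing $g_{m,n}\cdot x_m\dots x_n = y_m\dots y_n$, which already yields a single $g\in\Nn$ with $g\cdot x(m,\infty)=y(m,\infty)$; global consistency of the restrictions is not needed).

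The genuine gap is in your surjectivity step. You invoke Proposition~\ref{prp:mapping_properties}(1) to get bijections $\sigma\colon q^{-1}(\xi_{n+1})\to q^{-1}(\xi_n)$, but that proposition assumes the action is \emph{regular}, whereas Proposition~\ref{prp:S-J conjugate} is stated for actions that are merely contracting (regularity is deliberately reintroduced only afterwards, in Corollary~\ref{cor:Smale solenoid}). Under the stated hypotheses the citation is not licensed, and bijectivity of $\sigma$ on fibres can genuinely fail without regularity, so this is not a purely cosmetic issue. The paper's argument avoids regularity entirely: choosing representatives $x_j\in q^{-1}(\zeta_j)$, it observes that for each fixed $j$ every $\sigma^{n-j}(x_n)$ is asymptotically equivalent to $x_j$, so by Corollary~\ref{cor:finite orbits} the set $\{\sigma^{n-j}(x_n)\mid n\ge j\}$ is finite; a Cantor diagonal argument then extracts a subsequence $(x_{n_k})$ along which $\sigma^{n_k-j}(x_{n_k})$ is eventually constant for every $j$, and these constants assemble into the required bi-infinite path. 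If you are content to prove the result only for regular actions (which is all that is used later), your route works and is arguably cleaner; to prove the statement as written, you should replace the appeal to Proposition~\ref{prp:mapping_properties}(1) by an argument of the above type, which uses only the contracting hypothesis.
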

\begin{proof}
If $x,y  \in E^{\ZZ}$ satisfy $x \sim_\aeq y$, then $x(-\infty, n) \sim_\aeq
y(-\infty, n)$ for all $n  \in \ZZ$. So the formula $\theta([x]) =
([x(-\infty, -1)], [x(-\infty, 0)], [x(-\infty, 1)], \dots)$ is well defined
and determines a map $\theta : \Ss \to \prod^\infty_{n=1} \Jj$. By
definition of $\Jsig$, we have $\Jsig([x(-\infty, n)]) = [\sigma(x(-\infty,
n))] = [x(-\infty, n-1)]$ for all $n$, and so each $\theta([x]) \in
\Jj_\infty$.

Since $E^\ZZ$ is compact, so is its continuous image $\Ss$. Projective limits of Hausdorff spaces are Hausdorff, so $\Jj_\infty$ is Hausdorff. So, to see that $\theta$ is a homeomorphism, it suffices to show that
it
a continuous bijection.

The maps $E^\ZZ \owns x \mapsto x(-\infty, n)$ indexed by $n  \in \ZZ$ are
clearly continuous, and so the maps $x \mapsto [x(-\infty, n)]$ are also
continuous because the quotient map from $E^{-\infty}$ to $\Jj$ is
continuous. Hence $\bar\theta(x) := ([x(-\infty, -1)], [x(-\infty, 0)],
[x(-\infty, 1)], \dots)$ defines a continuous map $\bar\theta : E^\ZZ \to
\Jj_\infty$. Since $\theta : \Ss \to \Jj_\infty$ is the map induced by
$\bar\theta$, it is also continuous.

To see that $\theta$ is surjective, fix $(\zeta_1, \zeta_2, \zeta_3, \dots)
\in \Jj_\infty$. For each $j$, choose $x_j  \in E^{-\infty}$ such that $[x_j]
= \zeta_j$. Since each $\Jsig(\zeta_j) = \zeta_{j-1}$, we have $\sigma(x_j)
\sim_\aeq x_{j-1}$ for all $j$. For each $j \ge 1$, consider the sequence
$(\sigma^{n-j}(x_n))^\infty_{n=j}$ in $E^{-\infty}$. We just saw that each
$\sigma^{n-j}(x_n) \sim_\aeq x_j$, and so Corollary~\ref{cor:finite orbits}
shows that this sequence contains just finitely many distinct elements of
$E^{-\infty}$. A standard Cantor diagonal argument yields a subsequence
$(x_{n_k})$ such that $\big(\sigma^{n_k -j}(x_{n_k})\big)_{n_k \ge j}$ is a
constant sequence for each $j$. Define $y  \in E^\ZZ$ by $y_j = \lim_k
\sigma^{n_k - j}(x_{n_k})$. By construction, $\sigma(y_j) = y_{j-1}$ for all
$j$. Also, by construction, each $y(-\infty, n) \sim_\aeq x_n$ and so
$\theta([y]) = (\zeta_1, \zeta_2, \zeta_3, \dots)$.

To show that $\theta$ is injective, suppose that $\theta([x]) = \theta([y])$.
We must show that $x \sim_\aeq y$. For this, fix $m  \in \ZZ$. It suffices to
find $g  \in \Nn$ such that $g \cdot x(m,\infty) = y(m,\infty)$. Since
$\theta([x]) = \theta([y])$, we have $x(-\infty, n) \sim_\aeq y(-\infty, n)$
for all $n$. Fix $n \ge m$. Lemma~\ref{lem:ae by nucleus} shows that there is
a sequence $(g_{k,n})_{k \le n}$ in $\Nn$ such that $g_{k,n}\cdot x_k \dots x_n =
y_k \dots y_n$ for all $k$. Taking $k = m \le n$, we obtain $g_n := g_{m,n}
\in \Nn$, such that $g_n \cdot x_m \dots x_n = y_m \dots y_n$. Since $\Nn$ is
finite, the sequence $(g_n)_{n \ge m}$ has a constant subsequence. The
constant value $g$ of this subsequence then satisfies $g \cdot x_m \dots x_n
= x_m \dots y_n$ for infinitely many $n \ge m$. It then follows that $g \cdot
x(m,\infty) = y(m, \infty)$ as required.

It remains to check that $\theta([\tau(x)]) = \Jsig_\infty(\theta([x]))$ for
all $x \in E^\ZZ$. This follows from direct calculation: for $x  \in E^\ZZ$,
\begin{align*}
\theta([\tau(x)])
    &= \big([\tau(x)(-\infty, -1)], [\tau(x)(-\infty, 0)], [\tau(x)(-\infty, 1)], \dots\big)\\
    &= \big([x(-\infty, -2)], [x(-\infty, -1)], [x(-\infty, 0)], \dots\big)\\
    &= \big(\Jsig([x(-\infty, -1)]), [x(-\infty, -1)], [x(-\infty, 0)], \dots\big)\\
    &= \Jsig_\infty(\theta([x])).\qedhere
\end{align*}
\end{proof}

\begin{cor}\label{cor:Smale solenoid}
Let $E$ be a finite directed graph with no sinks or sources. Let $(G, E)$ be
a contracting, regular self-similar groupoid action. Let $\Ss :=
E^\ZZ/{\sim_\aeq}$ be the limit solenoid of $(G, E)$. Then there is a
homeomorphism $\Stau : \Ss \to \Ss$ such that $\Stau([x]) = [\tau(x)]$ for
all $x  \in E^\ZZ$. Let $d_\Jj$ be the quotient metric on $\Jj$ as in
Corollary~\ref{cor:limit space metrisable}. There is a metric
$d_{\mathcal{S}}$ on $\mathcal{S}$ such that
\[
d_{\mathcal{S}}([x],[y]) = \sup_{m\in\mathbb{N}_{0}}(\frac{1}{c})^{m}d_\Jj([x(-\infty,m)],[y(-\infty, m)])
\]
for all $x,y \in E^\ZZ$. There is a constant $\varepsilon_{\mathcal{S}}$ such
that $(\Ss, d_{\mathcal{S}}, \Stau, \varepsilon_{\mathcal{S}}, \frac{1}{c})$
is a Smale space with totally disconnected stable set.

If $E$ is strongly connected, then $(\Ss, \Stau)$ is irreducible. If $E$ is
primitive, then $(\Ss, \Stau)$ is topologically mixing.
\end{cor}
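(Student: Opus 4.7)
The strategy is to transport the Smale space structure from $(\Jj_\infty,\Jsig_\infty)$ to $(\Ss,\Stau)$ through the homeomorphism $\theta:\Ss\to\Jj_\infty$ of Proposition~\ref{prp:S-J conjugate}, and then to deduce irreducibility and mixing by lifting the corresponding dynamical properties from $\Jsig$ on $\Jj$. First, I would define $\Stau([x])=[\tau(x)]$: well-definedness follows because a sequence $(g_n)_{n\in\ZZ}$ in $\Nn$ implementing $x\sim_\aeq y$ re-indexes to $(g_{n-1})_{n\in\ZZ}$, which implements $\tau(x)\sim_\aeq\tau(y)$, and $\Stau$ is a homeomorphism because it conjugates through $\theta$ to the homeomorphism $\Jsig_\infty$ of Theorem~\ref{thm:Wieler}. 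Next, I would set $d_\Ss$ to be the supremum in the statement and verify that it is a metric: the supremum is finite since $\Jj$ is compact, symmetry and the triangle inequality are inherited termwise from $d_\Jj$, and if $d_\Ss([x],[y])=0$ then $x(-\infty,m)\sim_\aeq y(-\infty,m)$ for all $m\ge 0$, which via the injectivity argument of Proposition~\ref{prp:S-J conjugate} forces $x\sim_\aeq y$.

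The preceding lemma shows that $(\Jj,\Jsig)$ satisfies Wieler's Axioms~1 and~2 with $K=1$, $\gamma=1/c$ and $\varepsilon'=\varepsilon/2$, so Theorem~\ref{thm:Wieler} produces a Smale space $(\Jj_\infty,\Jsig_\infty,d_\infty)$ with totally disconnected stable set. A direct calculation identifies $\theta^\ast d_\infty$ with an indexed supremum that differs from $d_\Ss$ only by a single extra term from a negative index; since $\Jsig$ is a local $c$-expansion by Theorem~\ref{thm:Jsig}, on a small enough neighbourhood of the diagonal the two metrics are bi-Lipschitz equivalent, so they generate the same topology and support a common bracket map. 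The Smale space data for $(\Jj_\infty,\Jsig_\infty,d_\infty)$ therefore transport to $(\Ss,\Stau,d_\Ss)$ via $\theta$, and I would take $\varepsilon_\Ss$ to be the Wieler constant rescaled by this equivalence; the totally-disconnected stable-set property is preserved by any conjugating homeomorphism.

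For the final two sentences, Theorem~\ref{thm:Wieler} reduces irreducibility of $(\Ss,\Stau)$ to non-wandering together with a dense forward orbit for $\Jsig$ on $\Jj$. Given a basic open $U_\mu$ from Corollary~\ref{cor:limit space basis} and using strong connectedness to pick $\nu\in r(\mu)E^\ast s(\mu)$, the point with representative $\dots\mu\nu\mu\nu\mu\in E^{-\infty}$ lies in $U_\mu$ and returns under $\Jsig^{|\mu\nu|}$, giving the non-wandering condition; a standard concatenation of an enumeration of $E^\ast$ produces $z\in E^{-\infty}$ whose shift orbit meets every cylinder $\lZ{\lambda}$, whence $q(z)$ has dense $\Jsig$-forward orbit in $\Jj$. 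For the mixing conclusion, primitivity of $E$ yields some $N$ with $vE^n w\neq\varnothing$ for all $v,w\in E^0$ and $n\ge N$, making $\sigma$ on $E^{-\infty}$ topologically mixing; this descends to $\Jsig$ on $\Jj$ through the continuous surjection $q$, and mixing then passes to the natural extension $(\Jj_\infty,\Jsig_\infty)$ by a routine inverse-limit argument, hence to $(\Ss,\Stau)$ through $\theta$. The main obstacle is the metric comparison in the second paragraph: matching the ad hoc formula for $d_\Ss$ to Wieler's $d_\infty$ closely enough to transfer the Smale space constants requires careful bookkeeping, since the two formulas are not literally equal on the nose.
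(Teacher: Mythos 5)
Your overall architecture is the same as the paper's: define $\Stau$ via $\tau$, transport the Smale space structure from $(\Jj_\infty,\Jsig_\infty)$ through the conjugacy $\theta$ of Proposition~\ref{prp:S-J conjugate}, use Wieler's irreducibility criterion (dense periodic points from strong connectedness, plus a concatenated dense orbit), and descend mixing from the shift. The irreducibility and mixing arguments are fine; the paper happens to handle mixing directly via the surjection $q:E^\ZZ\to\Ss$ intertwining $\tau$ and $\Stau$ rather than via the natural extension of $(\Jj,\Jsig)$, but your inverse-limit route is equivalent.

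The genuine gap is in your second paragraph, and you have half-identified it yourself. The statement asserts that the \emph{specific} tuple $(\Ss,d_\Ss,\Stau,\varepsilon_\Ss,\frac{1}{c})$ is a Smale space, and axioms (C1)--(C2) are metric-sensitive: a local bi-Lipschitz equivalence with constant $L$ between $d_\Ss$ and $\theta^*d_\infty$ only gives contraction by $L^2/c$ on local stable and unstable sets, which need not be less than $1$ and in any case is not the constant $\frac{1}{c}$ claimed. So ``bi-Lipschitz equivalent, hence same topology and common bracket map'' does not deliver the conclusion as stated. Moreover, the worry about ``a single extra term from a negative index'' is unfounded: with $K=1$ and $\gamma=\frac{1}{c}$, Wieler's formula reads $d_\infty((\zeta_n),(\eta_n))=c\cdot\sup_{m\in\NN}c^{-m}d_\Jj(\zeta_{m+1},\eta_{m+1})$ with $m\ge 1$, so only the coordinates $\theta([x])_{m+1}=[x(-\infty,m-1)]$ for $m\ge 1$ appear, and reindexing $j=m-1$ gives exactly $\sup_{j\in\NN_0}c^{-j}d_\Jj([x(-\infty,j)],[y(-\infty,j)])=d_\Ss([x],[y])$. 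This is precisely the computation in the paper's proof: $d_\infty=d_\Ss\circ(\theta\times\theta)$ on the nose, so $\theta$ is an isometry and all Smale space data, including the constant $\frac{1}{c}$ and the totally disconnected stable sets, transfer verbatim. Replacing your bi-Lipschitz step with this exact identification closes the gap.
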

\begin{proof}
Theorem~\ref{thm:Wieler} shows that $(\Jj_\infty, \Jsig_\infty)$ is a Smale
space with totally disconnected stable set, and Proposition~\ref{prp:S-J
conjugate} shows that $(\Ss, \Stau)$ is conjugate to $(\Jj_\infty,
\Jsig_\infty)$. Let $d_{\infty}$ be the metric of Theorem~\ref{thm:Wieler}.
For $x,y \in E^{\mathbb{Z}}$, we have
\[
d_{\infty}(\theta([x]),\theta([y])) = c\cdot \sup_{m\in\mathbb{N}}(\frac{1}{c})^{m}d(\theta([x])_{m+1}, \theta([y])_{m+1}).
\]
Since $\theta([x])_{m+1} = [x(-\infty, m)]$, we have $d_{\infty} =
d_{\mathcal{S}}\circ (\theta\times \theta)$. Hence, there is a constant
$\varepsilon_{\mathcal{S}} > 0$ such that $(\mathcal{S}, d_{\mathcal{S}},
\tilde{\tau}, \varepsilon_{S}, \frac{1}{c})$ is a Smale space.
\par
For the irreducibility, by Wieler's Theorem it suffices to show that every
point in $\Jj$ is non-wandering and that $\Jj$ admits a dense orbit. To see
that every point is non-wandering, first observe that if $x  \in E^{-\infty}$
is periodic, say $\sigma^n(x) = x$, then $[x]  \in \Jj$ satisfies
$\Jsig^n([x]) = [\sigma^n(x)] = [x]$, so $[x]$ is also periodic. Since $E$ is
strongly connected, for each $\lambda  \in E^*$, there exists $\mu $ in
$s(\lambda) E^*r(\lambda)$, and then $x := (\lambda\mu)^\infty$ is a periodic
point in $Z(\lambda)$. So there is a dense set of periodic points. It follows
that the periodic points in $\Jj$ are dense. So for any $[y]  \in \Jj$ and
any open neighbourhood $U$ of $[y]$, we can find $[x]  \in U$ and $n \ge 1$
such that $\Jsig^n([x]) = [x]$, and so $[x] \in \Jsig^n(U) \cap U$. To see
that $\Jj$ has a dense orbit, let $\lambda_1, \lambda_2, \lambda_3 \dots$ be
a listing of $E^*$. For each $i \ge 1$, choose $\mu_i  \in s(\lambda_i) E^*
r(\lambda_{i+1})$. Then $x = \lambda_1\mu_1\lambda_2\mu_2\dots \in E^\infty$.
For each $\lambda  \in E^*$, we have $\lambda = \lambda_i$ for some $i$, and
then $\sigma^{|\lambda_1\mu_1 \dots \lambda_{i-1}\mu_{i-1}|}(x) \in
Z(\lambda)$. Hence $\{\sigma^n(x) \mid n \in \NN\}$ is dense  $E^\infty$.
Hence $\{\Jsig^n([x]) \mid n \in \NN\} = q(\{\sigma^n(x) \mid n \in \NN\}$ is
a dense forward orbit in $\Jj$.
\par
If $E$ is primitive, then $\tau:E^{\ZZ}\mapsto E^{\ZZ}$ is topologically mixing, see \cite[Observation 7.2.2]{Kitchens}. Since the quotient map $q:E^{\ZZ}\mapsto \Ss$ satisfies $q\circ\tau = \tilde{\tau}\circ q$
and is surjective, $\tau$ being topologically mixing implies $\tilde{\tau}$ is topologically mixing.
\end{proof}

\section{The $C^*$-algebra of a self-similar groupoid action on a graph}\label{sec:O(G,E)}

In this section and the next, we will discuss two $C^*$-algebras associated
to self-similar groupoid actions. The first of these is the $C^*$-algebra
$\Oo(G, E)$ described by Laca--Raeburn--Ramagge--Whittaker in
\cite{Laca-Raeburn-Ramagge-Whittaker:Equilibrium}, see also
\cites{lrrw,Nekrashevych:Cuntz--Pimsner,Nekrashevych:Cstar_selfsimilar}. Our
main goal is to provide a groupoid model based on the one developed for
self-similar group actions on graphs by Exel and Pardo
\cite{Exel-Pardo:Self-similar}. This is the subject of the present section.
In the next section, we consider the $C^*$-algebra obtained from the
Deaconu--Renault groupoid of the dynamics $(\Jj,\Jsig)$ of Section~\ref{sec:J
dynamics}. Our main result will establish $KK$-duality between these two
$C^*$-algebras for contracting, regular self-similar actions.

In \cite{Laca-Raeburn-Ramagge-Whittaker:Equilibrium}, the Toeplitz algebra of
a self-similar groupoid action is defined as the Toeplitz algebra of an
associated Hilbert module. Then Proposition~4.4 of
\cite{Laca-Raeburn-Ramagge-Whittaker:Equilibrium} provides an alternative
description as the universal $C^*$-algebra for generators and relations. At
the beginning of Section~8 of
\cite{Laca-Raeburn-Ramagge-Whittaker:Equilibrium}, the Cuntz--Pimsner algebra
of the self-similar action is defined as the quotient of the Toeplitz algebra
by the ideal determined by an additional Cuntz--Krieger-type relation. We
follow \cite{Laca-Raeburn-Ramagge-Whittaker:Equilibrium} and define the
$C^*$-algebra of a self-similar action in terms of generators and relations.

If $G$ is a discrete groupoid, then a \emph{unitary representation} of $G$ is
a function $g \mapsto u_g$ from $G$ to a $C^*$-algebra such that $u_g u_h =
\delta_{d(g), c(h)} u_{gh}$ and $u_{g^{-1}} = u_g^*$ for all $g,h \in G$.
This is equivalent to the definition presented at the start of
\cite[Section~4]{Laca-Raeburn-Ramagge-Whittaker:Equilibrium}.

If $E$ is a finite directed graph with no sources, and $(G, E)$ is a
self-similar groupoid action, then a \emph{covariant representation} of $(G,
E)$ in a $C^*$-algebra $A$ is a triple $(u, p, s)$ consisting of a unitary
representation $u : g \mapsto u_g$ of $G$ in $A$ and a Cuntz--Krieger
$E$-family $(p, s) \in A$ such that $p_v = u_v$ for all $v  \in E^0$, and
such that
\[
u_g s_e = s_{g \cdot e} u_{g|_e}\quad\text{ for all $g \in G$ and $e \in d(g)E^1$.}
\]
We have $p_v u_g = \delta_{v, c(g)} u_g$ and $u_g p_v = \delta_{d(g), v} u_g$
because $p_v = u_v$. If $d(g) \not= r(e)$, then $u_g s_e = u_g p_{d(g)}
p_{r(e)} s_e = 0$. So the relations we have just presented are equivalent to
those of \cite[Proposition~4.4]{Laca-Raeburn-Ramagge-Whittaker:Equilibrium}
combined with the additional relation determining the generators of the ideal
$I$ described in
\cite[Equation~(8.1)]{Laca-Raeburn-Ramagge-Whittaker:Equilibrium}. It follows
from Proposition~4.4 and the definition of $\Oo(G, E)$ in
\cite{Laca-Raeburn-Ramagge-Whittaker:Equilibrium} that the $C^*$-algebra
$\Oo(G, E)$ is the universal $C^*$-algebra generated by a covariant
representation of $(G, E)$.

Our first step is to describe a groupoid model for $\Oo(G, E)$. Our construction is based on that
of \cite{Exel-Pardo:Self-similar}.

\begin{lem}
Let $E$ be a finite graph, and let $(G, E)$ be a self-similar groupoid action.
The set
\[
S_{G, E} := \{0\} \cup \{(\mu, g, \nu) \in E^* \times G \times E^* \mid s(\mu) =
c(g)\text{ and }s(\nu) = d(g)\}
\]
is an inverse semigroup with respect to the multiplication given
by
\[
(\alpha, g, \beta)(\mu, h, \nu)
    = \begin{cases}
        (\alpha (g \cdot \mu'), g|_{\mu'}h, \nu) &\text{ if $\mu = \beta\mu'$}\\
        (\alpha, g h|_{h^{-1}\cdot\beta'}, \nu(h^{-1} \cdot \beta')) &\text{ if $\beta = \mu\beta'$}\\
        0 &\text{ otherwise.}
    \end{cases}
\]
There is an action of $S_{G, E}$ on $E^\infty$ such that $\Dom(\mu, g, \nu) = \rZ{\nu}$,
and
\[
(\mu, g, \nu) \cdot \nu x = \mu g\cdot x\quad\text{ for all $x \in \rZ{s(\nu)}$.}
\]
\end{lem}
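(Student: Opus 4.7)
The plan is to construct, for each nonzero $s = (\mu, g, \nu) \in S_{G,E}$, a partial bijection $T_s : \rZ{\nu} \to \rZ{\mu}$ of $E^\infty$ defined by $T_s(\nu x) = \mu(g \cdot x)$ for $x \in d(g)E^\infty$, and to exhibit the semigroup multiplication as the pullback of functional composition of the $T_s$. This simultaneously produces the action on $E^\infty$ claimed in the lemma and reduces associativity, the existence of generalised inverses, and commutativity of idempotents to the corresponding assertions in the symmetric inverse semigroup of all partial bijections of $E^\infty$. The map $T_s$ is a homeomorphism because $g$ induces a bijection $d(g)E^\infty \to c(g)E^\infty$ (extending the action on $E^*$ one letter at a time via the self-similarity identity); and $s \mapsto T_s$ is injective because the cylinder sets $\rZ{\mu}$ and $\rZ{\nu}$ recover $\mu$ and $\nu$, after which agreement of the $T_s$ forces $g = g'$ by faithfulness of the action on each $E^n$.

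The central computation is to verify that $T_{(\alpha,g,\beta)} \circ T_{(\mu,h,\nu)}$ is nonzero exactly in the two cases of the multiplication formula, and in each case coincides with $T$ applied to the formula's output. For $\nu x \in \rZ{\nu}$, $T_{(\mu,h,\nu)}(\nu x) = \mu(h \cdot x)$ lies in $\rZ{\beta}$ iff $\mu = \beta\mu'$ (Case~1, holding for all $x$) or $\beta = \mu\beta'$ with $h \cdot x \in \rZ{\beta'}$, i.e., $x \in \rZ{h^{-1}\cdot\beta'}$ (Case~2); no other prefix relationship yields a nonempty intersection of $\rZ{\mu}$ and $\rZ{\beta}$. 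In Case~1, Lemma~\ref{properties SSA} gives
\[
T_{(\alpha,g,\beta)}(\beta\mu'(h\cdot x)) = \alpha(g\cdot\mu')(g|_{\mu'} h \cdot x),
\]
matching $T_{(\alpha(g\cdot\mu'),\, g|_{\mu'}h,\, \nu)}(\nu x)$. In Case~2, writing $x = (h^{-1}\cdot\beta')y$, the self-similarity identity yields $h\cdot x = \beta'(h|_{h^{-1}\cdot\beta'}\cdot y)$, so
\[
T_{(\alpha,g,\beta)}(\mu\beta'(h|_{h^{-1}\cdot\beta'}\cdot y)) = \alpha\big((g\, h|_{h^{-1}\cdot\beta'})\cdot y\big),
\]
matching $T$ of $(\alpha,\, g\, h|_{h^{-1}\cdot\beta'},\, \nu(h^{-1}\cdot\beta'))$; parts~(1), (4), and~(5) of Lemma~\ref{properties SSA} confirm the requisite source/range conditions on the resulting triple. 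When $\mu = \beta$ both cases apply and reconcile to $(\alpha, gh, \nu)$ using $g|_{d(g)} = g$ and $h^{-1}\cdot c(h) = d(h)$.

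Once $S_{G,E}$ is identified with its image under $s\mapsto T_s$, associativity is inherited from composition of functions. The inverse of $(\mu, g, \nu)$ is $(\nu, g^{-1}, \mu)$, whose partial bijection is the set-theoretic inverse of $T_{(\mu,g,\nu)}$; and direct application of Case~1 gives
\[
(\mu, g, \nu)(\nu, g^{-1}, \mu)(\mu, g, \nu) = (\mu, g, \nu), \qquad (\nu, g^{-1}, \mu)(\mu, g, \nu)(\nu, g^{-1}, \mu) = (\nu, g^{-1}, \mu),
\]
using $g\cdot s(\nu) = c(g)$ and Lemma~\ref{properties SSA}(3). The idempotents are precisely the triples $(\mu, s(\mu), \mu)$, whose $T$-images are the identities on $\rZ{\mu}$; two such commute because their products act as the identity on the cylinder-set intersection $\rZ{\mu} \cap \rZ{\lambda}$, which is a cylinder (or empty). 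The main obstacle is the Case~2 bookkeeping: one must track $g\cdot\mu'$, $g|_{\mu'}$, $h^{-1}\cdot\beta'$, and $h|_{h^{-1}\cdot\beta'}$ consistently under the identities of Lemma~\ref{properties SSA}, and confirm that the two cases agree on their overlap.
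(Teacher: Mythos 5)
Your strategy --- realising each nonzero triple as a partial homeomorphism $T_{(\mu,g,\nu)}\colon \rZ{\nu}\to\rZ{\mu}$ of $E^\infty$ and pulling the inverse-semigroup axioms back from the symmetric inverse monoid of partial bijections --- is genuinely different from the paper's proof, which establishes associativity and the identities $aa^*a=a$, $a^*aa^*=a^*$ by direct computation on the triples and only afterwards checks that the formula defines an action. Your case analysis for when the composite is nonzero and the Case~1/Case~2 computations are correct, and they do prove the second assertion of the lemma, namely that $T_{ab}=T_a\circ T_b$, i.e.\ that $(\mu,g,\nu)\cdot\nu x=\mu(g\cdot x)$ defines an action of $S_{G,E}$ on $E^\infty$.

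There is, however, a genuine gap at the step where you transfer associativity (and commutativity of idempotents) back to $S_{G,E}$: this requires $s\mapsto T_s$ to be injective, and your argument for injectivity --- that the domain cylinder $\rZ{\nu}$ recovers $\nu$ --- fails in general. If the vertex $s(\nu)$ receives only one edge, say $s(\nu)E^1=\{e\}$, then every $x\in s(\nu)E^\infty$ begins with $e$, so $\rZ{\nu}=\rZ{\nu e}$ and the self-similarity relation gives $T_{(\mu,g,\nu)}=T_{(\mu(g\cdot e),\,g|_e,\,\nu e)}$ even though these are distinct elements of $S_{G,E}$; nothing in the hypotheses (finite, no sources, even strong connectedness) rules out such vertices. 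Since distinct triples can share the same partial bijection, associativity of composition does not imply that $(ab)c$ and $a(bc)$ coincide \emph{as triples}, which is what the lemma asserts. The repair is cheap: run the identical computation for the action on the discrete set $E^*$, where $\Dom(\mu,g,\nu)=\nu E^*$ has $\nu$ as its unique shortest element, so that $\nu$, $\mu$ and then (by faithfulness of $G\curvearrowright E^*$) $g$ really are recovered from the partial bijection; alternatively, verify associativity directly on triples, as the paper does.
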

\begin{proof}
It is routine, though tedious, to check that this multiplication is
associative. For each $a := (\mu, g, \nu)$, the element $a^{*} := (\nu,
g^{-1}, \mu)$ satisfies $aa^{*}a = a$ and $a^{*}aa^{*} = a^{*}$. Direct
computation shows that the formula $(\mu, g, \nu) \cdot \nu x = \mu g\cdot x$
defines a homeomorphism from $\rZ{\nu}$ to $\rZ{\mu}$. A routine calculation
very similar to the associativity calculation shows that $a \cdot (b \cdot x)
= (ab)\cdot x$ whenever both sides are defined.
\end{proof}

Given any action of an inverse semigroup $S$ on a locally compact Hausdorff
space $X$, we can form the associated groupoid of germs $S \ltimes X$ as
follows \cite[Section~4.3]{Paterson}: we define an equivalence relation on
$\{(s, x) \mid s\in S, x \in \Dom(s)\}$ by $(s, x) \sim (t, y)$ if $x = y =:
z$ and there is an idempotent $e
 \in S$ such that $z \in \Dom(e)$, and $se = te$. The topology has basic open sets $W(s, V) :=
\{[s, x] \mid x \in V\}$ indexed by pairs $(s, V)$ consisting of an element
$s  \in S$ and an open set $V \subseteq \Dom(s)$. The unit space of this
groupoid is $X$, and the groupoid operations are given by
\[
s([t, x]) = x,\quad r([t, x]) = t \cdot x,\quad [t,u\cdot x][u, x] = [tu, x],
    \quad\text{ and }\quad [t, x]^{-1} = [t^*, t \cdot x].
\]
Though this groupoid need not be Hausdorff, it is always \'etale with Hausdorff unit space $X$,
and hence locally Hausdorff with a basis of open bisections. The $C^*$-algebra of this groupoid is
the completion of the $^*$-algebra
\[
\Cc(S \ltimes X) = \lsp\{C_c(U) \mid U \text{ is an open bisection}\}
\]
in a universal norm. A very nice account of this construction can be found in
\cite{Exel:Inverse_combinatorial}.

\begin{dfn}\label{def:germ gpd}
Let $E$ be a finite directed graph, and let $(G, E)$ be a self-similar groupoid action.
The groupoid of $(G, E)$, denoted $S_{G, E} \ltimes E^\infty$ is defined to be the groupoid of
germs for the action of $S_{G, E}$ on $E^\infty$ as above.
\end{dfn}

To establish our duality theorem later, we will describe a groupoid
equivalence between the groupoid $S_{G, E} \ltimes E^\infty$ and the stable
groupoid of the Smale space constructed in Section~\ref{sec:smale space}. To
do this, it will be helpful first to establish a description of $S_{G, E}
\ltimes E^\infty$ as a kind of lag groupoid. A related description for
self-similar group actions appears in
\cite[Section~8]{Exel-Pardo:Self-similar}, though there the lag takes values
in the ``sequence group" $\prod^\infty_{i=1} G/\bigoplus^\infty_{i=1} G$. We
will give yet another description, which is particularly well suited to our
application to $KK$-duality later. We also characterise exactly when this
groupoid is Hausdorff, by characterising exactly which pairs of elements (if
any) cannot be separated by disjoint open neighbourhoods.

Our groupoid is based on the left-shift map on $E^\infty$ given by $x_1 x_2 x_3 \dots \mapsto x_2
x_3 \dots$. In the graph-algebra literature, it is standard to denote this shift map by $\sigma$,
but we have already used that symbol for the right-shift on $E^{-\infty}$. We will instead use
$\varsigma$ for the left-shift map.

\begin{lem}\label{lem:groupoid def}
Let $E$ be a finite directed graph, and let $(G, E)$ be a self-similar
groupoid action. There is an equivalence relation $\sim$ on
\[
\{(x, m, g, n, y) \in E^\infty \times \NN \times G \times \NN \times E^\infty
 \mid d(g) = r(\varsigma^n(y))\text{ and } \varsigma^m(x) = g \cdot \varsigma^n(y)\}
\]
such that $(x, m, g, n, y) \sim (w, p, h, q, z)$ if and only if
\begin{itemize}
\item $x = w$, $y = z$ and $m-n = p-q$, and
\item there exists $l \ge \max\{n,q\}$ such that $g|_{y(n, l)} = h|_{z(q,l)}$.
\end{itemize}
We write $[x,m,g,n,y]$ for the equivalence class of $(x, m, g, n, y)$ under $\sim$. The set
\[
\Gg_{G, E} := \{[x, m, g, n, y] \mid d(g) = r(\varsigma^n(y))\text{ and } \varsigma^m(x) = g \cdot \varsigma^n(y)\}
\]
is an algebraic groupoid with unit space $\{[x, 0, r(x), 0, x] \mid x \in E^\infty\}$ identified
with $E^\infty$, range and source maps $r([x,m,g,n,y]) = x$ and $s([x,m,g,n,y]) = y$, and
operations
\begin{gather*}
    [x,m,g,n,y][y,p,h,q,z] = [x, m+p, g|_{y(n, n+p)} h|_{z(q, q+n)}, n+q, z],\quad\text{ and}\\
    [x, m, g, n, y]^{-1} = [y, n, g^{-1}, m, x].
\end{gather*}
There is an injective homomorphism $\iota$ of the graph groupoid $\Gg_E$ into
$\Gg_{G, E}$ given by $\iota(x, m-n, y) = [x, m, s(x_m), m, y]$ whenever $x,y
\in E^\infty$ satisfy $\varsigma^m(x) = \varsigma^n(y)$.
\end{lem}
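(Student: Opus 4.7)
The plan has three stages: establish that $\sim$ is an equivalence relation; verify that the operations descend to $\Gg_{G,E}$ and satisfy the groupoid axioms; and construct the homomorphism $\iota$. For the equivalence relation, reflexivity follows by taking $l = n$, since $g|_{y(n,n)} = g$ by Lemma~\ref{properties SSA}(3), and symmetry is immediate. Transitivity rests on the observation that equality of restrictions propagates upwards: if $g|_{y(n,l)} = h|_{y(q,l)}$, then for any $l' \ge l$, applying $\cdot|_{y(l,l')}$ to both sides and using Lemma~\ref{properties SSA}(2) gives $g|_{y(n,l')} = h|_{y(q,l')}$. From this propagation principle I would extract the absorption identity
\[
[x, m, g, n, y] = [x, m+k, g|_{y(n,n+k)}, n+k, y] \qquad (k \ge 0),
\]
which is the essential tool throughout the rest of the proof: given any two equivalent representatives with indices $(m, n)$ and $(m', n')$ satisfying $m - n = m' - n'$, absorbing each by a sufficiently large $k$ produces quintuples that coincide \emph{literally}, not merely up to $\sim$.

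Using this, well-definedness of the product reduces to checking that absorbing one factor by $k$ has the same effect as absorbing the whole product by $k$; this identity follows from Lemma~\ref{properties SSA}(2) and~(4) (the product-restriction formula $(ab)|_\mu = a|_{b\cdot\mu} \cdot b|_\mu$), together with the self-similarity identity $h|_{z(q,q+n)} \cdot z(n+q, n+q+k) = y(p+n, p+n+k)$. The condition $\varsigma^{m+p}(x) = (g|_{y(n,n+p)} h|_{z(q,q+n)}) \cdot \varsigma^{n+q}(z)$ required for the product to lie in $\Gg_{G,E}$ follows by applying self-similarity twice, first to $\varsigma^p(g \cdot \varsigma^n(y))$ and then to $\varsigma^n(h \cdot \varsigma^q(z))$. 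Well-definedness of the inverse uses Lemma~\ref{properties SSA}(5) to express $g^{-1}|_{x(m, L')}$ as $(g|_{y(n, n + L' - m)})^{-1}$. The unit axiom is matched by absorbing $[x, 0, r(x), 0, x]$; associativity reduces, after expanding both sides with the product formula and applying Lemma~\ref{properties SSA}(2) and~(4), to a common expression; and the key step in the inverse axiom is
\[
g|_{y(n,2n)} \cdot g^{-1}|_{x(m,m+n)} = g|_{y(n,2n)} \cdot (g|_{y(n,2n)})^{-1} = c(g|_{y(n,2n)}),
\]
again via Lemma~\ref{properties SSA}(5), which is then rewritten as the unit class $[x, 0, r(x), 0, x]$ after one more application of absorption.

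For the homomorphism $\iota$, well-definedness (independence of the choice of $(m,n)$ with $\varsigma^m(x) = \varsigma^n(y)$) follows because a unit $s(x_m)$ restricts along any initial segment of $y$ to another unit by Lemma~\ref{properties SSA}(3). Injectivity is immediate since $x$, $y$, and $m - n$ can be read off any representative. The homomorphism property reduces to a computation using the product formula and the fact that restrictions of units are units, yielding on both sides the common representative $[x, m+p, s(x_{m+p}), n+q, z]$. The main obstacle throughout the proof is organisational rather than conceptual: careful management of the four indices $m, n, p, q$ and the restriction operations, which the absorption identity tames by reducing equivalent representatives to literally equal ones.
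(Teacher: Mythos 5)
Your proposal is correct and follows essentially the same route as the paper: a direct verification of the equivalence relation and the groupoid axioms via the restriction calculus of Lemma~\ref{properties SSA}, resting on the same key identities ($h|_{z(q,q+n)}\cdot z(q+n,q+l) = \varsigma^p(y)(n,l)$ for well-definedness of the product, and $g^{-1}|_{x(m,m+n)} = (g|_{y(n,2n)})^{-1}$ for inverses). The only difference is organisational: your absorption identity $[x,m,g,n,y]=[x,m+k,g|_{y(n,n+k)},n+k,y]$, which the paper uses only implicitly, lets you replace equivalent representatives by literally equal ones before computing, whereas the paper manipulates the witness $l$ directly inside each well-definedness check.
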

\begin{proof}
Reflexivity and symmetry of the relation $\sim$ are clear. For transitivity, suppose that
$(x,m,g,n,y) \sim (x',m',g',n',y')$ and $(x',m',g',n',y') \sim (x'',m'',g'',n'',y'')$. Then $x =
x' = x''$, $y = y' = y''$ and $m-n = m'-n' = m''-n''$. Choose $l \ge n,n'$ and $l' \ge n', n''$
with $g|_{y(n,l)} = g'|_{y(n',l)}$ and $g'|_{y(n',l')} = g''|_{y(n'',l')}$, and put $L = \max\{l,
l'\}$. Then $L \ge n, n''$, and we have
\begin{align*}
g|_{y(n,L)}
    &= \big(g|_{y(n,l)}\big)|_{y(l, L)}
    = \big(g'|_{y(n',l)}\big)_{y(l, L)}
    = g'|_{y(n',L)}\\
    &= \big(g'|_{y(n',l')}\big)|_{y(l',L)}
    = \big(g''|_{y(n'',l')}\big)|_{y(l',L)}
    = g''|_{y(n'',L)}.
\end{align*}

To show that $\Gg_{G, E}$ is a groupoid, we first check that if $[x,m,g,n,y]$ and $[y,p,h,q,z]$
belong to $\Gg_{G, E}$, then so does $[x, m+p, g|_{y(n,n+p)} h|_{z(q,q+n)}, n+q, z]$. For this,
just check:
\begin{align*}
(g|_{y(n,n+p)})(h|_{z(q,q+n)})\cdot \varsigma^{q+n}(z)
    &= (g|_{y(n,n+p)})\cdot \varsigma^n(h\cdot \varsigma^q(z))\\
    &= (g|_{y(n,n+p)})\cdot \varsigma^{n+p}(y)
    = \varsigma^p(g\cdot\varsigma^n(y))
    = \varsigma^{m+p}(x).
\end{align*}
The range and source maps are well-defined by definition. We must check that
multiplication is well-defined. First suppose that $[x,m,g,n,y] =
[x',m',g',n',y']$; so $x = x'$, $y = y'$ and $m - n = m'-n'$, and there
exists $l \ge n, n'$ such that $g|_{y(n,l)} = g'|_{y(n',l)}$. Fix
$[y,p,h,q,z]  \in \Gg_{G, E}$. We must show that
\[
[x, m+p, g|_{y(n,n+p)} h|_{z(q,q+n)}, n+q, z]
    = [x, m'+p, g'|_{y(n',n'+p)} h|_{z(q,q+n')}, n'+q, z].
\]
We have $m+p - (m'+p) = n+q - (n'+q)$. We will show that
\[
\big(g|_{y(n,n+p)} h|_{z(q,q+n)}\big)|_{z(q+n, q+l)}
    = \big(g'|_{y(n',n'+p)} h|_{z(q,q+n')}\big)|_{z(q+n', q+l)}.
\]
We have
\[
\big(g|_{y(n,n+p)} h|_{z(q,q+n)}\big)|_{z(q+n, q+l)}
    = \big((g|_{y(n,n+p)})|_{h|_{z(q,q+n)} \cdot z(q+n, q+l)}\big)h|_{z(q, q+l)},
\]
and similarly
\[
\big(g'|_{y(n',n'+p)} h|_{z(q,q+n')}\big)|_{z(q+n', q+l)}
    = \big((g'|_{y(n',n'+p)})|_{h|_{z(q,q+n')} \cdot z(q+n', q+l)}\big)h|_{z(q, q+l)}.
\]
So we need to check that $(g|_{y(n,n+p)})|_{h|_{z(q,q+n)} \cdot z(q+n, q+l)}
= (g'|_{y(n',n'+p)})|_{h|_{z(q,q+n')} \cdot z(q+n', q+l)}$. For this, we
observe that $h|_{z(q,q+n)} \cdot z(q+n, q+l) = (h \cdot
\varsigma^q(z))(n,l)$, which is equal to $\varsigma^p(y)(n,l)$ because
$[y,p,h,q,z] \in \Gg_{G, E}$. Hence
\begin{align*}
(g|_{y(n,n+p)})|_{h|_{z(q,q+n)} \cdot z(q+n, q+l)}
    &= (g|_{y(n,n+p)})|_{y(n+p, l+p)}
    = (g|_{y(n, l)})|_{y(l, l+p)}
    = (g'|_{y(n', l)})|_{y(l, l+p)}\\
    &= (g'|_{y(n',n'+p)})|_{y(n'+p, l+p)}
    = (g'|_{y(n',n'+p)})|_{h|_{z(q,q+n')} \cdot z(q+n', q+l)}
\end{align*}
as required. A very similar calculation shows that if $[y,p,h,q,z] = [y',p',h',q',z']$
and $[x,m,p,n,y] \in \Gg_{G, E}$, then
\[
[x, m+p, g|_{y(n,n+p)} h|_{z(q,q+n)}, n+q,z]
    = [x, m+p', g|_{y(n,n+p')} h'|_{z(q',q'+n)}, n+q',z],
\]
so multiplication in $\Gg_{G, E}$ is well-defined. It is routine that
$[x,0,r(x),0,x][x,m,g,n,y] = [x,m,g,n,y] = [x,m,g,n,y][y,0,r(y),0,y]$ for all
$[x,m,g,n,y]  \in \Gg_{G, E}$, so that $\Gg_{G, E}$ admits units.

We have
\[
[x,m,g,n,y][y,n,g^{-1},m,x]
    = [x,m+n,g|_{y(n,2n)} g^{-1}|_{x(m,m+n)}, m+n,x].
\]
We calculate:
\[
g^{-1}|_{x(m,m+n)}
    = g^{-1}|_{\varsigma^m(x)(0,n)}
    = g^{-1}|_{g\cdot(\varsigma^n(y)(0,n))}
    = \big(g|_{\varsigma^n(y)(0,n)}\big)^{-1}.
\]
So $[x,m+n,g|_{y(n,2n)} g^{-1}|_{x(m,m+n)}, m+n,x] = [x,m+n,r(x),m+n,x]$. We now have
that $(x,p,r(x),p,x) \sim (x,q,r(x),q,x)$ for all $x,p,q$, and we deduce that
$[y,n,g^{-1},m,x]$ is an inverse for $[x,m,g,n,y]$. Associativity of the multiplication
described follows from straightforward calculations like those above, and we deduce that
$\Gg_{G, E}$ is a groupoid. Using the definition of $\sim$, we see that $[x,m,r(x),n,y]
\sim [x',m',r(x'),n',y']$ if and only if $x = x'$, $y = y'$ and $m-n = m'-n'$, and it
follows that $\iota(x,m-n,y) = [x,m,e,n,y]$ defines a groupoid homomorphism $\Gg_E \to
\Gg_{G, E}$, which is injective by definition of $\sim$.
\end{proof}

We now describe an algebraic isomorphism of $S_{G, E} \ltimes E^\infty$ onto the groupoid $\Gg_{G,
E}$ of Lemma~\ref{lem:groupoid def}, and use it to define an \'etale topology on $\Gg_{G, E}$.

\begin{lem}\label{lem:gpd isomorphism}
Let $E$ be a finite directed graph, and let $(G,E)$ be a self-similar
groupoid action. Let $\Gg_{G, E}$ be the groupoid of Lemma~\ref{lem:groupoid
def}, and let $S_{G, E} \ltimes E^\infty$ be the groupoid of germs described
in Definition~\ref{def:germ gpd}. There is an algebraic isomorphism $\psi :
S_{G, E} \ltimes E^\infty \to \Gg_{G, E}$ such that $\psi([(\mu,g,\nu), \nu
x]) = [\mu (g\cdot x), |\mu|, g, |\nu|, \nu x]$ for all $(\mu, g,\nu)$ in
$S_{G, E}$ and $x \in \rZ{s(\nu)}$. The sets
\[
Z(\mu, g, \nu) := \{[\mu (g\cdot y), |\mu|, g, |\nu|, \nu y] \mid y \in \rZ{s(\nu)}\}
\]
indexed by triples $(\mu, g, \nu)  \in E^* \times G \times E^*$ such that
$s(\mu) = g\cdot s(\nu)$ constitute a basis of compact open sets for a
locally Hausdorff topology on $\Gg_{G, E}$ on which the range and source maps
are homeomorphisms. Under this topology, $\Gg_{G, E}$ is an \'etale groupoid.
\end{lem}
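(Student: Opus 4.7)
My plan is to transport the canonical \'etale topology from the germ groupoid $S_{G,E} \ltimes E^\infty$ across the algebraic isomorphism $\psi$; the substantive work is thus to establish that $\psi$ is a well-defined, bijective groupoid homomorphism. A direct check with the multiplication formula shows that the nonzero idempotents of $S_{G,E}$ are precisely the triples $(\alpha, s(\alpha), \alpha)$, which have domain $\rZ{\alpha}$. Consequently, two pairs $[(\mu, g, \nu), z]$ and $[(\mu', g', \nu'), z]$ represent the same germ exactly when there is a prefix $\alpha$ of $z$ (which, by enlarging if necessary, may be assumed to satisfy $|\alpha| \ge |\nu|, |\nu'|$) such that $(\mu, g, \nu)(\alpha, s(\alpha), \alpha) = (\mu', g', \nu')(\alpha, s(\alpha), \alpha)$. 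Writing $\alpha = \nu\alpha'' = \nu'\alpha'''$ and expanding both products via the product formula gives the equivalent system $\mu(g \cdot \alpha'') = \mu'(g' \cdot \alpha''')$ and $g|_{\alpha''} = g'|_{\alpha'''}$. Since partial isomorphisms preserve length, the first equation forces $|\mu| - |\nu| = |\mu'| - |\nu'|$, and combined with the second this is precisely the defining condition for $\psi([(\mu,g,\nu),z]) = \psi([(\mu',g',\nu'),z])$ in $\Gg_{G,E}$, with witnessing index $l = |\alpha|$. This single calculation yields both the well-definedness and the injectivity of $\psi$.

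Surjectivity is immediate, since any $[x, m, g, n, y] \in \Gg_{G,E}$ is the image of the germ $[(x_1 \cdots x_m, g, y_1 \cdots y_n), y]$. To verify that $\psi$ preserves multiplication, I would work through the two nonzero branches of the inverse semigroup product (either $\nu$ is a prefix of the left-hand path or vice versa) and compare the result to the multiplication in $\Gg_{G,E}$ spelled out in Lemma~\ref{lem:groupoid def}; the essential identity used in each case is the chain rule $(hg)|_\mu = (h|_{g \cdot \mu})(g|_\mu)$ from Lemma~\ref{properties SSA}. Preservation of inverses and units is then routine.

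For the topology, the standard basic open sets $W((\mu,g,\nu), \rZ{\nu}) := \{[(\mu,g,\nu), z] : z \in \rZ{\nu}\}$ of the \'etale topology on $S_{G,E} \ltimes E^\infty$ are sent bijectively by $\psi$ onto the sets $Z(\mu,g,\nu)$. Transporting the topology across $\psi$ therefore yields a locally Hausdorff topology on $\Gg_{G,E}$ for which the $Z(\mu,g,\nu)$ form a basis. The \'etale condition reduces to the observation that on each $Z(\mu,g,\nu)$ the source and range maps are given by the explicit formulas $[\mu(g\cdot x), |\mu|, g, |\nu|, \nu x] \mapsto \nu x$ and $[\mu(g\cdot x), |\mu|, g, |\nu|, \nu x] \mapsto \mu(g \cdot x)$, which are homeomorphisms onto $\rZ{\nu}$ and $\rZ{\mu}$ respectively under the parametrisation $x \mapsto [\mu(g\cdot x), |\mu|, g, |\nu|, \nu x]$; this parametrisation also exhibits $Z(\mu,g,\nu)$ as homeomorphic to the compact set $\rZ{\nu}$.

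The main obstacle is bridging the germ equivalence in $S_{G,E} \ltimes E^\infty$, which is phrased in terms of agreement after multiplication by an idempotent, with the lag-type equivalence $\sim$ in $\Gg_{G,E}$, which is phrased in terms of restrictions matching at a sufficiently large level. Once one correctly identifies the idempotents of $S_{G,E}$ and computes their product with a general element $(\mu, g, \nu)$, the two equivalences align perfectly and the remainder of the lemma follows by direct verification.
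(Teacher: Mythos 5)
Your proposal is correct and follows essentially the same route as the paper: the paper likewise characterises germ equivalence via the idempotents $(\lambda, s(\lambda), \lambda)$, matches it against the lag relation $\sim$ of Lemma~\ref{lem:groupoid def} to get well-definedness and injectivity of $\psi$ in one biconditional, and then transports the \'etale topology of the germ groupoid (via Exel's Propositions~4.14 and~4.15) across $\psi$. The only detail you elide is that the sets $W((\mu,g,\nu),\rZ{\nu})$ alone, rather than all $W((\mu,g,\nu),U)$ with $U$ open, form a basis; this follows because $(\mu,g,\nu)$ cut down to $\rZ{\nu\nu'}$ equals $(\mu(g\cdot\nu'), g|_{\nu'}, \nu\nu')$, an observation the paper makes explicitly.
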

\begin{proof}
Define
\[
\psi^0 : \{\big((\mu,g,\nu),\nu x\big) \mid (\mu,g,\nu) \in S_{E, G}, x \in s(\nu)E^\infty\} \to \Gg_{G, E}
\]
by $\psi^0\big(((\mu,g,\nu), \nu x)\big) = [\mu g\cdot x, |\mu|, g, |\nu|, \nu x]$. We
claim that
\begin{equation}\label{eq:psi def'd}
[(\mu, g, \nu),\nu x] = [(\alpha,h,\beta),\beta y]
    \quad\Longleftrightarrow\quad
        \psi^0\big((\mu,g,\nu),\nu x\big) = \psi^0\big((\alpha,h,\beta),\beta y\big).
\end{equation}
For this, first suppose that $[(\mu,g,\nu),\nu x] = [(\alpha,h,\beta),\beta
y]$. Then $\nu x = \beta y$, and there is an idempotent $(\lambda, e,
\lambda)$ of $S_{G, E}$ such that $x \in \rZ{\lambda}$ and
$(\mu,g,\nu)(\lambda, e, \lambda) = (\alpha,h,\beta)(\lambda, e, \lambda)$.
Without loss of generality, we may assume that $\lambda = x(0,n)$ with $n \ge
|\nu|, |\beta|$.  So $\lambda = \nu\nu' = \beta\beta'$. Since $(\lambda, e,
\lambda) = (\lambda,e,\lambda)(\lambda,e,\lambda) = (\lambda,e^{2},\lambda)$,
we have $e^{2} = e$, so $e = s(\lambda)$. Hence
\[
(\mu g\cdot \nu', g|_{\nu'}, \nu\nu')
    = (\mu,g,\nu)(\lambda, e, \lambda)
    = (\alpha,h,\beta)(\lambda, e, \lambda)
    = (\alpha h\cdot \beta', h|_{\beta'}, \beta\beta').
\]
In particular, $g|_{(\nu x)(|\nu|, |\lambda|)} = g|_{\nu'} = h|_{\beta'} = h|_{(\alpha
y)(|\alpha|,|\lambda|)}$. Also, $\mu (g\cdot x) = \mu (g\cdot \nu') g|_{\nu'}\cdot
\varsigma^{|\nu'|}(x) = \alpha (h\cdot \beta') h|_{\beta'} \cdot \varsigma^{|\beta'|}(y)
= \alpha (h\cdot y)$. Hence
\begin{align*}
\psi^0\big((\mu, g, \nu),\nu x\big)
    &= [\mu (g\cdot x), |\mu|, g, |\nu|, \nu x]\\
    &= [\mu (g\cdot x), |\mu| + |\nu'|, g|_{(\nu x)(|\nu|, |\nu| + |\nu'|)}, |\nu| + |\nu'|, \nu x]\\
    &= [\alpha (h\cdot y), |\alpha| + |\beta'|, h|_{(\beta y)(|\beta|, |\beta| + |\beta'|)}, |\beta| + |\beta'|, \beta y]\\
    &= \psi^0\big((\alpha,h,\beta), \beta y\big).
\end{align*}

Conversely suppose that $\psi^0\big((\mu,g,\nu),\nu x\big) =
\psi^0\big((\alpha,h,\beta),\beta y\big)$. Then $|\mu| - |\nu| = |\alpha| -
|\beta|$, $\nu x = \beta y$, $\mu g\cdot x = \alpha h \cdot y$, and there
exists $l \ge |\nu|, |\beta|$ such that $g|_{x(0,l-|\nu|)} = h|_{y(0,
l-|\beta|)}$. Let $e = s(y_{l-|\beta|}) = s(x_{l-|\nu|})$. Then
\begin{align*}
(\alpha, h, \beta)&(\beta y(0, l-|\beta|), e, \beta y(0, l-|\beta|))\\
    &= (\alpha h\cdot y(0, l-|\beta|), h|_{y(0, l-|\beta|)}, (\beta y)(0,l))\\
    &= (\mu g\cdot x(0, l-|\nu|), g|_{x(0, l-|\nu|)}, (\nu x)(0,l))\\
    &= (\mu, g, \nu)(\nu x(0, l-|\nu|), e, \nu x(0, l-|\nu|)).
\end{align*}
Since $\nu x(0, l-|\nu|) = (\nu x)(0,l) = (\beta y)(0,l) = \beta y(0, l-|\beta|)$, we obtain
\[
(\alpha, h, \beta)(\beta y(0, l-|\beta|), e, \beta y(0, l-|\beta|))
    = (\mu, g, \nu)(\beta y(0, l-|\beta|), e, \beta y(0, l-|\beta|)).
\]
Hence $[(\mu,g,\nu),\nu x] = [(\alpha,h,\beta),\beta y]$. This completes the
proof of~\eqref{eq:psi def'd}.

It follows that $\psi^0$ descends to an injective map $\psi : S_{G, E} \ltimes E^\infty
\to \Gg_{G, E}$. To see that $\psi$ is surjective, fix $[x,m,g,n,y] \in \Gg_{G, E}$, let
$z := \varsigma^{n}(y)$, $\nu = y(0,n)$ and $\mu = x(0,m)$, and observe that $[x,m,g,n,y]
= [\mu g\cdot z, |\mu|, g, |\nu|, \nu z] = \psi([(\mu, g, \nu), \nu z])$. It is routine
to check that $\psi$ is multiplicative, and hence an algebraic isomorphism of groupoids
as claimed.

For $(\mu, g, \nu)$ with $s(\mu) = g\cdot s(\nu)$ and an open set $U \subseteq \rZ{\nu}$ in
$E^\infty$, let
\[
Z(\mu,g,\nu, U) := \{[\mu g\cdot x, |\mu|, g, |\nu|, \nu x] \mid \nu x \in U\}.
\]
Proposition~4.14 of \cite{Exel:Inverse_combinatorial} combined with the
algebraic isomorphism $\psi$ above shows that the sets $Z(\mu, g, \nu, U)$
are a basis for a topology on $\Gg_{G, E}$ under which it becomes a
topological groupoid. If $[\mu g\cdot x, |\mu|, g, |\nu|, \nu x]$ is in
$Z(\mu, g, \nu, U)$, then by definition of the topology on $E^\infty$ there
exists $n \in \mathbb{N}$ for which $\nu' := x(0, n)$ is a path such that
$\nu x \in \rZ{\nu\nu'} \subseteq U$. Then, we have
\[
[\mu g\cdot x, |\mu|, g, |\nu|, \nu x]
    \in Z(\mu g\cdot \nu', g|_{\nu'}, \nu\nu')
    = Z(\mu, g, \nu, Z(\nu'))
    \subseteq Z(\mu, g, \nu, U).
\]
So the $Z(\mu, g, \nu)$ are a basis for the same topology as the $Z(\mu, g, \nu, U)$. Now
Proposition~4.15 of \cite{Exel:Inverse_combinatorial} shows that the range and source maps
restrict to homeomorphisms $r : Z(\mu, g, \nu) \to \rZ{\mu}$ and $s : Z(\mu, g, \nu) \to
\rZ{\nu}$. Since the $\rZ{\nu}$ are compact and Hausdorff, we deduce that the $Z(\mu, g, \nu)$ are
also compact and Hausdorff. It follows that $\Gg_{G, E}$ is locally Hausdorff and \'etale as
claimed.
\end{proof}

We now show that the $C^*$-algebra of the groupoid $\Gg_{G, E}$ just constructed coincides with
the $C^*$-algebra of the self-similar action $(G, E)$.

Note that for each $\mu  \in E^*$ and $g  \in G$ with $c(g) = s(\mu)$, we
have $(\mu, g, d(g)) \in S_{G, E}$, and $\Dom((\mu, g, d(g))) = \rZ{d(g)}$.
Hence $W((\mu, g, d(g)), \rZ{\lambda})$ is a compact open subset of $S_{G, E}
\ltimes E^\infty$ for each $\lambda  \in d(g)E^*$.

\begin{prp}\label{prp:O,C*G isomorphism}
Let $E$ be a finite directed graph with no sources, and let $(G, E)$ be a self-similar
groupoid action. Let $\Gg_{G, E}$ be the groupoid described in Lemmas
\ref{lem:groupoid def}~and~\ref{lem:gpd isomorphism}. There is an isomorphism
$\pi : \Oo(G, E) \to C^*(\Gg_{G, E})$ such that $\pi(u_g) = 1_{Z(c(g), g,
d(g))}$ and $\pi(s_e) = 1_{Z(e, s(e), s(e))}$ for all $g \in G$ and $e \in
E^1$.
\end{prp}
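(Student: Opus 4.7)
The plan is to build a covariant representation of $(G,E)$ inside $C^*(\Gg_{G, E})$, use the universal property of $\Oo(G, E)$ to produce $\pi$, and then prove that $\pi$ is an isomorphism via a gauge-invariant uniqueness argument.

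First, I would show that the formulas $U_g := 1_{Z(c(g), g, d(g))}$ for $g \in G$, $P_v := 1_{Z(v, v, v)}$ for $v \in E^0$, and $S_e := 1_{Z(e, s(e), s(e))}$ for $e \in E^1$ define a covariant representation of $(G, E)$ in $C^*(\Gg_{G, E})$. Since each of these sets is an open bisection by Lemma~\ref{lem:gpd isomorphism}, the products reduce to computing pointwise products of characteristic functions on groupoid products of bisections. Using the multiplication formula of Lemma~\ref{lem:groupoid def}, one checks directly that the $P_v$ are mutually orthogonal projections; that $S_e^* S_e = P_{s(e)}$ and $\sum_{e \in vE^1} S_e S_e^* = P_v$ (this is the standard computation for the graph subgroupoid $\iota(\Gg_E)$); that $g \mapsto U_g$ is a unitary representation of $G$ (the identities $U_g U_h = \delta_{d(g), c(h)} U_{gh}$ and $U_{g^{-1}} = U_g^*$ follow because $Z(c(g), g, d(g))Z(c(h), h, d(h)) = Z(c(gh), gh, d(gh))$ when $d(g) = c(h)$, and is empty otherwise); and finally the covariance relation $U_g S_e = S_{g\cdot e} U_{g|_e}$ for $e \in d(g)E^1$, which reduces via Lemma~\ref{lem:groupoid def} to the observation that the product bisection $Z(c(g), g, d(g)) \cdot Z(e, s(e), s(e))$ equals $Z(g\cdot e, g|_e, s(e))$. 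The universal property of $\Oo(G, E)$ then yields the $^*$-homomorphism $\pi$ with the stated values on generators.

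For surjectivity, I would verify by induction on $|\mu| + |\nu|$ that
\[
    \pi(s_\mu u_g s_\nu^*) = 1_{Z(\mu, g, \nu)} \quad\text{for every } (\mu, g, \nu) \in S_{G, E}.
\]
Since Lemma~\ref{lem:gpd isomorphism} shows that the sets $Z(\mu, g, \nu)$ form a basis of compact open bisections for $\Gg_{G, E}$, the span of their indicators is dense in $C_c(\Gg_{G, E})$ in the inductive limit topology, hence dense in $C^*(\Gg_{G, E})$; so $\pi$ is surjective.

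For injectivity, I would use a gauge-invariant uniqueness argument. The algebra $\Oo(G, E)$ carries a circle gauge action $\gamma$ with $\gamma_z(s_e) = z s_e$ and $\gamma_z(u_g) = u_g$, and the map $c : \Gg_{G, E} \to \ZZ$, $c([x, m, g, n, y]) = m - n$, is a well-defined continuous cocycle (well-definedness is immediate from the definition of $\sim$ in Lemma~\ref{lem:groupoid def}) inducing a gauge action on $C^*(\Gg_{G, E})$ that $\pi$ intertwines with $\gamma$. The isotropy subgroupoid $c^{-1}(0)$ is an increasing union of elementary (hence amenable and AF) subgroupoids indexed by cylinder depth, and standard arguments show that $\Gg_{G, E}$ is amenable. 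Applying the gauge-invariant uniqueness theorem for $\Oo(G, E)$ from \cite{Laca-Raeburn-Ramagge-Whittaker:Equilibrium}, it then suffices to check that $\pi$ is injective on the fixed-point algebra for $\gamma$, which in turn follows from the fact that each $\pi(p_v) = 1_{Z(v, v, v)}$ is a nonzero projection and that $\pi$ respects the canonical faithful conditional expectations onto the fixed-point subalgebras.

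The main obstacle is verifying injectivity cleanly. In particular, establishing amenability of $\Gg_{G, E}$ and identifying the fixed-point algebra with a suitable AF-core so that the gauge-invariant uniqueness theorem applies is the delicate step; the alternative is a direct core computation showing that $\pi$ is faithful on the fixed-point algebra using only the Cuntz--Krieger relations and the finiteness of vertex projections $p_v$, bypassing amenability but requiring a careful analysis of how $S_{G, E}$ acts on cylinder sets in $E^\infty$.
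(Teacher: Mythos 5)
Your construction of the covariant representation $(U,P,S)$ and your surjectivity argument (reducing to $\pi(s_\mu u_g s_\nu^*) = 1_{Z(\mu,g,\nu)}$ and density of the span of indicators of basic compact open bisections) match the paper's proof. The injectivity step, however, has a genuine gap. First, the proposition carries no contracting, regular, or amenability hypotheses, so your claims that $c^{-1}(0)$ is an increasing union of elementary AF subgroupoids and that $\Gg_{G,E}$ is amenable are false in this generality: $c^{-1}(0) = \Gg^\TT_{G,E}$ contains all germs $[g\cdot x,0,g,0,x]$, whose isotropy encodes the stabilisers of infinite paths, and faithful self-similar actions of non-amenable groups exist. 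The groupoid is also not Hausdorff in general (Corollary~\ref{cor:Hausdorffness}), which further undermines ``standard arguments.'' Second, and more seriously, even after the (legitimate) reduction via the circle actions to injectivity of $\pi$ on the fixed-point algebra, that injectivity does \emph{not} follow from the nonvanishing of the projections $\pi(p_v)$. That criterion works for graph algebras because the core is the AF algebra built from the $s_\mu s_\nu^*$; here the core of $\Oo(G,E)$ is $\clsp\{s_\mu u_g s_\nu^* : |\mu|=|\nu|\}$, which contains the images of all the $u_g$, and deciding whether $\pi$ collapses linear combinations of the $U_g = 1_{Z(c(g),g,d(g))}$ (whose supports can overlap heavily) is essentially the entire difficulty of the proposition. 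You have not addressed it.

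The paper avoids this issue altogether by producing an explicit one-sided inverse: since $C^*(S_{G,E}\ltimes E^\infty)$ is \emph{universal} for representations of the inverse-semigroup action of $S_{G,E}$ on $E^\infty$ (\cite[Theorem~8.5]{Exel:Inverse_combinatorial}), and the generators of $\Oo(G,E)$ furnish such a representation via $S_{(\mu,g,\nu)} := s_\mu u_g s_\nu^*$ together with $\rho_0(1_{\rZ\lambda}) = s_\lambda s_\lambda^*$, one obtains a homomorphism $\rho : C^*(\Gg_{G,E}) \to \Oo(G,E)$ with $\rho\circ\pi = \id$ on generators. This uses only the universal properties on both sides and needs no gauge-invariant uniqueness theorem, no amenability, and no analysis of the core. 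If you want to salvage your route, you would have to prove directly that $\pi$ is faithful on $\clsp\{s_\mu u_g s_\nu^* : |\mu|=|\nu|=n\}$ for each $n$, which amounts to reproving the universal property in disguise.
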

\begin{proof}
By Lemma~\ref{lem:gpd isomorphism}, it suffices to construct an isomorphism $\pi : \Oo(G, E) \to
C^*(S_{G, E} \ltimes E^\infty)$ such that each $\pi(u_g) = 1_{W((c(g), g, d(g)), \rZ{d(g)})}$, and
each $\pi(s_e) = 1_{W((e, s(e), s(e)), \rZ{s(e)})}$.

For $g  \in G$, $e  \in E^1$ and $v  \in E^0$, define
\[
U_g := 1_{W((c(g), g, d(g)), \rZ{d(g)})}, \quad
    S_e := 1_{W((e, s(e), s(e)), \rZ{s(e)})}, \quad\text{ and }\quad
    P_v := 1_{W((v, v, v), \rZ{v})}.
\]
Elementary calculations using the definition of multiplication in $S_{G, E}$
shows that $(U, P, S)$ is a covariant representation of $(G, E) \in C^*(S_{G,
E} \ltimes E^\infty)$. It follows that there is a homomorphism $\pi : \Oo(G,
E) \to C^*(S_{G, E} \ltimes E^\infty)$ satisfying the given formulas. For
each $\lambda  \in E^*$, write $\lambda = \lambda_1 \lambda_2 \dots
\lambda_n$ as a concatenation of edges, and then define $S_\lambda :=
S_{\lambda_1} \dots S_{\lambda_n} $. Then
\[
1_{\rZ{\lambda}} = S_\lambda S^*_\lambda \in \pi(\Oo(G, E)).
\]
Since the $\rZ{\lambda}$ constitute a basis for the topology on $E^\infty$, it follows that
$C_0(E^\infty) \subseteq \pi(\Oo(G, E))$. If $V$ is a compact open bisection in $S_{G, E} \ltimes
E^\infty$, we can write it as a finite disjoint union of bisections of the form $W((\mu, g, \nu),
\rZ{\nu\alpha})$. We have
\[
1_{W((\mu,g,\nu), \rZ{\nu\alpha})}
    = 1_{W((\mu g \cdot\alpha, g|_\alpha, \nu\alpha), \rZ{s(\alpha)})}
    = S_{\mu g\cdot \alpha} U_{g|_\alpha} S^*_{\nu\alpha}
    \in \pi(\Oo(G, E)),
\]
and we deduce that the indicator function of each compact open bisection belongs to the range of
$\pi$. For each compact open bisection $V$, indicator functions of this form linearly span a dense sub-algebra of $C_{c}(V)$. It follows that $\pi$ is
surjective.

It remains to show that $\pi$ is injective. To do this, it suffices to construct a right
inverse $\rho : C^*(S_{G, E} \ltimes E^\infty) \to \Oo(G, E)$ for $\pi$. Observe that
since $S_{G, E} \ltimes E^\infty$ is the groupoid of germs of the action $\theta$ of the
inverse semigroup $S_{G, E}$ on $E^\infty$,
\cite[Theorem~8.5]{Exel:Inverse_combinatorial} shows that $C^*(S_{G, E} \ltimes
E^\infty)$ is universal for representations, as defined in
\cite[Definition~8.1]{Exel:Inverse_combinatorial} of $(\theta, S_{G, E}, E^\infty)$.
Since the $s_e$ and $p_v$ constitute a Cuntz--Krieger $E$-family in $\Oo(G, E)$, there is
a homomorphism $\rho_0 : C_0(E^\infty) \to \Oo(G, E)$ such that $\rho_0(1_{\rZ{\lambda}})
= s_\lambda s^*_\lambda$ for each $\lambda$. For each $(\mu, g, \nu)  \in S_{G, E}$
define $S_{(\mu,g,\nu)} := s_\mu u_g s^*_\nu$. If $\lambda = \nu\lambda'$ then the
relations in $\Oo(G, E)$ give
\[
S_{(\mu,g,\nu)} \rho_0(1_{\rZ{\lambda}}) S_{(\mu,g,\nu)}^*
    = s_\mu u_g s^*_\nu s_\lambda s^*_\lambda s_\nu u_{g^{-1}} s^*_\mu
    = s_{\mu g\cdot\lambda'} s^*_{\mu g\cdot\lambda'},
\]
and then linearity and continuity imply that $S_{(\mu,g,\nu)} \rho_0(f)
S_{(\mu,g,\nu)}^* = \rho_0(f \circ \theta_{(\mu,g,\nu)^*})$ whenever $f$ is
supported on $\Dom(\theta_{(\mu,g,\nu)})$. Routine calculations show that
$S_a S_b = S_{ab}$ and $S_{a^*} = S^*_a$ for all $a,b  \in S_{G, E}$. So
$(\rho_0, S)$ is a representation of $(\theta, S_{G, E}, E^\infty)$ and it
follows that there is a homomorphism $\rho : C^*(S_{G, E} \ltimes E^\infty)
\to \Oo(G, E)$ such that $\rho(f) = \rho_0(f)$ for all $f $ in
$C_0(E^{\infty})$ and $\rho(1_{W((\mu,g,\nu), \rZ{\nu})}) = s_\mu u_g
s^*_\nu$ for all $(\mu,g,\nu)  \in S_{G, E}$. In particular, $\rho \circ \pi$
fixes the generators of $\Oo(G, E)$ and therefore $\rho \circ \pi =
\id_{\Oo(G, E)}$ as required.
\end{proof}

We conclude this section by characterising exactly when $\Gg_{G, E}$ is Hausdorff.

\begin{prp}\label{prp:non-Hausdorff points}
Let $E$ be a finite directed graph, and let $(G, E)$ be a self-similar
groupoid action. Let $\Gg_{G, E}$ be the groupoid of Lemmas \ref{lem:groupoid
def}~and~\ref{lem:gpd isomorphism}. Points $[x, m, g, n, y]$ and $[w, p, h,
q, z] \in \Gg_{G, E}$ are distinct but cannot be separated by disjoint open
sets if and only if both of the following hold
\begin{enumerate}
    \item\label{it:nonHausdorff1} $x = w$, $y = z$, $m-n = p-q$ and $g|_{y(n,l)} \not=
        h|_{y(q,l)}$ for all $l \ge n,q$; and
    \item\label{it:nonHausdorff2} for every $l \ge n,q$ there exists
        $\lambda  \in s(y_{l})E^*$ such that $g|_{y(n,l)} \cdot \lambda =
        h|_{y(q, l)} \cdot \lambda$ and $g|_{y(n,l)\lambda} =
        h|_{y(q,l)\lambda}$.
\end{enumerate}
\end{prp}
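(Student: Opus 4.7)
My approach is the standard one for characterising non-Hausdorff pairs in an étale groupoid with Hausdorff unit space: both directions reduce to understanding the basic compact open sets $Z(\mu, g', \nu)$ of Lemma~\ref{lem:gpd isomorphism} and the equivalence relation $\sim$ of Lemma~\ref{lem:groupoid def}. Write $\gamma := [x,m,g,n,y]$ and $\gamma' := [w,p,h,q,z]$.

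For the forward direction, I would first use that the range and source maps are continuous into the Hausdorff space $E^\infty$ to force $x = w$ and $y = z$, and then observe that the integer-valued cocycle $[x,m,g,n,y] \mapsto m-n$ takes the constant value $|\mu|-|\nu|$ on each $Z(\mu,g',\nu)$ and is therefore continuous into discrete $\ZZ$, which forces $m-n = p-q$. Distinctness then gives~\eqref{it:nonHausdorff1} directly from the definition of $\sim$. To extract~\eqref{it:nonHausdorff2} for a given $l \ge n, q$, I would consider the canonical basic neighbourhoods $U_l := Z(x(0, l+(m-n)), g|_{y(n,l)}, y(0,l))$ of $\gamma$ and $V_l := Z(x(0, l+(p-q)), h|_{y(q,l)}, y(0,l))$ of $\gamma'$; these share their first and third coordinates because $m-n = p-q$, and any common element in $U_l \cap V_l$, unpacked through $\sim$, yields some $y' \in s(y_l)E^\infty$ and $L \ge l$ such that $\lambda := y'(0, L-l)$ witnesses both required identities.

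For the converse, \eqref{it:nonHausdorff1} gives distinctness immediately. For non-separability it suffices to show that any pair of basic neighbourhoods of $\gamma$ and $\gamma'$ has nonempty intersection. My first task is a routine reduction: every basic neighbourhood $Z(\mu_1, g_1', \nu_1)$ of $\gamma$ contains a canonical set $Z(x(0, k+(m-n)), g|_{y(n,k)}, y(0,k))$ for all sufficiently large $k$, and similarly for $\gamma'$; so by choosing a common large $l$ the problem reduces to showing $Z(\mu, g|_{y(n,l)}, \nu) \cap Z(\mu, h|_{y(q,l)}, \nu) \ne \varnothing$ with $\mu = x(0, l+(m-n))$ and $\nu = y(0,l)$. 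Hypothesis~\eqref{it:nonHausdorff2} supplies $\lambda \in s(y_l)E^*$ satisfying the two required identities. Choosing any $y'' \in s(\lambda)E^\infty$ and $y' := \lambda y''$, the self-similarity formulas of Lemma~\ref{properties SSA} together with the restriction identity from~\eqref{it:nonHausdorff2} imply that
\[
[\mu(g|_{y(n,l)}\cdot y'), |\mu|, g|_{y(n,l)}, l, \nu y'] = [\mu(h|_{y(q,l)}\cdot y'), |\mu|, h|_{y(q,l)}, l, \nu y'],
\]
and this common germ lies in both basic sets.

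The main obstacle is the topological bookkeeping in the converse: one must verify carefully that the canonical sets $Z(x(0, k+(m-n)), g|_{y(n,k)}, y(0,k))$ shrink to form a neighbourhood base at $\gamma$, using the restriction identity $g|_{y(n,L)} = g_1'|_{y(|\nu_1|, L)}$ that holds in any basic neighbourhood of $\gamma$. Once this is in place, both directions become direct unpackings of $\sim$.
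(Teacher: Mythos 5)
Your proposal is correct and follows essentially the same route as the paper's proof: both directions hinge on the canonical basic neighbourhoods $Z(x(0,l+(m-n)), g|_{y(n,l)}, y(0,l))$, with the forward implication extracting $\lambda$ as an initial segment of a common germ in the intersection of two such sets, and the converse reducing arbitrary neighbourhoods to these canonical ones (the paper asserts this reduction ``by definition of the topology'', which is exactly the restriction identity you flag) and then exhibiting the common element $[\mu(g|_{y(n,l)}\cdot\lambda y''),|\mu|,g|_{y(n,l)},l,\nu\lambda y'']$. No substantive differences.
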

\begin{proof}
First, suppose that $[x,m,g,n,y]$ and $[w,p,h,q,z]$ are distinct but cannot
be separated by open neighbourhoods.
\par
The range map $r$, the source map $s$ and the co-cycle map
$c:\mathcal{G}_{(G,E)}\mapsto\mathbb{Z}$, defined by $c([x,m,g,n,y]) = m-n$,
are continuous mappings onto Hausdorff spaces, so since $[x,m,g,n,y]$ and
$[w,p,h,q,z]$ cannot be separated by open neighbourhoods, their images under
$r,s$ and $c$ coincide. Hence $x = w$, $y = z$ and $m-n = p - q$. Since
$[x,m,g,n,y] \not= [w,p,h,q,z]$, the definition of the equivalence relation
of Lemma~\ref{lem:groupoid def} forces $g|_{y(n,l)} \neq h|_{y(q,l)}$ for all
$l\geq \text{max}(n,q)$. Therefore, $(1)$ is satisfied.

For~$(2)$, let $\xi = y(n,l)$, and $\tau = y(q,l)$. Then $[x,m,g,n,y] \in Z(x(0,m) g
\cdot\xi, g|_\xi, y(0,n)\xi)$, and $[x, p, h, q, y]$ is in $Z(x(0,p) h\cdot\tau, h|_\tau,
y(0,q)\tau)$. Let $\gamma := y(0,n)\xi = y(0,q)\tau$ and $\omega := x(0,m) g\cdot\xi =
x(0,p) h\cdot \tau$. By assumption,  $Z(\omega, g|_\xi, \gamma)\cap Z(\omega, h|_\tau,
\gamma)\neq\emptyset,$ so there exist $u \in s(\gamma)E^{\infty}$ and $v \in
s(\omega)E^{\infty}$ such that $g|_{\xi}\cdot u = h|_{\tau}\cdot u = v$ and $[\omega v,
|\omega|, g|_{\xi}, |\gamma|, \gamma u] = [\omega v, |\omega|, h|_{\tau}, |\gamma|,
\gamma u].$ Hence, there exists $k$ in $\mathbb{N}$ such that $(g|_{\xi})|_{u(0,k)} =
(h|_{\tau}|)_{u(0,k)}$. Thus $\lambda := u(0,k) \in s(y_{l})E^{*}$ satisfies~$(2)$.

Now suppose that (\ref{it:nonHausdorff1})~and~(\ref{it:nonHausdorff2}) hold. The last
part of~(\ref{it:nonHausdorff1}) and the definition of $\sim$ imply that $[x,m,g,n,y]
\not= [w,p,h,q,z]$. Fix neighbourhoods $U \owns [x,m,g,n,y]$ and $V \owns [w,p,h,q,z]$.
We must show that $U \cap V \not= \varnothing$. By definition of the topology, there
exists $l \ge n,q$ such that $Z(x(0,m-n+l), g|_{y(n,l)}, y(0,l)) \subseteq U$  and
$Z(x(0, p-q+l), h|_{y(q,l)}, y(0,l)) \subseteq V$. Fix $\lambda$ as in
condition~(\ref{it:nonHausdorff2}) for this $l$. Then
\begin{align*}
U &\owns \big[x(0, m - n + l) g|_{y(n,l)} \cdot (\lambda z), m-n+l + |\lambda|, g|_{y(n,l)\lambda}, l + |\lambda|, y(0,l)\lambda z\big]\\
    &= \big[x(0, p-q+l)h|_{y(q,l)} \cdot (\lambda z), p-q+l + |\lambda|, h|_{y(q,l)\lambda)}, l + |\lambda|, y(0,l)\lambda z\big] \in V
\end{align*}
for all $z  \in \rZ{s(\lambda)}$, so $U \cap V \not= \emptyset$.
\end{proof}

For the following corollary, we use the following terminology adapted from
\cite[Definition~5.2]{Exel-Pardo:Self-similar}: if $(G,E)$ is a self-similar groupoid
action on a finite graph $E$, we say that a path $\lambda \in E^*$ is \emph{strongly
fixed} by an element $g \in G r(\lambda)$ if $g\cdot\lambda = \lambda$ and $g|_{\lambda}
= s(\lambda)$.

\begin{cor}\label{cor:Hausdorffness}
Let $E$ be a finite directed graph, and let $(G, E)$ be a self-similar
groupoid action. Then the following are equivalent.
\begin{enumerate}
    \item\label{it:Hchar1} The groupoid $\Gg_{G, E}$ is Hausdorff.
    \item\label{it:Hchar2} The subgroupoid $\Gg_{G, E}^\TT := \{[x,m,g,n,y] \in \Gg_{G, E} \mid
        m = n\}$ is Hausdorff.
    \item\label{it:Hchar3} The subgroupoid $\{[g \cdot x, 0, g, 0, x] \mid x \in E^\infty\text{
        and }d(g) = r(x)\}$ is Hausdorff.
    \item\label{it:Hchar4} If $g  \in G$ and $y  \in E^\infty$ satisfy $g \cdot y = y$
        and $g|_{y(0,n)} \not= s(y_n)$ for all $n$, then there exists $\lambda  \in
        E^*$ such that $y \in \rZ{\lambda}$ and no element of $\lambda E^*$ is strongly
        fixed by $g$.
\end{enumerate}
In particular, if $(G, E)$ is regular then $\Gg_{G, E}$ is Hausdorff.
\end{cor}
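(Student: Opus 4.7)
The main tool is Proposition~\ref{prp:non-Hausdorff points}, which lists two checkable conditions characterising when two germs in $\Gg_{G,E}$ fail to be separated. The plan is to establish the cycle $(\ref{it:Hchar1}) \Rightarrow (\ref{it:Hchar2}) \Rightarrow (\ref{it:Hchar3}) \Rightarrow (\ref{it:Hchar4}) \Rightarrow (\ref{it:Hchar1})$. The first two implications are immediate because a subspace of a Hausdorff space is Hausdorff, so the substantive content lies in $(\ref{it:Hchar3}) \Rightarrow (\ref{it:Hchar4})$, $(\ref{it:Hchar4}) \Rightarrow (\ref{it:Hchar1})$, and the final ``regularity implies Hausdorffness'' assertion.

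For $(\ref{it:Hchar3}) \Rightarrow (\ref{it:Hchar4})$, I would argue the contrapositive. Suppose $(\ref{it:Hchar4})$ fails, producing $g \in G$ and $y \in E^\infty$ with $g \cdot y = y$, $g|_{y(0,n)} \neq s(y_n)$ for all $n$, and such that for every $l \ge 0$ there exists $\lambda' \in s(y_l)E^*$ with $y(0,l)\lambda'$ strongly fixed by $g$. Then the two germs $[y, 0, g, 0, y]$ and $[y, 0, r(y), 0, y]$ lie in the subgroupoid of $(\ref{it:Hchar3})$; they are distinct by the definition of $\sim$ applied to the hypothesis $g|_{y(0,n)} \neq s(y_n)$. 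Taking $\lambda = \lambda'$ at each level verifies condition~(2) of Proposition~\ref{prp:non-Hausdorff points}, because strong fixation unwinds exactly to $g|_{y(0,l)} \cdot \lambda' = \lambda'$ and $g|_{y(0,l)\lambda'} = s(\lambda')$, while $r(y)|_{y(0,l)} = s(y_l)$ trivially produces the matching identities on the other side.

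The main obstacle is $(\ref{it:Hchar4}) \Rightarrow (\ref{it:Hchar1})$. Again I would argue by contrapositive: given distinct, non-separable germs $[x,m,g,n,y]$ and $[x,p,h,q,y]$ from Proposition~\ref{prp:non-Hausdorff points} (so $m-n = p-q$), set $k := \max(n,q)$, $g' := g|_{y(n,k)}$, $h' := h|_{y(q,k)}$, $u := \varsigma^k(y)$, and $f := h'^{-1}g'$. Expanding $g \cdot \varsigma^n(y) = \varsigma^m(x)$ and $h \cdot \varsigma^q(y) = \varsigma^p(x)$ using the self-similarity identity $g \cdot (\mu\nu) = (g \cdot \mu)(g|_\mu \cdot \nu)$, together with $m - n = p - q$, yields $g' \cdot u = h' \cdot u$, hence $f \cdot u = u$. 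A short computation using Lemma~\ref{properties SSA}(4)--(5) reduces $f|_{u(0,j)}$ to $(h'|_{u(0,j)})^{-1} g'|_{u(0,j)}$, so $f|_{u(0,j)} \neq s(u_j)$ translates precisely to $g'|_{u(0,j)} \neq h'|_{u(0,j)}$, which is condition~(1) of the Proposition at level $l = k+j$. The same restriction identities show that each $\lambda$ supplied by condition~(2) at level $l = k+j$ makes $u(0,j)\lambda$ strongly fixed by $f$; thus for every prefix $\mu$ of $u$ some extension $\mu\lambda \in \mu E^*$ is strongly fixed, violating $(\ref{it:Hchar4})$ for the pair $(f, u)$.

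Finally, for the last sentence of the corollary, note that if $(G,E)$ is regular then $(\ref{it:Hchar4})$ holds vacuously: whenever $g \cdot y = y$, Definition~\ref{def:regular} supplies $\mu \in E^*$ with $y \in Z[\mu)$ and $g|_\mu = s(\mu)$, so $g|_{y(0,|\mu|)} = s(y_{|\mu|})$, contradicting the hypothesis ``$g|_{y(0,n)} \neq s(y_n)$ for all $n$'' of $(\ref{it:Hchar4})$. The hard part is the bookkeeping in $(\ref{it:Hchar4}) \Rightarrow (\ref{it:Hchar1})$: verifying that the restriction algebra converts \emph{both} clauses of Proposition~\ref{prp:non-Hausdorff points} simultaneously into the failure of $(\ref{it:Hchar4})$ for the derived data $(f, u)$.
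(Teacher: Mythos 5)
Your proposal is correct and follows essentially the same route as the paper: the first two implications via subspaces of Hausdorff spaces, $(\ref{it:Hchar3})\Rightarrow(\ref{it:Hchar4})$ by exhibiting the non-separable pair $[y,0,g,0,y]$, $[y,0,r(y),0,y]$ via Proposition~\ref{prp:non-Hausdorff points}, and $(\ref{it:Hchar4})\Rightarrow(\ref{it:Hchar1})$ by passing to the element $h'^{-1}g'$ fixing $\varsigma^{\max(n,q)}(y)$ (the paper uses its inverse $g'^{-1}h'$, which is immaterial) and checking that both clauses of the proposition translate into the failure of $(\ref{it:Hchar4})$. The final regularity claim is also handled exactly as in the paper.
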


\begin{proof}
Since $\mathcal{G}_{G,E}^{\mathbb{T}}\subseteq\mathcal{G}_{G,E}$ and $\mathcal{G}_{G,E}^{\mathbb{T}}$ contains the subgroupoid of $(\ref{it:Hchar3})$, we have
(\ref{it:Hchar1})$\;\implies\;$(\ref{it:Hchar2})$\;\implies\;$(\ref{it:Hchar3}).

For (\ref{it:Hchar3})$\;\implies\;$(\ref{it:Hchar4}), we prove the contrapositive. So
suppose that~(\ref{it:Hchar4}) fails with respect to $g \in G$ and $y  \in \rZ{d(g)}$.
That is $g \cdot y = y$ and $g|_{y(0,n)} \not= s(y_n)$ for all $n$, but for every
$\lambda$ such that $y \in \rZ{\lambda}$ there exists $\mu \in s(\lambda)E^*$ such that
$\lambda\mu$ is strongly fixed by $g$. Equivalently, for every $l \ge 0$, there exists
$\mu \in s(y_l)E^*$ such that $g\cdot (y(0,l)\mu) = y(0,l)\mu$ and $g|_{y(0,l)\mu} =
s(\mu)$. Hence Proposition~\ref{prp:non-Hausdorff points} implies that $[y, 0, g, 0, y]$
and $[y, 0, d(g), 0, y]$ are distinct but cannot be separated by open sets. Hence the
open subgroupoid $\{[g \cdot x, 0, g, 0, x] \mid x \in E^\infty\text{ and }d(g) = r(x)\}$
is not Hausdorff.

For (\ref{it:Hchar4})$\;\implies\;$(\ref{it:Hchar1}), suppose
that~(\ref{it:Hchar4}) holds. Fix $[x, m, g, n, y]$ and $[w, p, h, q, z] \in
\Gg_{G, E}$. It suffices to show that the conditions of
Proposition~\ref{prp:non-Hausdorff points} do not both hold for these points.
To do this, we suppose that Proposition~\ref{prp:non-Hausdorff
points}(\ref{it:nonHausdorff1}) holds, and show that
Proposition~\ref{prp:non-Hausdorff points}(\ref{it:nonHausdorff2}) fails. Let
$l := \max{(n,q)}$, let $k := (g|_{y(n,l)})^{-1} h|_{y(q,l)}$, and put $y' =
\varsigma^l(y)$. Since $x = w$, we have $(g|_{y(n,l)}) \cdot y' = h|_{y(q,l)}
\cdot y'$ and so $k \cdot y' = y'$. Moreover, $k|_{y'(0,a)} =
\big((g|_{y(n,l)})^{-1}h|_{y(q,l)}\big)|_{y'(0,a)} =
\big((g|_{y(n,l+a)})^{-1}h|_{y(q,l+a)}\big) \neq s(y_{(l+a)})$ for all $a$.
So~(\ref{it:Hchar4}) implies that for large $a  \in \NN$ and $\lambda \in
s(y'_{a})E^*$, either $k|_{y'(0,a)} \cdot \lambda \not= \lambda$ or
$k|_{y'(0,a)\lambda} \not= s(\lambda)$. That is, for large $a$, for every
$\lambda \in s(y_{a})E^*$, either $g|_{y(n,l+a)} \cdot \lambda \not= h|_{y(q,
l+a)}\cdot \lambda$, or $g|_{y(n,l+a)\lambda} \not= h|_{y(q, l+a)\lambda}$.
Thus condition~(\ref{it:nonHausdorff2}) of Proposition~\ref{prp:non-Hausdorff
points} fails for $[x, m, g, n, y]$ and $[w, p, h, q, z]$ as required.

For the final statement, we show that if $(G, E)$ is regular,
then~(\ref{it:Hchar4}) holds. Suppose that $g  \in G$ and $y  \in E^\infty$
satisfy $g \cdot y = y$. Regularity gives $n \in \NN$ such that $g$ pointwise
fixes $\rZ{y(0,n)}$. Hence $g|_{y(0,n)}$ pointwise fixes $\rZ{y(n)}$. Since
self-similar groupoid actions are, by definition, faithful this implies that
$g|_{y(0,n)} = y(n) \in G^{(0)}$. So the hypothesis of~(\ref{it:Hchar4}) is
never satisfied, and so~(\ref{it:Hchar4}) holds vacuously.
\end{proof}

We can characterise regularity of $(G,E)$ in terms of of $\mathcal{G}_{G,E}$.
Recall that a groupoid $\mathcal{G}$ is \textit{principal} if its isotropy
bundle $\text{Iso}(\mathcal{G}) = \{g\in\mathcal{G}:r(g) = s(g)\}$ is equal
to the unit space $\mathcal{G}^{(0)}$.
\begin{prp}
Let $(G,E)$ be a self-similar groupoid action on a finite directed graph $E$. Then, $(G,E)$ is regular if and only if $\mathcal{G}^{\mathbb{T}}_{G,E}$ is principal.
\end{prp}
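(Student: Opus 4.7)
The plan is to unpack both conditions in terms of the equivalence relation defining $\Gg_{G,E}$ (Lemma~\ref{lem:groupoid def}) and observe that they match almost tautologically. An element of the isotropy bundle of $\Gg^{\TT}_{G,E}$ at a point $y \in E^\infty$ has the form $[y, n, g, n, y]$ where $n \ge 0$, $d(g) = r(\varsigma^n(y))$, and $\varsigma^n(y) = g\cdot \varsigma^n(y)$; the unit at $y$ is $[y, 0, r(y), 0, y]$. By the definition of $\sim$, these two germs coincide if and only if there exists $l \ge n$ such that $g|_{y(n,l)} = r(y)|_{y(0,l)} = s(y_l)$. Thus principality of $\Gg^{\TT}_{G,E}$ is equivalent to the statement that whenever $g \in G$ fixes a tail $\varsigma^n(y)$ of some $y \in E^\infty$, there is an $l \ge n$ with $g|_{y(n,l)} = s(y_l)$.

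First I would prove the forward direction. Assume $(G,E)$ is regular, and take an arbitrary isotropy element $[y, n, g, n, y]$. Setting $x := \varsigma^n(y)$, the condition $g \cdot x = x$ and regularity together produce a finite path $\mu$ with $x \in \rZ{\mu}$, $g \cdot \mu = \mu$, and $g|_\mu = s(\mu)$. Taking $l := n + |\mu|$ gives $y(n,l) = \mu$ and hence $g|_{y(n,l)} = s(\mu) = s(y_l)$, so $[y, n, g, n, y]$ is a unit. Therefore $\Gg^{\TT}_{G,E}$ is principal.

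Conversely, assume $\Gg^{\TT}_{G,E}$ is principal. Given any $g \in G$ and $y \in \rZ{d(g)}$ with $g \cdot y = y$, the triple $[y, 0, g, 0, y]$ lies in the isotropy bundle (use $n = 0$) and so must equal the unit $[y,0,r(y),0,y]$. The characterisation of $\sim$ then furnishes $l \ge 0$ with $g|_{y(0,l)} = s(y_l)$ (or $g = r(y)$, in which case take $l = 1$ and use Lemma~\ref{properties SSA}(3) to obtain the same conclusion with $l = 1$). Setting $\mu := y(0,l)$, the equality $g \cdot y = y$ forces $g \cdot \mu = \mu$, and we already have $g|_\mu = s(\mu)$, verifying the regularity condition of Definition~\ref{def:regular}.

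There is no real obstacle here: the proof is essentially a translation exercise, and the only thing to be careful about is the boundary case $l = 0$ in the second direction, which is handled by a single application of the identity-restriction rule from Lemma~\ref{properties SSA}. The main conceptual point worth emphasising in the write-up is that the $m = n$ restriction in the definition of $\Gg^{\TT}_{G,E}$ is exactly what lets us identify its isotropy with the potential obstructions to regularity, since a germ $[y,m,g,n,y]$ with $m \neq n$ can never lie in the isotropy (the cocycle $[x,m,g,n,y] \mapsto m-n$ would have to vanish there).
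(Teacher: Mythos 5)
Your proof is correct and follows essentially the same route as the paper's: both directions simply unwind the equivalence relation defining $\Gg_{G,E}$, translating ``$[y,n,g,n,y]$ is a unit'' into the existence of $l$ with $g|_{y(n,l)} = s(y_l)$ and matching this against Definition~\ref{def:regular}. One small caveat: your closing parenthetical is false as stated --- the cocycle $[x,m,g,n,y]\mapsto m-n$ need not vanish on the isotropy of the full groupoid $\Gg_{G,E}$ (eventually periodic points give isotropy elements with $m\neq n$), which is precisely why the statement is about $\Gg^{\TT}_{G,E}$ rather than $\Gg_{G,E}$ --- but this aside plays no role in your argument, which correctly works inside $\Gg^{\TT}_{G,E}$ throughout.
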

\begin{proof}
Suppose that $(G,E)$ is regular. Fix $\gamma \in
\text{Iso}(\mathcal{G}_{G,E})$. Then $\gamma = [x,n,g,n,x]$ for some $x \in
E^{\infty}$, $n \ge 0$ and $g \in G$ such that $d(g) = r(\varsigma^{n}(x))$
and $g\cdot \varsigma^{n}(x) = \varsigma^{n}(x)$. By regularity, there exists
$k \in \mathbb{N}$ such that $g|_{x(n+1, n+k)}= s(x_{n+k})$. By definition of
the equivalence relation $\sim$ defining $\mathcal{G}_{G, E}$ (see
Lemma~\ref{lem:groupoid def}), we have $[x,n,g,n,x] = [x,n+k,s(x_{n+k}), n+k,
x] \in \mathcal{G}_{G,E}^{(0)}$. Therefore, $\mathcal{G}_{G,E}^{\mathbb{T}}$
is principal.

Now, suppose that $\mathcal{G}_{G,E}^{\mathbb{T}}$ is principal. If $g \in G$
and $x \in E^{\infty}$ satisfy $d(g) = r(x)$ and $g\cdot x = x$, then
$[x,0,g,0,x] \in \text{Iso}(\mathcal{G}_{G,E})$. Therefore, $(x,0,g,0,x) \sim
(x,n, s(x_{n}),n,x)$ for some $n$. Hence there exists $k\geq n$ such that
$g|_{x(0,k)} = s(x_{n})|_{x(n,k)} = s(x_{k})$. Therefore, $(G,E)$ is regular.
\end{proof}

\section{The dual algebra of a self-similar graph}\label{sec:hO(G,E)}

We now describe a second $C^*$-algebra associated to a contracting, regular
self-similar groupoid action on a finite directed graph with no sources; namely the $C^*$-algebra of the
Deaconu--Renault groupoid of the homeomorphism $\Jsig : \Jj \to \Jj$ of Section~\ref{sec:J
dynamics}.

Recall that if $X$ is a locally compact Hausdorff space, and $T : X \to X$ is a local
homeomorphism, then $\Gg_{X, T}$ is the set
\[
\Gg_{X, T} := \{(x, m-n, y) \in X \times \ZZ \times X \mid m,n\in\mathbb{N}_{0}, T^m(x) = T^n(y)\},
\]
endowed with the topology arising from the basic open sets
\[
Z(U, m, n, V) := \{(x, m-n, y) \in U
\times \{m-n\} \times V \mid T^m(x) = T^n(y)\}.
\]
The unit space is $\Gg^{(0)}_{X, T} := \{(x, 0, x) \mid x \in X\}$ and is
identified with $X$. The groupoid structure is given by
\begin{gather*}
r(x, p, y) := x, \qquad s(x, p, y) := y,\\
(x,p,y)(y,q,z) := x(p+q,z)\qquad\text{and}\qquad (x, p, y)^{-1} := (y, -p, x).
\end{gather*}

It is not hard to check that
\[
\{Z(U, m, n, V) \mid T^m|_U\text{ and }T^n|_V\text{ are homeomorphisms and } T^m(U) = T^n(V)\}
\]
is a basis of open bisections for the topology, so $\Gg_{X, T}$ is \'etale. It is easy to see that
it is Hausdorff, and it is locally compact because $X$ is.

\begin{dfn}\label{dfn:dual alg}
Let $E$ be a finite directed graph with no sources. Let $(G, E)$ be a contracting,
regular self-similar groupoid action. Let $\Jj$ and $\Jsig$ be the space and local homeomorphism
of Definition~\ref{dfn:limit space} and Proposition~\ref{thm:Jsig}. We define $\hG_{G, E}$ to be
the Deaconu--Renault groupoid $\hG_{G, E} := \Gg_{\Jj, \Jsig}$, and we define $\hO(G, E) :=
C^*(\hG_{G, E})$, and call this the \emph{dual $C^*$-algebra} of $(G, E)$.
\end{dfn}

Let $\mathcal{G}_{\sigma}$ be the Deaconu--Renault groupoid associated to
$\sigma:E^{-\infty}\mapsto E^{-\infty}$. Recall from Section~\ref{sec:limit
space} the quotient map $q:E^{-\infty}\mapsto\mathcal{J}$. Since
$q\circ\tilde{\sigma} = \sigma\circ q$, we see that $q$ extends to a groupoid
homomorphism $q:\mathcal{G}_{\sigma}\mapsto \hat{\mathcal{G}}_{G,E}$ which
sends $(x,k,y) \in \mathcal{G}_{\sigma}$ to $q(x,k,y) = (q(x), k, q(y))$. The
next result will allow us to deduce properties of $\hat{\mathcal{G}}_{G, E}$
from those of $\mathcal{G}_\sigma$.

\begin{prp}\label{prp:s-bijective}
Let $E$ be a finite directed graph with no sources and $(G,E)$ a contracting
regular self-similar groupoid action. For every $y \in E^{-\infty}$,
$q:(\mathcal{G}_{\sigma})y\mapsto (\hat{\mathcal{G}}_{G,E})q(y)$ is a
bijection, and $q:\mathcal{G}_{\sigma}\mapsto \hat{\mathcal{G}}_{G,E}$ is
proper.
\end{prp}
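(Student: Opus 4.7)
The plan is to deduce both parts from Proposition~\ref{prp:mapping_properties}(1), which shows that $\sigma$ restricts to a bijection $q^{-1}(z) \to q^{-1}(\Jsig(z))$ for every $z \in \Jj$, so that each $\sigma^j$ restricts to a bijection between the corresponding fibres. For the source-fibre statement, I would first prove surjectivity: given $(z, k, q(y)) \in (\hG_{G,E}) q(y)$ with $\Jsig^m(z) = \Jsig^n(q(y))$ and $m - n = k$, the point $\sigma^n(y)$ lies in $q^{-1}(\Jsig^n(q(y))) = q^{-1}(\Jsig^m(z))$, so the bijection $\sigma^m : q^{-1}(z) \to q^{-1}(\Jsig^m(z))$ produces a unique $x \in q^{-1}(z)$ with $\sigma^m(x) = \sigma^n(y)$, giving the required lift $(x, k, y) \in (\mathcal{G}_{\sigma}) y$. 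For injectivity, if $(x_1, k, y)$ and $(x_2, k, y)$ in $(\mathcal{G}_{\sigma}) y$ both map to $(z, k, q(y))$, I would pass to a common $M$ with $\sigma^M(x_i) = \sigma^{M-k}(y)$ for $i = 1, 2$ and use injectivity of $\sigma^M$ on $q^{-1}(z)$ to force $x_1 = x_2$.

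For properness, the strategy is to stratify $\hG_{G,E}$ by the level sets $\hG^{m, n}_{G,E} := \{(z, m-n, w) \in \hG_{G,E} : \Jsig^m(z) = \Jsig^n(w)\}$, with the analogous definition of $\mathcal{G}_{\sigma}^{m, n}$, and to reduce any compact $K \subseteq \hG_{G,E}$ to one contained in a single such level set. First, since the cocycle $c : \hG_{G,E} \to \ZZ$ is continuous with discrete target, $c(K)$ is finite, so I may assume $K \subseteq \hG^{(k)}_{G,E} := c^{-1}(k)$. Using that $\Jsig$ is a local homeomorphism (Theorem~\ref{thm:Jsig}), each $\hG^{m, m-k}_{G,E}$ can be covered by basic open bisections $Z(U, m, m-k, V)$ and is therefore open in $\hG_{G,E}$; moreover, the family $\{\hG^{m, m-k}_{G,E}\}_m$ is an increasing open cover of $\hG^{(k)}_{G,E}$, so compactness of $K$ places it inside some single $\hG^{m, m-k}_{G,E}$. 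Both $\hG^{m, m-k}_{G,E}$ and $\mathcal{G}_{\sigma}^{m, m-k}$ are compact Hausdorff, being preimages of diagonals under continuous maps out of the compact spaces $\Jj \times \{k\} \times \Jj$ and $E^{-\infty} \times \{k\} \times E^{-\infty}$.

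The main obstacle is establishing the identity $q^{-1}(\hG^{m, m-k}_{G,E}) = \mathcal{G}_{\sigma}^{m, m-k}$; once this is in hand, $q^{-1}(K)$ is closed in the compact space $\mathcal{G}_{\sigma}^{m, m-k}$ and hence itself compact, completing the proof. The inclusion $\supseteq$ follows immediately from $q \circ \sigma = \Jsig \circ q$. For $\subseteq$, suppose $(x, k, y) \in \mathcal{G}_{\sigma}$ with $q(x, k, y) \in \hG^{m, m-k}_{G,E}$; then $\sigma^m(x) \sim_{\aeq} \sigma^{m-k}(y)$, while membership in $\mathcal{G}_{\sigma}$ supplies only some $M$ (possibly larger than $m$) with $\sigma^M(x) = \sigma^{M-k}(y)$. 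When $M \ge m$, I would apply $\sigma^{M-m}$ to both sides to obtain $\sigma^{M-m}(\sigma^m(x)) = \sigma^{M-m}(\sigma^{m-k}(y))$, and then invoke Proposition~\ref{prp:mapping_properties}(1) once more---in the form that $\sigma^{M-m}$ is injective on $q^{-1}(\Jsig^m(q(x)))$---to deduce $\sigma^m(x) = \sigma^{m-k}(y)$; the case $M < m$ is immediate by applying $\sigma^{m-M}$ to the equality. This upgrade from asymptotic equivalence at level $m$ to genuine equality at level $m$ is the technical heart of the argument.
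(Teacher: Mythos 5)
Your proposal is correct and follows essentially the same route as the paper: part (1) is deduced by iterating Proposition~\ref{prp:mapping_properties}(1) to get bijections $\sigma^m : q^{-1}(z) \to q^{-1}(\Jsig^m(z))$ on fibres, and properness rests on the same key identity $q^{-1}\big(Z(\Jj,m,n,\Jj)\big) = Z(E^{-\infty},m,n,E^{-\infty})$ between compact open level sets, proved by the same upgrade from asymptotic equivalence at level $m$ to genuine equality via injectivity on fibres. Your version merely spells out a few steps the paper leaves implicit (the reduction of a compact set to a single level set via the cocycle, and the compactness of the level sets).
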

\begin{proof}
Suppose $z \in \mathcal{J}$ and $m,n \in \mathbb{N}$ satisfy
$\tilde{\sigma}^{n}(q(y)) = \tilde{\sigma}^{m}(z):= w$.
Proposition~\ref{prp:mapping_properties}$(1)$ implies that
$\sigma^{m+k}:q^{-1}(z)\mapsto q^{-1}(\tilde{\sigma}^{k}(w))$ and
$\sigma^{n+k}:q^{-1}(q(y))\mapsto q^{-1}(\tilde{\sigma}^{k}(w))$ are
bijective for all $k \ge 0$. Thus there is a unique $x'$ in $q^{-1}(z)$ such
that $\sigma^{m+k}(x') = \sigma^{n+k}(y)$, for all $k \ge 0$. Hence,
$(x',m-n,y)$ is the unique element of $(\mathcal{G}_{\sigma})y$ such that
$q((x',m-n,y)) = (z,m-n,q(y))$.

The same uniqueness property shows that if $x,y \in E^{-\infty}$ and $m,n,k
\ge 0$ satisfy $\sigma^{m+k}(x) = \sigma^{n+k}(y)$ and
$\tilde{\sigma}^{m}(q(x)) = \tilde{\sigma}^{n}(q(y))$, then $\sigma^{m}(x) =
\sigma^{n}(y)$. It follows that $q^{-1}(Z(\mathcal{J},m,n,\mathcal{J})) =
Z(E^{-\infty},m,n,E^{-\infty})$ for any $m,n \ge 0$. Since these sets form
compact open coverings of the respective groupoids, $q$ is proper.
\end{proof}

We investigate when $\hO(G, E)$ is simple. A groupoid $\Gg$ is \emph{minimal}
if $\{r(\gamma) \mid s(\gamma) = x\}$ is dense in $\Gg^{(0)}$ for every $x
\in \Gg^{(0)}$, and that an \'etale groupoid $\Gg$ is \emph{effective} if the
interior of $\text{Iso}(\mathcal{G}) =\{\gamma \in \Gg \mid r(\gamma) =
s(\gamma)\}$ is $\mathcal{G}^{(0)}$.

\begin{lem}
Let $E$ be a finite directed graph. Let $(G, E)$ be a contracting, regular
self-similar groupoid action. If $E$ is strongly connected and not a simple
cycle, then $\hG_{G, E}$ is minimal and effective, and $\hO(G, E)$ is simple.
\end{lem}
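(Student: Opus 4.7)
Since $\hG_{G,E}$ is a Hausdorff \'etale groupoid, and Deaconu--Renault groupoids of local homeomorphisms on compact Hausdorff spaces are amenable, simplicity of $\hO(G,E)$ will follow from the standard fact that the $C^*$-algebra of an amenable, second-countable, Hausdorff, \'etale groupoid is simple if and only if the groupoid is minimal and effective. So the plan reduces to establishing these two dynamical conditions for $\hG_{G,E}$.

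For minimality, I plan to use Proposition~\ref{prp:s-bijective}, which supplies a continuous surjective groupoid homomorphism $q:\mathcal{G}_\sigma\to\hG_{G,E}$ that carries the $\mathcal{G}_\sigma$-orbit of any $y\in E^{-\infty}$ onto the $\hG_{G,E}$-orbit of $q(y)$. It therefore suffices to show $\mathcal{G}_\sigma$-orbits are dense in $E^{-\infty}$, because continuity and surjectivity of $q$ will then promote density to $\Jj$. Given $y\in E^{-\infty}$ and any cylinder $\lZ{\mu}$, strong connectedness supplies $\rho\in s(y_{-1})E^*r(\mu)$, and then $y':=y\rho\mu\in\lZ{\mu}$ satisfies $\sigma^{|\rho\mu|}(y')=y$, so $y'$ belongs to the Deaconu orbit of $y$.

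For effectiveness, I plan to argue that no nonempty open $V\subseteq\Jj$ and $k\ge 1$ can satisfy $\Jsig^k|_V=\id_V$. If such $V$ and $k$ existed and $V$ contained distinct points $[x],[y]$ with $d_\Jj([x],[y])<\varepsilon c^{-(k-1)}$ (for the constants $\varepsilon$ and $c>1$ of Theorem~\ref{thm:Jsig}(1)), iterating the local expansion would give
\[
d_\Jj([x],[y])=d_\Jj(\Jsig^k([x]),\Jsig^k([y]))=c^k\,d_\Jj([x],[y]),
\]
a contradiction with $c>1$. Hence $V$ would be discrete, so each of its points would be isolated in $\Jj$. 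The task therefore reduces to showing that $\Jj$ has no isolated points.

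I will prove this by establishing perfectness of $E^{-\infty}$ and then transferring it to $\Jj$. An isolated $y\in E^{-\infty}$ would force every vertex in its backward trajectory to have in-degree $1$; since the sum of in-degrees equals the number of edges, a strongly connected graph in which every vertex has in-degree $1$ must be a simple cycle, so under our hypothesis some vertex has in-degree $\ge 2$. Pigeonhole shows any backward trajectory is eventually periodic, and if its periodic part consisted only of in-degree-$1$ vertices, those vertices would form a proper sub-cycle with no in-edges from outside, contradicting strong connectedness. This combinatorial step, showing that ``backward-unique'' trajectories cannot coexist with the non-simple-cycle hypothesis, is the main obstacle. Finally, because each class $q^{-1}([y])=[y]_{\aeq}$ is finite by Corollary~\ref{cor:finite orbits}, an open singleton $\{[y]\}\subseteq\Jj$ would pull back to a finite open subset of the perfect space $E^{-\infty}$, an impossibility. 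This rules out isolated points in $\Jj$ and completes the verification of effectiveness, hence of simplicity.
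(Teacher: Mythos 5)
Your minimality argument is essentially the paper's: both push density of the Deaconu--Renault orbits in $E^{-\infty}$ (obtained from strong connectedness) through the orbit-surjective, continuous map $q$ of Proposition~\ref{prp:s-bijective}. Your endgame for effectiveness --- reducing to the non-existence of isolated points in $\Jj$, proved by showing $E^{-\infty}$ is perfect and using that $q$ is continuous and finite-to-one (Corollary~\ref{cor:finite orbits}) --- also matches the paper, and you supply the combinatorial justification that the paper leaves implicit. Where you genuinely diverge is in how you rule out open sets of isotropy: the paper shows that the range of the non-unit isotropy is contained in the countable set $\bigcup_n\Jsig^{-n}(P_{\Jsig})$ and invokes the Baire category theorem, whereas you use the expansion identity of Theorem~\ref{thm:Jsig}(1) to show that periodic points are uniformly separated. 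Your route is arguably more elementary, since it avoids Baire and the countability bookkeeping.

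However, there is a genuine gap in your reduction. For a Deaconu--Renault groupoid $\Gg_{X,T}$, a basic open bisection contained in $\operatorname{Iso}(\Gg_{X,T})\setminus X$ corresponds to a nonempty open set $V$ and integers $m\neq n$ (both possibly positive) with $T^m|_V=T^n|_V$; effectiveness is the statement that no such $V$ exists. You only treat the case $n=0$, i.e.\ $\Jsig^k|_V=\id_V$. An element $(x,m-n,x)$ with, say, $\Jsig^2|_V=\Jsig^1|_V$ on a neighbourhood $V$ of $x$ would lie in the interior of the isotropy without $x$ being a periodic point, and your expansion argument --- which compares $\Jsig^k([x])$ with $[x]$ --- says nothing about it. The gap is patchable with the tools you already have: if $\Jsig^m|_V=\Jsig^n|_V$ with $m>n$, then $W:=\Jsig^n(V)$ is open (because $\Jsig$ is an open map, by Theorem~\ref{thm:Jsig}(2)) and consists of points fixed by $\Jsig^{m-n}$, so your separation argument applies to $W$ and again produces an isolated point of $\Jj$. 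You should also be slightly careful with the direction conventions in the perfectness step: isolation of $y\in E^{-\infty}$ forces each vertex $r(y_{-n})$, for large $n$, to emit exactly one edge (here strong connectedness is used to know every one-edge extension of a cylinder is nonempty), and your sub-cycle argument then applies to these out-degrees. With these repairs the proof is complete; the appeal to amenability of Deaconu--Renault groupoids and to the standard simplicity criterion (the paper cites \cite{BCFS}) is fine.
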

\begin{proof}
Fix $x \in E^{-\infty}$. Then $\{x\mu\mid\mu\in E^{*}\text{ and }s(x) =
r(\mu)\}\subseteq\{r(\gamma)\mid\gamma\in\mathcal{G}_{\sigma}\text{ and
}s(\gamma) = x\}$. Since $E$ is strongly connected, for every $\nu \in
E^{*}$, there is $\mu \in E^{*}$ such that $s(\mu) = r(\nu)$ and $r(\mu) =
s(x)$. Hence $x\mu\nu \in E^{-\infty}$. Therefore, $Z(\nu)\cap\{r(\gamma)\mid
s(\gamma) = x\}\neq\emptyset$ and consequently $\mathcal{G}_{\sigma}$ is
minimal.

By Proposition~\ref{prp:s-bijective}, for every $x \in E^{-\infty}$,
$q:(\mathcal{G}_{\sigma})x\mapsto (\hat{\mathcal{G}}_{G,E})q(x)$ is
surjective. Hence, $q(r((\mathcal{G}_{\sigma})x) =
r((\hat{\mathcal{G}}_{G,E})q(x))$. Since $q$ is surjective and continuous,
$r((\hat{\mathcal{G}}_{G,E})q(x))$ is dense whenever
$r((\mathcal{G}_{\sigma})x)$ is dense. Therefore, minimality of
$\mathcal{G}_{\sigma}$ implies minimality of $\hat{\mathcal{G}}_{G,E}$.

To see that $\hat{\mathcal{G}}_{G,E}$ is effective, suppose that $w \in
\mathcal{J}$ satisfies $\tilde{\sigma}^{m}(w) = w$. By
Proposition~\ref{prp:mapping_properties}(1), $\sigma^{m}$ maps $q^{-1}(w)$
bijectively onto $q^{-1}(w)$. Since $q^{-1}(w)$ is finite, there exists $k
\in \mathbb{N}$ such that $\sigma^{mk}(x) = x$ for all $x \in q^{-1}(w)$. Let
$P_{\sigma}$ and $P_{\tilde{\sigma}}$ denote the sets of periodic points for
$\sigma$ and $\tilde{\sigma}$ respectively. Then $q^{-1}(P_{\tilde{\sigma}})
= P_{\sigma}$. Hence, $q^{-1}(\bigcup_{n\geq
0}\tilde{\sigma}^{-n}(P_{\tilde{\sigma}})) = \bigcup_{n\geq
0}\sigma^{-n}(P_{\sigma}).$ We have $P_\sigma = \bigcup_{\lambda \in E^*}
\bigcup_{\mu \in s(\lambda)E^* s(\lambda)\setminus\{\lambda\}}
\{\lambda\mu^\infty\}$, and so $P_\sigma$, and hence $\bigcup_{n \ge 0}
\tilde\sigma^{-n}(P_{\tilde\sigma})$, is countable.

If $g \in \hat{\mathcal{G}}_{G,E}\setminus\mathcal{J}$ satisfies $r(g) =
s(g)$, then $r(g) \in \bigcup_{n\geq
0}\tilde{\sigma}^{-n}(P_{\tilde{\sigma}})$. Thus
$r\big(\text{Iso}(\hat{\mathcal{G}}_{G,E})\setminus\mathcal{J}\big)$ is
countable. Since $r$ is an open map, to show
$\text{Iso}(\hat{\mathcal{G}}_{G,E})\setminus\mathcal{J}$ has empty interior,
it suffices to show no countable set in $\mathcal{J}$ is open. By the Baire
Category Theorem, it suffices to show that $\mathcal{J}$ has no isolated
points. Since $E$ is strongly connected and not a simple cycle, every open
subset of $E^{-\infty}$ is infinite. By continuity and surjectivity of $q$,
the preimage of every nonempty open subset of $\Jj$ is open and hence
infinite. Since $q$ is finite-to-one, it follows that no singleton in $\Jj$
is open. Hence $\mathcal{J}$ has no isolated points, and consequently
$\hat{\mathcal{G}}_{G,E}$ is effective.

It now follows from \cite[Theorem~5.1]{BCFS} that $\hO(G, E) = C^*(\hG_{G,
E})$ is simple.
\end{proof}

\section{$KK$-duality via Smale spaces}\label{sec:poincare}

In this section we establish our $KK$-duality result. We do this using a
general result of Kaminker--Putnam--Whittaker
\cite{Kaminker-Putnam-Whittaker:K-duality}, which says that the stable and
unstable Ruelle algebras of any irreducible Smale space are $KK$-dual. We
show that the stable Ruelle algebra of the Smale space of
Section~\ref{sec:smale space} is Morita equivalent to the $C^*$-algebra
$\hO(G, E)$ of Section~\ref{sec:hO(G,E)}, and that the unstable Ruelle
algebra is Morita equivalent to the $C^*$-algebra $\Oo(G, E)$ of
Section~\ref{sec:O(G,E)}. This, combined with the duality of the Ruelle
algebras, gives our main result.

\begin{thm}\label{thm:main duality thm}
Let $E$ be a strongly connected finite directed graph. Let $(G, E)$ be a contracting, regular
self-similar groupoid action. Then $\Oo(G, E)$ and $\hO(G, E)$ are $KK$-dual
in the sense that there are classes $\mu  \in KK^1(\Oo(G, E) \otimes \hO(G,
E), \CC)$ and $\beta \in KK^1(\CC, \Oo(G, E) \otimes \hO(G, E))$ such that
\[
\beta \mathbin{\widehat{\otimes}}_{\Oo(G, E)} \mu = \id_{KK(\hO(G, E), \hO(G, E))}
    \quad\text{ and }\quad
\beta \mathbin{\widehat{\otimes}}_{\hO(G, E)} \mu = -\id_{KK(\Oo(G, E), \Oo(G, E))}.
\]
In particular, $K^*(\Oo(G, E)) \cong K_{*+1}(\hO(G, E))$ and $K_*(\Oo(G, E)) \cong K^{*+1}(\hO(G,
E))$.
\end{thm}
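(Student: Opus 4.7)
The plan is to deduce the theorem from the $KK$-duality for stable and unstable Ruelle algebras of an irreducible Smale space established in \cite{Kaminker-Putnam-Whittaker:K-duality}. The Smale space is already in hand: Corollary~\ref{cor:Smale solenoid} produces $(\Ss, \Stau)$, and since $E$ is strongly connected, the same corollary gives that $(\Ss, \Stau)$ is irreducible. Thus we may fix a finite $\Stau$-invariant set $P$ of periodic points and form the stable and unstable Ruelle groupoids $G^s(P) \rtimes_\alpha \ZZ$ and $G^u(P) \rtimes_\alpha \ZZ$ as in Section~\ref{subsec:smale}. Applied to $(\Ss, \Stau, P)$, the Kaminker--Putnam--Whittaker theorem yields $KK^1$-duality classes $\mu_0$ and $\beta_0$ for the pair $\big(U(\Ss,P)\rtimes\ZZ,\ S(\Ss,P)\rtimes\ZZ\big)$, satisfying the analogous Kasparov-product identities (including the sign on one side). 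The theorem therefore reduces to establishing the two Morita equivalences
\[
\Oo(G, E) \;\sim_M\; U(\Ss, P) \rtimes_\alpha \ZZ
\qquad\text{and}\qquad
\hO(G, E) \;\sim_M\; S(\Ss, P) \rtimes_\alpha \ZZ,
\]
and then pulling back $\mu_0$ and $\beta_0$ along the resulting $KK$-equivalences.

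I would construct both Morita equivalences via Muhly--Renault--Williams groupoid equivalences. For the stable side, Proposition~\ref{prp:S-J conjugate} identifies $\Ss$ with the projective limit of $(\Jj, \Jsig)$, and under this identification the projection $\pi_0 : \Ss \to \Jj$ onto the first coordinate realises $\Jj$ as a clopen transversal for stable equivalence on $\Ss$: two points of $\Ss$ are stably equivalent precisely when their $\pi_0$-images eventually coincide after finitely many iterations of $\Jsig$. The Deaconu--Renault groupoid $\hG_{G,E}$ should then be identified with the reduction of $G^s(P)\rtimes_\alpha \ZZ$ to the preimage of this transversal in $X^u(P)$; the fact that stable sets of $(\Ss, \Stau)$ are totally disconnected (Wieler), together with the density of $P$, is what ensures this reduction meets every $(G^s(P)\rtimes\ZZ)$-orbit. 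For the unstable side, the symmetric construction applies: the groupoid $\Gg_{G, E}$ of Lemma~\ref{lem:groupoid def}, which realises $\Oo(G,E)$ by Proposition~\ref{prp:O,C*G isomorphism}, combines the expansion of $\Jsig$ with the self-similar action, and this structure should align with $G^u(P)\rtimes_\alpha \ZZ$ after passing through $\Ss\cong \Jj_\infty$, with the $\ZZ$-action absorbing the $\NN$-grading by $m-n$ on $\Gg_{G,E}$.

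The main obstacle, and where most of the technical work lies, is constructing the two equivalence bimodules explicitly and verifying the freeness, properness and openness conditions of the Muhly--Renault--Williams setup, together with the equivariance with respect to $\alpha$ and $\Stau$; regularity is essential here, entering through Corollary~\ref{cor:Hausdorffness} to guarantee that $\Gg_{G,E}$ is Hausdorff so that the framework applies at all. Granted the two Morita equivalences, one defines $\mu$ and $\beta$ as the images of $\mu_0$ and $\beta_0$ under Kasparov product with the imprimitivity bimodules, and the product identities of the theorem transfer directly from those in \cite{Kaminker-Putnam-Whittaker:K-duality} by associativity of the Kasparov product. The final $K$-theoretic statements $K^*(\Oo(G,E)) \cong K_{*+1}(\hO(G,E))$ and $K_*(\Oo(G,E)) \cong K^{*+1}(\hO(G,E))$ are then the standard consequence of cap product with $\mu$ and $\beta$, the degree shift reflecting that the duality classes live in $KK^1$ rather than $KK^0$.
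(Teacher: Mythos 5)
Your proposal follows essentially the same route as the paper: invoke irreducibility of $(\Ss,\Stau)$ from Corollary~\ref{cor:Smale solenoid}, apply the Ruelle-algebra duality of \cite{Kaminker-Putnam-Whittaker:K-duality}, establish Morita equivalences $\Oo(G,E)\sim_M C^*(G^u\rtimes\ZZ)$ and $\hO(G,E)\sim_M C^*(G^s\rtimes\ZZ)$ via Muhly--Renault--Williams groupoid equivalences (these are exactly Corollaries \ref{cor:base alg Me}~and~\ref{cor:dual alg Me}), and conjugate the duality classes by the resulting $KK$-equivalences. The only discrepancy is in the deferred technical step: the paper realises the stable and unstable Ruelle groupoids as \emph{amplifications} of $\hG_{G,E}$ and $\Gg_{G,E}$ along the open surjections $\tilde\pi:\Ss\to\Jj$ and $\pi_x:E^\ZZ_x\to E^\infty$ (Theorems \ref{thm:stable gpd iso}~and~\ref{thm:unstable gpd iso}), rather than as reductions to a clopen transversal inside $\Ss$ as you suggest --- note $\Jj$ is a quotient of $\Ss$, not a subspace --- but both devices yield the same groupoid equivalence and your overall architecture matches the paper's.
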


\subsection{The stable algebra}\label{subsec:stable ME}

We will show that the groupoid $\hG_{G, E}$ of Lemmas \ref{lem:groupoid def}~and~\ref{lem:gpd
isomorphism} is equivalent to the stable Ruelle groupoid $G^s \rtimes \ZZ$ of the Smale space
$\Ss$ of Section~\ref{sec:smale space}. The idea is to show that in fact $G^s \rtimes \ZZ$ is
equal to the amplification of $\hG_{G, E}$ with respect to the surjection $\tilde\pi : \Ss \to \Jj$
induced by the natural surjection of $E^\ZZ$ onto $E^{-\infty}$. For this, we first need to show
that this $\tilde\pi$ makes sense and is an open map.

\begin{lem}\label{lem:pi tilde}
Let $E$ be a finite directed graph with no sources. Let $(G, E)$ be a
contracting regular self-similar groupoid action. Let $\Jj$ be the limit
space of Definition~\ref{dfn:limit space}, and let $\Ss$ be the limit
solenoid of Definition~\ref{def:S space}. There is a continuous open
surjection $\tilde\pi : \Ss \to \Jj$ such that $\tilde\pi([x]) = [\dots
x_{-3} x_{-2} x_{-1}]$ for all $x \in E^\ZZ$.
\end{lem}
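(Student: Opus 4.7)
The plan is to define $\pi : E^\ZZ \to E^{-\infty}$ by $\pi(x) := \dots x_{-3} x_{-2} x_{-1}$, observe that it respects asymptotic equivalence so that it descends to the desired map $\tilde\pi$ on the quotients, and then verify continuity, surjectivity, and openness. Compatibility with $\sim_\aeq$ is immediate: if a sequence $(g_n)_{n \in \ZZ}$ in $G$ witnesses $x \sim_\aeq y$ in $E^\ZZ$, then its restriction $(g_n)_{n < 0}$ witnesses $\pi(x) \sim_\aeq \pi(y)$ in $E^{-\infty}$. Hence $\tilde\pi([x]) := [\pi(x)]$ is well defined, and continuity follows because $\pi$, $q_\Ss$, and $q_\Jj$ are all continuous and $\tilde\pi \circ q_\Ss = q_\Jj \circ \pi$. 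For surjectivity I would use the standing assumption (implicit in the construction of $\Ss$, and explicit in Proposition~\ref{prp:S-J conjugate}) that $E$ also has no sinks: given $y \in E^{-\infty}$, set $x_n := y_n$ for $n < 0$ and extend by any path $x_0 x_1 \dots \in s(y_{-1}) E^\infty$ to obtain $x \in E^\ZZ$ with $\tilde\pi([x]) = [y]$.

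The main work lies in proving openness, and I would split the argument into two steps. First, $\pi : E^\ZZ \to E^{-\infty}$ itself is an open map: for any basic cylinder $\bZ{\mu} \subseteq E^\ZZ$ with $\mu \in E^{2n+1}$, the same forward-extension argument used for surjectivity gives $\pi(\bZ{\mu}) = \lZ{\mu_1 \dots \mu_n}$, and these cylinders form a basis of the topology on $E^{-\infty}$. Second, and this is the main obstacle, the $\pi$-image of any $\sim_\aeq$-saturated subset of $E^\ZZ$ is $\sim_\aeq$-saturated in $E^{-\infty}$.

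To prove the second step, suppose $A \subseteq E^\ZZ$ is saturated, $x \in A$, and $y \sim_\aeq \pi(x)$. By Lemma~\ref{lem:ae by nucleus} there is a sequence $(h_n)_{n < 0}$ in $\Nn$ with $h_n \cdot x_n = y_n$ and $h_n|_{x_n} = h_{n+1}$. I would extend this sequence to all of $\ZZ$ by the recursion $h_n := h_{n-1}|_{x_{n-1}}$ for $n \geq 0$; closure of $\Nn$ under restriction (Lemma~\ref{lem:N a core}) keeps the sequence in the finite set $\Nn$. Setting $y'_n := h_n \cdot x_n$ for all $n \in \ZZ$, a short verification using Lemma~\ref{properties SSA} shows $y' \in E^\ZZ$, $\pi(y') = y$, and that $(h_n)_{n \in \ZZ}$ witnesses $x \sim_\aeq y'$ in $E^\ZZ$. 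Saturation of $A$ then forces $y' \in A$, whence $y \in \pi(A)$. Combining the two steps: for any open $U \subseteq \Ss$, the preimage $q_\Ss^{-1}(U)$ is open and $\sim_\aeq$-saturated in $E^\ZZ$, so $\pi(q_\Ss^{-1}(U))$ is open and $\sim_\aeq$-saturated in $E^{-\infty}$, and therefore $\tilde\pi(U) = q_\Jj(\pi(q_\Ss^{-1}(U)))$ is open in $\Jj$ by the definition of the quotient topology.
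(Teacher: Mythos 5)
Your proof is correct, but it takes a genuinely different route from the paper's. The paper obtains $\tilde\pi$ with almost no work as the composite $P_1 \circ \theta$, where $\theta : \Ss \to \Jj_\infty = \varprojlim(\Jj,\Jsig)$ is the conjugacy of Proposition~\ref{prp:S-J conjugate} and $P_1$ is projection onto the first coordinate of the inverse limit; openness then follows because $P_1(Z(W,n)) = \Jsig^n(W)$ and $\Jsig$ is an open map by Theorem~\ref{thm:Jsig} (which is where regularity enters). You instead argue directly at the level of the path spaces: you factor $\tilde\pi$ through the truncation $\pi : E^\ZZ \to E^{-\infty}$, check $\pi$ is open on cylinders, and then prove the key saturation statement that $\pi$ maps $\sim_\aeq$-saturated sets to $\sim_\aeq$-saturated sets, by extending a nucleus sequence $(h_n)_{n<0}$ from Lemma~\ref{lem:ae by nucleus} to all of $\ZZ$ via $h_n := h_{n-1}|_{x_{n-1}}$ and using closure of $\Nn$ under restriction. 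That extension argument is sound (one checks $s(y'_n) = h_{n+1}\cdot s(x_n) = c(h_{n+1}) = r(y'_{n+1})$ using Lemma~\ref{properties SSA}(1)), and the final step $\tilde\pi(U) = q_\Jj(\pi(q_\Ss^{-1}(U)))$ is valid. What your approach buys is that it is elementary and needs only the contracting hypothesis and ``no sources'' --- regularity plays no role --- whereas the paper's proof is essentially a two-line corollary once $\theta$ and the openness of $\Jsig$ are in hand. One small correction: the forward extension you use for surjectivity and for computing $\pi(\bZ{\mu})$ requires $s(y_{-1})E^\infty \neq \varnothing$, which is exactly the ``no sources'' hypothesis already in the statement, not ``no sinks'' as you wrote; this is a mislabel rather than a gap.
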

\begin{proof}
Let $\theta : \Ss \to \Jj_\infty$ be the homeomorphism of
Proposition~\ref{prp:S-J conjugate}. Let $P_1$ be the projection map $P_1 :
\Jj_\infty \to \Jj$ given by $P_1([x_1], [x_2], [x_3], \dots) = [x_1]$. Then
$P_1$ is continuous by definition of the projective-limit topology, and
surjective because $\Jsig$ is surjective. By definition of the topology on
$\Jj_\infty$, the sets $Z(W, n) := \{([x_1], [x_2], [x_3], \dots) \mid [x_n]
\in W\}$ indexed by pairs $(W, n)$ consisting of an open $W \subseteq J$ and
an element $n  \in \NN$ constitute a basis for the topology on $\Jj_\infty$.
Since $\Jsig$ is surjective, each $P_1(Z(W, n)) = \Jsig^n(W)$, which is open.
So $P_1$ is an open map. Hence $\tilde\pi := P_1 \circ \theta$ is a
continuous open surjection from $\Ss$ to $\Jj$. It satisfies $\tilde\pi([x])
= [\dots x_{-3} x_{-2} x_{-1}]$ by definition.
\end{proof}

If $X$ is a locally compact Hausdorff space, $\Gg$ is an \'etale groupoid and
$\pi : X \to \Gg^{(0)}$ is a continuous open surjection, then we can form the
\emph{amplification} of $\Gg$ by $\pi$ which, as a topological space, is
\[
\Gg^\pi := \{(x, \gamma, y) \in X \times \Gg \times X \mid r(\gamma) = \pi(x)\text{ and }s(\gamma) = \pi(y)\}
\]
under the topology inherited from the product topology. Its unit space is
$(\Gg^\pi)^{(0)} = \{(x, \pi(x), x) \mid x \in X\}$ which we identify with
$X$. The range and source maps are given by $r(x, \gamma, y) = x$ and $s(x,
\gamma, y) = y$. The multiplication and inversion are given by $(x, \gamma,
y)(y, \eta, z) = (x, \gamma\eta, z)$ and $(x, \gamma, y)^{-1} = (y,
\gamma^{-1}, x)$. By, for example, \cite[(4)$\implies$(1) of
Proposition~3.10]{FKPS}, the groupoids $\Gg$ and $\Gg^\pi$ are equivalent
groupoids, and hence $C^*(\Gg)$ and $C^*(\Gg^\pi)$ are Morita equivalent by
\cite[Theorem~2.8]{Muhly-Renault-Williams:Equivalence}.

Recall that the stable equivalence relation associated to a Smale space
$(\Ss, d, \tau, \varepsilon_{S}, \lambda)$ is the equivalence relation
\begin{align*}
G^s := \{(\xi,\eta) \in \Ss \times \Ss \mid {}\text{ }\lim_{m\to\infty}d(\tau^m(\xi), \tau^m(\eta)) = 0\}.
\end{align*}
By \cite[pg. 179]{Putnam:Algebras}, there exists $\varepsilon'_{\mathcal{S}}
\leq\varepsilon_{\mathcal{S}}$ such that for any $\delta \leq
\varepsilon'_{\mathcal{S}}$,
\begin{equation}\label{eq:eps'_S}
\begin{split}
G^s := \{(\xi,\eta) \in \Ss \times \Ss \mid {}&\text{there exists } M\in\mathbb{N}\text{ such that }\\
        &d(\tau^m(\xi), \tau^m(\eta)) < \delta \text{ for all } m \ge M\}
\end{split}
\end{equation}
For $M \ge 0$ we define
\[
G^{s}_{\varepsilon, M} = \{(\xi,\eta) \mid  d(\tau^m(\xi), \tau^m(\eta)) < \varepsilon\text{ for all }m \ge M\},
\]
endowed with the subspace topology inherited from $\Ss \times \Ss$. We endow
$G^{s}$ with the inductive-limit topology obtained from the inductive limit
decomposition $G^s = \bigcup_M(G^{s}_{\varepsilon, M})$. It is
straightforward to check this agrees with the topology on $G^{s}$ described
on \cite[pg.~282]{Putnam-Spielberg:Structure}.

We now give a description of the stable equivalence relation and its topology
for the Smale space $(\Ss, d_{\mathcal{S}}, \Stau, \varepsilon_{\mathcal{S}},
\frac{1}{c})$ that will help us in proving the amplification
$\hG^{\tilde{\pi}}_{G,E}$ and the stable Ruelle groupoid $G^s
\rtimes_{\tilde\tau} \ZZ$ are isomorphic.

\begin{lem}\label{lem:stable_description}
Let $E$ be a finite directed graph with no sinks or sources and $(G,E)$ be a
contracting, regular self-similar groupoid. Let $(\Ss, d_{\mathcal{S}},
\Stau, \varepsilon_{\mathcal{S}}, \frac{1}{c})$ be the Smale space of
Corollary~\ref{cor:Smale solenoid}. Let $\varepsilon$ be as in
Theorem~\ref{thm:Jsig} and let $\varepsilon'_\Ss$ be a constant such
that~\eqref{eq:eps'_S} holds for all $\delta < \varepsilon'_\Ss$. Let $\beta
= \min\{\varepsilon,\varepsilon'_{\Ss}\}$. For each $m \in \NN$, let
\[
    G^{s}_{m} = \{([x],[y])\in \Ss\times \Ss \mid\text{ }[x(-\infty, -m)] = [y(-\infty, -m)]\}.
\]
Then there exists $k \in \mathbb{N}$ such that, for every $m \in \mathbb{N}$,
we have $G^{s}_{\beta, m}\subseteq G^{s}_{m}\subseteq G^{s}_{\beta, m+k}$.
Points $[x], [y] \in \Ss$ are stably equivalent if and only if there exists
$m \in \mathbb{N}$ such that $[x(-\infty, -m)] = [y(-\infty, -m)]$. The
topology on $G^{s}$ is equal to the inductive limit topology for the
decomposition $G^{s} = \bigcup_{m}G^{s}_{m}$.
\end{lem}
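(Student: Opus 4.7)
The plan is to unpack the metric $d_\Ss$ along iterates of $\Stau$ and then exploit the expansion property of $\Jsig$ from Theorem~\ref{thm:Jsig}. Since $\Stau^{m'}([x]) = [\tau^{m'}(x)]$ and $\tau^{m'}(x)(-\infty, m) = x(-\infty, m - m')$, the defining formula for $d_\Ss$ in Corollary~\ref{cor:Smale solenoid} rewrites, after the substitution $j = m - m'$, as
\[
d_\Ss(\Stau^{m'}([x]), \Stau^{m'}([y]))
 = \sup_{j \ge -m'} c^{-(j+m')} d_\Jj\big([x(-\infty, j)], [y(-\infty, j)]\big).
\]
Recall also that $\Jsig([x(-\infty, n)]) = [x(-\infty, n-1)]$, so the projections at consecutive indices are related by a single application of $\Jsig$.

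For the inclusion $G^s_{\beta,m} \subseteq G^s_m$, I would extract from the formula above, applied with $j = -m'$ and each $m' \ge m$, the bound $d_\Jj([x(-\infty, -m')], [y(-\infty, -m')]) < \beta \le \varepsilon$. Setting $a := d_\Jj([x(-\infty, -m)], [y(-\infty, -m)])$ and iterating, Theorem~\ref{thm:Jsig}(1) gives $d_\Jj([x(-\infty, -m-n)], [y(-\infty, -m-n)]) = c^n a$ as long as the predecessor stays below $\varepsilon$. If $a > 0$ then taking $n$ minimal with $c^n a \ge \beta$ forces $c^{n-1} a < \beta \le \varepsilon$, so the expansion identity still applies at step $n-1$ and yields $c^n a \ge \beta$, contradicting the uniform bound. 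Hence $a = 0$.

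For $G^s_m \subseteq G^s_{\beta, m+k}$, suppose $[x(-\infty, -m)] = [y(-\infty, -m)]$. Applying $\Jsig$ repeatedly gives $[x(-\infty, j)] = [y(-\infty, j)]$ for all $j \le -m$, so only the indices $j \ge -m+1$ contribute to the supremum, producing $d_\Ss(\Stau^{m'}[x], \Stau^{m'}[y]) \le c^{-(m'-m+1)} \operatorname{diam}_{d_\Jj}(\Jj)$; the diameter is finite by Corollary~\ref{cor:limit space metrisable}. Choosing $k$ once and for all so that $c^{-(k+1)}\operatorname{diam}_{d_\Jj}(\Jj) < \beta$ delivers the claimed uniform $k$. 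Part~(2) is then immediate from~(1): if $[x] \sim_s [y]$, inequality~(\ref{eq:eps'_S}) applied with $\delta = \beta \le \varepsilon'_\Ss$ yields $M$ with $([x],[y]) \in G^s_{\beta, M} \subseteq G^s_M$, while conversely $G^s_m \subseteq G^s_{\beta,m+k} \subseteq G^s$. For Part~(3), the two filtrations $\{G^s_{\beta,m}\}$ and $\{G^s_m\}$ both carry the subspace topology from $\Ss \times \Ss$ and, by Part~(1), are cofinal in each other; this is exactly the situation in which the two inductive-limit topologies on the common union coincide, by a routine verification of the universal property. The only real obstacle is disciplined bookkeeping of the shift indices in the metric formula; once the identity in the first paragraph is in hand, the dynamical content collapses to one expansion estimate and one trivial bound by $\operatorname{diam}_{d_\Jj}(\Jj)$.
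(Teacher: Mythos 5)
Your proof is correct and follows essentially the same route as the paper's: unpack $d_\Ss$ along iterates of $\Stau$, use the expansion identity of Theorem~\ref{thm:Jsig}(1) to force $d_\Jj([x(-\infty,-m)],[y(-\infty,-m)])=0$ for the first inclusion, and a geometric decay bound (with $k$ chosen so that $c^{-(k+1)}\operatorname{diam}(\Jj)<\beta$, slightly more explicit than the paper's $c^{-k}<\beta$) for the second. The only cosmetic difference is that you argue the first inclusion by contradiction where the paper lets $n\to\infty$ directly; the remaining assertions follow from the interleaving exactly as you say.
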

\begin{proof}
To see that $G^{s}_{\beta, m}\subseteq G^{s}_{m}$, fix $x,y \in
E^{\mathbb{Z}}$ such that $d_{\Ss}([\tau^{n}(x)], [\tau^{n}(y)]) < \beta$ for
all $n\geq m$. Fix $n \ge m$. Then
\begin{align*}
d([x(-\infty, -n)], [x(-\infty, -n)]) &= d([\tau^{n}(x)(-\infty, 0)], [\tau^{n}(x)(-\infty, 0)]) \\
&\leq d_{\Ss}([\tau^{n}(x)], [\tau^{n}(y)]) < \beta <\varepsilon.
\end{align*}
By definition of $\epsilon$,
\begin{align*}
c\cdot d([x(-\infty, -n)], [x(-\infty, -n)]) &= d(\tilde{\sigma}([x(-\infty, -n)]),\tilde{\sigma}([y(-\infty, -n)])) \\
&= d([x(-\infty, -(n+1))], [x(-\infty, -(n+1))]).
\end{align*}
Hence
\[
d([x(-\infty, -m)], [x(-\infty, -m)]) = c^{m-n}d([x(-\infty, -n)], [x(-\infty, -n)])\leq c^{m-n}.
\]
Since $n \ge m$ was arbitrary, we deduce that $[x(-\infty,-m)] = [y(-\infty,
-m)]$.

Fix $k \in \mathbb{N}$ such that $(\frac{1}{c})^{k} < \beta$. We show that
$G^{s}_{m}\subseteq G^{s}_{\beta, m+k}$. Fix $x,y \in E^{\mathbb{Z}}$ such
that $[x(-\infty,-m)] = [y(-\infty, -m)]$. Then, $[\tau^{n}(x)(-\infty, l)] =
[\tau^{n}(y)(-\infty, l)]$ for all $l-n\leq -m$. Therefore,
\[
d_{\Ss}(\tilde{\tau}^{n}[x], \tilde{\tau}^{n}[y]) =
    \sup_{l > n-m}(c^{-l}d([\tau^{n}(x)(-\infty, l)],[\tau^{n}(y)(-\infty, l)])\leq c^{-(n-m)}.
\]
So, whenever $n\geq m +k$, we have $d_{\Ss}(\tilde{\tau}^{n}[x],
\tilde{\tau}^{n}[y]) < c^{-k} <\beta$. Therefore, $([x],[y]) \in
G^{s}_{\beta, m+k}$.
\end{proof}

Recall that the stable Ruelle groupoid is the skew groupoid for the action of
$\ZZ$ on the unit space of $G^s$. That is,
\[
G^s \rtimes_{\tilde{\tau}} \ZZ
    = \{(\xi, n, \eta) \in \Ss \times \ZZ \times \Ss \mid (\tilde{\tau}^{n}(\xi), \eta) \in G^s\}.
\]

\begin{thm}\label{thm:stable gpd iso}
Let $E$ be a finite directed graph with no sinks or sources. Let $(G, E)$ be
a contracting, regular self-similar groupoid action. Let $\tilde\pi : \Ss \to
\Jj$ be the continuous open surjection of Lemma~\ref{lem:pi tilde}, and let
$\Stau : \Ss \to \Ss$ be the homeomorphism of Corollary~\ref{cor:Smale
solenoid}. Then there is an isomorphism $\kappa$ of the stable Ruelle
groupoid $G^s \rtimes_{\Stau} \ZZ$ onto the amplification $\hG^{\tilde\pi}_{G,
E}$ of the dual groupoid of $(G, E)$ by $\tilde\pi$ satisfying $\kappa([x],
n, [y]) = ([x], (\tilde\pi([x]), n, \tilde\pi([y])), [y])$ for all $([x], n,
[y])  \in G^s \rtimes_{\Stau} \ZZ$.
\end{thm}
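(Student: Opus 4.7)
The proof divides into an algebraic verification that $\kappa$ is a bijective groupoid homomorphism and a topological verification that it is a homeomorphism.

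For the algebraic verification, the key input is the intertwining $\tilde{\pi}\circ\Stau = \Jsig\circ\tilde{\pi}$, which iterates to $\tilde{\pi}(\Stau^{\ell}[x]) = [x(-\infty, -1-\ell)]$ for every $\ell\in\ZZ$. Given $([x], n, [y]) \in G^s \rtimes_{\Stau}\ZZ$, the stable equivalence $\Stau^n[x]\sim_s[y]$ combined with Lemma~\ref{lem:stable_description} supplies $m$ large enough that both $a := m+n-1$ and $b := m-1$ are non-negative and $[x(-\infty, -m-n)] = [y(-\infty, -m)]$, which is exactly $\Jsig^a\tilde{\pi}[x] = \Jsig^b\tilde{\pi}[y]$. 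This certifies $(\tilde{\pi}[x], n, \tilde{\pi}[y]) \in \hG_{G, E}$, so $\kappa$ is well-defined. Preservation of the groupoid operations follows by direct computation using linearity in the $\ZZ$-coordinate; injectivity is immediate, and surjectivity reverses the argument by extracting $a, b$ from an arbitrary element of $\hG^{\tilde\pi}_{G, E}$ and reading $\Stau^n[x]\sim_s[y]$ off Lemma~\ref{lem:stable_description}.

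For the topological verification, both groupoids are \'etale and $\ZZ$-graded, with $\kappa$ respecting the grading; the plan is to verify that $\kappa$ carries a fundamental system of open bisections on each side to open bisections on the other. On the stable Ruelle side, the $n$-component of $G^s \rtimes_{\Stau}\ZZ$ decomposes via Lemma~\ref{lem:stable_description} as an inductive union of strata $\{([x], [y]) : [x(-\infty, -M-n)] = [y(-\infty, -M)]\}$ carrying the subspace topology from $\Ss\times\Ss$; the $n$-component of $\hG_{G, E}$ is covered by basic compact open bisections $Z(W_1, p, q, W_2)$ with $p - q = n$, and that of $\hG^{\tilde\pi}_{G, E}$ by opens of the form $(V_1 \times Z(W_1, p, q, W_2) \times V_2) \cap \hG^{\tilde\pi}_{G, E}$. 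Under $\kappa$, the level-$M$ stratum corresponds to the $\Jj$-equation $\Jsig^{M+n-1}\tilde{\pi}\xi = \Jsig^{M-1}\tilde{\pi}\eta$, so pulling back a basic open with parameters $p, q$ forces descent into the level-$(q+1)$ stratum; continuity in both directions then reduces to continuity and openness of $\tilde{\pi}$ on $\Ss$, combined with the \'etale structure of $\hG_{G, E}$ and the local homeomorphism property of $\Jsig$ from Theorem~\ref{thm:Jsig}.

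The main obstacle is aligning the two stratifications so that basic opens on each side correspond under $\kappa$. The principal tool is the cofinality statement in Lemma~\ref{lem:stable_description}, which says the two natural stratifications of $G^s$ (by the metric condition $G^s_{\beta, M}$ and by the asymptotic-equality condition $G^s_M$) interleave up to a uniform additive constant; combined with the openness of $\tilde{\pi}$ and the compactness of each $Z(W_1, p, q, W_2)$, this should yield the required matching of fundamental systems of open bisections and complete the bicontinuity of $\kappa$.
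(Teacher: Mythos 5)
Your algebraic step is correct and is exactly the paper's: Lemma~\ref{lem:stable_description} translates $\Stau^n[x]\sim_s[y]$ into $\tilde\pi(\Stau^{m+n}[x])=\tilde\pi(\Stau^m[y])$ for suitable $m$, and the intertwining $\tilde\pi\circ\Stau=\Jsig\circ\tilde\pi$ converts this into membership of $(\tilde\pi[x],n,\tilde\pi[y])$ in the Deaconu--Renault groupoid, giving a well-defined bijective homomorphism $\kappa$. The continuity direction of your topological argument is also essentially the paper's: the pullback of a basic open set with parameters $(p,q)$ lands in a single stratum, each stratum carries the subspace topology from $\Ss\times\ZZ\times\Ss$, and the universal property of the inductive-limit topology does the rest.

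The genuine gap is the openness of $\kappa$ (equivalently, continuity of $\kappa^{-1}$), which is the real content of the theorem: one must rule out that the inductive-limit topology on $G^s\rtimes_{\Stau}\ZZ$ is strictly finer than the topology $\kappa$ pulls back from $\hG^{\tilde\pi}_{G,E}$. Your proposal ends with the assertion that interleaving of the stratifications, openness of $\tilde\pi$, and compactness of the basic bisections ``should yield'' a matching of bases; this is a plan, not an argument, and as stated it omits the one fact that actually closes the loop. The paper's mechanism is properness: each stratum $G^s_M\rtimes_{\Stau}\{n\}$ is a \emph{compact} subset of $\Ss\times\{n\}\times\Ss$, it maps onto the compact \emph{open} subset $\Ss\ast\hG^{(M+n,M)}_{G,E}\ast\Ss$ of the amplification, these sets cover $\hG^{\tilde\pi}_{G,E}$, and preimages of the latter are contained in the former; hence $\kappa$ is a proper continuous bijection of locally compact Hausdorff spaces, therefore closed, therefore a homeomorphism. (Localised, this is the statement that a continuous bijection from a compact stratum onto a Hausdorff piece is automatically a homeomorphism, so no basis-matching is needed.) Without invoking compactness of the strata in this way, your approach would additionally require proving that each stratum is open in the inductive limit and that the relative topology on $\Ss\ast\hG^{(M+n,M)}_{G,E}\ast\Ss$ inside $\hG^{\tilde\pi}_{G,E}$ agrees with the product subspace topology --- neither of which you establish. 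Supply the properness argument (or its compact-to-Hausdorff localisation) and the proof is complete.
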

\begin{proof}
For $z \in E^{\mathbb{Z}}$ and $n \in \mathbb{N}$, we have
$\tilde{\pi}(\tilde{\tau}^{n}([z])) = [z(-\infty, -n)]$. Hence
Lemma~\ref{lem:stable_description} gives $([x], n, [y]) \in G^s$ if and only
if $\tilde\pi(\Stau^{m + n}([x])) = \tilde\pi(\Stau^m([y]))$ for some $m \in
\mathbb{N}$ such that $m+n\geq 0$. Since $\tilde{\pi}\circ\tilde{\tau}^{m} =
\tilde{\sigma}^{m} \circ \tilde{\pi}$ for any $m \in \mathbb{N}$,
\begin{equation}\label{eq:stable equivalence}
([x], n, [y]) \in G^s \quad\Longleftrightarrow\quad
    ([x], (\tilde\pi([x]), n, \tilde\pi([y])), [y]) \in \hG^{\tilde\pi}_{G, E}.
\end{equation}
Hence there is a bijection $\kappa : G^s \rtimes_{\Stau} \ZZ \to
\hG^{\tilde\pi}_{G, E}$ satisfying the desired formula.
\par
We show $\kappa$ is continuous. Extend of $\kappa$ to a map
$\tilde{\kappa}:\Ss\times\mathbb{Z}\times\Ss\to
\Ss\times\Jj\times\mathbb{Z}\times\Jj\times\Ss$ by $\tilde{\kappa}((\xi,
n,\eta)) = (\xi, (\tilde{\pi}(\xi), n,\tilde{\pi}(\eta)), \eta)$. Then
$\tilde\kappa$ is continuous. For $M \in \mathbb{N}$ and $n \in \mathbb{Z}$
such that $M+n\geq 0$ the restriction of $\kappa$ to $G^s_M \rtimes_{\Stau}
\{n\} = \{([x],n,[y])\mid (\tilde{\tau}^{n}([x]),[y])\in G^{s}_{M}\}$ has
co-domain contained in
\[
\Ss\ast\hG^{(M+n, M)}_{G,E}\ast\Ss := \{([x],(\tilde{\pi}([x]), n, \tilde{\pi}([y])), [y])\mid\text{ }\tilde{\sigma}^{M+n}(\tilde{\pi}([x])) = \tilde{\sigma}^{M}(\tilde{\pi}([y]))\}.
\]
The subspace topology of $ G^s_M \rtimes_{\Stau} \{n\}$ relative to $ G^s
\rtimes_{\Stau} \ZZ$ is equal to the subspace topology relative to
$\Ss\times\mathbb{Z}\times\Ss$, and the subspace topology of
$\Ss\ast\hG^{(M+n, M)}_{G,E}\ast\Ss$ relative to $\hG_{G,E}$ is equal to the
subspace topology relative to $\Ss\times
\Jj\times\mathbb{Z}\times\Jj\times\Ss$. So, continuity of $\tilde\kappa$
implies that $\kappa: G^s_M \rtimes_{\Stau} \{n\}\to \Ss\ast\hG^{(M+n,
M)}_{G,E}\ast\Ss$ is continuous. Fix $n \in \mathbb{Z}$. The universal
property of the inductive limit topology on $ G^s\rtimes_{\Stau} \{n\} =
\bigcup_{M+n\geq 0} G^s_M \rtimes_{\Stau} \{n\}$ implies that $\kappa$ is
continuous on the clopen subspace $ G^s\rtimes_{\Stau} \{n\} \subseteq
G^s\rtimes_{\Stau}\mathbb{Z}$, for each $n$ in $\mathbb{Z}$. Hence, $\kappa$
is continuous.
\par
Since $\Ss\ast\hG^{(M+n, M)}_{G,E}\ast\Ss$ and $ G^s_M \rtimes_{\Stau} \{n\}$
are compact, and since $\kappa^{-1}(\Ss\ast\hG^{(M+n,
M)}_{G,E}\ast\Ss)\subseteq G^s_M \rtimes_{\Stau} \{n\}$, and
$\hG_{G,E}^{\tilde{\pi}} = \bigcup_{M+n\geq 0}\Ss\ast\hG^{(M+n,
M)}_{G,E}\ast\Ss$, the map $\kappa$ is proper. Since proper continuous maps
between locally compact Hausdorff spaces are closed, $\kappa$ is a
homeomorphism.
\end{proof}

\begin{cor}\label{cor:dual alg Me}
Let $E$ be a finite directed graph with no sinks or sources. Let $(G, E)$ be a contracting, regular self-similar groupoid action. Then the dual $C^*$-algebra $\hO(G, E)$ of
Definition~\ref{dfn:dual alg} is Morita equivalent to the stable Ruelle algebra $C^*( G^s \rtimes
\ZZ)$ of the Smale space $(\Ss, \Stau)$ of Corollary~\ref{cor:Smale solenoid}.
\end{cor}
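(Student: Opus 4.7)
The plan is to chain together three facts: the groupoid isomorphism established in Theorem~\ref{thm:stable gpd iso}, the equivalence between an étale groupoid and its amplification along a continuous open surjection, and the Muhly--Renault--Williams theorem linking groupoid equivalence with Morita equivalence of the associated $C^*$-algebras. None of the three steps requires fresh input; the bulk of the technical work has already been done in the preceding sections.

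First I would invoke Theorem~\ref{thm:stable gpd iso} to identify $G^s\rtimes_{\Stau}\ZZ$ with the amplification groupoid $\hG^{\tilde\pi}_{G,E}$, where $\tilde\pi:\Ss\to\Jj$ is the continuous open surjection from Lemma~\ref{lem:pi tilde}. This immediately yields $C^*(G^s\rtimes_{\Stau}\ZZ)\cong C^*(\hG^{\tilde\pi}_{G,E})$. Next, because $\tilde\pi$ is a continuous open surjection from the locally compact Hausdorff space $\Ss$ onto the unit space $\Jj$ of the étale groupoid $\hG_{G,E}$, the discussion preceding Theorem~\ref{thm:stable gpd iso} (citing \cite[Proposition~3.10]{FKPS}) gives that $\hG_{G,E}$ and $\hG^{\tilde\pi}_{G,E}$ are equivalent as locally compact groupoids in the sense of Muhly--Renault--Williams.

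Finally, applying \cite[Theorem~2.8]{Muhly-Renault-Williams:Equivalence} to this equivalence yields a Morita equivalence $C^*(\hG_{G,E})\sim_M C^*(\hG^{\tilde\pi}_{G,E})$. Combining the two displayed isomorphisms/equivalences gives
\[
\hO(G,E)=C^*(\hG_{G,E})\sim_M C^*(\hG^{\tilde\pi}_{G,E})\cong C^*(G^s\rtimes_{\Stau}\ZZ),
\]
which is exactly the desired Morita equivalence.

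The only step that requires any real care is verifying the hypotheses of the amplification-equivalence result: one needs the amplifying map $\tilde\pi$ to be a \emph{continuous open surjection} onto the unit space of an étale groupoid, and one needs the groupoid $\hG_{G,E}$ to be an étale, locally compact Hausdorff groupoid so that Muhly--Renault--Williams applies. Both hypotheses are on record — openness and surjectivity of $\tilde\pi$ come from Lemma~\ref{lem:pi tilde}, and $\hG_{G,E}$ is a standard Deaconu--Renault groupoid of a local homeomorphism on a compact metrisable space (Definition~\ref{dfn:dual alg}) — so the corollary essentially writes itself once these pieces are assembled. I do not anticipate any genuine obstacle beyond reciting the relevant references in the correct order.
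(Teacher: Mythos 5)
Your proposal is correct and follows exactly the same route as the paper's proof: invoke Theorem~\ref{thm:stable gpd iso} to identify $G^s\rtimes_{\Stau}\ZZ$ with the amplification $\hG^{\tilde\pi}_{G,E}$, use \cite[Proposition~3.10]{FKPS} together with openness of $\tilde\pi$ from Lemma~\ref{lem:pi tilde} to get the groupoid equivalence with $\hG_{G,E}$, and conclude via \cite[Theorem~2.8]{Muhly-Renault-Williams:Equivalence}. No gaps.
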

\begin{proof}
Since $\tilde\pi : \Ss \to \Jj$ is an open map by Lemma~\ref{lem:pi tilde},
\cite[Proposition~3.10]{FKPS} shows that $\hG^{\tilde\pi}_{G, E}$ is groupoid equivalent to
$\hG_{G, E}$. Therefore \cite[Theorem~2.8]{Muhly-Renault-Williams:Equivalence} shows that $C^*(\hG_{G,
E})$ is Morita equivalent to $C^*(\hG^{\tilde\pi}_{G, E})$. Theorem~\ref{thm:stable gpd iso} shows
that $C^*(\hG^{\tilde\pi}_{G, E})$ is isomorphic to $C^*( G^s \rtimes_{\Stau} \ZZ)$, which is
precisely the stable Ruelle algebra of $(\Ss, \Stau)$.
\end{proof}

\subsection{The unstable algebra}\label{subsec:unstable ME}

We now need to show that the unstable Ruelle algebra of the Smale space $(\Ss, \Stau)$ is Morita
equivalent to the $C^*$-algebra $\Oo(G, E)$. Our approach again is via groupoid equivalence. We
use Putnam and Spielberg's construction of an \'etale groupoid $ G^{u}_{[x]} \rtimes_{\Stau} \ZZ$
corresponding to a choice of orbit in $\Ss$. We will show that this groupoid is isomorphic to a
suitable amplification of the groupoid $\Gg_{G, E}$ of Lemma~\ref{lem:groupoid def}. To do this, we shall need an alternative description of the unstable equivalence relation and its topology, which we provide in the
next lemma.

For $M \in \NN$ and $\varepsilon > 0$, let $G^{u}_{\varepsilon,M} =
\{(\eta,\xi)\in\mathcal{S}\times\mathcal{S} \mid
d_{\Ss}(\tilde{\tau}^{-m}(\eta),\tilde{\tau}^{-m}(\xi)) <\varepsilon \text{
for all } m\geq M\}$. By \cite[pg. 179]{Putnam:Algebras}, there exists
$\varepsilon_{\Ss}'\leq\varepsilon_{\Ss}$ such that for every $\varepsilon <
\varepsilon'_\Ss$, we have
\[
G^{u} = \bigcup_{M} G^{u}_{\varepsilon,M}
\]
in the inductive-limit topology. This agrees with the topology on $G^{u}$ on
\cite[pg. 282]{Putnam-Spielberg:Structure}.

\begin{lem}\label{lem:unstable_description}
Let $E$ be a finite directed graph with no sinks or sources. Let $(G,E)$ be a
contracting, regular self-similar groupoid action. Let $\varepsilon'_\Ss$ be
as above, and let $\varepsilon$ be as in Theorem~\ref{thm:Jsig}. Let $\beta =
\min\{\varepsilon,\varepsilon_{\Ss}'\}$. Let $F$ be the smallest finite set
containing $\Nn\cup\Nn^{2}$ that is closed under restriction. Then, there is
an $l \in \mathbb{N}$ such that for every $M \in \mathbb{N}$, we have
\\$ G^{u}_{\beta, M}\subseteq  G^{u}_{M}\subseteq G^{u}_{\beta, M+l}$, where
$$ G^{u}_{M} = \{([y],[x])\in\Ss\times\Ss\mid\text{ }\exists\text{} g\in F:g\cdot x(M+1,\infty) = y(M+1,\infty)\}.$$ More precisely, if $(\eta,\xi) \in  G^{u}_{\beta, M}$, then for any representatives $y,x$
such that $[y] = \eta$, $[x] = \xi$, there is an element $g \in F$ such that
$g\cdot x(M+1,\infty) = y(M+1,\infty)$.\par In particular, $[x]$ and $[y]$ in
$\Ss$ are unstably equivalent if and only if there is an $M \in \mathbb{N}$
and $g \in G^s(x_{M})$ such that $g\cdot x(M+1,\infty) = y(M+1,\infty)$, and
the topology on $ G^{u}$ is equal to the inductive limit topology provided by
the decomposition $ G^{u} = \bigcup_{M} G^{u}_{M}$.
\end{lem}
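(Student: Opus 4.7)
The plan is to prove the two sandwich inclusions $G^u_{\beta, M} \subseteq G^u_M \subseteq G^u_{\beta, M+l}$ for a suitable $l$ independent of $M$; the final assertions about unstable equivalence and the inductive limit topology then follow formally from sandwiching the two inductive decompositions.

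For the forward inclusion $G^u_{\beta, M} \subseteq G^u_M$, fix representatives $x, y \in E^\ZZ$ of $\xi, \eta$. Unwinding the formula for $d_\Ss$ from Corollary~\ref{cor:Smale solenoid}, the hypothesis $d_\Ss(\tilde\tau^{-m}([x]), \tilde\tau^{-m}([y])) < \beta$ for all $m \ge M$ forces $d_\Jj([x(-\infty,m)], [y(-\infty,m)]) < \beta \le \varepsilon$ for all $m \ge M$, and Theorem~\ref{thm:Jsig}(1) then shows these distances decrease geometrically like $c^{-(m-M)}$ as $m$ grows. Iterating Lemma~\ref{lem:delta_property}(1) via the bijection property of $\tilde\sigma$ in Theorem~\ref{thm:Jsig}(2) shows that balls of radius on the order of $c^{-k}$ sit inside some $U_\omega$ of length at least $k + O(1)$; thus for all sufficiently large $m$ one can find $\omega_m \in E^*$ with $|\omega_m| \ge m - M$ and both $[x(-\infty,m)], [y(-\infty,m)] \in U_{\omega_m}$. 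By definition of $U_{\omega_m}$ there exist $g_m, h_m \in \Nn$ with $x_{m-|\omega_m|+1} \dots x_m = g_m \cdot \omega_m$ and $y_{m-|\omega_m|+1}\dots y_m = h_m \cdot \omega_m$. Splitting $\omega_m = \omega_m^{(1)} \omega_m^{(2)}$ with $|\omega_m^{(2)}| = m - M$ and invoking Lemma~\ref{properties SSA}, one finds $g'_m := g_m|_{\omega_m^{(1)}}, h'_m := h_m|_{\omega_m^{(1)}} \in \Nn$ whose product $n^{(m)} := h'_m (g'_m)^{-1} \in \Nn^2$ satisfies $n^{(m)} \cdot (x_{M+1} \dots x_m) = y_{M+1} \dots y_m$. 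Since $\Nn^2$ is finite, pigeonhole yields a single $n \in \Nn^2 \subseteq F$ with this equation holding for infinitely many $m$, so $n \cdot x(M+1,\infty) = y(M+1,\infty)$.

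For the reverse inclusion $G^u_M \subseteq G^u_{\beta, M+l}$, suppose $g \in F$ satisfies $g \cdot x(M+1,\infty) = y(M+1,\infty)$. Since $F$ is finite and $(G,E)$ is contracting, there is $l_0$ (independent of $M$) with $g|_\mu \in \Nn$ whenever $g \in F$ and $|\mu| \ge l_0$; in particular $\tilde g := g|_{x_{M+1}\dots x_{M+l_0}} \in \Nn$ and $\tilde g \cdot x(M+l_0+1, \infty) = y(M+l_0+1, \infty)$. For $j \ge l_0$, set $\mu_x := x_{M+l_0+1}\dots x_{M+j}$ and $\mu_y := \tilde g \cdot \mu_x = y_{M+l_0+1}\dots y_{M+j}$. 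Applying Lemma~\ref{lem:ae by nucleus} to describe asymptotic equivalents shows $[x(-\infty,M+j)] \in U_{\mu_x}$ and $[y(-\infty,M+j)] \in U_{\mu_y}$, and Lemma~\ref{lem:asymp cylinders} applied to $\tilde g \in \Nn$ produces points $a \in Z(\mu_x]$, $b \in Z(\mu_y]$ with $q(a) = q(b)$; by the nuclear description of asymptotic equivalents this common point lies in $U_{\mu_x} \cap U_{\mu_y}$, so
\[
d_\Jj([x(-\infty,M+j)], [y(-\infty,M+j)]) \le \text{diam}(U_{\mu_x}) + \text{diam}(U_{\mu_y}).
\]
Invoking $\lim_{N\to\infty} \sup_{|\mu|=N}\text{diam}(U_\mu) = 0$ (established in the proof of Lemma~\ref{lem:delta_property}), choose $N^*$ with $2\sup_{|\mu|=N^*}\text{diam}(U_\mu) < \beta/2$; then $l := l_0 + N^*$ combined with the $c^{-k}$ factor in the formula for $d_\Ss$ gives $d_\Ss(\tilde\tau^{-m}([x]), \tilde\tau^{-m}([y])) < \beta$ for all $m \ge M + l$.

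The main obstacle is the second inclusion, where one must translate the purely algebraic condition $g\cdot x(M+1,\infty) = y(M+1,\infty)$ into a quantitative metric estimate in $\Ss$. This requires the contracting property to replace an arbitrary $g \in F$ by a nucleus element after sufficient restriction, and the diameter-shrinking of the basic open sets $U_\mu$ to obtain uniformly small $d_\Jj$-distances. Once the sandwich is established with $l$ independent of $M$, the characterisation of $G^u$ is immediate by taking unions, and the equality of inductive-limit topologies follows formally from the cofinality of the two increasing families $\{G^u_{\beta, M}\}$ and $\{G^u_M\}$.
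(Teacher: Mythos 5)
Your proposal is correct and follows essentially the same route as the paper's proof: the forward inclusion via the geometric decay of $d_\Jj([x(-\infty,m)],[y(-\infty,m)])$, the iterated pull-back of balls into basic sets $U_\omega$ of growing length (which really rests on Lemma~\ref{lem:delta_property}(2) together with Proposition~\ref{prp:mapping_properties}(3), not just Lemma~\ref{lem:delta_property}(1)), and a pigeonhole on the resulting elements of $\Nn^2$; the reverse inclusion via restricting $g\in F$ into the nucleus and the decay of $\operatorname{diam}(U_\mu)$. The one place you deviate is in the reverse inclusion, where you place the two classes in $U_{\mu_x}$ and $U_{\mu_y}$ separately and join them through a common point supplied by Lemma~\ref{lem:asymp cylinders} and the triangle inequality, rather than asserting, as the paper does, that both classes lie in the single set $U_{x(M+l_1+1,M+l_1+m)}$; your variant is if anything the more careful of the two and costs only a harmless factor of $2$ in the choice of $l$.
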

\begin{proof}
Fix $M \in \mathbb{N}$. We first show $ G^{u}_{\beta, M}\subseteq G^{u}_{M}$.
Suppose $([x],[y]) \in  G^{u}_{\beta, M}$. For $m \in \mathbb{Z}$, let $x^{m}
:= [x(-\infty,m)]$ and $y^{m} := [y(-\infty, m)]$. By definition of the
metric $d_{\Ss}$, for every $m\geq M$,
\[
d(x^{m},y^{m}) = d([\tau^{-m}(x)(-\infty,0)],[\tau^{-m}(y)(-\infty,0)])\leq d_{\Ss}(\tilde{\tau}^{-m}([x]),\tilde{\tau}^{-m}([y]))\ <\beta.
\]
Since $\beta\leq\varepsilon$, Theorem~\ref{thm:Jsig}(1) gives $d(x^{m},y^{m})
= d(\tilde{\sigma}(x^{m+1}), \tilde{\sigma}(y^{m+1})) = c\cdot d(x^{m+1}, y^{m+1})$
for all $m\geq M$. So $d(x^{M+s}, y^{M+s}) = (\frac{1}{c})^{s}d(x^{M},
y^{M})$ for all $s \ge 0$. Hence $\alpha := \frac{d(x^{M}, y^{M})
+\beta}{2}$, satisfies $y^{M+s} \in B(x^{M+s}, (\frac{1}{c})^{s}\alpha)$ for
every $s\ge 0$.

Fix $n \in \mathbb{N}$ as in Lemma~\ref{lem:delta_property} and $k \in
\mathbb{N}$ as in Proposition~\ref{prp:mapping_properties}. Since $\alpha
<\varepsilon$, Lemma~\ref{lem:delta_property}(1) yields $\omega \in E^*$ such
that $|\omega|\geq n-1$ and $B(x^{M},\alpha)\subseteq U_{\omega}$.

Since $\tilde{\sigma}(x^{M+1}) = x^{M}$ and $|\omega|\geq n-1\geq k-1$,
Proposition~\ref{prp:mapping_properties}(3) gives $e \in E^1$ such that
$s(\omega) = r(e)$ and $x^{M+1} \in U_{\omega e}$. Since
$B(x^{M},\alpha)\subseteq U_{\omega}$, Lemma~\ref{lem:delta_property} implies
that $B(x^{M+1},\frac{1}{c}\alpha)\subseteq U_{\omega e}$.

Applying the argument in the above paragraph inductively gives paths
$\{\mu_{s}\}_{s\in\mathbb{N}}$ such that $\mu_{s+1} = \mu_{s}e_{s+1}$, for
some edge $e_{s+1}$, and that $B(x^{M+s},(\frac{1}{c})^{s}\alpha)\subseteq
U_{\omega \mu_{s}}$, for all $s \in \mathbb{N}$. Since $y^{M+s} \in
B(x^{M+s},(\frac{1}{c})^{s}\alpha)$ for each $s \in \mathbb{N}$, we obtain
$x^{M+s}, y^{M+s} \in U_{\omega \mu_{s}}$ for each $s \in \mathbb{N}$.
Therefore, there exist sequences $\{g_{s}\}_{s\in\mathbb{N}}$,
$\{h_{s}\}_{s\in\mathbb{N}}$ in $\Nn$ such that $d(g_{s}) = d(h_{s}) =
r(\mu_{s})$, $g_{s}\cdot \mu_{s} = x(M+1, M+s)$, and $h_{s}\cdot \mu_{s} =
y(M+1, M+s)$ for all $s \in \mathbb{N}$. Let $z \in E^\infty$ be the unique
element of $\bigcap_s \rZ{\mu_s}$. Choose an increasing subsequence
$\{n_{s}\}_{s\in\mathbb{N}}$ such that $(g_{n_{s}})_s$ and $(h_{n_s})_s$ are
constant sequence, with constant values $g$ and $h$, say. Then $g\cdot z =
x(M+1,\infty)$ and $h\cdot z = y(M+1,\infty)$. So $t := hg^{-1} \in
\mathcal{N}^{2}\subseteq F$ satisfies $t\cdot x(M+1,\infty) = y(M+1,\infty)$.
Therefore, $([y], [x]) \in G^{u}_{M}$.

Take $l_{1} \in \mathbb{N}$ such that for all $g \in F$ and $\mu \in d(g)E^*$
such that $|\mu|\geq l_{1}$ we have that $g|_{\mu} \in \mathcal{N}$. Fix
$l_{2} \in \mathbb{N}$ such that $\text{diam}(U_{\nu}) < \frac{1}{c}\beta$
for every path $\nu$ with $|\nu|\geq l_{2}$. Let $l := l_{1}+l_{2}$. We show
that $ G^{u}_{M}\subseteq  G^{u}_{\beta, M+l}$.

Take $[x],[y] \in \Ss$ such that there exists $g \in F$ such that $d(g) =
s(x_{M})$ and $g\cdot x(M+1,\infty) = y(M+1,\infty)$. As above, let $x^{m} :=
[x(-\infty,m)]$ and $y^{m} := [y(-\infty, m)]$ for all $m \in \mathbb{Z}$.
Let $h = g|_{x(M+1,M+l_{1})}$. By the choice of $l_{1}$, we have $h \in
\mathcal{N}$. Since $h\cdot x(M+l_{1},\infty) = y(M+l_{1},\infty)$, it
follows that $x^{M+l_{1}+m}, y^{M+l_{1}+m} \in U_{x(M+l_{1}+1, M+l_{1}+m)}$
for all $m \in \mathbb{N}$. By the choice of $l_{2}$, we have
$d(x^{M+l_{1}+m},y^{M+l_{1}+m}) <\frac{1}{c}\beta$ for all $m\geq l_{2}$. So,
for all $s\geq M+l$ and $k \ge 0$, we have $(\frac{1}{c})^{k}d(x^{k+s},
y^{k+s}) < \frac{1}{c}\beta$. Therefore,
$d_{\Ss}(\tilde{\tau}^{-s}([x]),\tilde{\tau}^{-s}([y])) =
\sup_{k\in\mathbb{N}_{0}}(\frac{1}{c})^{k}d(x^{k+s},
y^{k+s})\leq\frac{1}{c}\beta < \beta$ for all $s\geq M+l$, forcing $([y],
[x]) \in G^{u}_{\beta, M+l}$. Hence $ G^{u}_{M}\subseteq G^{u}_{\beta, M+l}$.

We have established that $[x]$ and $[y]$ are unstably equivalent if and only
if there exist $M \in \mathbb{N}$ and $g \in Fs(x_{M})$ such that $g\cdot
x(M+1,\infty) = y(M+1,\infty)$. If $[x], [y] \in \mathcal{S}$, $N$ in
$\mathbb{N}$ and $g \in Gr(x_{N})$ satisfy $g\cdot x(N+1,\infty) =
y(N+1,\infty)$, then, since $(G, E)$ is contracting, there exists $M\geq N$
such that $h:= g|_{x(N+1, M)} \in \mathcal{N}\subseteq F$ and $h\cdot
x(M+1,\infty) = y(M+1,\infty)$. This proves the penultimate statement of the
lemma.

Since $F$ is closed under restriction, $ G^{u}_{M}\subseteq G^{u}_{M+1}$ for
all $M \in \mathbb{N}$. Hence the inductive limit topology with respect to
the decomposition $ G^{u} = \bigcup_{M} G^{u}_{M}$ is well defined. Since $
G^{u}_{\beta, M}\subseteq  G^{u}_{M}\subseteq G^{u}_{\beta, M+l}$ for all $M
\in \mathbb{N}$, this topology is equal to the one provide by the
decomposition $ G^{u} = \bigcup_{M} G^{u}_{\beta, M}$.
\end{proof}

Let $(X,\tau)$ be an irreducible Smale space, and recall that $
G^{u}\rtimes\ZZ = \{(\eta,l,\xi)\in X\times\ZZ\times
X:(\tau^{-l}(\eta),\xi)\in  G^{u}\}$. In \cite{Putnam-Spielberg:Structure},
Putnam and Spielberg show that, given any point $x \in X$, the groupoid $
G^{u}_{x} \rtimes \ZZ$ defined as
\begin{equation}\label{eq:PutnamSpielberg}
 G^{u}_{x} \rtimes \ZZ := \{(\eta, n, \varepsilon) \in  G^u \rtimes \ZZ \mid (x, \eta), (x, \varepsilon) \in  G^s\},
\end{equation}
endowed with a suitable topology, is an \'etale groupoid that is equivalent to $ G^u\rtimes\ZZ$ when $(X,\tau)$ is mixing, and use
this to study the unstable $C^*$-algebra of a Smale space up to Morita equivalence. We will make
use of the same technique here.

Let $E$ be a finite directed graph with no sinks or sources, and let $(G, E)$
be a contracting, regular self-similar groupoid action. Let $(\Ss, d_{\Ss},
\Stau, \varepsilon_{S}, \frac{1}{c})$ be the Smale space of
Corollary~\ref{cor:Smale solenoid}. Let $\beta >0$ and $k \in \mathbb{N}$ be
as in Lemma~\ref{lem:stable_description}. Consider $x \in E^\ZZ$. In line
with \cite{Putnam-Spielberg:Structure}, the global stable equivalence class
\begin{equation}\label{eq:Ss[x] top}
\Ss^s([x]) := \{\xi \in \Ss : ([x], \xi) \in  G^s\}
\end{equation}
is endowed with the inductive-limit topology coming from the decomposition
$\Ss^s([x]) = \bigcup_{M}\Ss_{\beta, M}^{s}([x])$, where
\[
\Ss_{\beta, M}^{s}([x]) = \{\xi\in\Ss \mid
d_{\Ss}(\tilde{\tau}^{n}([x]),\tilde{\tau}^{n}(\xi)) <\beta,\text{
}\forall\text{ }n\geq M\}
\]
is given the subspace topology relative to $\Ss$. For $M \in \NN$, define
\[
\Ss^{s}_{M}([x]) = \{[y]\in\Ss : [x(-\infty, M)] = [y(-\infty, M)]\}.
\]
Then Lemma~\ref{lem:stable_description} implies that $\Ss_{\beta,
M}^{s}([x])\subseteq\Ss_{M}^{s}([x])\subseteq\Ss_{\beta, M+k}^{s}([x])$.
Hence, the inductive-limit topology on $\Ss^s([x])$ is equivalent to the
inductive-limit topology for the decomposition $\Ss^s([x]) =
\bigcup_{M}\Ss^{s}_{M}([x])$. Note that $\Ss^s([x])$ is not compact in this
topology even though $\Ss$ is compact.

We equip the groupoid $G^{u}_{[x]} \rtimes \ZZ = ( G^u \rtimes \ZZ) \cap
(\Ss^s([x]) \times\ZZ\times \Ss^s([x]))$ with the topology with sub-basis
\[
\big\{U\cap r^{-1}(V)\cap s^{-1}(W) \mid U \subseteq G^{u}\rtimes\mathbb{Z}\text{ and } V,W \subseteq \Ss^{s}([x]) \text{ are open}\big\}.
\]

Fix a periodic orbit $P = \{\tilde\tau^l(p_0) : l \in \ZZ\} =
\{\tilde\tau^l(p_0) : 0 \le l < N\}$ of $\tilde{\tau}$. Then $\Ss^{s}(p)\cap
S^{s}(q) = \emptyset$ for distinct $p,q \in P$. So $\Ss^{s}(P):=
\bigcup_{p\in P}\Ss^{s}(p)$ is the topological disjoint union of the sets
$\Ss^s(p)$. Consider the groupoid $ G^u(P) \rtimes \ZZ := ( G^u \rtimes \ZZ)
\cap (\Ss^s(P) \times\ZZ\times \Ss^s(P))$. As in the preceding paragraph, we
give $G^u(P) \rtimes \ZZ$ the topology with sub-basis
\[
\big\{U\cap r^{-1}(V)\cap s^{-1}(W) \mid U \subseteq G^{u}\rtimes\mathbb{Z}\text{ and } V,W \subseteq \Ss^{s}(P) \text{ are open}\big\}.
\]
Then $G^{u}(P)\rtimes\ZZ$ is a locally compact \'etale Hausdorff groupoid
\cite[Section~3]{Kaminker-Putnam-Whittaker:K-duality}, and
$G^{u}(P)\rtimes\ZZ$ is groupoid equivalent to $G^{u}\rtimes\ZZ$
\cite[Section~2]{Putnam:Functoriality}.

We claim that for any $p \in P$, the subset $G^{u}_{p}\rtimes\ZZ$ is also a
locally compact Hausdorff groupoid that is equivalent to $G^u \rtimes \ZZ$.
Indeed, the open subgroupoid $G^{u}(P)\rtimes\ZZ|_{\Ss^{s}(p)} = \{g\in
G^{u}(P)\rtimes\ZZ:r(g),s(g)\in \Ss^{s}(p)\}$ is equal to $G^{u}_{p}\rtimes
\ZZ$, and the relative topology of $G^{u}_{p}\rtimes\ZZ$ inherited from
$G^{u}(P)\rtimes\ZZ$ is equal to the topology on $G^{u}_{p}\rtimes\ZZ$
described above. Hence $G^{u}_{p}\rtimes\ZZ$ is a  locally compact \'etale
Hausdorff groupoid.

We show $G^{u}_{p}\rtimes\ZZ$ is equivalent to $G^{u}(P)\rtimes\ZZ$, and
hence to $G^{u}\rtimes\ZZ$. Since $H = \{g\in G^{u}(P)\rtimes\ZZ:r(g)\in
\Ss^{s}(p)\}$ is a clopen subset, it follows from
\cite[Example~2.7]{Muhly-Renault-Williams:Equivalence} that $H$ implements a
groupoid equivalence between $G^{u}_{p}\rtimes\ZZ$ and $G^{u}(P)\rtimes\ZZ$
if and only if the source map restricted to $H$ surjects onto $\Ss^{s}(P)$.
Fox $q \in P$ and $z \in \Ss^{s}(q)$. There exists $n \in \mathbb{N}$ such
that $\tilde{\tau}^{n}(q) = p$. So $(\tilde{\tau}^{n}(z), n, z) \in H$,
proving surjectivity.

We describe an amplification of $\mathcal{G}_{G,E}$ which we will prove in
Theorem~\ref{thm:unstable gpd iso} is Morita equivalent to $G^{u}\rtimes\ZZ$.

For $x \in E^{\mathbb{Z}}$, we write
\begin{equation}\label{eq:EZx}
    E^\ZZ_x := \{y \in E^\ZZ \mid y(-\infty, -M) = x(-\infty, -M)\text{ for some }M\in\mathbb{N}\}.
\end{equation}
For $M  \in \NN$, let
\[
    E^\ZZ_x(M) := \{y \in E^\ZZ : y(-\infty, -M) = x(-\infty, -M)\},
\]
endowed with the relative topology inherited from $E^\ZZ$. We endow $E^\ZZ_x$ with the
inductive-limit topology determined by this decomposition.

The map $\pi_{x}:E_{x}^{\ZZ}\mapsto E^{\infty}$ sending $z \in E_{x}^{\ZZ}$
to $\pi_{x}(z) = z(1,\infty)$ is an open continuous map (in fact, it is a
local homeomorphism onto an open set in $E^{\infty}$). It is a surjection
whenever $E$ is strongly connected. We show that the amplification
$\mathcal{G}_{G,E}^{\pi_{x}}$ is isomorphic to $G_{[x]}^{u}\rtimes\ZZ$. We
start by analysing the space $E_{x}^{\mathbb{Z}}$.

We first prove that an element $x  \in E^\ZZ$ is completely determined by its
class $[x]$ in the limit solenoid $\Ss$ of Definition~\ref{def:S space}
together with the tail $\dots x_{n-3} x_{n-2} x_{n-1}$ for any $n  \in \ZZ$.

\begin{lem}\label{lem:section}
Let $E$ be a finite directed graph with no sinks or sources. Let $(G, E)$ be
a contracting, regular self-similar groupoid action. Let $\Ss$ be the limit
solenoid of Definition~\ref{def:S space}. Suppose that $x, y  \in E^\ZZ$
satisfy $[x] = [y]  \in \Ss$, and suppose that there exists $n $ in $\ZZ$
such that $x_m = y_m$ for all $m \le n$. Then $x = y$.
\end{lem}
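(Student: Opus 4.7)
The plan is to prove by induction on $m \ge n$ that $x_k = y_k$ for all $k \le m$; the base case is the hypothesis, and iteration then yields $x = y$.

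First, from $[x] = [y]$ in $\Ss$ I note that $[x(-\infty, m)] = [y(-\infty, m)]$ in $\Jj$ for every $m \in \ZZ$. This is immediate from the definition of $\sim_\aeq$ on bi-infinite paths: a sequence $(g_k)_{k \in \ZZ}$ in $\Nn$ implementing $x \sim_\aeq y$ restricts, for each $m$, to a sequence $(g_k)_{k \le m}$ witnessing $x(-\infty, m) \sim_\aeq y(-\infty, m)$ via Lemma~\ref{lem:ae by nucleus}. Alternatively, for $m \ge -1$ one can simply read this off from the coordinates of $\theta([x]) = \theta([y])$ using Proposition~\ref{prp:S-J conjugate}.

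Now for the inductive step, suppose $x_k = y_k$ for all $k \le m$. Both $x(-\infty, m+1)$ and $y(-\infty, m+1)$ lie in the fibre $q^{-1}([x(-\infty, m+1)])$ by the previous paragraph. By Proposition~\ref{prp:mapping_properties}(1), the shift $\sigma$ restricts to a bijection of this fibre onto $q^{-1}(\Jsig([x(-\infty, m+1)])) = q^{-1}([x(-\infty, m)])$. The $\sigma$-images of $x(-\infty, m+1)$ and $y(-\infty, m+1)$ are $x(-\infty, m)$ and $y(-\infty, m)$, which coincide by the inductive hypothesis, so injectivity on the fibre forces $x(-\infty, m+1) = y(-\infty, m+1)$; in particular $x_{m+1} = y_{m+1}$. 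I do not foresee any substantive obstacle: the proof is essentially bookkeeping on top of Propositions~\ref{prp:S-J conjugate}~and~\ref{prp:mapping_properties}(1), with the regularity of $(G,E)$ entering only implicitly through the latter.
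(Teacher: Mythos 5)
Your proof is correct, but it takes a genuinely different route from the paper's. The paper argues in one step directly with the implementing sequence: it takes the nucleus-valued, restriction-compatible sequence $(g_m)$ supplied by Lemma~\ref{lem:ae by nucleus}, notes that $g_m$ fixes a length-$k$ window of $x$ ending at coordinate $n$ (with $k$ the constant of Lemma~\ref{lem:magic k} for $F' = \Nn \cup G^{(0)}$), and concludes from Lemma~\ref{lem:magic k} that the restriction of $g_m$ past that window is a unit, so the entire tails $x_{n+1}x_{n+2}\dots$ and $y_{n+1}y_{n+2}\dots$ agree at once. You instead extend the agreement one coordinate at a time by an induction whose engine is the fibrewise injectivity of $\sigma$ from Proposition~\ref{prp:mapping_properties}(1). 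Your argument is sound: the reduction of $[x]=[y]$ in $\Ss$ to $[x(-\infty,m)]=[y(-\infty,m)]$ in $\Jj$ for every $m\in\ZZ$ is exactly the observation made at the start of the proof of Proposition~\ref{prp:S-J conjugate}, the identity $\Jsig([x(-\infty,m+1)])=[x(-\infty,m)]$ is immediate, and the hypotheses of Proposition~\ref{prp:mapping_properties} hold under those of the lemma. The two proofs are not far apart at bottom, since the injectivity half of Proposition~\ref{prp:mapping_properties}(1) is itself established by the same magic-$k$ device, so regularity enters both arguments through Lemma~\ref{lem:magic k}; what your version buys is economy, reusing an already-proved statement rather than repeating the nucleus/restriction bookkeeping, at the cost of replacing a single application of that mechanism with an induction.
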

\begin{proof}
Fix $n$ satisfying $x_m = y_m$ for all $m \le n$. Let $k$ be as in
Lemma~\ref{lem:magic k}, with respect to the finite set $F' = \Nn\cup G^{0}$.
Since $x \sim_\aeq y$, Lemma~\ref{lem:ae by nucleus} shows that there is a
sequence $(g_m)_{m \in \ZZ} $ in $\Nn$ such that $g_m \cdot x_m x_{m+1} \dots
= y_m y_{m+1} \dots$ for all $m \in \mathbb{Z}$ and $g_m|_{x_m \dots x_{l-1}}
= g_l$ for all $m \le l$. In particular,
\[
g_m \cdot x_{-n-k} \dots x_{-n-1} = y_{-n-k} \dots y_{-n-1} = x_{-n-k} \dots x_{-n-1}.
\]
Since $v := r(x_{-n-k}) \in G^{(0)}\subseteq F'$ and also satisfies $v \cdot
x_{-n-k} \dots x_{-n-1} = x_{-n-k} \dots x_{-n-1}$, the choice of $k$
guarantees that $g_m|_{x_{-n-k} \dots x_{-n-1}} = v|_{x_{-n-k} \dots
x_{-n-1}} = r(x_n)$. We then have
\begin{align*}
y_{n+1} y_{n+2} \dots
    &= g_{n-1} \cdot x_{n+1} x_{n+2} \dots\\
    &= g_m|_{x_{-n-k} \dots x_{-n-1}} \cdot x_{n+1} x_{n+2} \dots
    = v \cdot x_{n+1} x_{n+2} \dots
    = x_{n+1} x_{n+2} \dots
\end{align*}
Since we already have $x_m = y_m$ for $m \le n$, we conclude that $x = y$.
\end{proof}

Lemma~\ref{lem:section} shows that for $x \in E^\ZZ$ the quotient map $y
\mapsto [y]$ from $E^\ZZ$ to $\Ss$ restricts to an injection $E^\ZZ_x \to
\Ss^s([x])$. We show that this is a homeomorphism with respect to the
inductive-limit topologies.

\begin{lem}\label{lem:x gives section}
Let $E$ be a finite directed graph with no sinks or sources, and let $(G, E)$
be a contracting, regular self-similar groupoid action. Fix $x  \in E^{\ZZ}$.
The map $y \mapsto [y]$ is a homeomorphism from $E^\ZZ_x$ onto $\Ss^s([x])$
with respect to the inductive-limit topologies described at \eqref{eq:Ss[x]
top}~and~\eqref{eq:EZx}.
\end{lem}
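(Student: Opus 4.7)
The plan is to prove the statement level-by-level and then assemble via the inductive-limit topologies. For each $M \in \NN$, I aim to show that the restriction of $q$ maps $E^\ZZ_x(M)$ bijectively and continuously onto $\Ss^s_M([x])$, where both carry the subspace topologies from $E^\ZZ$ and $\Ss$ respectively. Continuity is immediate from continuity of the quotient map $q : E^\ZZ \to \Ss$, and the image is clearly contained in $\Ss^s_M([x])$. Injectivity follows directly from Lemma~\ref{lem:section}: any two elements of $E^\ZZ_x(M)$ that are asymptotically equivalent already agree on all indices $\le -M$, forcing equality.

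The main content is surjectivity. Given $[y] \in \Ss^s_M([x])$, the left-infinite paths $x(-\infty,-M)$ and $y(-\infty,-M)$ are asymptotically equivalent in $E^{-\infty}$, so Lemma~\ref{lem:ae by nucleus} supplies a sequence $(h_n)_{n \le -M}$ in $\Nn$ with $h_n \cdot y_n = x_n$ and $h_n|_{y_n} = h_{n+1}$ for $n < -M$. I would extend this sequence forward by the recursion $h_{n+1} := h_n|_{y_n}$ for $n \ge -M$, and then define $z \in E^\ZZ$ by setting $z_n := x_n$ for $n \le -M$ and $z_n := h_n \cdot y_n$ for $n > -M$. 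Lemma~\ref{properties SSA}(1) is exactly what is needed to verify that consecutive edges of $z$ compose correctly at the junction $n = -M$, so $z \in E^\ZZ$; by construction $z \in E^\ZZ_x(M)$, and the bi-infinite sequence $(h_n)_{n\in\ZZ} \subset \Nn$ witnesses $z \sim_\aeq y$ via the bi-infinite analogue of Lemma~\ref{lem:ae by nucleus} recorded just before Definition~\ref{def:S space}.

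Compactness of $E^\ZZ_x(M)$, which is a closed subset of $E^\ZZ$ (the preimage of the singleton $\{x(-\infty,-M)\}$ under a continuous projection), together with Hausdorffness of $\Ss^s_M([x]) \subseteq \Ss$, then promotes each $q|_{E^\ZZ_x(M)}$ to a homeomorphism. To pass to the inductive limits I would observe that the level-homeomorphisms commute with the inclusions $E^\ZZ_x(M) \hookrightarrow E^\ZZ_x(M+1)$ and $\Ss^s_M([x]) \hookrightarrow \Ss^s_{M+1}([x])$, and that bijectivity at each level together with this compatibility yields $q^{-1}(\Ss^s_M([x])) = E^\ZZ_x(M)$. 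A standard argument about inductive-limit topologies then gives that $q$ induces a homeomorphism $E^\ZZ_x \to \Ss^s([x])$. The hard part is the surjectivity step: converting a purely left-infinite asymptotic equivalence into a valid bi-infinite element of $E^\ZZ_x$ requires correctly propagating the nucleus-twist forward via restriction and checking composability at the junction index, which is where the self-similarity axioms enter nontrivially.
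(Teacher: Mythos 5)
Your proof is correct and follows the same route as the paper's: reduce to the level sets $E^\ZZ_x(M) \to \Ss^s_M([x])$, use continuity of the quotient map together with compactness of $E^\ZZ_x(M)$ and Hausdorffness of $\Ss$ to upgrade each continuous level-wise bijection to a homeomorphism, and then pass to the inductive-limit topologies. The paper simply asserts the level-wise bijection (injectivity having been recorded via Lemma~\ref{lem:section} in the preceding paragraph); your explicit surjectivity argument --- taking the nucleus sequence from Lemma~\ref{lem:ae by nucleus} on the left-infinite tails, propagating it forward by restriction, and checking composability at the junction via Lemma~\ref{properties SSA}(1) to build a representative of $[y]$ lying in $E^\ZZ_x(M)$ --- is correct and supplies a detail the paper leaves implicit.
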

\begin{proof}
Fix $n  \in \NN$. Then $y \mapsto [y]$ restricts to a bijection of
$E^\ZZ_x(n)$ onto $\Ss^s_n([x])$. By definition of the inductive-limit
topologies it suffices to show that this map is continuous and open. For
fixed $n$ the set $E_{x}^{\mathbb{Z}}(n)$ is compact because it is closed in
$E^\ZZ$, and the set $\Ss^s_n([x])$ is Hausdorff because $\Ss$ is. Since $y
\mapsto [y]$ is the quotient map, it is continuous on $E_{x}^{\ZZ}(n)$, and
we deduce that it is a homeomorphism.
\end{proof}

We now analyse the topology of $ G^{u}_{[x]}\rtimes\ZZ$, for any $x \in
E^\mathbb{Z}$.

\begin{rmk}\label{rmk:G-sets open}
For any $m,M \in \mathbb{N}$ such that $m\leq M$, we have
\[
    G^{u}_{\beta, m} = G^{u}_{\beta, M}\bigcap_{k: m\leq k\leq M} \{(\eta,\xi)\in\Ss\times\Ss: d_{\Ss}(\tau^{-k}(\eta),\tau^{-k}(\xi))<\beta\}.
\]
Therefore, $G^{u}_{\beta,m}$ is open in $ G^{u}$, and consequently $
G^{u}_{\beta,m}\rtimes\{l\}:=\{(\eta,l,\xi):(\tilde{\tau}^{-l}(\eta), \xi)\in
G^{u}_{\beta, m}\}$ is open in $ G^{u}\rtimes\ZZ$.
\end{rmk}

\begin{lem}\label{lem:unstable_top}
Let $E$ be a finite directed graph with no sources, and let $(G,E)$ be a
contracting, regular self-similar action. Fix $x \in E^{\ZZ}$. For $k,m \in
\mathbb{N}$ and $l \in \mathbb{Z}$, let
\[
X_{k,l,m} := \{([z],l,[y])\in G^{u}_{\beta, m}\rtimes\{l\}: x(-\infty,-m) =
y(-\infty,-m) = z(-\infty, -m) \}.
\]
Then, $X_{k,l,m}$ is an open subset of $G^{u}_{[x]}\rtimes\ZZ$, and the
relative topology it inherits from $G^{u}_{[x]}\rtimes\ZZ$ coincides with the
relative topology inherited from $\Ss\times\ZZ\times\Ss$.
\end{lem}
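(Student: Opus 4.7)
The plan hinges on two openness facts, proved separately, then combined. First, the set $G^u_{\beta, m} \rtimes \{l\}$ is open in $G^u \rtimes \ZZ$ (Remark~\ref{rmk:G-sets open}) and inherits on itself the subspace topology from $\Ss \times \ZZ \times \Ss$, since each $G^u_{\beta, M}$ in the inductive-limit decomposition of $G^u$ was defined carrying that subspace topology. Second, the set $\Ss^s_m([x])$ is open in $\Ss^s([x])$ with its inductive-limit topology.

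For the second fact, I would transport the question to $E^\ZZ_x$ via the homeomorphism of Lemma~\ref{lem:x gives section} and verify openness of $E^\ZZ_x(m)$ in the inductive-limit sense: for every $n$, the set $E^\ZZ_x(m) \cap E^\ZZ_x(n)$ must be open in the subspace topology on $E^\ZZ_x(n)$ from $E^\ZZ$. When $n \ge m$ this intersection equals $E^\ZZ_x(n)$, which is trivially open; when $n < m$ the intersection equals $E^\ZZ_x(m)$, which inside $E^\ZZ_x(n)$ is the cylinder set constraining finitely many coordinates (positions $-m, \dots, -n-1$), hence open in the product topology.

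With these two facts in hand, openness of $X_{k,l,m}$ follows by writing it as $(G^u_{\beta, m} \rtimes \{l\}) \cap r^{-1}(\Ss^s_m([x])) \cap s^{-1}(\Ss^s_m([x]))$. For the coincidence of the two subspace topologies on $X_{k,l,m}$, I would compare sub-bases in each direction. One inclusion is immediate: any basic open set of $\Ss \times \ZZ \times \Ss$, when restricted to $X_{k,l,m} \subseteq G^u_{\beta, m} \rtimes \{l\}$, is open in $G^u_{[x]} \rtimes \ZZ$, because on the open piece $G^u_{\beta, m} \rtimes \{l\}$ the $G^u \rtimes \ZZ$-topology agrees with the subspace topology from $\Ss \times \ZZ \times \Ss$. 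For the other inclusion, a sub-basic open set $(U \cap r^{-1}(V) \cap s^{-1}(W)) \cap X_{k,l,m}$ of the $G^u_{[x]} \rtimes \ZZ$-subspace topology splits into the piece $U \cap X_{k,l,m}$, open in the $\Ss \times \ZZ \times \Ss$-subspace topology for the reason just noted, together with the pieces $r^{-1}(V) \cap X_{k,l,m}$ and $s^{-1}(W) \cap X_{k,l,m}$, whose openness follows from the fact that the range and source maps send $X_{k,l,m}$ into $\Ss^s_m([x])$, combined with the observation that openness of $V$ in $\Ss^s([x])$ (inductive-limit topology) forces $V \cap \Ss^s_m([x])$ to be open in the subspace topology on $\Ss^s_m([x])$ from $\Ss$, by the very definition of the inductive-limit topology.

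The main subtlety, and what I expect to be the key insight rather than a serious obstacle, is the shift of viewpoint in the openness of $E^\ZZ_x(m)$: as a subset of $E^\ZZ$ itself it is closed but not open --- it requires agreement on an infinite past --- whereas inside $E^\ZZ_x(n)$ for $n < m$ that infinite past has already been constrained to match $x$, reducing the defining condition to finitely many coordinate equalities, which is plainly open.
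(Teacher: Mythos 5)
Your proof is correct and follows essentially the same route as the paper's: the same decomposition $X_{k,l,m} = \big(G^{u}_{\beta,\cdot}\rtimes\{l\}\big)\cap r^{-1}(q(E^{\ZZ}_{x}(m)))\cap s^{-1}(q(E^{\ZZ}_{x}(m)))$, the same appeal to Remark~\ref{rmk:G-sets open} and Lemma~\ref{lem:x gives section}, and the same two-way comparison of sub-basic open sets (you in fact spell out the openness of $E^{\ZZ}_{x}(m)$ in the inductive-limit topology, which the paper leaves implicit). One small slip: the family $E^{\ZZ}_{x}(M)$ is \emph{increasing} in $M$ (larger $M$ constrains a shorter tail), so your two cases are swapped --- the intersection with $E^{\ZZ}_{x}(n)$ is all of $E^{\ZZ}_{x}(n)$ when $n\le m$, and is the finite cylinder condition on coordinates $-n+1,\dots,-m$ when $n>m$; the argument is otherwise unaffected.
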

\begin{proof}
We have $X_{k,l,m} =  G^{u}_{\beta,k}\rtimes\{l\}\bigcap
r^{-1}(q(E_{x}^{\ZZ}(m)))\cap s^{-1}(q(E_{x}^{\ZZ}(m)))$. The first factor is
open in $ G^{u}\rtimes\ZZ$ by Remark~\ref{rmk:G-sets open}. The image
$q(E_{x}^{\ZZ}(m))$ is open in $\Ss^{s}([x])$ by Lemma~\ref{lem:x gives
section}. Therefore, $X_{k,l,m}$ is open in $  G^{u}_{[x]}\rtimes\ZZ$. The
topologies on $G^{u}_{\beta,k}\rtimes\{l\}$ and $q(E_{x}^{\ZZ}(m))$ are the
subspace topologies relative to $\Ss\times\ZZ\times\Ss$ and to $\Ss$
respectively. Hence, for all triples of open sets $U \subseteq
G^{u}\rtimes\ZZ$ and $V,W \subseteq \Ss^{s}([x])$, there exist open sets $U'
\subseteq \Ss\times\ZZ\times\Ss$ and $V',W' \subseteq \Ss$ such that
\begin{align*}
X_{k,l,m}&\cap U\cap r^{-1}(V)\cap s^{-1}(W)\\
    &= \big(U\cap G^{u}_{\beta,k}\rtimes\{l\}\big) \cap \big(r^{-1}(V)\cap r^{-1}(q(E_{x}^{\ZZ}(m)))\big) \cap \big(s^{-1}(W)\cap s^{-1}(q(E_{x}^{\ZZ}(m)))\big)\\
    &= X_{k,l,m}\cap U'\cap r^{-1}(V')\cap s^{-1}(W').
\end{align*}
Since $r,s$ are continuous with respect to the subspace topologies, $U'\cap
r^{-1}(V')\cap s^{-1}(W')$ is open in $\Ss\times\ZZ\times\Ss$. Hence
$X_{k,l,m}$ has the subspace topology relative to $\Ss\times\ZZ\times\Ss$.
\end{proof}

To lighten notation, for all $z,y \in E^\ZZ_x$, all $m,n \in \NN$ and all $g
\in G$ such that $g\cdot \sigma^n(\pi_x(y)) = \sigma^m(\pi_x(z))$, we define
\begin{equation}\label{eq:notational shortcut}
    [z, m, g, n, y] := (z,[\pi_{x}(z),m,g,n,\pi_{x}(y)],y) \in \mathcal{G}_{G,E}^{\pi_{x}}.
\end{equation}

\begin{ntn}\label{ntn:calZs}
Give finite paths $\mu_{-},\mu_{+},\nu_{-},\nu_{+} \in E^{*}$ such that
$s(\mu_{-}) = r(\mu_{+})$, $s(\nu_{-}) = r(\nu_{+})$, $|\mu_{-}| = |\nu_{-}|$
and an element $g \in G$ with $s(\nu_{+}) = d(g)$, $s(\mu_{+}) = c(g)$, we
define
\begin{align*}
\mathcal{Z}(\mu_{-}\mu_{+}) = \{z\in E_{x}^{\ZZ}: {}&z(-|\mu_{-}|,|\mu_{+}|) = \mu_{-}\mu_{+}\\
    &\text{ and } z(-\infty, -|\mu_{-}| - 1) = x(-\infty, -|\mu_{-}| - 1)\},
\end{align*}
and
\[
\mathcal{Z}(\mu_{-}\mu_{+}, g, \nu_{-}\nu_{+} ) = \{[z,|\mu_{+}|, g, |\nu_{-}|, y]\in\mathcal{G}_{G,E}^{\pi_{x}}:z\in \mathcal{Z}(\mu_{-}\mu_{+}), y\in  \mathcal{Z}(\nu_{-}\nu_{+})\}.
\]
These two collections form a basis for the topologies on $E_{x}^{\ZZ}$, $\mathcal{G}_{G,E}^{\pi_{x}}$, respectively.
\end{ntn}

We show that $\mathcal{G}_{G,E}^{\pi_{x}}$ and $  G^{u}_{[x]}\rtimes\ZZ$ are
isomorphic.

\begin{thm}\label{thm:unstable gpd iso}
Let $E$ be a finite directed graph with no sinks or sources, and let $(G,E)$
be a contracting, regular self-similar groupoid action. Fix $x \in E^{\ZZ}$.
The map $\theta : \mathcal{G}_{G,E}^{\pi_{x}} \to G^{u}_{[x]}\rtimes\ZZ$ such
that $\theta([z,m,g,n,y]) = ([z],m-n,[y])$ is an isomorphism of topological
groupoids.
\end{thm}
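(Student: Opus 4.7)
My plan is to verify in sequence that $\theta$ is well-defined, a groupoid homomorphism, a bijection, and a homeomorphism. The cornerstone is Lemma~\ref{lem:x gives section}, which provides a homeomorphism $E^\ZZ_x\to\Ss^s([x])$ in the inductive-limit topologies; this gives a canonical choice of representative on both sides of $\theta$ and makes the bijection statements concrete.

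First, well-definedness on the algebraic level is immediate from the equivalence relation of Lemma~\ref{lem:groupoid def}: representatives $(z,m,g,n,y)$ and $(z,m',g',n',y)$ of the same class must satisfy $m-n=m'-n'$. To see the image lies in $G^u_{[x]}\rtimes\ZZ$, I use that $z,y\in E^\ZZ_x$ forces $[z],[y]\in\Ss^s([x])$, and that the defining identity $\varsigma^m(\pi_x(z))=g\cdot\varsigma^n(\pi_x(y))$ together with Lemma~\ref{lem:unstable_description} (applied with $M=n$) gives an element of the finite set $F$ witnessing $(\Stau^{-(m-n)}[z],[y])\in G^u$. The groupoid axioms then follow from the composition formula of Lemma~\ref{lem:groupoid def} via the cocycle identity $(m-n)+(p-q)=(m+p)-(n+q)$, and inversion is automatic.

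For bijectivity, surjectivity combines Lemma~\ref{lem:x gives section} (which picks the unique $z,y\in E^\ZZ_x$ representing the given classes) with Lemma~\ref{lem:unstable_description} (which provides $M$ and $g\in F$ with $g\cdot y(M+1,\infty)=z(M+p+1,\infty)$), yielding $[z,M+p,g,M,y]$ as a preimage. Injectivity is the main obstacle. Given $\theta([z,m,g,n,y])=\theta([z',m',g',n',y'])$, Lemma~\ref{lem:x gives section} forces $z=z'$, $y=y'$, and $m-n=m'-n'$, so it remains to show $g|_{\pi_x(y)(n,l)}=g'|_{\pi_x(y)(n',l)}$ in $G$ for some $l\geq\max(n,n')$, which is precisely equality in $\mathcal{G}_{G,E}$. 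The defining relations guarantee that both restrictions act identically on the tail $\varsigma^l(\pi_x(y))$, but upgrading pointwise agreement to equality in $G$ requires the contracting hypothesis: for $l$ large enough both restrictions lie in the finite nucleus $\Nn$, and Lemma~\ref{lem:magic k} applied to $F=\Nn$ yields a $k$ such that further restriction to any length-$k$ prefix of the common tail makes them coincide in $G$; the chain rule of Lemma~\ref{properties SSA}(2) then produces the required equality at level $l+k$.

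Finally, for the homeomorphism property I will compare the basis of Notation~\ref{ntn:calZs} with the topology on $G^u_{[x]}\rtimes\ZZ$ described in Lemma~\ref{lem:unstable_top}. The image $\theta(\mathcal{Z}(\mu_-\mu_+,g,\nu_-\nu_+))$ sits inside an appropriate $X_{k,l,m}$ and, via the homeomorphism of Lemma~\ref{lem:x gives section}, coincides there with the trace of a product of basic opens from $\Ss\times\ZZ\times\Ss$; Lemma~\ref{lem:unstable_top} identifies the subspace topology on $X_{k,l,m}$ with the product topology, so $\theta$ sends basic opens to opens. A symmetric argument, using that $\pi_x\colon E^\ZZ_x\to E^\infty$ is a local homeomorphism and the basis $Z(\mu,g,\nu)$ of $\mathcal{G}_{G,E}$ from Lemma~\ref{lem:gpd isomorphism}, shows that preimages of basic opens of $G^u_{[x]}\rtimes\ZZ$ are open, yielding continuity of $\theta$. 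The main obstacle throughout will be the injectivity step, where the nucleus and Lemma~\ref{lem:magic k} have to do real work to bridge the gap between agreement on a single infinite path and equality in the groupoid $\mathcal{G}_{G,E}$.
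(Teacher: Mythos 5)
Your overall architecture (well-definedness, homomorphism property, bijectivity via Lemmas~\ref{lem:x gives section} and~\ref{lem:unstable_description}, and topology via the basis of Notation~\ref{ntn:calZs} together with Lemma~\ref{lem:unstable_top}) matches the paper's proof, and your injectivity argument is correct: passing to restrictions in the nucleus and invoking Lemma~\ref{lem:magic k} to convert agreement on the common tail into equality of further restrictions is exactly the content of the paper's one-line appeal to regularity. Surjectivity and continuity are also handled essentially as in the paper.

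The genuine gap is in the openness step. You assert that $\theta(\mathcal{Z}(\mu_-\mu_+,g,\nu_-\nu_+))$ coincides, inside an appropriate $X_{k,l,m}$, with the trace of a product of basic open sets from $\Ss\times\ZZ\times\Ss$. This is false in general: a point of $X_{k,l,m}\cap r^{-1}(q(\mathcal{Z}(\mu_-\mu_+)))\cap s^{-1}(q(\mathcal{Z}(\nu_-\nu_+)))$ is only guaranteed to be unstably equivalent with the correct cocycle value via \emph{some} element of the finite set $F$ (Lemma~\ref{lem:unstable_description}); nothing forces that witness to be a restriction of the particular $g$ labelling the bisection. For instance, when $g$ is a unit the image consists of pairs whose shifted tails agree on the nose, while the trace contains pairs related by any nucleus element, so the image is a proper subset and is not itself of product form. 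The conclusion (openness) is still true, but it requires the pointwise argument the paper gives: around each $[u,m,g,n,v]$ in the image one chooses a much finer product neighbourhood $W$ determined by prefixes of length $L$, where $L$ is calibrated using the constant $j$ from Lemma~\ref{lem:magic k} applied to $F=E^0\cup\Nn\cup\Nn^2$, and then shows that any $h\in F$ witnessing membership of a point of $W\cap\tilde X$ in the unstable relation must agree with the relevant restriction of $g$ far enough along the path, whence $g$ itself intertwines the two tails and the point lies in the image. Without an argument of this kind the homeomorphism claim is not established.
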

\begin{proof}
If $[z, m, g, n, y] = [z', m', g', n' y']$, then $z = z'$ and $y = y'$
by~\eqref{eq:notational shortcut}, and $m - n = m' - n'$ by definition of the
equivalence relation defining $\mathcal{G}_{G, E}$. So there is a
well-defined map $\theta$ satisfying $\theta([z,m,g,n,y]) = ([z],m-n,[y])$.

If $[z,m,h,n,y] \in \mathcal{Z}(\mu_{-}\mu_{+}, g, \nu_{-}\nu_{+} )$ for $g$
in the nucleus $\mathcal{N}$, then $g\cdot y(|\nu_{+}|+1,\infty) =
z(|\nu_{+}|+1 +(m-n),\infty)$. Lemma~\ref{lem:unstable_description} yields $l
\in \NN$ such that $([z], m-n, [y]) \in  G^{u}_{\beta,
|\nu_{+}|+l}\rtimes\{n-m\}$. We have $([z],m-n,[y]) \in
r^{-1}(q(\mathcal{Z}(\mu_{-}\mu_{+})))\cap
s^{-1}(q(\mathcal{Z}(\nu_{-}\nu_{+})))$, so $([z],m-n,[y]) \in X_{|\nu_{+}| +
l, |\mu_{+}|-|\nu_{+}|, |\nu_{-}|}$. Hence,
\begin{equation}\label{eq:theta mapping property}
\theta(\mathcal{Z}(\mu_{-}\mu_{+}, g, \nu_{-}\nu_{+} ))\subseteq X_{|\nu_{+}| + l, |\mu_{+}|-|\nu_{+}|, |\nu_{-}|}, \text{ }g\in\mathcal{N}.
\end{equation}
Hence $\theta(\Gg^{\pi_x}_{G, E}) \subseteq G^{u}_{[x]}\rtimes\ZZ$.

It is straightforward that $\theta$ is a groupoid homomorphism.

We show that $\theta$ is continuous. It is enough to show that $\theta$
restricts to a continuous map from $\mathcal{Z}(\mu_{-}\mu_{+}, g,
\nu_{-}\nu_{+} )$ to $X_{|\nu_{+}| + l, |\mu_{+}|-|\nu_{+}|, |\nu_{-}|}$  for
all $\mu_+, \mu_-, \nu_+,\nu_-$ as in Notation~\ref{ntn:calZs}. By
Lemma~\ref{lem:unstable_top}, $X_{|\nu_{+}| + l, |\mu_{+}|-|\nu_{+}|,
|\nu_{-}|}$ has the subspace topology inherited from $\Ss\times\ZZ\times\Ss$,
so it suffices to show that $\theta: \mathcal{Z}(\mu_{-}\mu_{+}, g,
\nu_{-}\nu_{+} )\mapsto\Ss\times\ZZ\times\Ss$ is continuous.

Let $([z_{\lambda}, m_{\lambda}, h_{\lambda}, n_{\lambda},
y_{\lambda}])_{\lambda\in\Lambda}$ be a net in $\mathcal{Z}(\mu_{-}\mu_{+},
g, \nu_{-}\nu_{+} )$ converging to $[z,m,h,n,y]$. Since $r,s$ are continuous,
it follows that $z_\lambda \to z$ in $\mathcal{Z}(\mu_{-}\mu_{+})$ and
$y_{\lambda} \to y$ in $\mathcal{Z}(\nu_{-}\nu_{+})$. Since
$\mathcal{Z}(\mu_{-}\mu_{+})$ and $\mathcal{Z}(\nu_{-}\nu_{+})$ carry the
subspace topologies relative to $E^{\ZZ}$, we have $y_n \to y$ and $z_n \to
z$ in $E^{\ZZ}$. Since the quotient map $q:E^{\ZZ}\mapsto \Ss$ is continuous,
\[
([z_{\lambda}], m_{\lambda} - n_{\lambda}, [y_{\lambda}]) \to ([z],
m_{\lambda}-n_{\lambda}, [y]) = ([z], m-n, [y])
\]
in $\Ss\times\ZZ\times\Ss$. Hence, $\theta$ is continuous.

Now, we show that $\theta$ is an open map. It suffices to show that each
$\theta(\mathcal{Z}(\mu_{-}\mu_{+}, g, \nu_{-}\nu_{+} ))$ is open.

By \eqref{eq:theta mapping property} and Lemma~\ref{lem:unstable_top}, it is
enough to show that $\theta(\mathcal{Z}(\mu_{-}\mu_{+}, g, \nu_{-}\nu_{+}))$
is an open subset of $X_{|\nu_{+}| + l, |\mu_{+}|-|\nu_{+}|, |\nu_{-}|}$.
Write $n = |\nu_{+}|$, $m = |\mu_{+}|$ and $k = |\nu_{-}|$. Let
\begin{align*}
\tilde{X} = \{([z],m-n,[y])\mid \exists h\in F: h\cdot z( m +l+1,\infty) = y(n+l+1,\infty)\\
&\hskip-27em\text{ and } x(-\infty, -k) = z(-\infty, -k) = y(-\infty, -k)\}.
\end{align*}
Then Lemma~\ref{lem:unstable_description} gives $X_{n+l, m-n, k}\subseteq
\tilde{X}$. So it suffices to show that $\theta(\mathcal{Z}(\mu_{-}\mu_{+},
g, \nu_{-}\nu_{+} ))$ is open in the relative topology
inherited from $\Ss^{s}_{k}([x])\times\{m-n\}\times\Ss^{s}_{k}([x])$.

Let $j$ be the number from Lemma~\ref{lem:magic k} applied to the set $F =
E^{0}\cup\mathcal{N}\cup\mathcal{N}^{2}$. Let $L = \max\{(m+l+1 +j), k,
(n+l+1+j)\}$. Fix $(u, m, g,n, v) \in \mathcal{Z}(\mu_{-}\mu_{+}, g,
\nu_{-}\nu_{+} )$. Let $W = q(\mathcal{Z}(u(-L,-1)u(1,L)))\times\{m-n\}\times
q(\mathcal{Z}(v(-L,-1)v(l,L)))$. Then $W$ is open in
$\Ss^{s}_{k}([x])\times\{m-n\}\times\Ss^{s}_{k}([x])$. We show that
$W\cap\tilde{X}\subseteq \theta(\mathcal{Z}(\mu_{-}\mu_{+}, g,
\nu_{-}\nu_{+}))$.

Fix $a,b \in E_{x}^{\mathbb{Z}}$, $m,n \in \NN$ and $h \in F$ such that
$h\cdot a(m +l+1,\infty) = b(n+l+1,\infty)$ and $x(-\infty, -k) = a(-\infty,
-k) = b(-\infty, -k)$. Suppose that $([a], m-n, [b]) \in W\cap\tilde{X}$. For
$L\geq m,n,k$, the paths $\mu_{-}\mu_{+}, \nu_{-}\nu_{+}$ are subwords of
$u(-L,-1)u(1,L)$, $v(-L,-1)v(1,L)$, respectively. Therefore, since
$q:E^{\mathbb{Z}}_{x}\mapsto\Ss^{s}([x])$ is bijective (Lemma \ref{lem:x
gives section}), that $([a], m-n, [b]) \in W$ implies that $a \in
\mathcal{Z}(\mu_{-}\mu_{+})$ and $b \in \mathcal{Z}(\nu_{-}\nu_{+})$. It only
remains to show that $g\cdot a(m+1,\infty) = b(n+1,\infty)$.

By the choice of $L$, we have $a(m+l+1, m+l+j) = u(m+l+1, m+l+j) =: p$ and
$g|_{u(m+1,m+l+1)}\cdot p = h\cdot p$. By the choice of $j$, we have
$g|_{u(m+1,m+l+j)} = h|_{a(m+l+1, m+l+j)}$. Therefore,
\begin{align*}
g\cdot a(m+1,\infty)
    &= v(n+1,n+l+j)(g|_{u(m+1,m+l+j)})\cdot a(m+l+1+j,\infty)\\
    &= b(n+1,n+l+j)(h|_{a(m+l+1, m+l+j)})\cdot a(m+l+1+j,\infty)\\
    &= b(n+1,\infty).
\end{align*}
We have shown that the open neighbourhood $W\cap\tilde{X}$ of
$\theta([u,m,g,n,v])$ is contained in $\theta(\mathcal{Z}(\mu_{-}\mu_{+}, g,
\nu_{-}\nu_{+} ))$. Therefore, $\theta(\mathcal{Z}(\mu_{-}\mu_{+}, g,
\nu_{-}\nu_{+} ))$ is open, and hence $\theta$ is an open map.

Now, we show that $\theta$ is a bijection. We first show injectivity. Since
$q:E_{x}^{\ZZ}\mapsto \Ss^{s}([x])$ is a bijection, $\theta([z,m,g,n,y]) =
\theta([z',m',g',n',y'])$ implies $z = z'$ and $y = y'$. Since $m - n = m'-n'
= l$, for $k$ large enough, we have $g|_{y(n,k)}\cdot y(k+1,\infty)=
g'|_{y(n',k)}\cdot y(k+1,\infty)$. By regularity, $g|_{y(n,K)} =
g'|_{y(n,K)}$ for some $K\geq k$. Therefore, $[z,m,g,n,y] = [z',
m',g',n',y'].$
\par
Finally, we show surjectivity. Fix $([z],l,[y]) \in G^{u}_{[x]}\rtimes\ZZ$.
By Lemma~\ref{lem:unstable_description}, there exist $M \in \mathbb{N}$ and
$g \in F$ such that $M+l\geq 0$ and $g\cdot y(M+1,\infty) = z(M+l+1,\infty)$.
Hence $[z,M+l,g,M,y] \in \mathcal{G}^{\pi_{x}}_{G,E}$ satisfies
$\theta([z,M+l,g,M,y]) = ([z],l,[y])$.
\end{proof}

\begin{cor}\label{cor:base alg Me}
Let $E$ be a finite strongly connected directed graph, and let $(G, E)$ be a
contracting, regular self-similar groupoid action. Then the $C^*$-algebra
$\Oo(G, E)$ of Section~\ref{sec:O(G,E)} is Morita equivalent to the unstable
Ruelle algebra $C^*( G^u \rtimes \ZZ)$ of the Smale space $(\Ss, \Stau)$ of
Corollary~\ref{cor:Smale solenoid}.
\end{cor}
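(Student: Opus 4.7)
The plan is to assemble the Morita equivalence from four ingredients: the groupoid model of $\Oo(G,E)$, an amplification equivalence, the isomorphism of Theorem~\ref{thm:unstable gpd iso}, and the fact that $G^{u}_{[x]} \rtimes \ZZ$ is equivalent to the full unstable Ruelle groupoid when $[x]$ is periodic. First I would use the strong connectivity of $E$ to choose a convenient $x \in E^{\ZZ}$. Since $E$ is finite and strongly connected, there is a simple cycle $\lambda \in E^{*}$ with $s(\lambda)=r(\lambda)$; setting $x$ to be the bi-infinite periodic path $\cdots\lambda\lambda\cdot\lambda\lambda\cdots$ produces a point with $\tau^{|\lambda|}(x)=x$, and hence $[x]\in\Ss$ is a periodic point of $\Stau$ lying on a finite $\Stau$-invariant set $P$.

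Next I would verify the hypotheses needed to amplify $\Gg_{G,E}$ along the map $\pi_{x}:E^{\ZZ}_{x}\to E^{\infty}$, $z\mapsto z(1,\infty)$. Continuity and openness follow because $\pi_{x}$ is locally a homeomorphism (on each level set of the decomposition $E^{\ZZ}_{x}=\bigcup_{M}E^{\ZZ}_{x}(M)$ it is injective with open image), and surjectivity follows from strong connectedness of $E$: for any $y\in E^{\infty}$, choose $M$ large and a path in $E^{*}$ from $s(x_{-M-1})$ to $r(y_{1})$, then splice it into $x$ to build the required pre-image. Then \cite[Proposition~3.10]{FKPS} implies that $\Gg_{G,E}^{\pi_{x}}$ is groupoid equivalent to $\Gg_{G,E}$, and \cite[Theorem~2.8]{Muhly-Renault-Williams:Equivalence} upgrades this to a Morita equivalence $C^{*}(\Gg_{G,E})\sim_{M}C^{*}(\Gg_{G,E}^{\pi_{x}})$. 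Combined with Proposition~\ref{prp:O,C*G isomorphism}, this gives $\Oo(G,E)\sim_{M}C^{*}(\Gg_{G,E}^{\pi_{x}})$.

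Now I would invoke Theorem~\ref{thm:unstable gpd iso} to identify $\Gg_{G,E}^{\pi_{x}}$ topologically with $G^{u}_{[x]}\rtimes\ZZ$, so that $C^{*}(\Gg_{G,E}^{\pi_{x}})\cong C^{*}(G^{u}_{[x]}\rtimes\ZZ)$. Finally, because $[x]$ was arranged to lie on the periodic orbit $P$, the discussion preceding the theorem (based on the clopen subset $H=\{g\in G^{u}(P)\rtimes\ZZ\mid r(g)\in \Ss^{s}([x])\}$ and \cite[Example~2.7]{Muhly-Renault-Williams:Equivalence}) shows that $G^{u}_{[x]}\rtimes\ZZ$ is groupoid equivalent to $G^{u}(P)\rtimes\ZZ$, and Putnam's results in \cite[Section~2]{Putnam:Functoriality} show that the latter is groupoid equivalent to $G^{u}\rtimes\ZZ$. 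Another application of \cite[Theorem~2.8]{Muhly-Renault-Williams:Equivalence} converts these equivalences into Morita equivalences of the corresponding $C^{*}$-algebras, yielding
\[
\Oo(G,E)\sim_{M} C^{*}(G^{u}_{[x]}\rtimes\ZZ)\sim_{M} C^{*}(G^{u}(P)\rtimes\ZZ)\sim_{M} C^{*}(G^{u}\rtimes\ZZ),
\]
which is the desired Morita equivalence to the unstable Ruelle algebra.

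The conceptually nontrivial step is the middle one (Theorem~\ref{thm:unstable gpd iso}), which has already been proved; everything remaining is a bookkeeping exercise, and the only real technical point is ensuring that $x$ can be chosen so that $[x]$ is periodic under $\Stau$, which the strong connectivity of $E$ provides. I do not expect any significant obstacle beyond verifying that the continuous open surjection $\pi_{x}$ satisfies the hypotheses of \cite[Proposition~3.10]{FKPS}, which is immediate from the explicit description of $\pi_{x}$ as a local homeomorphism onto an open subset of $E^{\infty}$ combined with the surjectivity argument above.
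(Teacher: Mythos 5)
Your proposal is correct and follows essentially the same route as the paper's proof: pick $x\in E^{\ZZ}$ with $[x]$ periodic, use $\Oo(G,E)\cong C^*(\Gg_{G,E})\sim_{M}C^*(\Gg_{G,E}^{\pi_x})$ via Proposition~\ref{prp:O,C*G isomorphism} and \cite[Proposition~3.10]{FKPS}, apply Theorem~\ref{thm:unstable gpd iso}, and then the equivalence of $G^{u}_{[x]}\rtimes\ZZ$ with $G^{u}\rtimes\ZZ$ established before Lemma~\ref{lem:unstable_top}. Your extra details (the explicit construction of a periodic point from a simple cycle, and the verification that $\pi_x$ is an open surjection) are harmless elaborations of steps the paper treats as already established.
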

\begin{proof}
Fix $x \in E^\ZZ$ such that $[x]$ is periodic under the action of
$\tilde{\tau}$. Since $E$ is assumed strongly connected,
Corollary~\ref{cor:Smale solenoid} implies $(\Ss,\tilde{\tau})$ is
irreducible. Thus, as shown above Lemma~\ref{lem:unstable_top}, $G^u \rtimes
\ZZ$ is groupoid equivalent to $G^{u}_{[x]} \rtimes \ZZ$. Hence $C^*( G^u
\rtimes \ZZ)$ is Morita equivalent to $C^*( G^{u}_{[x]} \rtimes \ZZ)$.
Theorem~\ref{thm:unstable gpd iso} implies that $C^*( G^{u}_{[x]} \rtimes
\ZZ) \cong C^*\big(\Gg^{\pi_{x}}_{G, E})$. Since $E$ is strongly connected,
$\pi_{x}$ is an open surjection, so \cite[Proposition~3.10]{FKPS} implies
that $C^*(\Gg_{G, E})$ is Morita equivalent to $C^*(\Gg^{\pi_{x}}_{G, E})$,
and Proposition~\ref{prp:O,C*G isomorphism} shows that $\Oo(G, E) \cong
C^*(\Gg_{G,E})$. Stringing these isomorphisms and Morita equivalences
together gives the desired Morita equivalence $C^*( G^u \rtimes \ZZ)
\sim_{\operatorname{Me}} \Oo(G, E)$.
\end{proof}

\begin{rmk}
If $E$ is not strongly connected, then $\pi_{x}$ is not necessarily
surjective, and $\mathcal{G}^{\pi_{x}}_{G,E}$ is only groupoid equivalent to
the reduction $\mathcal{G}_{G,E}|_{\pi_{x}(E^{\mathbb{Z}}_{x})}$ of
$\mathcal{G}_{G,E}$ to the image of $\pi_{x}$, which is open. However, if
$\mathcal{G}_{G,E}$ is minimal, then
$\mathcal{G}_{G,E}|_{\pi_{x}(E^{\mathbb{Z}}_{x})}$ is still groupoid
equivalent to $\mathcal{G}_{G,E}$.
\end{rmk}

We can now prove our main theorem.

\begin{proof}[Proof of Theorem~\ref{thm:main duality thm}]
Corollary~\ref{cor:Smale solenoid} shows that $(\Ss, \Stau)$ is an irreducible Smale space. So
Theorem~1.1 of \cite{Kaminker-Putnam-Whittaker:K-duality} shows that there are classes $\delta$ in
$KK^1\big(\CC, C^*( G^s \rtimes \ZZ) \otimes C^*( G^u \rtimes \ZZ)\big)$ and $\Delta$ in
$KK^1\big(C^*( G^s \rtimes \ZZ) \otimes C^*( G^u \rtimes \ZZ), \CC\big)$ such that $\delta
\widehat{\otimes}_{C^*( G^u \rtimes \ZZ)} \Delta = \id_{\KK(C^*( G^s \rtimes \ZZ), C^*( G^s
\rtimes \ZZ))}$ and $\delta \widehat{\otimes}_{C^*( G^s \rtimes \ZZ)} \Delta =
-\id_{\KK(C^*( G^u \rtimes \ZZ), C^*( G^u \rtimes \ZZ))}$.

Corollary~\ref{cor:dual alg Me} gives a Morita equivalence bimodule between
$\hO(G, E)$ and $C^*( G^s \rtimes \ZZ)$, which induces a $KK$-equivalence
$\hat{\alpha}  \in KK^0\big(\hO(G, E), C^*( G^s \rtimes \ZZ)\big)$. Likewise
Corollary~\ref{cor:base alg Me} gives a $KK$-equivalence $\alpha $ in
$KK\big(\Oo(G, E), C^*( G^u \rtimes \ZZ)\big)$. So the Kasparov products
\[
\beta := (\hat{\alpha} \otimes \alpha)
    \mathbin{\widehat{\otimes}} \delta
    \mathbin{\widehat{\otimes}} (\hat{\alpha} \otimes \alpha)^{-1}\quad\text{ and}\quad
\mu := (\hat{\alpha} \otimes \alpha)
    \mathbin{\widehat{\otimes}} \Delta
    \mathbin{\widehat{\otimes}} (\hat{\alpha} \otimes \alpha)^{-1}
\]
implement the desired duality.
\end{proof}

\begin{exm}\label{Ex:Katsura}
Unital Katsura algebras \cite{Katsura:class_II} come from self-similar groupoids that are not recurrent unless the graph has only a single vertex. A Katsura groupoid is defined by two matrices $A$ and $B$, where $A$ is the adjacency matrix of the graph and $B$ dictates the restriction map.

Let $(G,E)$ be the Katsura groupoid action defined by
\begin{equation}\label{Katsura_matrices}
A=\left(\begin{matrix} 2 & 1 \\ 2 & 2 \end{matrix}\right) \quad \text{ and } \quad B=\left(\begin{matrix} 1 & 0 \\ 1 & 1 \end{matrix}\right),
\end{equation}
see \cite[Definition 18.1]{Exel-Pardo:Self-similar} or \cite[Example 7.7]{Laca-Raeburn-Ramagge-Whittaker:Equilibrium}. The graph $E$ associated to $A$ is depicted in Figure \ref{Katsura_graph}.

\begin{center}
\begin{figure}
\begin{tikzpicture}[scale=1.0]
\node at (0,0) {$2$};
\node[vertex] (vertexe) at (0,0)   {$\quad$}
	edge [->,>=latex,out=-30,in=30,loop] node[left,pos=0.5]{$\scriptstyle e_{2,2,0}$} (vertexe)
	edge [->,>=latex,out=-50,in=50,loop,looseness=10] node[right,pos=0.5]{$\scriptstyle e_{2,2,1}$} (vertexe);
\node at (-3,0) {$1$};
\node[vertex] (vertex-a) at (-3,0)   {$\quad$}
	edge [->,>=latex,out=35,in=145] node[below,swap,pos=0.5]{$\scriptstyle e_{2,1,0}$} (vertexe)
	edge [->,>=latex,out=50,in=130] node[above,swap,pos=0.5]{$\scriptstyle e_{2,1,1}$} (vertexe)
	edge [->,>=latex,out=210,in=150,loop] node[right,pos=0.5]{$\scriptstyle e_{1,1,1}$} (vertex-a)
	edge [->,>=latex,out=230,in=130,loop,looseness=10] node[left,pos=0.5]{$\scriptstyle e_{1,1,0}$} (vertex-a)
	edge [<-,>=latex,out=310,in=230] node[below,swap,pos=0.5]{$\scriptstyle e_{1,2,0}$} (vertexe);
\end{tikzpicture}
\begin{tikzpicture}[scale=1.0]
\node at (0,0) {$w$};
\node[vertex] (vertexe) at (0,0)   {$\quad$}
	edge [->,>=latex,out=-30,in=30,loop] node[left,pos=0.5]{$5$} (vertexe)
	edge [->,>=latex,out=-50,in=50,loop,looseness=10] node[right,pos=0.5]{$6$} (vertexe);
\node at (-3,0) {$v$};
\node[vertex] (vertex-a) at (-3,0)   {$\quad$}
	edge [->,>=latex,out=35,in=145] node[below,swap,pos=0.5]{$3$} (vertexe)
	edge [->,>=latex,out=50,in=130] node[above,swap,pos=0.5]{$4$} (vertexe)
	edge [->,>=latex,out=210,in=150,loop] node[right,pos=0.5]{$1$} (vertex-a)
	edge [->,>=latex,out=230,in=130,loop,looseness=10] node[left,pos=0.5]{$0$} (vertex-a)
	edge [<-,>=latex,out=310,in=230] node[below,swap,pos=0.5]{$2$} (vertexe);
\end{tikzpicture}
\caption{The adjacency matrix $A$ from \eqref{Katsura_matrices} defines the graph $E$ on the left. On the right we relabel the graph to streamline the labelling of the Schreier graphs below. }
\label{Katsura_graph}
\end{figure}
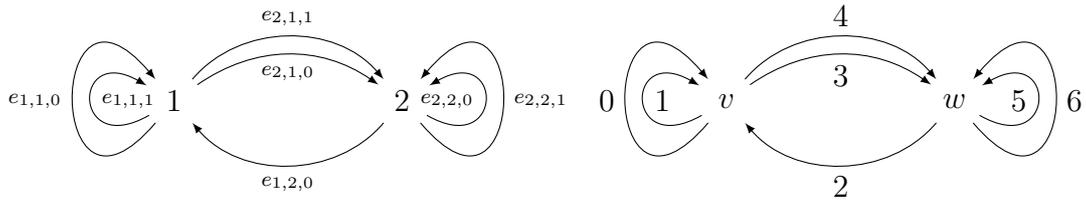
\end{center}

Using the hypotheses of \cite[Remark 18.3]{Exel-Pardo:Self-similar}, the
groupoid $G$ is $E^0 \times \ZZ$ with unit space $E^0$ and a copy of $\ZZ$ as
the isotropy group over each unit. We denote the generator of the copy of
$\ZZ$ over the vertex $i$ of the graph $E$ on the left-hand side of
Figure~\ref{Katsura_graph} by $a_i$. The action and restriction maps are
defined, for $\mu \in jE^*$, by the formulae
\begin{equation}
\label{Katsura_action}
a_i \cdot e_{i,j,m}\mu=e_{i,j,n}(a_j^l \cdot \mu) \quad \text{where} \quad
b_{ij}+m=l{a_{ij}}+n \text{ and }0\leq n< a_i.
\end{equation}
Thus, for the matrices in \eqref{Katsura_matrices}, in the notation of the
relabelled graph on the right of Figure~\ref{Katsura_graph}, and re-labelling
$a\coloneqq a_1$, $b \coloneqq a_2$, we obtain:
\begin{align*}
a\cdot 0=1,\ a|_0=v; \quad\quad&b\cdot 3=4, \ b|_3=v;\\
a\cdot 1=0,\ a|_1=a; \quad\quad&b\cdot 4=3,\ b|_4=a; \\
a\cdot 2=2,\ a|_2=w;\quad\quad&b\cdot 5=6,\ b|_5=w; \\
&b\cdot 6=5,\ b|_6=b;\notag
\end{align*}

Using [7, Remark 18.3], the $K$-theory of the Kirchberg algebra $\Oo(G,E)$ given by this Katsura groupoid is
\begin{align*}
K_0(\Oo(G,E))&= \text{coker}(I-A) \oplus \text{ker}(I-B)= \mathbb{Z} \quad \text{ and } \\ K_1(\Oo(G,E))&= \text{coker}(I-B) \oplus \text{ker}(I-A)= \mathbb{Z}.
\end{align*}

The first three Schreier graphs, $\Gamma_0$, $\Gamma_1$, and $\Gamma_2$, are given in Figure \ref{Katsura_Schreier}, with the levels separated by dashed lines. For a contracting, self-similar group(oid), the combination of dashed and solid lines is a self-similarity graph in the sense of Nekrashevych \cite[Section 3.7.1]{Nekrashevych:Self-similar}. The Gromov boundary of this self-similarity graph is the limit space \cite[Theorem 3.7.8]{Nekrashevych:Self-similar}. Note that we have omitted the unit loops at the vertices.

\begin{center}
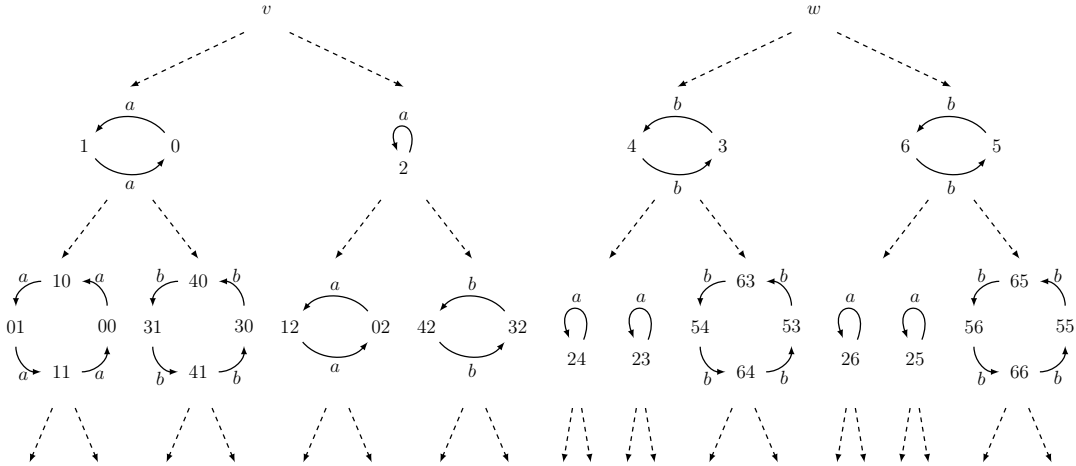
\begin{figure}
\scalebox{0.6}{
\begin{tikzpicture}
\node at (-6,3) {$v$};
\node at (6,3) {$w$};
\draw[->,dashed,>=latex,thick] (-6.5,2.5) -- (-9,1.3);
\draw[->,dashed,>=latex,thick] (-5.5,2.5) -- (-3,1.3);
\draw[->,dashed,>=latex,thick] (6.5,2.5) -- (9,1.3);
\draw[->,dashed,>=latex,thick] (5.5,2.5) -- (3,1.3);
\draw[->,dashed,>=latex,thick] (-9.5,-1.2) -- (-10.5,-2.5);
\draw[->,dashed,>=latex,thick] (-8.5,-1.2) -- (-7.5,-2.5);
\draw[->,dashed,>=latex,thick] (-3.5,-1.2) -- (-4.5,-2.5);
\draw[->,dashed,>=latex,thick] (-2.5,-1.2) -- (-1.5,-2.5);
\draw[->,dashed,>=latex,thick] (9.5,-1.2) -- (10.5,-2.5);
\draw[->,dashed,>=latex,thick] (8.5,-1.2) -- (7.5,-2.5);
\draw[->,dashed,>=latex,thick] (3.5,-1.2) -- (4.5,-2.5);
\draw[->,dashed,>=latex,thick] (2.5,-1.2) -- (1.5,-2.5);
\draw[->,dashed,>=latex,thick] (0.7,-5.8) -- (0.5,-7);
x\draw[->,dashed,>=latex,thick] (0.9,-5.8) -- (1.1,-7);
\draw[->,dashed,>=latex,thick] (4.2,-5.8) -- (3.7,-7);
\draw[->,dashed,>=latex,thick] (6.7,-5.8) -- (6.5,-7);
\draw[->,dashed,>=latex,thick] (6.9,-5.8) -- (7.1,-7);
\draw[->,dashed,>=latex,thick] (10.2,-5.8) -- (9.7,-7);
\draw[->,dashed,>=latex,thick] (2.1,-5.8) -- (1.9,-7);
\draw[->,dashed,>=latex,thick] (2.3,-5.8) -- (2.5,-7);
\draw[->,dashed,>=latex,thick] (4.7,-5.8) -- (5.2,-7);
\draw[->,dashed,>=latex,thick] (8.1,-5.8) -- (7.9,-7);
\draw[->,dashed,>=latex,thick] (8.3,-5.8) -- (8.5,-7);
\draw[->,dashed,>=latex,thick] (10.7,-5.8) -- (11.2,-7);
\draw[->,dashed,>=latex,thick] (-1.2,-5.8) -- (-0.7,-7);
\draw[->,dashed,>=latex,thick] (-4.2,-5.8) -- (-3.7,-7);
\draw[->,dashed,>=latex,thick] (-7.2,-5.8) -- (-6.7,-7);
\draw[->,dashed,>=latex,thick] (-10.2,-5.8) -- (-9.7,-7);
\draw[->,dashed,>=latex,thick] (-1.7,-5.8) -- (-2.2,-7);
\draw[->,dashed,>=latex,thick] (-4.7,-5.8) -- (-5.2,-7);
\draw[->,dashed,>=latex,thick] (-7.7,-5.8) -- (-8.2,-7);
\draw[->,dashed,>=latex,thick] (-10.7,-5.8) -- (-11.2,-7);

\begin{scope}[xshift=-9cm,yshift=0cm]
\def\xv{1.0}
\node[vertex] (vert_0) at (0:\xv)   {$0$};
\node[vertex] (vert_1) at (180:\xv)   {$1$}
	edge [->,>=latex,out=-50,in=-130,thick] node[below,pos=0.5]{$a$} (vert_0)
	edge [<-,>=latex,out=50,in=130,thick] node[above,pos=0.5]{$a$} (vert_0);
\end{scope}
\begin{scope}[xshift=-3cm,yshift=0cm]
\def\xv{0.5}
\node[vertex] (vert_1) at (270:\xv)   {$2$}
	edge [->,>=latex,out=70,in=110,thick,loop] node[above,pos=0.5]{$a$} (vert_1);
\end{scope}
\begin{scope}[xshift=3cm,yshift=0cm]
\def\xv{1.0}
\node[vertex] (vert_0) at (0:\xv)   {$3$};
\node[vertex] (vert_1) at (180:\xv)   {$4$}
	edge [->,>=latex,out=-50,in=-130,thick] node[below,pos=0.5]{$b$} (vert_0)
	edge [<-,>=latex,out=50,in=130,thick] node[above,pos=0.5]{$b$} (vert_0);
\end{scope}
\begin{scope}[xshift=9cm,yshift=0cm]
\def\xv{1.0}
\node[vertex] (vert_0) at (0:\xv)   {$5$};
\node[vertex] (vert_1) at (180:\xv)   {$6$}
	edge [->,>=latex,out=-50,in=-130,thick] node[below,pos=0.5]{$b$} (vert_0)
	edge [<-,>=latex,out=50,in=130,thick] node[above,pos=0.5]{$b$} (vert_0);
\end{scope}

\begin{scope}[xshift=-10.5cm,yshift=-4cm]
\def\x{1.0}
\node[vertex] (vert_00) at (1:\x)   {$00$};
\node[vertex] (vert_10) at (90:\x)   {$10$}
	edge [<-,>=latex,out=0,in=90,thick] node[above,pos=0.5]{$a$} (vert_00);
\node[vertex] (vert_01) at (180:\x)   {$01$}
	edge [<-,>=latex,out=90,in=180,thick] node[above,pos=0.5]{$a$} (vert_10);
\node[vertex] (vert_11) at (270:\x)   {$11$}
	edge [<-,>=latex,out=180,in=270,thick] node[below,pos=0.5]{$a$} (vert_01)
	edge [->,>=latex,out=0,in=270,thick] node[below,pos=0.5]{$a$} (vert_00);
\end{scope}
\begin{scope}[xshift=-7.5cm,yshift=-4cm]
\def\x{1.0}
\node[vertex] (vert_00) at (1:\x)   {$30$};
\node[vertex] (vert_10) at (90:\x)   {$40$}
	edge [<-,>=latex,out=0,in=90,thick] node[above,pos=0.5]{$b$} (vert_00);
\node[vertex] (vert_01) at (180:\x)   {$31$}
	edge [<-,>=latex,out=90,in=180,thick] node[above,pos=0.5]{$b$} (vert_10);
\node[vertex] (vert_11) at (270:\x)   {$41$}
	edge [<-,>=latex,out=180,in=270,thick] node[below,pos=0.5]{$b$} (vert_01)
	edge [->,>=latex,out=0,in=270,thick] node[below,pos=0.5]{$b$} (vert_00);
\end{scope}
\begin{scope}[xshift=-4.5cm,yshift=-4cm]
\def\xv{1.0}
\node[vertex] (vert_0) at (0:\xv)   {$02$};
\node[vertex] (vert_1) at (180:\xv)   {$12$}
	edge [->,>=latex,out=-50,in=-130,thick] node[below,pos=0.5]{$a$} (vert_0)
	edge [<-,>=latex,out=50,in=130,thick] node[above,pos=0.5]{$a$} (vert_0);
\end{scope}
\begin{scope}[xshift=-1.5cm,yshift=-4cm]
\def\xv{1.0}
\node[vertex] (vert_0) at (0:\xv)   {$32$};
\node[vertex] (vert_1) at (180:\xv)   {$42$}
	edge [->,>=latex,out=-50,in=-130,thick] node[below,pos=0.5]{$b$} (vert_0)
	edge [<-,>=latex,out=50,in=130,thick] node[above,pos=0.5]{$b$} (vert_0);
\end{scope}
\begin{scope}[xshift=1.5cm,yshift=-4cm]
\def\xv{1.0}
\node[vertex] (vert_0) at (315:\xv)   {$23$}
	edge [->,>=latex,out=70,in=110,thick,loop] node[above,pos=0.5]{$a$} (vert_0);
\node[vertex] (vert_1) at (225:\xv)   {$24$}
	edge [->,>=latex,out=70,in=110,thick,loop] node[above,pos=0.5]{$a$} (vert_1);
\end{scope}
\begin{scope}[xshift=4.5cm,yshift=-4cm]
\def\x{1.0}
\node[vertex] (vert_00) at (1:\x)   {$53$};
\node[vertex] (vert_10) at (90:\x)   {$63$}
	edge [<-,>=latex,out=0,in=90,thick] node[above,pos=0.5]{$b$} (vert_00);
\node[vertex] (vert_01) at (180:\x)   {$54$}
	edge [<-,>=latex,out=90,in=180,thick] node[above,pos=0.5]{$b$} (vert_10);
\node[vertex] (vert_11) at (270:\x)   {$64$}
	edge [<-,>=latex,out=180,in=270,thick] node[below,pos=0.5]{$b$} (vert_01)
	edge [->,>=latex,out=0,in=270,thick] node[below,pos=0.5]{$b$} (vert_00);
\end{scope}
\begin{scope}[xshift=7.5cm,yshift=-4cm]
\def\xv{1.0}
\node[vertex] (vert_0) at (315:\xv)   {$25$}
	edge [->,>=latex,out=70,in=110,thick,loop] node[above,pos=0.5]{$a$} (vert_0);
\node[vertex] (vert_1) at (225:\xv)   {$26$}
	edge [->,>=latex,out=70,in=110,thick,loop] node[above,pos=0.5]{$a$} (vert_1);
\end{scope}
\begin{scope}[xshift=10.5cm,yshift=-4cm]
\def\x{1.0}
\node[vertex] (vert_00) at (1:\x)   {$55$};
\node[vertex] (vert_10) at (90:\x)   {$65$}
	edge [<-,>=latex,out=0,in=90,thick] node[above,pos=0.5]{$b$} (vert_00);
\node[vertex] (vert_01) at (180:\x)   {$56$}
	edge [<-,>=latex,out=90,in=180,thick] node[above,pos=0.5]{$b$} (vert_10);
\node[vertex] (vert_11) at (270:\x)   {$66$}
	edge [<-,>=latex,out=180,in=270,thick] node[below,pos=0.5]{$b$} (vert_01)
	edge [->,>=latex,out=0,in=270,thick] node[below,pos=0.5]{$b$} (vert_00);
\end{scope}
\end{tikzpicture}}
\caption{The first three Schreier graphs, $\Gamma_0$, $\Gamma_1$, and $\Gamma_2$, associated to the Katsura groupoid $(G,E)$.}
\label{Katsura_Schreier}
\end{figure}
\end{center}

The limit space $\Jj$ is a Cantor set of circles along with a covering map $\sigma:\Jj \to\Jj$ that aligns with forward paths in the graph $E$. Poincar\'{e} duality then gives us the $K$-groups of the solenoids induced by the shift map on this Cantor set fibre bundle of circles. \qed
\end{exm}

\end{document}